\documentclass[10pt]{article}

\usepackage[a4paper,margin=25mm]{geometry}
\usepackage[T1]{fontenc}
\usepackage[utf8]{inputenc}
\usepackage{lmodern}
\usepackage{microtype}
\usepackage{graphicx}
\usepackage{float}
\usepackage{amsmath,amssymb,amsthm,mathtools}
\usepackage{enumitem}
\usepackage{xcolor}
\usepackage[numbers,sort&compress]{natbib}
\usepackage[colorlinks=true,linkcolor=blue!50!black,citecolor=blue!50!black,urlcolor=blue!50!black]{hyperref}

\newtheorem{theorem}{Theorem}[section]
\newtheorem{proposition}[theorem]{Proposition}
\newtheorem{lemma}[theorem]{Lemma}
\newtheorem{corollary}[theorem]{Corollary}
\theoremstyle{definition}
\newtheorem{definition}[theorem]{Definition}
\theoremstyle{remark}
\newtheorem{remark}[theorem]{Remark}

\newcommand{\R}{\mathbb{R}}

\newcommand{\Hh}{\mathbb{H}}
\newcommand{\eps}{\varepsilon}
\newcommand{\1}{\mathbf{1}}

\newcommand{\calE}{\mathcal{E}}

\title{Optimal location of small favourable regions for Robin eigenvalues with indefinite weights}
\author{Baruch Schneider$^{1}$, Diana Schneiderov\'a$^{1}$, Yifan Zhang$^{1,2,3}$\\[0.45em]
\small $^{1}$Department of Mathematics, University of Ostrava, Ostrava, Czech Republic\\
\small $^{2}$Department of Algebra, Charles University, Prague, Czech Republic\\
\small $^{3}$Department of Applied Mathematics, V\v{S}B--Technical University of Ostrava, Ostrava, Czech Republic}
\date{}

\begin{document}
\maketitle

\begin{abstract}
We consider the positive principal eigenvalue of an elliptic problem with a Robin boundary condition and bang--bang indefinite weight $\kappa\mathbf 1_D-\mathbf 1_{\Omega\setminus D}$, and ask where a favourable region $D$ of prescribed small volume $|D|=\delta$ should be located.  Put $\varepsilon=\delta^{1/N}$ and $\tau_\delta=\alpha_\delta\varepsilon$.  We prove that there is a finite threshold $\tau_*=\tau_*(N,\kappa)$, independent of the ambient domain, such that $\delta^{2/N}\Lambda_\delta(\alpha_\delta)\to\Lambda_{\mathbb H}(\tau)$ whenever $\tau_\delta\to\tau<\infty$.  If $\tau<\tau_*$, optimal small regions concentrate at the boundary.  If $\tau>\tau_*$, their concentration centres move to distances much larger than $\varepsilon$ from the boundary, and after recentring and rescaling the favourable sets converge in measure to the whole-space optimal ball.  At $\tau=\tau_*$, the half-space problem admits a compact boundary optimiser as well as minimising sequences escaping to infinity.  In the finer regime $\tau_\delta=\tau_*+\sigma\varepsilon+o(\varepsilon)$, we determine the first-order competition between the boundary and interior configurations.  Tangential symmetry of compact threshold optimisers reduces the geometric correction to mean curvature.  In particular, every fixed finite Robin coefficient is asymptotically in the boundary regime.  Numerical experiments illustrate the boundary--interior transition, the three cases in the first-order selection law, and the curvature-dependent boundary location; for $N=2$ and $\kappa=1$ they suggest a transition near $\tau_*\approx3.2$.
\end{abstract}

\medskip
\noindent\textbf{Keywords.} spectral optimisation; indefinite weight; Robin boundary condition; small-volume asymptotics; concentration--compactness; shape optimisation; mean curvature.

\noindent\textbf{2020 Mathematics Subject Classification.} 35P15, 49Q10, 35J20.

\section{Introduction}

\subsection{Problem and main conclusion}

Let $\Omega\subset\mathbb R^N$, $N\ge2$, be a bounded smooth domain.  For a measurable favourable set $D\subset\Omega$ of prescribed volume $|D|=\delta$, consider the bang--bang weight $m_D=\kappa\mathbf 1_D-\mathbf 1_{\Omega\setminus D}$ with $\kappa>0$, and the principal eigenvalue problem
\begin{equation}\label{eq:intro-eigenproblem-submission}
 -\Delta u=\lambda m_Du\quad\hbox{in }\Omega,
 \qquad
 \partial_\nu u+\alpha_\delta u=0\quad\hbox{on }\partial\Omega.
\end{equation}
We minimise its positive principal eigenvalue over all such sets $D$.  In population models this eigenvalue determines a linear persistence threshold for a diffusive population in a heterogeneous habitat, while the Robin condition models partial loss through the outer boundary \citep{CantrellCosner1989,CantrellCosner2003,LouYanagida2006,MinorsDawes2017,SearleVanVuuren2021,SearleVanVuuren2022}.  The geometric optimisation question is therefore: when only a small favourable volume is available, does an optimal patch benefit from touching the boundary, or does boundary loss make an interior patch preferable?  At the Neumann endpoint the boundary is advantageous through reflection, whereas the Dirichlet limit favours interior concentration; our aim is to identify the scale at which the change occurs.

The relevant comparison is between the patch length scale $\varepsilon=\delta^{1/N}$ and the Robin length $\alpha_\delta^{-1}$.  We therefore set
\begin{equation}\label{eq:intro-critical-scale}
 \varepsilon:=\delta^{1/N},\qquad \tau_\delta:=\alpha_\delta\varepsilon.
\end{equation}
The main result gives a complete leading-order answer whenever $\tau_\delta$ has a finite limit.  There is a finite, domain-independent number $\tau_*=\tau_*(N,\kappa)$ such that optimal regions concentrate at the boundary when $\tau_\delta\to\tau<\tau_*$, whereas for $\tau_\delta\to\tau>\tau_*$ their concentration centres lie at distances large compared with the patch scale and, after recentring, their rescaled favourable sets converge in measure to the whole-space optimal ball.  Thus every fixed Robin coefficient $\alpha$ is asymptotically in the boundary regime as $\delta\downarrow0$; the first possible transition to an interior configuration occurs when $\alpha_\delta$ reaches the scale $\delta^{-1/N}$.  At the transition scale we also determine the first correction, which decides between the boundary and interior configurations and, when the boundary configuration is selected, records the effect of mean curvature.

Robin eigenvalue optimisation with indefinite weights has been studied in \citet{AfrouziBrown1999,HintermullerKaoLaurain2012,Daners2013,LamboleyLaurainNadinPrivat2016}; weighted anisotropic variants appear in \citet{PellacciPisanteSchiera2024,PellacciPisanteSchieraCorrigendum2026}.  Exact information is available in several special Robin geometries \citep{SchneiderSchneiderovaZhang2025,SchneiderSchneiderovaZhangOneD2025}.  Singular small-volume limits have recently been analysed separately at the Neumann and Dirichlet endpoints: the Neumann problem concentrates at the boundary, with a refined location law governed by mean curvature, whereas the Dirichlet problem concentrates in the interior and approaches the whole-space ball \citep{MazzoleniPellacciVerzini2020,MazzoleniPellacciVerzini2023,FerreriVerzini2024,FerreriMazzoleniPellacciVerzini2026}; see also the periodic setting in \citet{Verzini2026}.  The new point here is that the Robin parameter is allowed to vary with the favourable volume.  This reveals the critical scale $\alpha_\delta\asymp\delta^{-1/N}$, a universal transition threshold on that scale, and a first-order selection law at the threshold.  These features are not visible when the boundary condition is kept fixed while $\delta\downarrow0$.

\subsection{Main theorem}

Let $\Lambda_\delta(\alpha_\delta)$ denote the optimal value of \eqref{eq:intro-eigenproblem-submission}.  Let $\Lambda_{\mathbb R^N}=:k^2$ be the unit-volume whole-space optimum, and let $\Lambda_{\mathbb H}(\tau)$ be the corresponding half-space value defined in Section~\ref{sec:limit-problems}.  The profile statements below are made precise in Theorem~\ref{thm:bounded-profile-critical}.

\begin{theorem}[Small-volume location transition]\label{thm:intro-main}
Assume that $\Omega$ is bounded and connected, with $C^2$ boundary for \emph{(i)}--\emph{(ii)} and $C^{2,1}$ boundary for \emph{(iii)}.  There exists $\tau_*=\tau_*(N,\kappa)\in(0,\infty)$ with the following properties.
\begin{enumerate}[label=(\roman*)]
\item If $\tau_\delta\to\tau<\infty$, then
\begin{equation}\label{eq:intro-leading-law}
 \varepsilon^2\Lambda_\delta(\alpha_\delta)\longrightarrow\Lambda_{\mathbb H}(\tau),
\end{equation}
and
\begin{equation}\label{eq:intro-phase-law}
 \Lambda_{\mathbb H}(\tau)<k^2\quad(0\le\tau<\tau_*),
 \qquad
 \Lambda_{\mathbb H}(\tau)=k^2\quad(\tau\ge\tau_*).
\end{equation}
The half-space infimum is attained for $\tau\le\tau_*$ and is not attained for $\tau>\tau_*$.

\item If $\tau<\tau_*$, optimal small favourable sets have a boundary-attached half-space limit profile.  If $\tau>\tau_*$, there are concentration centres whose distance from the boundary, measured in units of $\varepsilon$, tends to infinity; after recentring at these points, the rescaled favourable sets converge in measure to the whole-space optimal ball.  At $\tau=\tau_*$ these are the only two possible subsequential behaviours.

\item If
\begin{equation}\label{eq:intro-critical-window-summary}
 \tau_\delta=\tau_*+\sigma\varepsilon+o(\varepsilon),
\end{equation}
then an explicitly defined boundary correction $\Gamma_{\rm bd}(\sigma)$ satisfies
\begin{equation}\label{eq:intro-first-order-summary}
 \frac{\varepsilon^2\Lambda_\delta(\alpha_\delta)-k^2}{\varepsilon}
 \longrightarrow \min\{0,\Gamma_{\rm bd}(\sigma)\}.
\end{equation}
Every compact threshold optimiser is rotationally symmetric in the tangential variables, up to a common tangential translation.  Consequently the geometric part of $\Gamma_{\rm bd}$ depends on $\partial\Omega$ only through its mean curvature.
\end{enumerate}
\end{theorem}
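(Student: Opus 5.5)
This theorem gathers results proved later in the paper, so the plan is to assemble it from three ingredients. These are a variational reformulation, a blow-up/concentration-compactness analysis at scale $\varepsilon$, and a study of the half-space limit problem as a function of $\tau$.

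\textbf{Reformulation and limit problems.} I would write $\Lambda_\delta(\alpha_\delta)$ as the double infimum over $D$ with $|D|=\delta$ and $u\in H^1(\Omega)$ with $\kappa\int_D u^2-\int_{\Omega\setminus D}u^2=1$ of the Robin energy $\int_\Omega|\nabla u|^2+\alpha_\delta\int_{\partial\Omega}u^2$. Rescaling $x=x_0+\varepsilon y$ multiplies this quotient by $\varepsilon^{-2}$, sends the Robin coefficient to $\tau_\delta=\alpha_\delta\varepsilon$, and makes the weight $\kappa\1_{\tilde D}-\1_{\tilde\Omega\setminus\tilde D}$ with $|\tilde D|=1$. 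The two natural limits are the whole-space unit-volume problem with value $k^2$, whose optimiser is the ball by rearrangement, and the half-space problem with Robin parameter $\tau$, with value $\Lambda_{\mathbb H}(\tau)$.

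\textbf{Properties of $\tau\mapsto\Lambda_{\mathbb H}(\tau)$.}
1. Monotonicity and concavity: $\Lambda_{\mathbb H}$ is nondecreasing and concave, being an infimum of affine functions of $\tau$.
2. Upper bound: translating the ball to infinity gives $\Lambda_{\mathbb H}(\tau)\le k^2$.
3. Neumann endpoint: reflection shows $\Lambda_{\mathbb H}(0)<k^2$, since a half-ball attached to the boundary beats the ball.
4. Large $\tau$: here the problem approaches the Dirichlet half-space problem. Its value is at least $k^2$ by odd reflection, so $\Lambda_{\mathbb H}(\tau)=k^2$ for large $\tau$. I would prove this with a quantitative bound, comparing the boundary trace term with the loss of mass.
5. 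Threshold: set $\tau_*=\sup\{\tau:\Lambda_{\mathbb H}(\tau)<k^2\}$. Then $\tau_*$ is finite and positive, and continuity gives $\Lambda_{\mathbb H}(\tau_*)=k^2$.
6. Attainment for $\tau<\tau_*$: a minimising sequence cannot vanish or split, since the strict inequality $\Lambda_{\mathbb H}<k^2$ rules out escape to infinity. This is Lions' dichotomy together with strict subadditivity in the volume.
7. Attainment at $\tau_*$: take optimisers for $\tau\uparrow\tau_*$ and show they stay compact, e.g. by uniform decay estimates. This is where I am least certain.
8. Non-attainment for $\tau>\tau_*$: a minimiser $u$ at $\tau$ would satisfy $\Lambda_{\mathbb H}(\tau')\le$ its quotient at $\tau'<\tau$. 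That quotient is strictly less than $k^2$ because $\int_{\partial}u^2>0$, contradicting the definition of $\tau_*$ when $\tau'\in(\tau_*,\tau)$.

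\textbf{Parts (i) and (ii).}
1. Upper bound: use transplanted trial functions, a rescaled half-space optimiser flattened near a boundary point or a whole-space ball placed deep inside. The $C^2$ curvature errors are $O(\varepsilon)$.
2. Lower bound: apply concentration-compactness to normalised optimal eigenfunctions. Decay in the region $m=-1$ gives exponential localisation, so the mass concentrates on a single profile. According to whether the distance from the boundary in units of $\varepsilon$ stays bounded or diverges, the limit is a half-space or a whole-space competitor, and lower semicontinuity gives $\liminf\ge\min\{\Lambda_{\mathbb H}(\tau),k^2\}=\Lambda_{\mathbb H}(\tau)$.
3. Profiles: the dichotomy in (ii) comes from the same trichotomy. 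For $\tau<\tau_*$ the interior alternative would give $k^2>\Lambda_{\mathbb H}$, which is impossible. For $\tau>\tau_*$ a bounded-distance limit would attain the half-space infimum, contradicting step 8. Convergence in measure to the ball follows from the rigidity of the equality case in rearrangement.

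\textbf{Part (iii), the main obstacle.}
1. Expansion at threshold: carry the asymptotics to order $\varepsilon$ around a compact threshold optimiser $(u_*,D_*)$. Boundary flattening in Fermi coordinates with a $C^{2,1}$ boundary gives corrections linear in the second fundamental form, of size $\varepsilon\int(\dots)\,\mathrm{II}(y',y')$. The shift $\sigma\varepsilon$ in $\tau$ contributes $\sigma\int_{\partial\mathbb H}u_*^2$.
2. Defining $\Gamma_{\rm bd}(\sigma)$: it is the infimum over threshold optimisers and boundary points of this linear functional. The interior configuration contributes $0$ at order $\varepsilon$, since its curvature effects are exponentially small.
3. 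Lower bound: the matching lower bound requires $O(\varepsilon)$ closeness of rescaled optimisers to the set of threshold optimisers. I expect this step to be the hardest. I would try a first-order argument: test the limiting eigenfunction in the threshold problem and use the Hellmann–Feynman-type derivative.
4. Tangential symmetry: obtain it by Steiner symmetrisation in the tangential hyperplanes, with strictness from the equality cases of Pólya–Szegő for the positive principal eigenfunction. Symmetry gives $\int u_*^2 y_iy_j\propto\delta_{ij}$, so the second fundamental form enters only through its trace, the mean curvature.
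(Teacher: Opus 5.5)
\textbf{Main gap: finiteness of $\tau_*$ (your step~4).} Everything else depends on this step. Without $\tau_*<\infty$ there is no phase law in (i), no supercritical regime in (ii), and no critical window in (iii).

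Your inference does not work. The Dirichlet half-space value equals $k^2$; this comes from zero extension, since odd reflection doubles the favourable volume and only gives $2^{-2/N}k^2$. That fact, together with $\Lambda_{\mathbb H}(\tau)\to k^2$, gives only convergence, not equality at a finite $\tau$. The first Robin eigenvalue of a bounded domain stays strictly below the Dirichlet one for every finite parameter, so ``hence $\Lambda_{\mathbb H}(\tau)=k^2$ for large $\tau$'' is not automatic.

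A trace-versus-mass comparison does not close either. Write $v=\mathcal Y_kf+w$ with $w\in H^1_0(\Hh)$. The gain in reduced mass is linear in $\|f\|_2$, while the Robin cost is $\tau\|f\|_2^2$, so you only get $0\le k^2-\Lambda_{\mathbb H}(\tau)\le C/\tau$. Finiteness of $\tau_*$ is equivalent to the \emph{quadratic} inequality $k^2\mathcal M(v)\le\int_{\Hh}|\nabla v|^2+C\|\operatorname{Tr}v\|_2^2$. The natural Dirichlet-to-Neumann bound involves the unbounded symbol $\sqrt{k^2+|\xi|^2}$, so it does not give this.

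The missing idea in the paper is geometric, in two steps.
\begin{enumerate}
\item Contact. Suppose a sub-$k^2$ optimiser at large $\tau$ had a favourable-free strip $\{0<x_N<d\}$. One-sided normal translation gives $\int(\lambda+|\xi|^2)|\widehat f|^2\le\tau^2\|f\|_2^2$, hence by Cauchy--Schwarz $\int\sqrt{\lambda+|\xi|^2}\,|\widehat f|^2\le\tau\|f\|_2^2$. Gluing a $\lambda$-Yukawa extension below $\partial\Hh$ and using the whole-space inequality gives the strict reverse inequality. So such optimisers must touch the boundary.
\item No contact. The $O(\tau^{-1})$ structure theorem forces these optimisers to look like a ball drifting away from $\partial\Hh$, so their bathtub levels tend to $\Phi(R_1)>0$. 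A blow-up on the Robin scale $\tau^{-1}$ at a boundary maximum (limit $\Delta V=0$, $\partial_NV=V$) shows their boundary values tend to $0$. Contact is therefore impossible.
\end{enumerate}

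\textbf{Attainment at $\tau_*$ (your step~7).} This step lacks the same mechanism. Minimising sequences escaping to infinity genuinely exist at $\tau_*$, so decay estimates around the core cannot give compactness. What excludes escape for exact optimisers with $\tau_n\uparrow\tau_*$ is the strict inequality $\Lambda_{\mathbb H}(\tau_n)<k^2$. Escape makes boundary values vanish, which forces a favourable-free strip, and then the translation and gluing inequalities above contradict each other.

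\textbf{Part (iii).} You have misplaced the difficulty: no $O(\varepsilon)$ rate of convergence to threshold optimisers is needed. The paper dilates the flattened actual optimiser to exact unit volume and applies $E_{\tau_*}\ge k^2\mathcal M$ to it. Uniform exponential localisation makes the Fermi remainders $O(\varepsilon^2)$, so only convergence of first moments is used.

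Your claim that curvature effects are exponentially small holds for the interior upper-bound competitor. It fails for escaping optimisers with $\operatorname{dist}\to0$ but $\operatorname{dist}/\varepsilon\to\infty$. Their lower bound needs two facts: the linear flattening term cancels by radial symmetry of $\Phi$, and $(\tau_\delta-\tau_*)\|\operatorname{Tr}\widetilde q\|_2^2=o(\varepsilon)$ for the flattened profile.

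Finally, equality in a slice-wise (Steiner or Schwarz) rearrangement only gives translates whose centres may vary with the remaining variables. You still need to verify the no-flat-zone hypothesis and show the centre is constant; the paper gets the latter from equality of the normal energies.
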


In terms of the original optimisation problem, Theorem~\ref{thm:intro-main} says that the boundary is advantageous for sufficiently small Robin loss and disadvantageous once the scaled loss $\alpha_\delta\delta^{1/N}$ crosses $\tau_*$.  The critical scale is therefore $\alpha_\delta\sim\tau_*\delta^{-1/N}$.  Below this scale the optimal favourable region concentrates at the boundary; above it the concentration centre moves away from the boundary on the patch scale and, after recentring, the rescaled favourable set converges in measure to the whole-space optimal ball.  At critical scaling, the $O(\delta^{1/N})$ correction determines whether the boundary or interior configuration is selected, and the location of a boundary-attached critical patch is selected through mean curvature.

Section~\ref{sec:numerics} gives numerical illustrations organised around the same decision problem as the theorem.  For $N=2$ and $\kappa=1$, side-by-side computations show that the boundary configuration has the smaller value below the threshold and the interior configuration has the smaller value above it; in the critical window they show both signs of the first-order selection law and a finite-$\delta$ preference in the first-order equality case.  A smooth ellipse then illustrates the mean-curvature selection of the boundary location.  Further contrast values and an axisymmetric $N=3$ computation show that the transition is not confined to one parameter choice or to two dimensions.

The proof is organised around the two limit geometries.  Section~\ref{sec:limit-problems} analyses the half-space problem and identifies the finite threshold, its attainment properties, the compactness/escape alternatives, and tangential symmetry at the threshold.  Section~\ref{sec:bounded-targets} transfers these results to bounded domains by localisation and blow-up, and then derives the first-order critical correction by boundary flattening.  The numerical experiments are independent of the proofs.  We do not address uniqueness of the threshold optimiser, detailed free-boundary regularity, quantitative near-sphericity, or the higher-order analysis of the degenerate case $\Gamma_{\rm bd}(\sigma)=0$.

\section{Variational setting and small-volume scaling}\label{sec:framework}

Assume throughout that $\Omega\subset\R^N$, $N\ge2$, is bounded, open, and connected.  A $C^2$ boundary suffices for the leading-order results; for the first-order theorem we assume $C^{2,1}$.  With outward unit normal $\nu$, we use the convention $\mathsf S_P\xi=D_\xi\nu(P)$ and $h_P:=\operatorname{tr}\mathsf S_P=(N-1)H_{\partial\Omega}(P)$, so an outward-oriented sphere of radius $R$ has principal curvatures $+1/R$.

Fix $\kappa>0$ and, for measurable $D\subset\Omega$ with $|D|=\delta$, set
\begin{equation}\label{eq:mD}
 m_D:=\kappa\1_D-\1_{\Omega\setminus D}.
\end{equation}
We work in the small-volume range $0<\delta<|\Omega|/(\kappa+1)$, for which $\int_\Omega m_D<0$.  For $\alpha\ge0$, define
\begin{equation}\label{eq:Rayleigh}
 \lambda_1(D;\alpha)
 :=\inf_{\substack{u\in H^1(\Omega)\\ \int_\Omega m_Du^2>0}}
 \frac{\displaystyle\int_\Omega|\nabla u|^2+\alpha\int_{\partial\Omega}u^2}
 {\displaystyle\int_\Omega m_Du^2},
\end{equation}
and
\begin{equation}\label{eq:Lambda-delta}
 \Lambda_\delta(\alpha):=\inf_{\substack{D\subset\Omega\ \mathrm{measurable}\\ |D|=\delta}}\lambda_1(D;\alpha).
\end{equation}
The standard principal-eigenvalue theory for indefinite weights gives positivity and simplicity on this branch \citep{AfrouziBrown1999,Daners2013}.

\begin{proposition}[Existence and bathtub structure]\label{prop:bounded-existence-bathtub}
For every finite $\alpha\ge0$ and $0<\delta<|\Omega|/(\kappa+1)$, \eqref{eq:Lambda-delta} has an optimiser $D_\delta$.  Its positive principal eigenfunction $u_\delta$, normalised by $\int_\Omega m_{D_\delta}u_\delta^2=1$, satisfies for some $t_\delta>0$
\begin{equation}\label{eq:bounded-bathtub-level}
 \{u_\delta>t_\delta\}\subset D_\delta\subset\{u_\delta\ge t_\delta\}
\end{equation}
up to null sets.
\end{proposition}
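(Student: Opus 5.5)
The plan is the standard relaxation-plus-bathtub argument. First I reformulate \eqref{eq:Lambda-delta} in dual form. For $u\in H^1(\Omega)$ write $\int_\Omega m_Du^2=(\kappa+1)\int_D u^2-\int_\Omega u^2$, so
\[
 \frac{1}{\lambda_1(D;\alpha)}=\sup_{u\in H^1(\Omega)\setminus\{0\}}\frac{(\kappa+1)\int_Du^2-\int_\Omega u^2}{\int_\Omega|\nabla u|^2+\alpha\int_{\partial\Omega}u^2},
\]
with the convention that the supremum is positive. This needs the denominator to be positive for $u\ne0$. For $\alpha>0$ that is clear. For $\alpha=0$ and $u$ constant the numerator is $c^2((\kappa+1)\delta-|\Omega|)<0$, so constants never compete. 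Minimising $\lambda_1$ over $D$ is then the same as maximising $\sup_D\sup_u$, and the two suprema can be exchanged. For fixed $u$, the bathtub principle says $\sup_{|D|=\delta}\int_D u^2$ is attained exactly by sets with $\{u^2>t\}\subset D\subset\{u^2\ge t\}$, where $t$ is chosen so that $|\{u^2>t\}|\le\delta\le|\{u^2\ge t\}|$.

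For existence I take a minimising sequence $D_n$ with eigenfunctions $u_n\ge0$, normalised by $\int_\Omega|\nabla u_n|^2+\alpha\int_{\partial\Omega}u_n^2=1$ and with $\int_\Omega m_{D_n}u_n^2=1/\lambda_1(D_n;\alpha)\to 1/\Lambda_\delta(\alpha)$.
\begin{enumerate}[label=(\alph*)]
\item I first need $L^2$ bounds, and this is the step I expect to be the main obstacle when $\alpha=0$, since the normalisation does not control $\int_\Omega u_n^2$ directly. I write $u_n=c_n+v_n$ with $\int_\Omega v_n=0$, so that Poincar\'e bounds $\|v_n\|_{L^2}$. From $\int_\Omega m_{D_n}u_n^2>0$ I get $(\kappa+1)\int_{D_n}u_n^2>\int_\Omega u_n^2$. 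Expanding, the $c_n^2$ term carries the negative factor $(\kappa+1)\delta-|\Omega|$, which bounds $|c_n|$.
\item With $u_n$ bounded in $H^1$, I extract $u_n\rightharpoonup u$ in $H^1$, with strong convergence in $L^2(\Omega)$ and $L^2(\partial\Omega)$. I also extract $\1_{D_n}\rightharpoonup\theta$ weakly-$*$ in $L^\infty$, where $0\le\theta\le1$ and $\int_\Omega\theta=\delta$.
\item Passing to the limit gives $(\kappa+1)\int\theta u^2-\int u^2\ge 1/\Lambda_\delta$ and denominator at most $1$. Hence $u\ne0$, and the relaxed weight $\theta$ achieves ratio at least $1/\Lambda_\delta$.
\end{enumerate}

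Now I replace $\theta$ by a bathtub set $D$ for $u^2$, which only increases $\int\theta u^2$. The resulting $D$ satisfies $|D|=\delta$ and $\lambda_1(D;\alpha)\le\Lambda_\delta$, so $D$ is an optimiser.

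For the structure of an arbitrary optimiser $D_\delta$, let $u_\delta>0$ be its principal eigenfunction; positivity comes from the strong maximum principle and Harnack, since $\Omega$ is connected. If $D_\delta$ is not a bathtub set of $u_\delta^2$ up to null sets, then there are sets $A\subset D_\delta$ and $B\subset\Omega\setminus D_\delta$ of positive, equal measure with $u_\delta<u_\delta$ on $A$ pointwise below its values on $B$, more precisely $\operatorname{ess\,sup}_A u_\delta<\operatorname{ess\,inf}_B u_\delta$. Swapping $A$ and $B$ strictly increases $\int_{D}u_\delta^2$, and therefore strictly decreases the Rayleigh quotient, which contradicts optimality.

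The threshold $t_\delta$ is the bathtub level for $u_\delta$. It is positive because $u_\delta>0$ and $|\{u_\delta\ge0\}|=|\Omega|>\delta$, so the level cannot be $0$. If $\{u_\delta\ge t_\delta\}$ has positive measure on a level set, then $D_\delta$ may contain only part of it, and this is allowed by the sandwich inclusions in \eqref{eq:bounded-bathtub-level}. Finally, the normalisation $\int_\Omega m_{D_\delta}u_\delta^2=1$ is just a rescaling of $u_\delta$.
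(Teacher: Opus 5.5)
Your proposal is correct and follows the same route as the paper. You relax to weights $0\le\theta\le1$ with $\int_\Omega\theta=\delta$, use weak-$*$ compactness together with the sign $(\kappa+1)\delta-|\Omega|<0$ to control the constant mode at the Neumann endpoint, return to a bang--bang set by the bathtub principle, and obtain the level-set structure of any optimiser by the strict-improvement exchange argument. Your write-up also fills in details that the paper's short proof leaves implicit: the quadratic bound on $c_n$, the exchange of equal-measure pieces of $D_\delta$ and $\Omega\setminus D_\delta$, and $t_\delta>0$ from $|\{u_\delta>0\}|=|\Omega|>\delta$.
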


\begin{proof}
In the relaxed class $m=-1+(\kappa+1)\theta$, $0\le\theta\le1$, $\int_\Omega\theta=\delta$, weak-$*$ compactness and the direct method give a minimiser; the Neumann endpoint uses $\int_\Omega m<0$.  For a fixed eigenfunction the denominator is maximised by the bathtub principle, so a bang--bang minimiser exists \citep{HintermullerKaoLaurain2012,LamboleyLaurainNadinPrivat2016}.  Applying the same principle to the eigenfunction of any shape optimiser gives \eqref{eq:bounded-bathtub-level}; otherwise replacing $D_\delta$ by a set with larger $\int_Du_\delta^2$ would strictly lower its Rayleigh quotient.
\end{proof}

\subsection{Blow-up scales}\label{sec:scaling}

Set $\varepsilon=\delta^{1/N}$.  Under the dilation $x=P+\varepsilon y$, gradient energy scales as $\varepsilon^{N-2}$ and the weighted $L^2$ denominator as $\varepsilon^N$, hence eigenvalues are of order $\varepsilon^{-2}=\delta^{-2/N}$.  If $P\in\partial\Omega$, the rescaled domain converges locally to the half-space $\mathbb H:=\{y_N>0\}$, and the Robin condition becomes $\partial_{\nu_y}v+\tau_\delta v=0$ with $\tau_\delta=\varepsilon\alpha_\delta$.  Curvature disappears at leading order and enters at order $\varepsilon$.  If instead $\operatorname{dist}(P_\delta,\partial\Omega)/\varepsilon\to\infty$, the blow-up domain is $\R^N$ and the boundary is invisible.  A finite nonzero rescaled distance again gives a half-space, allowing profiles that need not touch the boundary.  These are precisely the competing geometries behind Theorem~\ref{thm:intro-main}.

\section{Limit variational problems}\label{sec:limit-problems}

\subsection{The whole-space problem}

For a measurable set $A\subset\R^N$ with $|A|=1$, set $m_A=\kappa\1_A-\1_{\R^N\setminus A}$.  The whole-space energy is
\begin{equation}\label{eq:whole-space-value-A}
\lambda_{\R^N}(A)
:=
\inf\left\{
\int_{\R^N}|\nabla v|^2\,dy:
 v\in H^1(\R^N),\
 \int_{\R^N}m_Av^2\,dy=1
\right\},
\end{equation}
and the optimal whole-space value is
\begin{equation}\label{eq:whole-space-Lambda}
\Lambda_{\R^N}
:=
\inf_{|A|=1}\lambda_{\R^N}(A).
\end{equation}
The whole-space problem also appears in the Dirichlet and periodic small-volume limits.  In the relevant formulations, rearrangement arguments show that the optimal favourable region is a ball up to translation; see \citet{FerreriVerzini2024,Verzini2026} and references therein.

We also use the same variational problem with arbitrary favourable volume. For $V>0$, let
\begin{equation}\label{eq:whole-space-volume-V}
\Lambda_{\R^N}(V)
:=
\inf_{\substack{A\subset\R^N\\|A|=V}}
\inf_{\substack{v\in H^1(\R^N)\\ \int_{\R^N}m_Av^2>0}}
\frac{\int_{\R^N}|\nabla v|^2\,dy}
{\int_{\R^N}m_Av^2\,dy}.
\end{equation}
Thus $\Lambda_{\R^N}=\Lambda_{\R^N}(1)$.

\begin{lemma}[Exact whole-space scaling]\label{lem:whole-scaling}
For every $V>0$,
\begin{equation}\label{eq:whole-scaling}
\Lambda_{\R^N}(V)=V^{-2/N}\Lambda_{\R^N}.
\end{equation}
\end{lemma}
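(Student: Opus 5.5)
The plan is a pure dilation argument: the weight $m_A$ takes only the values $\kappa$ and $-1$, and these are invariant under changes of variables, so rescaling the favourable set should transfer the problem at volume $V$ exactly to the problem at volume $1$. Fix $V>0$ and set $s:=V^{1/N}$. For a measurable $A\subset\R^N$ with $|A|=V$, I put $\tilde A:=s^{-1}A$, so $|\tilde A|=1$. For $v\in H^1(\R^N)$ I put $\tilde v(y):=v(sy)$. Then $\tilde v\in H^1(\R^N)$ and the map $v\mapsto\tilde v$ is a bijection of $H^1(\R^N)$ onto itself.

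The key step is to check how the weight and the two integrals transform. Since $\1_{A}(sy)=\1_{\tilde A}(y)$, we have $m_A(sy)=m_{\tilde A}(y)$ pointwise. The change of variables $x=sy$ then gives
\[
\int_{\R^N}m_A v^2\,dx=s^N\int_{\R^N}m_{\tilde A}\tilde v^2\,dy,
\qquad
\int_{\R^N}|\nabla v|^2\,dx=s^{N-2}\int_{\R^N}|\nabla\tilde v|^2\,dy .
\]
In particular the admissibility condition $\int m_Av^2>0$ holds if and only if $\int m_{\tilde A}\tilde v^2>0$, and the Rayleigh quotients satisfy
\[
\frac{\int|\nabla v|^2}{\int m_Av^2}=s^{-2}\,\frac{\int|\nabla\tilde v|^2}{\int m_{\tilde A}\tilde v^2}.
\]
Taking the infimum over $v$ shows that the inner infimum in \eqref{eq:whole-space-volume-V} for the set $A$ equals $s^{-2}$ times the inner infimum for $\tilde A$.

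Next I identify that inner infimum for a unit-volume set with $\lambda_{\R^N}(\tilde A)$ from \eqref{eq:whole-space-value-A}. Both the numerator and the denominator are $2$-homogeneous in $v$. Hence any admissible $v$ with positive denominator can be normalised so that $\int m_{\tilde A}v^2=1$ without changing the quotient. Conversely, every $v$ with $\int m_{\tilde A}v^2=1$ is admissible and its quotient equals $\int|\nabla v|^2$. So the two infima coincide.

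Finally, $A\mapsto s^{-1}A$ is a bijection between sets of volume $V$ and sets of volume $1$. Taking the infimum over $A$ therefore gives $\Lambda_{\R^N}(V)=s^{-2}\Lambda_{\R^N}(1)=V^{-2/N}\Lambda_{\R^N}$, which is \eqref{eq:whole-scaling}.

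There is no real obstacle here. The only point requiring care is the invariance $m_A(s\,\cdot)=m_{s^{-1}A}$. It holds precisely because the negative part of the weight is the constant $-1$ on the whole unbounded complement, so no bounded ambient domain breaks the scaling. This is exactly what fails in the bounded and half-space problems with a Robin term, where the boundary term scales like $s^{N-1}$ and produces the parameter $\tau$.
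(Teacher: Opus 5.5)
Your proof is correct and uses the same dilation argument as the paper. The only difference is presentation: you use the bijection $A\mapsto s^{-1}A$, $v\mapsto v(s\,\cdot)$ to get equality in one step, and you check explicitly that the unit-volume Rayleigh infimum matches the normalised definition $\lambda_{\R^N}(A)$. The paper instead proves one inequality and then dilates back by $r^{-1}$ for the reverse.
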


\begin{proof}
Let $r=V^{1/N}$. If $|A|=1$, set $A_r=rA$ and, for $v\in H^1(\R^N)$, set $v_r(x)=v(x/r)$. Then $|A_r|=V$ and
\[
\int_{\R^N}|\nabla v_r|^2\,dx
=r^{N-2}\int_{\R^N}|\nabla v|^2\,dy,
\qquad
\int_{\R^N}m_{A_r}v_r^2\,dx
=r^N\int_{\R^N}m_Av^2\,dy.
\]
Hence the Rayleigh quotient is multiplied by $r^{-2}=V^{-2/N}$. Taking infima gives
$\Lambda_{\R^N}(V)\le V^{-2/N}\Lambda_{\R^N}$. Applying the same argument with dilation factor $r^{-1}$ to an arbitrary set of volume $V$ gives the reverse inequality.
\end{proof}

\begin{proposition}[Whole-space optimiser classification]\label{prop:whole-space-classification}
For every $V>0$, the value $\Lambda_{\R^N}(V)$ is strictly positive and is attained.  Up to translation, every optimal favourable set is the ball $B_{R_V}$ of volume $V$, and the corresponding positive normalised principal eigenfunction is radial about the same centre and strictly decreasing in the radial variable.  In particular, for $V=1$ there is, up to translation and multiplication before normalisation, a unique positive radial profile $\Phi$, and its volume-one bathtub set is $B_{R_1}$.  The threshold level $\Phi(R_1)$ is positive and its level set has zero Lebesgue measure.
\end{proposition}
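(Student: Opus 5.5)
By Lemma~\ref{lem:whole-scaling} it suffices to treat $V=1$; the statement for general $V$ then follows by dilation, since dilation maps balls to balls and radial decreasing profiles to radial decreasing profiles. The plan has four steps: positivity, existence by symmetrisation, rigidity to show every optimiser is a ball, and the facts about the threshold level.

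\textbf{Positivity.} Fix $A$ with $|A|=1$ and $v$ with $\int m_Av^2=1$. The constraint gives
\[
\kappa\int_A v^2\ \ge\ 1+\int_{\R^N\setminus A}v^2 .
\]
Combining Hölder with the Sobolev inequality ($N\ge3$) gives $\int_A v^2\le |A|^{2/N}\|v\|_{2^*}^2\le C\int|\nabla v|^2$, hence $\int|\nabla v|^2\ge 1/(C\kappa)$. For $N=2$ I would instead use Gagliardo--Nirenberg on $\|v\|_{L^4}$ together with $\|v\|_2^2\le(\kappa+1)\int_A v^2$. Indeed, the bound $\int_A v^2\le\|v\|_4^2\le C\|v\|_2\|\nabla v\|_2$ yields $\int_A v^2\le C'\int|\nabla v|^2$. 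In both cases $\Lambda_{\R^N}>0$, uniformly in $A$.

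\textbf{Existence via symmetrisation.} Take $v\ge0$ (replace $v$ by $|v|$). Let $v^*$ be the Schwarz rearrangement and $A^*$ the centred ball of volume $1$. Pólya--Szegő gives $\|\nabla v^*\|_2\le\|\nabla v\|_2$. Hardy--Littlewood gives $\int_A v^2\le\int_{A^*}(v^*)^2$, and $\int_{\R^N}v^2=\int(v^*)^2$. Since
\[
\int m_Av^2=(\kappa+1)\int_A v^2-\int v^2,
\]
the denominator does not decrease, so the ball $B_{R_1}$ is optimal among sets. It remains to attain $\lambda_{\R^N}(B_{R_1})$. Restrict to radial minimising sequences, which is allowed by the same rearrangement. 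Normalise $\int m v_n^2=1$; then $\|v_n\|_{H^1}$ is bounded by the positivity step, because $\|v_n\|_2^2\le(\kappa+1)\int_{B}v_n^2\lesssim\|\nabla v_n\|_2^2$. Compactness of $H^1\hookrightarrow L^2(B_{R_1})$ gives $\int_B v_n^2\to\int_B v^2$. Weak lower semicontinuity of $\int v^2$ outside $B$ then yields $\int m v^2\ge1$ in the limit, so $v$ is a minimiser. The Euler--Lagrange equation is
\[
-\Delta\Phi=\Lambda\, m_{B}\Phi .
\]
This is a radial ODE with explicit Bessel-type solutions: Bessel functions $J$ inside the ball and modified Bessel $K$ (exponential decay) outside. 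Hence $\Phi>0$, $\Phi$ is strictly decreasing in $r$, and the ground state is unique up to scaling by simplicity (a positive minimiser; two linearly independent ones would give a sign-changing minimiser, contradiction).

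\textbf{Rigidity: every optimiser is a ball (main obstacle).} Let $(A,v)$ be optimal with $v>0$ (strong maximum principle). Equality must hold in Pólya--Szegő and in Hardy--Littlewood. Hardy--Littlewood equality forces $\{v>t\}\subset A\subset\{v\ge t\}$ up to null sets, which is the bathtub structure. For Pólya--Szegő I would invoke Brothers--Ziemer: equality plus $|\{\nabla v^*=0,\ 0<v^*<\max v\}|=0$ implies $v$ is a translate of $v^*$. That nondegeneracy holds for $v^*=\Phi$ because $\Phi$ is strictly decreasing with $\Phi'\neq0$ for $r>0$, a consequence of the ODE ($r^{N-1}\Phi'$ is strictly monotone). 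Then $A$ coincides with a translate of $B_{R_1}$ up to the level set $\{v=t\}$. Verifying the Brothers--Ziemer hypothesis and handling this level-set ambiguity is the delicate point.

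\textbf{Threshold level.} The level set $\{\Phi=\Phi(R_1)\}$ is the sphere $\partial B_{R_1}$, which is null, so $A$ is exactly the ball a.e. Finally, $\Phi(R_1)>0$ because $\Phi>0$ everywhere, by the explicit $K$-profile outside the ball or by the maximum principle.
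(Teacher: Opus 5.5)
Your argument is correct, and it is more self-contained than the paper's. The paper proves neither existence nor the ball characterisation. It cites the whole-space rearrangement results of \citet{FerreriVerzini2024,Verzini2026} for three things: attainment, the fact that optimal sets are balls, and uniqueness of the optimal weight up to translation. It proves only strict radial monotonicity, from the same ODE facts you use:
\begin{itemize}
\item $(r^{N-1}\Phi')'=-\kappa\Lambda_{\R^N}(V)r^{N-1}\Phi<0$ on $(0,R_V)$, with $\Phi'(0)=0$;
\item the explicit, strictly decreasing $K_{(N-2)/2}$ profile outside the ball;
\item continuity of $\Phi'$ at $R_V$.
\end{itemize}
You instead rebuild the cited input from standard tools. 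Positivity comes from Sobolev/Gagliardo--Nirenberg, reduction to the ball from P\'olya--Szeg\H{o} and Hardy--Littlewood, and attainment from the direct method on the fixed ball. You then use simplicity, and get rigidity from the Brothers--Ziemer equality case. A nice feature is that the monotonicity the paper proves anyway is exactly the Brothers--Ziemer no-flat-zone hypothesis, so the ball characterisation comes out of the same radial information. The paper's route is shorter but leaves the rigidity entirely to the literature.

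A few steps should be made explicit.
\begin{itemize}
\item \emph{Attainment for an arbitrary optimal set.} Before you can rearrange an eigenfunction of an arbitrary optimal set $A$ (one with $\lambda_{\R^N}(A)=\Lambda_{\R^N}$), you need $\lambda_{\R^N}(A)$ to be attained. Your compactness step is written only for $B_{R_1}$. It extends because $|A|<\infty$: the tail $\int_{A\setminus B_R}v_n^2\le|A\setminus B_R|^{2/N}\|v_n\|_{2^*}^2$ (with an $L^4$ bound when $N=2$) is uniformly small. Hence $\int_Av_n^2\to\int_Av^2$ along $H^1$-bounded sequences.
\item \emph{Identifying $v^*$.} The Brothers--Ziemer hypothesis concerns $v^*$, so you need $v^*=c\Phi$. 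This holds because equality in both rearrangement inequalities makes $v^*$ a minimiser for $B_{R_1}$, and then simplicity applies.
\item \emph{Sign of $\Phi'$ and of $\Phi$.} $r^{N-1}\Phi'$ is not monotone on $(0,\infty)$: it decreases from $0$ on $(0,R_1)$ and increases on $(R_1,\infty)$. Negativity outside the ball therefore needs $r^{N-1}\Phi'\to0$ at infinity, or the explicit $K$-profile as in the paper. Positivity of $\Phi$ inside the ball should come from $|v|$ being a minimiser plus the strong maximum principle, not from the Bessel formula.
\item \emph{Version of Brothers--Ziemer.} Your $v$ is positive everywhere, so it is not compactly supported. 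Either quote the version for nonnegative functions vanishing at infinity \citep{FeroneVolpicelli2003}, or truncate to $(v-\eta)_+$ as in Step~3 of the proof of Theorem~\ref{thm:threshold-tangential-symmetry}. The paper's Lemma~\ref{lem:schwarz-equality-rigidity} is stated only for compact support.
\end{itemize}
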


\begin{proof}
Existence and the ball characterisation, including uniqueness of the optimal weight up to translations, are the whole-space rearrangement results used in the sharp small-volume theories; see \citet{FerreriVerzini2024,Verzini2026} and the references therein.  We also need the following strict radial monotonicity.  Once the favourable ball is fixed and the positive principal eigenfunction is chosen radial, its radial equation satisfies
\[
(r^{N-1}\Phi')'=-\kappa\Lambda_{\R^N}(V)r^{N-1}\Phi<0
\qquad (0<r<R_V),
\]
with $\Phi'(0)=0$.  Hence $\Phi'(r)<0$ on $(0,R_V)$.  Outside the ball,
\[
(r^{N-1}\Phi')'=\Lambda_{\R^N}(V)r^{N-1}\Phi>0,
\]
and the decaying positive solution is a positive multiple of
$r^{-(N-2)/2}K_{(N-2)/2}(\sqrt{\Lambda_{\R^N}(V)}\,r)$, which is strictly decreasing.  Continuity of the radial derivative across $r=R_V$ therefore gives strict decrease on all of $(0,\infty)$.  The assertions about the bathtub level follow immediately: the unique volume-$V$ upper level set is $B_{R_V}$, its threshold is $\Phi(R_V)>0$, and strict radial monotonicity makes the corresponding level sphere negligible.
\end{proof}

The translation invariance of \eqref{eq:whole-space-Lambda} is the main source of noncompactness. It will reappear in the half-space problem when an almost optimal configuration escapes arbitrarily far from the boundary.

\subsection{The Robin half-space problem}

For $\tau\ge0$ and $A\subset\Hh$ measurable with $|A|=1$, define
\begin{equation}\label{eq:halfspace-A}
\lambda_{\Hh,\tau}(A)
:=
\inf\left\{
\int_{\Hh}|\nabla v|^2\,dy
+\tau\int_{\partial\Hh}v^2\,dS:
\begin{array}{l}
 v\in H^1(\Hh),\\[1mm]
 \displaystyle\int_{\Hh}m_Av^2\,dy=1
\end{array}
\right\}.
\end{equation}
Then set
\begin{equation}\label{eq:halfspace-Lambda}
\Lambda_{\Hh}(\tau)
:=
\inf_{\substack{A\subset\Hh\\|A|=1}}
\lambda_{\Hh,\tau}(A).
\end{equation}
This half-space value governs the small-volume limit.  We establish its properties below.

\subsection{Basic properties of the half-space value}

The following reduction eliminates the favourable set from the half-space minimisation and will be used throughout.

\begin{lemma}[Bathtub reduction for the favourable set]\label{lem:bathtub-halfspace}
Fix $v\in H^1(\Hh)$, $v\not\equiv0$. Among all measurable sets $A\subset\Hh$ with $|A|=1$, the quantity
\[
\int_{\Hh}m_Av^2
=(\kappa+1)\int_Av^2-\int_{\Hh}v^2
\]
is maximised by a superlevel set of $v^2$ of measure one, with the usual freedom on a level set of positive measure. Consequently, in the joint optimisation defining $\Lambda_{\Hh}(\tau)$ it is enough to consider favourable sets obtained from the upper level sets of the competing function.
\end{lemma}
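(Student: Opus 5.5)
The plan is to prove the identity first and then the maximisation, in the form of the classical bathtub principle. Since $m_A=\kappa\1_A-\1_{\Hh\setminus A}=(\kappa+1)\1_A-1$, we get
\[
\int_{\Hh}m_Av^2=(\kappa+1)\int_Av^2-\int_{\Hh}v^2 .
\]
This is well defined because $v\in L^2(\Hh)$. The second term does not depend on $A$. So maximising over $|A|=1$ is the same as maximising $\int_A v^2$ over sets of unit measure.

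Next I set up the level-set construction. Put $w=v^2\in L^1(\Hh)$, $w\ge0$, and let $\mu(t):=|\{w>t\}|$ for $t>0$.
\begin{itemize}
\item By Chebyshev, $\mu(t)\le\|w\|_{L^1}/t<\infty$.
\item $\mu$ is nonincreasing and right-continuous, with $\mu(t)\to0$ as $t\to\infty$.
\item $\mu(0^+)=|\{w>0\}|$.
\end{itemize}
If $|\{w>0\}|\ge1$, let $t_0:=\inf\{t>0:\mu(t)\le1\}$. Then $|\{w>t_0\}|\le1\le|\{w\ge t_0\}|$. Since Lebesgue measure is nonatomic, we may choose $A_0$ with $\{w>t_0\}\subset A_0\subset\{w\ge t_0\}$ and $|A_0|=1$, filling part of the level set $\{w=t_0\}$ when it has positive measure. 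If $|\{w>0\}|<1$, which can happen for compactly supported $v$, take $t_0=0$ and let $A_0$ be $\{w>0\}$ together with any set of the missing measure in $\{w=0\}$. That missing set exists because $|\Hh|=\infty$.

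Now compare $A_0$ with an arbitrary measurable $A$ with $|A|=1$. Because $|A|=|A_0|$, we have $|A\setminus A_0|=|A_0\setminus A|$. Moreover $w\ge t_0$ on $A_0\setminus A$ and $w\le t_0$ on $A\setminus A_0$. Hence
\[
\int_{A_0}w-\int_A w=\int_{A_0\setminus A}w-\int_{A\setminus A_0}w\ge t_0|A_0\setminus A|-t_0|A\setminus A_0|=0 .
\]
So $A_0$ is a maximiser.

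For the "usual freedom" statement, I characterise the maximisers. If $A$ is a maximiser, equality must hold above, so $w=t_0$ a.e.\ on $A\triangle A_0$. This means $A$ differs from $A_0$ only inside the level set $\{w=t_0\}$. The one exception is the degenerate case $t_0=0$, where the freedom is inside $\{v=0\}$.

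The consequence for the joint problem follows from this. Fix $\tau$ and any admissible pair $(A,v)$ with $\int m_Av^2=1$, where the numerator $N(v)=\int|\nabla v|^2+\tau\int_{\partial\Hh}v^2$ does not depend on $A$. Replacing $A$ by the superlevel set $A_0$ of $v^2$ gives $\int m_{A_0}v^2\ge1>0$. Rescaling $v$ by $(\int m_{A_0}v^2)^{-1/2}$ is admissible for $A_0$ and gives energy at most $N(v)$. So $\lambda_{\Hh,\tau}(A_0)\le N(v)$, and taking infima shows it suffices to use such $A_0$.

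The only step needing care is the existence of a measure-one set between the strict and non-strict superlevel sets. This uses the nonatomicity of Lebesgue measure, for instance by intersecting $\{w=t_0\}$ with balls $B_r$ and using continuity in $r$, together with the degenerate case $t_0=0$ above.
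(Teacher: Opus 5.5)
Your proof is correct and follows the same route as the paper: the term $\int_{\Hh}v^2$ does not depend on $A$, so one maximises $\int_A v^2$ over $|A|=1$ by the bathtub principle applied to $v^2$. The only difference is that you prove the bathtub principle yourself (level $t_0$, nonatomic filling, the case $|\{v\neq0\}|<1$, and the equality characterisation), whereas the paper simply cites the classical result.
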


\begin{proof}
The second term is independent of $A$, so one only has to maximise $\int_Av^2$ under the constraint $|A|=1$. This is the classical bathtub principle applied to the nonnegative function $v^2$.
\end{proof}

\begin{lemma}[Reduced half-space functional]\label{lem:reduced-halfspace}
For $v\in H^1(\Hh)$ define
\begin{equation}\label{eq:B-functional}
\mathcal B(v)
:=
\sup_{\substack{A\subset\Hh\\ |A|=1}}
\int_A v^2,
\end{equation}
and
\begin{equation}\label{eq:M-functional}
\mathcal M(v)
:=
(\kappa+1)\mathcal B(v)-\int_{\Hh}v^2.
\end{equation}
Then
\begin{equation}\label{eq:reduced-halfspace}
\Lambda_{\Hh}(\tau)
=
\inf_{\substack{v\in H^1(\Hh)\\ \mathcal M(v)>0}}
\frac{\displaystyle
\int_{\Hh}|\nabla v|^2
+\tau\int_{\partial\Hh}v^2}
{\mathcal M(v)}.
\end{equation}
Moreover $\mathcal B(cv)=c^2\mathcal B(v)$ and $\mathcal M(cv)=c^2\mathcal M(v)$ for every real $c$.  Hence every admissible competitor may be normalised by
\begin{equation}\label{eq:M-normalisation}
\mathcal M(v)=1.
\end{equation}
Finally, $\mathcal B$ is continuous with respect to strong $L^2(\Hh)$ convergence: for $v,w\in L^2(\Hh)$,
\begin{equation}\label{eq:B-L2-continuity}
|\mathcal B(v)-\mathcal B(w)|
\le
\|v^2-w^2\|_{L^1(\Hh)}
\le
(\|v\|_2+\|w\|_2)\|v-w\|_2.
\end{equation}
\end{lemma}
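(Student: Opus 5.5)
The reduction \eqref{eq:reduced-halfspace} comes from exchanging the two infima in \eqref{eq:halfspace-Lambda} and applying Lemma~\ref{lem:bathtub-halfspace} to eliminate the set. The remaining claims are elementary.

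First, fix $v\in H^1(\Hh)$ and write $Q(v):=\int_{\Hh}|\nabla v|^2+\tau\int_{\partial\Hh}v^2$. Because $\lambda_{\Hh,\tau}(A)$ is defined with the normalisation $\int m_Av^2=1$ and both $Q$ and $\int m_Av^2$ are quadratic, we have
\[
\lambda_{\Hh,\tau}(A)=\inf\{Q(v)/\textstyle\int m_Av^2:\ \int m_Av^2>0\}.
\]
Then
\[
\Lambda_{\Hh}(\tau)=\inf_{v}\ \inf_{A:\,\int m_Av^2>0} Q(v)/\textstyle\int m_Av^2 .
\]
For fixed $v$, the inner quantity equals $Q(v)/\sup_A\int m_Av^2$ whenever that supremum is positive. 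Since $\sup_A\int m_Av^2=(\kappa+1)\mathcal B(v)-\int v^2=\mathcal M(v)$, this gives \eqref{eq:reduced-halfspace}.

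A few points need care in this step. If $Q(v)=0$, then $v$ is constant and hence $v=0$ in $H^1(\Hh)$, so this case is excluded. The supremum defining $\mathcal B$ is attained by a measure-one superlevel set of $v^2$: such a set exists because $v^2\in L^1$, the distribution function of $v^2$ is finite, and a level set of positive measure can be split. The infimum over $A$ with $\int m_Av^2>0$ is exactly $Q(v)/\mathcal M(v)$ by attainment, because the map $t\mapsto Q/t$ is decreasing. Conversely, every $v$ admissible for some $A$ satisfies $\mathcal M(v)\ge\int m_Av^2>0$, so the admissible classes match.

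Homogeneity is immediate: $\int_A(cv)^2=c^2\int_Av^2$ for each $A$, and taking the supremum over $A$ gives $\mathcal B(cv)=c^2\mathcal B(v)$, whence the same for $\mathcal M$. The normalisation \eqref{eq:M-normalisation} then follows by replacing $v$ with $v/\sqrt{\mathcal M(v)}$, which leaves the quotient in \eqref{eq:reduced-halfspace} unchanged.

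For continuity, take any $A$ with $|A|=1$. Then
\[
\int_Av^2\le\int_Aw^2+\int_A|v^2-w^2|\le\mathcal B(w)+\|v^2-w^2\|_{L^1}.
\]
Taking the supremum over $A$ and then exchanging $v$ and $w$ gives the first inequality in \eqref{eq:B-L2-continuity}. For the second, write $|v^2-w^2|=|v-w|\,|v+w|$ and apply Cauchy--Schwarz to get $\|v-w\|_2\|v+w\|_2\le(\|v\|_2+\|w\|_2)\|v-w\|_2$.

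The only mildly delicate point is the bathtub attainment of $\mathcal B$ for general $L^2$ functions, in particular when $v^2$ has a level set of positive measure. This is routine: choose $t=\inf\{s:|\{v^2>s\}|\le1\}$ and add a measurable subset of $\{v^2=t\}$ of the appropriate measure. Such a subset exists because Lebesgue measure is nonatomic. Attainment is not strictly needed, since the chain of infima can be run with $\mathcal B$ as a supremum, but it makes the exchange of infima transparent.
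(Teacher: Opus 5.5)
Your proposal is correct and follows the paper's own proof: for fixed $v$ the numerator does not depend on $A$, so the bathtub principle reduces the optimisation over sets to $\mathcal M(v)$. Homogeneity is then immediate, and continuity follows from $\bigl|\sup_A\int_Av^2-\sup_A\int_Aw^2\bigr|\le\|v^2-w^2\|_{L^1}$ together with Cauchy--Schwarz. You give more detail than the paper on bathtub attainment and on why the admissible classes coincide, but this only makes explicit what the paper leaves implicit.
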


\begin{proof}
For fixed $v$, the numerator in the Rayleigh quotient is independent of the favourable set $A$, whereas the denominator equals
\[
(\kappa+1)\int_Av^2-\int_{\Hh}v^2.
\]
Lemma~\ref{lem:bathtub-halfspace} therefore shows that the best choice of $A$ is exactly the one represented by $\mathcal B(v)$, which proves \eqref{eq:reduced-halfspace}.  Homogeneity follows directly.  For continuity, use
\[
\left|\sup_A\int_Av^2-\sup_A\int_Aw^2\right|
\le
\sup_A\int_A|v^2-w^2|
\le
\int_{\Hh}|v^2-w^2|
\]
and then Cauchy--Schwarz.
\end{proof}

\begin{lemma}[Stability of a unique bathtub set]\label{lem:bathtub-stability}
Let $X$ be either $\R^N$ or $\Hh$, and let $g_n,g\in L^1(X)$ be nonnegative with $g_n\to g$ strongly in $L^1(X)$. Suppose that there is a number $t>0$ such that
\[
E:=\{g>t\},\qquad |E|=1,\qquad |\{g=t\}|=0.
\]
Let $E_n\subset X$ be measurable sets with $|E_n|=1$ and assume that
\begin{equation}\label{eq:bathtub-stability-deficit}
0\le \sup_{|F|=1}\int_Fg_n-\int_{E_n}g_n\longrightarrow0.
\end{equation}
Then
\begin{equation}\label{eq:bathtub-stability-set}
|E_n\triangle E|\longrightarrow0.
\end{equation}
In particular, the conclusion applies when every $E_n$ is an exact bathtub maximiser for $g_n$.
\end{lemma}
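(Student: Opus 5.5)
The plan is to compare $\int_{E_n} g$ with $\int_E g$ and to exploit the fact that $E$ is the \emph{strict} superlevel set of $g$ at level $t$, with a negligible level set $\{g=t\}$.

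\emph{Step 1: $E_n$ is almost optimal for $g$.} Since $\|g_n-g\|_{L^1}\to0$, for every set $F$ of measure one we have $|\int_F g_n-\int_F g|\le\|g_n-g\|_1$. Consequently $\sup_{|F|=1}\int_F g_n\to\sup_{|F|=1}\int_F g$. By the bathtub principle this limit equals $\int_E g$: we have $|E|=1$ and $g>t\ge g$ across the boundary of $E$, so $E$ is a maximiser. Combining this with \eqref{eq:bathtub-stability-deficit} and the same $L^1$ estimate on $E_n$ gives
\[
\int_E g-\int_{E_n}g\longrightarrow0 .
\]

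\emph{Step 2: a coercive lower bound for the deficit.} Because $|E_n|=|E|=1$, the sets $E\setminus E_n$ and $E_n\setminus E$ have the same measure, say $a_n$, so that $|E_n\triangle E|=2a_n$. Write
\[
\int_E g-\int_{E_n}g=\int_{E\setminus E_n}(g-t)+\int_{E_n\setminus E}(t-g).
\]
Both integrands are nonnegative, since $g>t$ on $E$ and $g\le t$ off $E$. Thus each of the two integrals tends to zero. Now use the level-set hypothesis. For $\eta>0$ let $S_\eta=\{|g-t|<\eta\}$. As $\eta\downarrow0$ these sets decrease to $\{g=t\}$, which is null. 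Moreover $S_\eta\subset\{g>t/2\}$ for $\eta<t/2$, and this set has finite measure (at most $2\|g\|_1/t$) because $g\in L^1$ and $t>0$. So continuity from above applies, and $|S_\eta|\to0$. Given $\theta>0$, fix $\eta$ with $|S_\eta|<\theta$. Off $S_\eta$ we have $|g-t|\ge\eta$, which yields
\[
\eta\,\bigl(|E\setminus E_n|-\theta\bigr)\le\int_{E\setminus E_n}(g-t)\to0 .
\]
Hence $\limsup a_n\le\theta$. Since $\theta$ is arbitrary, $a_n\to0$ and therefore $|E_n\triangle E|\to0$.

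\emph{Step 3: exact maximisers.} If each $E_n$ is an exact bathtub maximiser for $g_n$, the deficit in \eqref{eq:bathtub-stability-deficit} is identically zero, so the conclusion follows immediately.

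The only delicate point is Step 2. There, finiteness of $|\{g>t/2\}|$, which comes from $t>0$ and $g\in L^1$, is what allows continuity from above in the measure of $S_\eta$. On an unbounded domain, without this, small-value regions could otherwise carry mass. Note also that only the half $E\setminus E_n$ needs to be estimated, because the equal-measure identity controls the other half.
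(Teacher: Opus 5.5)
Your proof is correct and follows essentially the same route as the paper. In both, you first show $\int_E g-\int_{E_n}g\to0$ from the $L^1$ convergence and the deficit hypothesis, then bound this deficit from below by $\eta$ times the measure of the exchanged pieces lying outside the strip $\{|g-t|<\eta\}$, whose measure vanishes as $\eta\downarrow0$. Your version differs only in two small refinements. You write the deficit exactly as $\int_{E\setminus E_n}(g-t)+\int_{E_n\setminus E}(t-g)$, so that only one exchanged piece needs estimating. You also justify $|\{|g-t|<\eta\}|\to0$ explicitly through the finiteness of $|\{g>t/2\}|$, a point the paper uses tacitly.
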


\begin{proof}
Because $E$ is admissible in the supremum in \eqref{eq:bathtub-stability-deficit}, strong $L^1$ convergence gives
\[
0\le \int_Eg-\int_{E_n}g
\le 2\|g_n-g\|_{L^1}+o(1)\longrightarrow0.
\]
Since $|E_n|=|E|$, the exchanged pieces have equal measure. Fix $\eta>0$. On $E\cap\{g\ge t+\eta\}$ one has $g\ge t+\eta$, whereas on $E^c\cap\{g\le t-\eta\}$ one has $g\le t-\eta$. Comparing the exchanged pieces gives
\[
\int_Eg-\int_{E_n}g
\ge \eta\Bigl(|(E\setminus E_n)\cap\{g\ge t+\eta\}|+|(E_n\setminus E)\cap\{g\le t-\eta\}|\Bigr).
\]
Thus both displayed measures tend to zero. The remaining parts lie in the strip $\{|g-t|<\eta\}$, so
\[
\limsup_{n\to\infty}|E_n\triangle E|\le 2|\{|g-t|<\eta\}|.
\]
Since $|\{g=t\}|=0$, the right-hand side tends to zero as $\eta\downarrow0$.
\end{proof}

\begin{lemma}[Monotonicity and concavity of the half-space value]\label{lem:halfspace-concavity-prelim}
The map $\tau\mapsto\Lambda_{\Hh}(\tau)$ is nondecreasing and concave on $[0,\infty)$.
\end{lemma}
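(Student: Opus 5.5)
We would obtain both properties at once by writing $\Lambda_{\Hh}(\tau)$ as a pointwise infimum of affine functions of $\tau$. By Lemma~\ref{lem:reduced-halfspace}, every admissible competitor may be normalised by $\mathcal M(v)=1$. Hence
\[
\Lambda_{\Hh}(\tau)=\inf_{\substack{v\in H^1(\Hh)\\ \mathcal M(v)=1}}\Bigl(a(v)+\tau\, b(v)\Bigr),
\qquad a(v):=\int_{\Hh}|\nabla v|^2,\quad b(v):=\int_{\partial\Hh}v^2 .
\]
The key point is that the admissible class $\{v\in H^1(\Hh):\mathcal M(v)=1\}$ does not depend on $\tau$. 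The constraint involves only $\mathcal B(v)$ and $\|v\|_{L^2(\Hh)}$, and both are independent of the Robin parameter. The traces are well defined, so $b(v)\in[0,\infty)$ is finite for every $v\in H^1(\Hh)$.

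Monotonicity then follows directly. Since $b(v)\ge0$, each map $\tau\mapsto a(v)+\tau b(v)$ is nondecreasing. An infimum of nondecreasing functions is nondecreasing.

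For concavity, fix $\tau_0,\tau_1\ge0$, $\theta\in[0,1]$, and set $\tau_\theta=(1-\theta)\tau_0+\theta\tau_1$. For each admissible $v$, affinity in $\tau$ gives
\[
a(v)+\tau_\theta b(v)=(1-\theta)\bigl(a(v)+\tau_0b(v)\bigr)+\theta\bigl(a(v)+\tau_1b(v)\bigr)\ge(1-\theta)\Lambda_{\Hh}(\tau_0)+\theta\Lambda_{\Hh}(\tau_1).
\]
Taking the infimum over $v$ yields $\Lambda_{\Hh}(\tau_\theta)\ge(1-\theta)\Lambda_{\Hh}(\tau_0)+\theta\Lambda_{\Hh}(\tau_1)$, which is concavity on $[0,\infty)$.

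The only step needing care is that $\Lambda_{\Hh}(\tau)$ is a finite real number. This is required so that the concavity inequality is meaningful and no $-\infty$ or $+\infty$ values appear. Nonnegativity is immediate because each quotient is nonnegative. For finiteness, take $v$ to be a compactly supported smooth approximation of $\Phi$ from Proposition~\ref{prop:whole-space-classification}, translated into $\Hh$ so that its support avoids $\partial\Hh$. Choose $A$ to be the translated ball $B_{R_1}$; if necessary, take a suitable multiple of a cutoff of $\Phi$ so that the weighted integral is positive. Then the boundary term vanishes and the quotient is a finite number independent of $\tau$. This bound also gives $\Lambda_{\Hh}(\tau)\le\Lambda_{\R^N}+o(1)$ along such cutoffs. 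With finiteness in hand, the infimum-of-affine-functions argument completes the proof.
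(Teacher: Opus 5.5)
Your proof is correct and is essentially the paper's argument: normalise by $\mathcal M(v)=1$ in the reduced formulation \eqref{eq:reduced-halfspace}, note that the admissible class does not depend on $\tau$, and conclude from the fact that an infimum of affine, nondecreasing functions of $\tau$ is concave and nondecreasing. Your extra finiteness check is a harmless addition that the paper leaves implicit; it is covered by the upper bound in Proposition~\ref{prop:halfspace-basic}. A simpler witness is any nonzero $v\in C_c^\infty$ supported inside a unit-volume set $A\subset\Hh$ away from $\partial\Hh$, which avoids appealing to $\Phi$. Also, taking a ``suitable multiple'' cannot change the sign of the weighted integral; positivity comes from choosing the cutoff radius large.
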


\begin{proof}
Use the reduced formulation \eqref{eq:reduced-halfspace} and normalise competitors by $\mathcal M(v)=1$. For every fixed admissible $v$,
\[
\tau\longmapsto \int_{\Hh}|\nabla v|^2+\tau\int_{\partial\Hh}v^2
\]
is affine and nondecreasing. The infimum of affine functions is concave, and the infimum of nondecreasing functions is nondecreasing. No compactness or attainment is used.
\end{proof}

\subsection{Volume-dependent functionals and strict subhomogeneity}

The volume-one bathtub functional is the correct object for the original half-space problem, but a concentration--compactness decomposition naturally divides the favourable volume among different profiles.  We therefore keep the favourable volume as an explicit parameter.

For $V\ge0$ and $v\in L^2(\Hh)$, set
\begin{equation}\label{eq:B-V-half}
\mathcal B_V^{\Hh}(v)
:=
\sup_{\substack{A\subset\Hh\\ |A|=V}}
\int_A v^2,
\qquad
\mathcal M_V^{\Hh}(v)
:=
(\kappa+1)\mathcal B_V^{\Hh}(v)-\|v\|_{L^2(\Hh)}^2,
\end{equation}
with $\mathcal B_0^{\Hh}(v)=0$.  Thus $\mathcal B_1^{\Hh}=\mathcal B$ and $\mathcal M_1^{\Hh}=\mathcal M$.  For $V>0$ define
\begin{equation}\label{eq:Lambda-H-volume}
\Lambda_{\Hh}(\tau;V)
:=
\inf_{\mathcal M_V^{\Hh}(v)>0}
\frac{\displaystyle
\int_{\Hh}|\nabla v|^2+\tau\int_{\partial\Hh}v^2}
{\mathcal M_V^{\Hh}(v)}.
\end{equation}
We use analogous notation $\mathcal B_V^{\R^N}$ and $\mathcal M_V^{\R^N}$ in the whole space.  The bathtub argument used in Lemma~\ref{lem:reduced-halfspace} gives, for every $V>0$,
\begin{equation}\label{eq:whole-reduced-V}
\Lambda_{\R^N}(V)
=
\inf_{\mathcal M_V^{\R^N}(v)>0}
\frac{\int_{\R^N}|\nabla v|^2}{\mathcal M_V^{\R^N}(v)}.
\end{equation}

\begin{lemma}[Exact half-space volume scaling]\label{lem:halfspace-volume-scaling}
For every $V>0$ and $\tau\ge0$,
\begin{equation}\label{eq:halfspace-volume-scaling}
\Lambda_{\Hh}(\tau;V)
=
V^{-2/N}\Lambda_{\Hh}\!\left(\tau V^{1/N}\right).
\end{equation}
\end{lemma}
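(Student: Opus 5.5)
The plan is to imitate the proof of Lemma~\ref{lem:whole-scaling}, but now track the Robin boundary term, which scales differently from the Dirichlet energy. Put $r=V^{1/N}$. Since $\Hh$ is a cone, the dilation $y\mapsto ry$ maps $\Hh$ onto itself and $\partial\Hh$ onto itself. For $v\in H^1(\Hh)$ set $v_r(x)=v(x/r)$; then $v_r\in H^1(\Hh)$.

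The first step is the three scaling identities:
\[
\int_{\Hh}|\nabla v_r|^2\,dx=r^{N-2}\int_{\Hh}|\nabla v|^2\,dy,\qquad
\int_{\partial\Hh}v_r^2\,dS_x=r^{N-1}\int_{\partial\Hh}v^2\,dS_y,\qquad
\|v_r\|_{L^2}^2=r^N\|v\|_{L^2}^2 .
\]
The boundary identity uses the $(N-1)$-dimensional Jacobian $r^{N-1}$ on the hyperplane. For the bathtub functional, note that $A\mapsto rA$ is a bijection from sets of measure $1$ onto sets of measure $V$, and $\int_{rA}v_r^2=r^N\int_Av^2$. Hence $\mathcal B_V^{\Hh}(v_r)=r^N\mathcal B_1^{\Hh}(v)$, and so $\mathcal M_V^{\Hh}(v_r)=r^N\mathcal M(v)$. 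In particular, $v\mapsto v_r$ is a bijection between the admissible classes $\{\mathcal M(v)>0\}$ and $\{\mathcal M_V^{\Hh}>0\}$, with inverse $w\mapsto w(r\,\cdot)$.

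The second step is to combine these identities in the quotient:
\[
\frac{\int_{\Hh}|\nabla v_r|^2+\tau\int_{\partial\Hh}v_r^2}{\mathcal M_V^{\Hh}(v_r)}
=r^{-2}\,\frac{\int_{\Hh}|\nabla v|^2+\tau r\int_{\partial\Hh}v^2}{\mathcal M(v)}.
\]
Taking the infimum over the admissible class and using the bijection gives
\[
\Lambda_{\Hh}(\tau;V)=r^{-2}\Lambda_{\Hh}(\tau r),
\]
by the reduced formulation \eqref{eq:reduced-halfspace}. This is exactly \eqref{eq:halfspace-volume-scaling}.

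There is no real obstacle. The only point needing care is that the Robin coefficient picks up exactly one power of $r$ relative to the gradient term, which is why the parameter becomes $\tau V^{1/N}$. One also has to remember that the supremum over sets in $\mathcal B_V^{\Hh}$ transforms correctly under dilation, which holds because dilation is a measure-scaling bijection of $\Hh$. Since the map between admissible classes is a bijection, the argument gives both inequalities at once.
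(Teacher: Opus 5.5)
Your proposal is correct and follows essentially the same route as the paper: dilate by $r=V^{1/N}$, record the scalings $r^{N-2}$, $r^{N-1}$, $r^N$ of the Dirichlet, trace and reduced-mass terms, and read off $r^{-2}$ together with the rescaled parameter $\tau r$. The only difference is cosmetic. You obtain both inequalities at once from the bijection $v\mapsto v_r$ between the admissible classes, whereas the paper proves one inequality and then dilates back by $r^{-1}$ for the other.
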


\begin{proof}
Let $r=V^{1/N}$.  Starting from a unit-volume competitor $(A,v)$, set $A_r=rA$ and $v_r(x)=v(x/r)$.  Then
\[
\int_{\Hh}|\nabla v_r|^2
=r^{N-2}\int_{\Hh}|\nabla v|^2,
\qquad
\int_{\partial\Hh}v_r^2
=r^{N-1}\int_{\partial\Hh}v^2,
\]
while
\[
\mathcal M_V^{\Hh}(v_r)=r^N\mathcal M_1^{\Hh}(v).
\]
Consequently
\[
\frac{\displaystyle
\int_{\Hh}|\nabla v_r|^2+\tau\int_{\partial\Hh}v_r^2}
{\mathcal M_V^{\Hh}(v_r)}
=
r^{-2}
\frac{\displaystyle
\int_{\Hh}|\nabla v|^2+(\tau r)\int_{\partial\Hh}v^2}
{\mathcal M_1^{\Hh}(v)}.
\]
Taking the infimum gives one inequality, and dilating an arbitrary volume-$V$ competitor by $r^{-1}$ gives the reverse inequality.
\end{proof}

\begin{corollary}[Strict subhomogeneity with respect to favourable volume]\label{cor:strict-resource-subhomogeneity}
For every $\tau\ge0$ and every $0<V<1$,
\begin{equation}\label{eq:strict-resource-subhomogeneity}
\Lambda_{\Hh}(\tau;V)>\Lambda_{\Hh}(\tau).
\end{equation}
More precisely, if $r=V^{1/N}$, then
\begin{equation}\label{eq:strict-resource-quantitative}
\Lambda_{\Hh}(\tau;V)
\ge
r^{-1}\Lambda_{\Hh}(\tau)
+(1-r)r^{-2}\Lambda_{\Hh}(0)
>
\Lambda_{\Hh}(\tau).
\end{equation}
The whole-space analogue is
\begin{equation}\label{eq:whole-strict-resource}
\Lambda_{\R^N}(V)=V^{-2/N}\Lambda_{\R^N}>\Lambda_{\R^N}
\qquad(0<V<1).
\end{equation}
\end{corollary}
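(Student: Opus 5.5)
By Lemma~\ref{lem:halfspace-volume-scaling}, with $r=V^{1/N}\in(0,1)$,
\[
\Lambda_{\Hh}(\tau;V)=r^{-2}\Lambda_{\Hh}(\tau r).
\]
The task is therefore to bound $\Lambda_{\Hh}(\tau r)$ from below using only Lemma~\ref{lem:halfspace-concavity-prelim}. Since $\tau r$ is the convex combination $\tau r=r\cdot\tau+(1-r)\cdot0$ of the points $\tau$ and $0$ in $[0,\infty)$, concavity gives
\[
\Lambda_{\Hh}(\tau r)\ge r\,\Lambda_{\Hh}(\tau)+(1-r)\Lambda_{\Hh}(0).
\]
Multiplying by $r^{-2}$ yields the first inequality in \eqref{eq:strict-resource-quantitative}. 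Monotonicity alone would only give $\Lambda_{\Hh}(\tau r)\ge\Lambda_{\Hh}(0)$, which is too weak, so concavity is the essential input.

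For the strict inequality we first need $\Lambda_{\Hh}(0)>0$. I would get this by reflection: extend a half-space competitor $v$ evenly across $\partial\Hh$ to $\tilde v\in H^1(\R^N)$, and reflect the favourable set $A$ to $\tilde A$ with $|\tilde A|=2$. The Neumann quotient on $\Hh$ then equals the whole-space quotient of $(\tilde A,\tilde v)$, because both numerator and denominator double. Hence, by Lemma~\ref{lem:whole-scaling} and Proposition~\ref{prop:whole-space-classification},
\[
\Lambda_{\Hh}(0)\ge\Lambda_{\R^N}(2)=2^{-2/N}\Lambda_{\R^N}>0 .
\]
Monotonicity then gives $\Lambda_{\Hh}(\tau)\ge\Lambda_{\Hh}(0)>0$.

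Now write $L=\Lambda_{\Hh}(\tau)$ and $L_0=\Lambda_{\Hh}(0)$. The right-hand side of \eqref{eq:strict-resource-quantitative} is $r^{-1}L+(1-r)r^{-2}L_0$. Since $r^{-1}>1$ and $L>0$, we have $r^{-1}L>L$. Since $(1-r)r^{-2}>0$ and $L_0>0$, the second term is strictly positive. Together these give strict inequality, and \eqref{eq:strict-resource-subhomogeneity} follows. Finiteness of $L$ is needed so that the comparison is meaningful; it holds because any compactly supported test function, placed inside $\Hh$, is admissible.

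The whole-space statement \eqref{eq:whole-strict-resource} is immediate from Lemma~\ref{lem:whole-scaling} together with $\Lambda_{\R^N}>0$ from Proposition~\ref{prop:whole-space-classification}, since $V^{-2/N}>1$ for $V<1$.

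The main obstacle is the positivity of $\Lambda_{\Hh}(0)$. I would verify the reflection step with some care: the even extension $\tilde v$ lies in $H^1(\R^N)$, and the bathtub value doubles exactly, $\mathcal B_2^{\R^N}(\tilde v)=2\mathcal B(v)$. The upper bound $\mathcal B_2^{\R^N}(\tilde v)\le2\mathcal B(v)$ comes from splitting any set of measure $2$ into its parts in $\Hh$ and in the reflected half-space. The equality is not strictly needed, since the quotient identity for the reflected pair $(\tilde A,\tilde v)$ already gives the lower bound.
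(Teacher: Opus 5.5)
Your proposal is correct and follows the paper's proof: you combine the scaling identity $\Lambda_{\Hh}(\tau;V)=r^{-2}\Lambda_{\Hh}(r\tau)$ with concavity along $r\tau=r\cdot\tau+(1-r)\cdot 0$, and strictness comes from $\Lambda_{\Hh}(0)>0$. The only difference is that you prove this positivity inline by even reflection, whereas the paper takes it from the Neumann reflection identity $\Lambda_{\Hh}(0)=2^{-2/N}\Lambda_{\R^N}$ (Proposition~\ref{prop:neumann-reflection}), which rests on the same reflection argument.
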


\begin{proof}
By Lemma~\ref{lem:halfspace-concavity-prelim} and $0<r<1$,
\[
\Lambda_{\Hh}(r\tau)
\ge
r\Lambda_{\Hh}(\tau)+(1-r)\Lambda_{\Hh}(0).
\]
Multiply by $r^{-2}$ and use Lemma~\ref{lem:halfspace-volume-scaling}.  Since $r^{-1}>1$ and $\Lambda_{\Hh}(0)>0$, the inequality is strict.  The whole-space statement is Lemma~\ref{lem:whole-scaling}.
\end{proof}

The preceding inequality excludes a nontrivial splitting of the favourable volume: after a split, each profile receiving a strict fraction of the unit volume has a larger spectral cost per unit of positive reduced mass.

\begin{lemma}[Bathtub functional for separated profiles]\label{lem:exact-B-splitting}
Let $u,w\in L^2(\Hh)$ have essentially disjoint supports.  Then, for every $V>0$,
\begin{equation}\label{eq:exact-B-splitting}
\mathcal B_V^{\Hh}(u+w)
=
\max_{0\le a\le V}
\left\{
\mathcal B_a^{\Hh}(u)+\mathcal B_{V-a}^{\Hh}(w)
\right\}.
\end{equation}
Consequently,
\begin{equation}\label{eq:exact-M-splitting}
\mathcal M_V^{\Hh}(u+w)
=
\max_{0\le a\le V}
\left\{
\mathcal M_a^{\Hh}(u)+\mathcal M_{V-a}^{\Hh}(w)
\right\}.
\end{equation}
\end{lemma}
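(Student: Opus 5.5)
Let $S_u$ and $S_w$ denote essentially disjoint measurable sets carrying the supports of $u$ and $w$. Outside $S_u\cup S_w$ the function $u+w$ vanishes, and on each piece it agrees with $u$ or with $w$. So $(u+w)^2=u^2\1_{S_u}+w^2\1_{S_w}$ almost everywhere. I will prove the two inequalities in \eqref{eq:exact-B-splitting} separately. The $\mathcal M$ identity \eqref{eq:exact-M-splitting} then follows at once, because
\[
\|u+w\|_2^2=\|u\|_2^2+\|w\|_2^2,
\]
and this sum does not depend on $a$.

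\emph{Upper bound.} Take any $A\subset\Hh$ with $|A|=V$. Set $a:=|A\cap S_u|$, so that $a\le V$ and $|A\setminus S_u|=V-a$. Then
\[
\int_A(u+w)^2=\int_{A\cap S_u}u^2+\int_{A\setminus S_u}w^2\le\mathcal B_a^{\Hh}(u)+\mathcal B_{V-a}^{\Hh}(w).
\]
Here I use that $w=0$ a.e.\ on $S_u$ and that $A\setminus S_u$ has measure exactly $V-a$, which makes it admissible in $\mathcal B_{V-a}^{\Hh}(w)$. The case $a=0$ is consistent with the convention $\mathcal B_0^{\Hh}=0$. Taking the supremum over $A$ bounds $\mathcal B_V^{\Hh}(u+w)$ by the supremum over $a$ of the right-hand side.

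\emph{Lower bound.} Fix $a\in[0,V]$ and $\eta>0$. Choose $A_1$ with $|A_1|=a$ and $A_2$ with $|A_2|=V-a$ that are $\eta$-almost optimal for $\mathcal B_a^{\Hh}(u)$ and $\mathcal B_{V-a}^{\Hh}(w)$ respectively. The difficulty is that $A_1$ and $A_2$ may overlap, or may include parts where the other function lives. I repair this in two steps.
\begin{itemize}
\item Replace $A_1$ by $A_1\cap S_u$, topped up with a set of the missing measure taken from the complement of $S_u\cup A_2$. Do the same for $A_2$ using $S_w$. This does not decrease the integrals, since $u^2=0$ off $S_u$ and $w^2=0$ off $S_w$.
\item The padding sets carry no mass of either function, and room for them always exists because $\Hh$ has infinite measure while $S_u\cup S_w$ may be chosen as the support sets, up to enlarging nothing.
\end{itemize}
A subtlety is that $S_u\cup S_w$ might be all of $\Hh$. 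In that case pad inside $S_u$ with sets where $u=0$, or simply use a monotonicity argument: $\mathcal B_b(u)$ is nondecreasing in $b$ and $\mathcal B_b(u)\le\mathcal B_a(u)+$ (measure padding contributes nonnegatively). Since integrands are nonnegative, any padding only helps. The clean construction is therefore to take disjoint $A_1'\subset S_u$ and $A_2'\subset S_w$ with $|A_1'|\le a$ and $|A_2'|\le V-a$, capturing the integrals up to $\eta$, and then add arbitrary disjoint filler to reach total measure $V$. The resulting $A$ satisfies
\[
\int_A(u+w)^2\ge\mathcal B_a^{\Hh}(u)+\mathcal B_{V-a}^{\Hh}(w)-2\eta.
\]
Letting $\eta\to0$ and maximising over $a$ gives the reverse inequality.

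The maximum over $a$ is attained because $a\mapsto\mathcal B_a^{\Hh}(u)$ is continuous. Indeed, it is nondecreasing, and absolute continuity of the integral gives $\mathcal B_{a+h}-\mathcal B_a\le\sup_{|F|\le h}\int_Fu^2\to0$ as $h\to0$. The continuous function $a\mapsto\mathcal B_a^{\Hh}(u)+\mathcal B_{V-a}^{\Hh}(w)$ on the compact interval $[0,V]$ therefore attains its supremum.

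The only step I expect to need care is the bookkeeping of measures in the lower bound, namely ensuring that the combined set has measure exactly $V$. It is resolved by the nonnegativity of the integrands together with the infinite measure of $\Hh$.
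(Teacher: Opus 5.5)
Your proof is correct, and it reaches the identity by a different route from the paper. The paper uses decreasing rearrangements. It writes $\sup_{|A|=V}\int_A f=\int_0^V f^*(s)\,ds$ and observes that, for functions carried by disjoint sets, the rearrangement of $(u+w)^2$ is obtained by merging the distribution functions of $u^2$ and $w^2$. Taking the top measure $V$ then amounts to choosing a share $a$ for the first component. You instead work directly with competitor sets. For the upper bound you split an arbitrary $A$ along $S_u$ with $a:=|A\cap S_u|$. For the lower bound you intersect near-optimal sets with $S_u$ and $S_w$ and then add filler. This uses only nonnegativity of the integrands and the infinite, nonatomic measure of $\Hh$.

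Your route is more elementary and fully rigorous as written, whereas the paper's merging step is stated rather heuristically. You also supply the modulus-of-continuity argument for $a\mapsto\mathcal B_a^{\Hh}(u)$, which the paper only asserts. The rearrangement picture gives something extra in return: an optimal split cuts $u^2$ and $w^2$ at a common level, which matches the bathtub-threshold structure used elsewhere in the paper.

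One presentational point: the first bullet and the subsequent ``subtlety'' remarks in your lower bound are muddled. The two top-ups described there need not be disjoint, and filler taken from the complement of $S_u\cup A_2$ may lie in $S_w$, so it need not carry zero mass. None of this is needed. Your ``clean construction'' already does all the work: take disjoint $A_1'=A_1\cap S_u$ and $A_2'=A_2\cap S_w$, add arbitrary filler to reach measure $V$, and use nonnegativity. Delete the rest.
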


\begin{proof}
Since the supports are disjoint, $(u+w)^2=u^2+w^2$ almost everywhere.  The identity is most transparent from decreasing rearrangements: for a nonnegative integrable function $f$,
\[
\sup_{|A|=V}\int_A f=\int_0^V f^*(s)\,ds.
\]
For two functions carried by disjoint measure spaces, the decreasing rearrangement of their sum is obtained by merging their two distribution functions; selecting the largest total mass of measure $V$ is therefore equivalent to choosing how much measure $a$ is assigned to the first component and $V-a$ to the second.  This gives \eqref{eq:exact-B-splitting}.  The map $a\mapsto\mathcal B_a^{\Hh}(u)$ is continuous, so the supremum is attained.  Subtracting $\|u\|_2^2+\|w\|_2^2$ proves \eqref{eq:exact-M-splitting}.
\end{proof}

\begin{lemma}[Bathtub functional under an asymptotically separated decomposition]\label{lem:asymptotic-B-splitting}
Let $(q_n)$ be bounded in $H^1(\Hh)$.  Then
\begin{equation}\label{eq:B-small-volume-uniform}
\lim_{V\downarrow0}\sup_n\mathcal B_V^{\Hh}(q_n)=0.
\end{equation}
Moreover, let $V>0$ and suppose
\[
v_n=u_n+w_n+r_n,
\]
where $(u_n)$ and $(w_n)$ are bounded in $L^2(\Hh)$, have essentially disjoint supports, and $\|r_n\|_2\to0$.  Then
\begin{align}
\mathcal B_V^{\Hh}(v_n)
&=
\max_{0\le a\le V}
\left\{
\mathcal B_a^{\Hh}(u_n)+\mathcal B_{V-a}^{\Hh}(w_n)
\right\}+o(1),
\label{eq:asymptotic-B-splitting}\\
\mathcal M_V^{\Hh}(v_n)
&=
\max_{0\le a\le V}
\left\{
\mathcal M_a^{\Hh}(u_n)+\mathcal M_{V-a}^{\Hh}(w_n)
\right\}+o(1).
\label{eq:asymptotic-M-splitting}
\end{align}
\end{lemma}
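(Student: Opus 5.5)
The uniform small-volume bound \eqref{eq:B-small-volume-uniform} and the splitting identities have different sources: the first comes from Sobolev integrability, the second from Lipschitz continuity of the bathtub functional plus the exact disjoint-support formula. I would prove them separately.

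For \eqref{eq:B-small-volume-uniform}, extend each $q_n$ by even reflection across $\partial\Hh$ to a function $\tilde q_n\in H^1(\R^N)$. This doubles the norm, so the $\tilde q_n$ are bounded in $H^1(\R^N)$. By the Sobolev embedding, $(q_n)$ is then bounded in $L^p(\Hh)$ for some $p>2$: take $p=2^*$ when $N\ge3$ and, say, $p=4$ when $N=2$. For any $A\subset\Hh$ with $|A|=V$, Hölder's inequality gives
\[
\int_A q_n^2\le |A|^{1-2/p}\|q_n\|_{L^p}^2\le C\,V^{1-2/p}.
\]
The constant $C$ is uniform in $n$, so $\sup_n\mathcal B_V^{\Hh}(q_n)\to0$ as $V\downarrow0$.

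For \eqref{eq:asymptotic-B-splitting}, the argument of \eqref{eq:B-L2-continuity} applies verbatim with volume $V$ in place of $1$:
\[
|\mathcal B_V^{\Hh}(v)-\mathcal B_V^{\Hh}(w)|\le(\|v\|_2+\|w\|_2)\|v-w\|_2.
\]
Since $u_n+w_n$ is bounded in $L^2$ and $\|r_n\|_2\to0$, this gives $\mathcal B_V^{\Hh}(v_n)=\mathcal B_V^{\Hh}(u_n+w_n)+o(1)$. Because $u_n$ and $w_n$ have essentially disjoint supports, Lemma~\ref{lem:exact-B-splitting} applies for each fixed $n$ and yields exactly the max formula, which proves \eqref{eq:asymptotic-B-splitting}.

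For \eqref{eq:asymptotic-M-splitting}, multiply \eqref{eq:asymptotic-B-splitting} by $\kappa+1$ and subtract $\|v_n\|_2^2$. Disjointness gives $\|u_n+w_n\|_2^2=\|u_n\|_2^2+\|w_n\|_2^2$. Boundedness in $L^2$ and $\|r_n\|_2\to0$ give $\|v_n\|_2^2=\|u_n+w_n\|_2^2+o(1)$. These $L^2$ terms do not depend on $a$, so they pass inside the maximum, giving the stated formula with an $o(1)$ error.

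The main point requiring care is that every error term be uniform in $a\in[0,V]$. In the argument above this holds automatically, because the only approximation is made before the exact splitting of Lemma~\ref{lem:exact-B-splitting} is applied, and that splitting is an exact identity for each $n$. The first assertion \eqref{eq:B-small-volume-uniform} is not needed for the splitting itself. It will be used later to show that components receiving vanishing volume contribute nothing.
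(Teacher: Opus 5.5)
Your proposal is correct and follows the paper's proof essentially step for step. You use H\"older with the Sobolev embedding ($p=2^*$ for $N\ge3$, $p=4$ for $N=2$) for the uniform small-volume bound. For the two asymptotic identities you use the $L^2$-Lipschitz estimate for $\mathcal B_V^{\Hh}$, followed by the exact disjoint-support splitting of Lemma~\ref{lem:exact-B-splitting}. Your only addition is the even-reflection justification of the half-space embedding, which the paper takes for granted.
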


\begin{proof}
For $N\ge3$, H\"older and the Sobolev inequality give, uniformly in $n$,
\[
\mathcal B_V^{\Hh}(q_n)
\le
V^{2/N}\|q_n\|_{2^*}^2
\le
C V^{2/N}.
\]
For $N=2$, the $H^1$ bound implies a uniform $L^4$ bound, and therefore
\[
\mathcal B_V^{\Hh}(q_n)
\le
V^{1/2}\|q_n\|_4^2
\le
C V^{1/2}.
\]
This proves \eqref{eq:B-small-volume-uniform}.

For the splitting statement, the continuity estimate used in Lemma~\ref{lem:reduced-halfspace} is valid for every $V>0$:
\begin{equation}\label{eq:B-V-L2-continuity}
\left|\mathcal B_V^{\Hh}(f)-\mathcal B_V^{\Hh}(g)\right|
\le
\|f^2-g^2\|_1
\le
(\|f\|_2+\|g\|_2)\|f-g\|_2.
\end{equation}
Hence
\[
\mathcal B_V^{\Hh}(v_n)
=
\mathcal B_V^{\Hh}(u_n+w_n)+o(1),
\]
and Lemma~\ref{lem:exact-B-splitting} gives \eqref{eq:asymptotic-B-splitting}.  Also
\[
\|v_n\|_2^2
=
\|u_n+w_n\|_2^2+o(1)
=
\|u_n\|_2^2+\|w_n\|_2^2+o(1),
\]
which yields \eqref{eq:asymptotic-M-splitting}.
\end{proof}

\begin{lemma}[Boundedness of normalised minimising sequences]\label{lem:normalised-bounded}
Fix $\tau\ge0$, and let $(v_n)$ satisfy
\[
\mathcal M(v_n)=1,
\qquad
\int_{\Hh}|\nabla v_n|^2
+\tau\int_{\partial\Hh}v_n^2
\le C.
\]
Then $(v_n)$ is bounded in $H^1(\Hh)$.  In particular this applies to every normalised minimising sequence for \eqref{eq:reduced-halfspace}.
\end{lemma}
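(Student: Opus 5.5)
The gradient and boundary terms are already controlled by the hypothesis. Since $\tau\ge0$, we have $\int_{\Hh}|\nabla v_n|^2\le C$. The whole task is therefore to bound $\|v_n\|_{L^2(\Hh)}$. The normalisation $\mathcal M(v_n)=1$ gives
\[
\|v_n\|_2^2=(\kappa+1)\mathcal B(v_n)-1\le(\kappa+1)\mathcal B(v_n),
\]
so it suffices to bound $\mathcal B(v_n)=\sup_{|A|=1}\int_Av_n^2$ by a sublinear power of $\|v_n\|_2^2$ plus a constant. We then absorb.

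For $N\ge3$ we avoid the full $H^1$ norm and use the homogeneous Sobolev inequality on the half-space. Even reflection across $\partial\Hh$ produces a function in $\dot H^1(\R^N)$ whose gradient norm is $\sqrt2$ times that of $v_n$. This gives $\|v_n\|_{L^{2^*}(\Hh)}\le C_S\|\nabla v_n\|_{L^2(\Hh)}$. Hölder on a set of measure one then yields
\[
\mathcal B(v_n)\le\|v_n\|_{2^*}^2\le C_S^2\|\nabla v_n\|_2^2\le C_S^2C.
\]
Hence $\|v_n\|_2^2\le(\kappa+1)C_S^2C$, and we are done.

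For $N=2$ there is no homogeneous Sobolev embedding into $L^p$, so this is the one step needing care. We use the Gagliardo--Nirenberg (Ladyzhenskaya) inequality on the half-plane, again via even reflection:
\[
\|v\|_{L^4(\Hh)}^2\le C\|v\|_{L^2(\Hh)}\|\nabla v\|_{L^2(\Hh)}.
\]
Hölder on $|A|=1$ then gives
\[
\mathcal B(v_n)\le\|v_n\|_4^2\le C\|v_n\|_2\|\nabla v_n\|_2\le C'\|v_n\|_2.
\]
Combining with the normalisation identity,
\[
\|v_n\|_2^2\le(\kappa+1)C'\|v_n\|_2,
\]
so $\|v_n\|_2\le(\kappa+1)C'$. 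The same interpolation argument, with $\|v\|_{2^*}$ replaced by an $L^p$ norm for suitable $p>2$ and Gagliardo--Nirenberg exponents, would also cover $N\ge3$, which gives a uniform proof if preferred. In either case $(v_n)$ is bounded in $H^1(\Hh)$.

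The final sentence of the lemma follows once we know that $\Lambda_{\Hh}(\tau)<\infty$. Then a normalised minimising sequence (normalised via the homogeneity in Lemma~\ref{lem:reduced-halfspace}) has energies converging to $\Lambda_{\Hh}(\tau)$, and these are bounded. Finiteness is seen by testing with any compactly supported $v$ that equals a large constant on a unit-volume set and is small elsewhere, so that $\mathcal M(v)>0$. The main point to watch is the $N=2$ case, where boundedness genuinely relies on the sublinear interpolation bound rather than on an a priori $L^2$ bound.
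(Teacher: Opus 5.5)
Your proposal is correct and follows essentially the same route as the paper. Both arguments use the normalisation identity $1+\|v_n\|_2^2=(\kappa+1)\mathcal B(v_n)$ and H\"older on unit-volume sets, then the Sobolev inequality for $N\ge3$, and for $N=2$ the Gagliardo--Nirenberg inequality followed by absorption. Your even-reflection justification of the half-space inequalities, and your check that $\Lambda_{\Hh}(\tau)<\infty$ for the final sentence, are details the paper leaves implicit.
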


\begin{proof}
The normalisation gives the exact identity
\begin{equation}\label{eq:normalisation-identity}
1+\|v_n\|_2^2=(\kappa+1)\mathcal B(v_n).
\end{equation}
Suppose first that $N\ge3$ and write $2^*=2N/(N-2)$.  Since $|A|=1$, H\"older's inequality and the Sobolev inequality on the half-space give
\[
\mathcal B(v_n)
\le \|v_n\|_{2^*}^2
\le C_N\|\nabla v_n\|_2^2.
\]
Together with \eqref{eq:normalisation-identity} and the energy bound, this controls $\|v_n\|_2$.

If $N=2$, then
\[
\mathcal B(v_n)\le\|v_n\|_4^2
\le C\|v_n\|_2\|\nabla v_n\|_2
\]
by the Gagliardo--Nirenberg inequality.  Since $\|\nabla v_n\|_2$ is bounded, \eqref{eq:normalisation-identity} yields a quadratic inequality for $\|v_n\|_2$, and therefore an $L^2$ bound.  The gradient bound is already contained in the assumed energy estimate.  Thus $(v_n)$ is bounded in $H^1(\Hh)$ in every dimension $N\ge2$.
\end{proof}

\subsection{Escape from the boundary and the whole-space threshold}

We first identify the limiting energy of a concentrated profile whose centre moves arbitrarily far from the boundary.

\begin{definition}[Tightness after translation away from the boundary]\label{def:tight-normal-escape}
A bounded sequence $(v_n)\subset H^1(\Hh)$ is said to be \emph{tight after translation away from the boundary} if there exist $h_n\to+\infty$ such that, after defining
\[
w_n(y',y_N):=v_n(y',y_N+h_n)
\quad\text{on }\Hh_n:=\{y_N>-h_n\},
\]
for every $\eta>0$ there exists $R>0$ for which, after discarding finitely many indices,
\begin{equation}\label{eq:H1-tight-escape}
\int_{\Hh_n\setminus B_R}
\bigl(|\nabla w_n|^2+w_n^2\bigr)
<\eta.
\end{equation}
\end{definition}

\begin{proposition}[Lower bound for sequences escaping from the boundary]\label{prop:tight-escape-lower}
Let $(v_n)$ satisfy $\mathcal M(v_n)=1$ and suppose that it is tight after translation away from the boundary in the sense of Definition~\ref{def:tight-normal-escape}.  Let $(\tau_n)$ be any sequence with $\tau_n\ge0$.  Then
\begin{equation}\label{eq:tight-escape-lower}
\liminf_{n\to\infty}
\left(
\int_{\Hh}|\nabla v_n|^2
+\tau_n\int_{\partial\Hh}v_n^2
\right)
\ge
\Lambda_{\R^N}.
\end{equation}
In particular, for fixed $\tau\ge0$, a normalised minimising sequence with value
$\Lambda_{\Hh}(\tau)<\Lambda_{\R^N}$ cannot escape from the boundary in this sense.
\end{proposition}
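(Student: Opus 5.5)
The plan is to drop the boundary term, transfer the translated functions to the whole space by a cutoff, and then use the whole-space bound together with the volume scaling of Lemma~\ref{lem:whole-scaling}. Since $\tau_n\ge0$,
\[
\int_{\Hh}|\nabla v_n|^2+\tau_n\int_{\partial\Hh}v_n^2\ \ge\ \int_{\Hh_n}|\nabla w_n|^2 ,
\]
so it suffices to bound $\liminf\int|\nabla w_n|^2$ from below. We may pass to a subsequence realising the liminf. By Lemma~\ref{lem:normalised-bounded} (with $\tau=0$, using the gradient bound along the realising subsequence; if the liminf is infinite there is nothing to prove) the sequence $(w_n)$ is bounded in $H^1$.

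\emph{Construction.} Fix $\eta>0$ and let $R$ be given by \eqref{eq:H1-tight-escape}. Choose a cutoff $\chi\in C_c^\infty(B_{2R})$ with $\chi=1$ on $B_R$ and $|\nabla\chi|\le 2/R$. For $n$ large we have $h_n>2R$, so $\tilde w_n:=\chi w_n$ extends by zero to an element of $H^1(\R^N)$. Then:
\begin{itemize}
\item The gradient satisfies $\|\nabla\tilde w_n\|_2^2\le(1+\eta)\|\nabla w_n\|_2^2+C_\eta\int_{\Hh_n\setminus B_R}(|\nabla w_n|^2+w_n^2)$, which is at most $(1+\eta)\|\nabla w_n\|_2^2+C_\eta\eta$. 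To make this error genuinely small, I would pick $R$ depending on $\eta$ and $C_\eta$, i.e.\ apply tightness with $\eta'=\eta/C_\eta$ rather than $\eta$.
\item The $L^2$ error is small: $\|w_n-\tilde w_n\|_2^2\le\eta'$.
\end{itemize}

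\emph{Reduced mass.} Translation preserves the bathtub functional, so $\mathcal B(v_n)=\mathcal B^{\Hh_n}_1(w_n)$, and the continuity estimate \eqref{eq:B-L2-continuity}, which holds verbatim on any domain, gives
\[
\mathcal M_1^{\R^N}(\tilde w_n)\ \ge\ \mathcal M(v_n)-C\sqrt{\eta'} \;=\; 1-C\sqrt{\eta'} .
\]
The bathtub sup over sets in $\R^N$ is at least the sup over sets in $\Hh_n$, so this inequality runs in the favourable direction. The whole-space reduced formulation \eqref{eq:whole-reduced-V} with $V=1$ then yields
\[
\|\nabla\tilde w_n\|_2^2\ \ge\ \Lambda_{\R^N}\,\bigl(1-C\sqrt{\eta'}\bigr).
\]

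\emph{Conclusion.} Combining the last display with the gradient estimate, letting $n\to\infty$ and then $\eta\downarrow0$ gives \eqref{eq:tight-escape-lower}. For the final assertion, suppose a minimising sequence with $\Lambda_{\Hh}(\tau)<\Lambda_{\R^N}$ escaped in this sense. Its energies converge to $\Lambda_{\Hh}(\tau)$, contradicting the lower bound just proved.

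The main obstacle is the bookkeeping in the cutoff step. The constant from $|\nabla\chi|$ must be absorbed by the choice of $R$, and the $L^2$-tail must control the bathtub functional uniformly. Both are handled by the tightness assumption in the $H^1$ norm and by the Lipschitz bound \eqref{eq:B-L2-continuity}, with uniform $L^2$ bounds supplied by Lemma~\ref{lem:normalised-bounded}.
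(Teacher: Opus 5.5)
Your proof is correct and follows the paper's argument essentially step by step: you discard the nonnegative Robin term, cut off the translated functions at scale $R$ inside $\Hh_n$ once $h_n>2R$, control the reduced mass through the $L^2$-Lipschitz estimate for the bathtub functional, and apply the whole-space inequality at $V=1$. The only cosmetic difference is that the paper bounds the cross and cutoff-gradient terms directly on the annulus $B_{2R}\setminus B_R$, where both $w_n$ and $\nabla w_n$ lie in the tight tail. That makes your Young-inequality step with the factor $(1+\eta)$ and the recalibrated $\eta'=\eta/C_\eta$ unnecessary; note also that $C_\eta$ depends on $R$ through $|\nabla\chi|$, which is harmless once you take $R\ge1$. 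Likewise the $H^1$ bound you obtain from Lemma~\ref{lem:normalised-bounded} is already built into Definition~\ref{def:tight-normal-escape}.
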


\begin{proof}
Translate by $h_ne_N$ and extend $w_n$ by zero outside $\Hh_n$ only as an $L^2(\R^N)$ function; denote this zero extension by $\bar w_n$.  Translation preserves the $L^2$ norm and the bathtub functional, while the complement of $\Hh_n$ contains only zeros.  Hence
\begin{equation}\label{eq:escape-M-exact}
\mathcal M_1^{\R^N}(\bar w_n)
=
\mathcal M_1^{\Hh}(v_n)=1.
\end{equation}
The zero extension need not lie in $H^1(\R^N)$, so we cut it off before extending it as an $H^1$ function.

Fix $\eta>0$ and choose $R$ according to \eqref{eq:H1-tight-escape}.  Let $\chi_R\in C_c^\infty(B_{2R})$ satisfy $0\le\chi_R\le1$, $\chi_R=1$ on $B_R$, and $|\nabla\chi_R|\le C/R$.  For all sufficiently large $n$, $h_n>2R$, and therefore
\[
z_{n,R}:=\chi_R w_n
\]
has compact support in $\Hh_n$ and may be extended by zero to an element of $H^1(\R^N)$.  Tightness gives
\[
\|z_{n,R}-\bar w_n\|_{L^2(\R^N)}^2\le\eta.
\]
The $L^2$ continuity estimate for the bathtub functional, applied in the whole space, together with the uniform $L^2$ bound, yields
\begin{equation}\label{eq:escape-M-approx}
\mathcal M_1^{\R^N}(z_{n,R})=1+o_\eta(1)
\end{equation}
uniformly for all sufficiently large $n$, where $o_\eta(1)\to0$ as $\eta\downarrow0$.

Expanding $\nabla(\chi_Rw_n)$ and using that the transition annulus $B_{2R}\setminus B_R$ lies in the $H^1$ tail from \eqref{eq:H1-tight-escape}, we obtain
\begin{equation}\label{eq:escape-energy-cutoff}
\int_{\R^N}|\nabla z_{n,R}|^2
\le
\int_{\Hh_n}|\nabla w_n|^2+o_\eta(1).
\end{equation}
For $\eta$ small, \eqref{eq:escape-M-approx} is positive.  The whole-space variational inequality \eqref{eq:whole-reduced-V} with $V=1$ therefore gives
\[
\Lambda_{\R^N}\,\mathcal M_1^{\R^N}(z_{n,R})
\le
\int_{\R^N}|\nabla z_{n,R}|^2.
\]
Using \eqref{eq:escape-M-approx}--\eqref{eq:escape-energy-cutoff}, translation invariance of the bulk energy, and the nonnegativity of the Robin trace term, we obtain
\[
\Lambda_{\R^N}(1-o_\eta(1))
\le
\liminf_{n\to\infty}
\left(
\int_{\Hh}|\nabla v_n|^2
+\tau_n\int_{\partial\Hh}v_n^2
\right)
+o_\eta(1).
\]
Letting $\eta\downarrow0$ proves \eqref{eq:tight-escape-lower}.
\end{proof}

\begin{theorem}[Compactness and attainment below the whole-space value]\label{thm:strict-gap-attainment}
Fix $\tau\ge0$ and assume
\begin{equation}\label{eq:strict-gap-assumption}
\Lambda_{\Hh}(\tau)<\Lambda_{\R^N}.
\end{equation}
Then every normalised minimising sequence for \eqref{eq:reduced-halfspace} is precompact in $H^1(\Hh)$ after translations parallel to $\partial\Hh$.  In particular, $\Lambda_{\Hh}(\tau)$ is attained.  More precisely, there exist $v\in H^1(\Hh)$ and a measurable set $A\subset\Hh$, $|A|=1$, such that
\[
\mathcal M(v)=1,
\qquad
\Lambda_{\Hh}(\tau)
=
\int_{\Hh}|\nabla v|^2
+\tau\int_{\partial\Hh}v^2,
\]
and $A$ is a bathtub maximiser for $v^2$.
\end{theorem}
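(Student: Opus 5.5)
The plan is a concentration--compactness argument in $H^1(\Hh)$ modulo tangential translations; the strict gap \eqref{eq:strict-gap-assumption} rules out escape, and strict subhomogeneity rules out splitting. Let $(v_n)$ be a normalised minimising sequence, $\mathcal M(v_n)=1$. By Lemma~\ref{lem:normalised-bounded} it is bounded in $H^1(\Hh)$, and we may replace $v_n$ by $|v_n|$ since this does not increase the energy. Apply Lions' lemma to the densities $|\nabla v_n|^2+v_n^2$, which have bounded total mass.

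\emph{Vanishing.} Suppose $\sup_{y}\int_{B_1(y)\cap\Hh}v_n^2\to0$. Then $v_n\to0$ in $L^p(\Hh)$ for $2<p<2^*$, using extension to $\R^N$ and Lions' lemma. Hölder's inequality with $|A|=1$ gives $\mathcal B(v_n)\le\|v_n\|_p^2\to0$, so $\mathcal M(v_n)\le(\kappa+1)\mathcal B(v_n)\to0$, which contradicts $\mathcal M(v_n)=1$. Hence there are points $y_n$ with $\int_{B_1(y_n)}v_n^2\ge c>0$. Translate tangentially so that $y_n'=0$, and set $h_n:=(y_n)_N$.

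\emph{Escape.} Suppose $h_n\to\infty$ along a subsequence. Here I would run a secondary concentration--compactness argument on the normally translated functions. If the translated sequence is tight in the sense of Definition~\ref{def:tight-normal-escape}, Proposition~\ref{prop:tight-escape-lower} yields $\Lambda_{\Hh}(\tau)\ge\Lambda_{\R^N}$, a contradiction. If it is not tight, it falls under the dichotomy case, which I treat below together with the main dichotomy. So $h_n$ stays bounded, and after passing to a subsequence $v_n\rightharpoonup v\ne0$ weakly in $H^1(\Hh)$.

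\emph{Dichotomy.} This is the main obstacle. Use the standard cutoff splitting $v_n=u_n+w_n+r_n$, where $u_n$ is concentrated near the origin, $w_n$ lives far away, the supports are disjoint, and $\|r_n\|_2\to0$. The gradient and trace energies split up to $o(1)$, since both are local and the cutoff annuli carry vanishing energy. Lemma~\ref{lem:asymptotic-B-splitting} gives
\[
1=\mathcal M(v_n)=\mathcal M_{a_n}(u_n)+\mathcal M_{1-a_n}(w_n)+o(1)
\]
for some $a_n\in[0,1]$; pass to $a_n\to a$.

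Each piece is then bounded below by its own infimum. For $u_n$, we have $\mathrm{energy}(u_n)\ge\Lambda_{\Hh}(\tau;a)\,\mathcal M_a(u_n)$ when $\mathcal M_a(u_n)>0$; when $\mathcal M_a(u_n)\le0$ the energy is simply nonnegative. Continuity in $a$ follows from \eqref{eq:B-small-volume-uniform} and a monotonicity argument in $V$. For $w_n$, use $\Lambda_{\Hh}(\tau;1-a)$, or the whole-space value $\Lambda_{\R^N}(1-a)$ if $w_n$ escapes. In the latter case, note $\Lambda_{\R^N}(V)\ge\Lambda_{\Hh}(\tau;V)$: by volume scaling this reduces to the unit-volume gap, and I would verify it via the concavity/scaling bound.

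Summing these bounds, and using Corollary~\ref{cor:strict-resource-subhomogeneity} whenever $0<a<1$, gives
\[
\Lambda_{\Hh}(\tau)\ge\min_i\Lambda_{(i)}>\Lambda_{\Hh}(\tau)
\]
unless one piece carries all of the positive mass with full volume. The degenerate cases need separate treatment. If $a\in\{0,1\}$, the piece with zero volume has $\mathcal M_0\le0$, so it only subtracts $L^2$ mass, and the positive part of the energy must then exceed the value; this forces that piece to have vanishing $L^2$ norm. Its nonzero weak limit means $u$ cannot be the piece that disappears, so all of the $L^2$ mass stays near the origin.

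\emph{Compactness.} From the above, $v_n\to v$ strongly in $L^2(\Hh)$. By \eqref{eq:B-L2-continuity}, $\mathcal M(v)=1$. Weak lower semicontinuity of the gradient term, together with compactness of the trace on bounded sets combined with tightness (or simply weak lower semicontinuity of the trace norm), shows that the energy of $v$ is at most $\Lambda_{\Hh}(\tau)$, so $v$ is a minimiser. Convergence of the energies to the limit's energy upgrades weak to strong $H^1$ convergence. Finally, choose $A$ to be a bathtub maximiser for $v^2$ by Lemma~\ref{lem:bathtub-halfspace}.
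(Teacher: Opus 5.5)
Your proposal is correct and follows essentially the same route as the paper: Lions' concentration--compactness, with vanishing excluded by the H\"older bound $\mathcal B(v_n)\le\|v_n\|_p^2\to0$, and dichotomy excluded by Lemma~\ref{lem:asymptotic-B-splitting} together with strict subhomogeneity (Corollary~\ref{cor:strict-resource-subhomogeneity}) and the endpoint estimate $E_\tau(v_n)\ge L+L\|w_n\|_2^2+\|\nabla w_n\|_2^2+o(1)$, $L=\Lambda_{\Hh}(\tau)>0$; escape is excluded by Proposition~\ref{prop:tight-escape-lower}, and strong convergence follows from Rellich, the $L^2$-continuity of $\mathcal B$, and lower semicontinuity. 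The differences are only organisational, and two of your steps are unnecessary: dichotomy can be excluded regardless of where the pieces sit, so escape does not need to be handled first by a ``secondary'' concentration--compactness step, and the far piece is always a half-space competitor for $\Lambda_{\Hh}(\tau;1-a_n)$, so the comparison with $\Lambda_{\R^N}(1-a)$ is not needed; in the endpoint case, keep the $\|\nabla w_n\|_2^2$ term so that the vanishing piece is trivial in $H^1$ and not merely in $L^2$.
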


\begin{proof}
Let $(v_n)$ be normalised by $\mathcal M(v_n)=1$ and satisfy
$E_\tau(v_n)\to L:=\Lambda_{\Hh}(\tau)<\Lambda_{\R^N}$.  Lemma~\ref{lem:normalised-bounded} gives boundedness in $H^1(\Hh)$.  We apply Lions' concentration--compactness principle to the normalised measures
$(|\nabla v_n|^2+v_n^2)\,dx$; see \citet{Lions1984}.  Only the modifications caused by the optimised denominator are recorded here.

Vanishing implies $v_n\to0$ in every admissible $L^p$, hence
$\mathcal B(v_n)\to0$ and contradicts $\mathcal M(v_n)=1$.  Under dichotomy, the standard cutoff decomposition gives
$v_n=u_n+w_n+o_{H^1}(1)$ with separated nontrivial profiles and additive Robin energy.  Lemma~\ref{lem:asymptotic-B-splitting} provides favourable-volume fractions $a_n\in[0,1]$ and reduced masses $d_{1,n},d_{2,n}$ with
$d_{1,n}+d_{2,n}=1+o(1)$.  If $a_n$ stays away from $0$ and $1$, strict subhomogeneity in the favourable volume, Corollary~\ref{cor:strict-resource-subhomogeneity}, gives a uniform spectral cost $>L$ for both pieces.  If, say, $a_n\to1$, then Lemma~\ref{lem:asymptotic-B-splitting} gives
\[
 d_{2,n}=-\|w_n\|_2^2+o(1),\qquad d_{1,n}=1+\|w_n\|_2^2+o(1).
\]
Hence the endpoint estimate from the volume-scaled variational inequality is
\[
 E_\tau(v_n)\ge L+L\|w_n\|_2^2+\|\nabla w_n\|_2^2+o(1),
\]
and the last two terms are bounded away from zero by nontriviality of the dichotomy component.  The case $a_n\to0$ is symmetric.  Thus dichotomy is impossible.

Hence, after translations parallel to $\partial\Hh$, the sequence is tight about centres $(0,h_n)$.  If $h_n\to\infty$, Proposition~\ref{prop:tight-escape-lower} yields
$L\ge\Lambda_{\R^N}$, contrary to the strict gap.  The heights are therefore bounded.  Tightness and local Rellich compactness give, after a subsequence, $v_n\to v$ strongly in $L^2(\Hh)$ and weakly in $H^1(\Hh)$.  The $L^2$ continuity of $\mathcal B$ gives $\mathcal M(v)=1$, and weak lower semicontinuity yields $E_\tau(v)=L$.  Equality of the quadratic energies then gives strong $H^1$ convergence.  A bathtub maximiser for $v^2$ produces an optimal set $A$ of volume one.
\end{proof}

\begin{proposition}[Euler--Lagrange system for an attained half-space optimum]\label{prop:EL-halfspace}
Let $\tau\ge0$ and suppose that the half-space infimum at $\tau$ is attained by a pair $(A,v)$, normalised by
\[
\int_{\Hh}m_Av^2=1.
\]
The function $v$ may be chosen nonnegative, and then is a positive principal eigenfunction.  It satisfies
\begin{equation}\label{eq:EL-weak-halfspace}
\int_{\Hh}\nabla v\cdot\nabla\varphi
+\tau\int_{\partial\Hh}v\varphi
=
\Lambda_{\Hh}(\tau)
\int_{\Hh}m_Av\varphi
\end{equation}
for every $\varphi\in H^1(\Hh)$.  Equivalently,
\begin{equation}\label{eq:EL-strong-formal}
-\Delta v=\Lambda_{\Hh}(\tau)m_Av
\quad\text{in }\Hh,
\qquad
\partial_\nu v+\tau v=0
\quad\text{on }\partial\Hh
\end{equation}
in the weak sense.  Moreover there exists $t\ge0$ such that, up to null sets,
\begin{equation}\label{eq:bathtub-level-EL}
\{v>t\}\subset A\subset\{v\ge t\},
\qquad |A|=1.
\end{equation}
If the level set $\{v=t\}$ has zero measure, then $A=\{v>t\}$ almost everywhere.
\end{proposition}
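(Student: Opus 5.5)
The plan is to argue directly from the variational characterisation, first with the set $A$ frozen and then by varying $A$. Since $|\nabla|v||=|\nabla v|$ a.e. and the trace of $|v|$ is $|{\rm tr}\,v|$, replacing $v$ by $|v|$ leaves the numerator and $\int m_Av^2$ unchanged. Hence $(A,|v|)$ is still optimal, and we may assume $v\ge0$.

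\textbf{Euler--Lagrange equation.} Fix $A$. Then $v$ minimises the quadratic quotient
\[
\Bigl(\int_{\Hh}|\nabla w|^2+\tau\int_{\partial\Hh}w^2\Bigr)\Big/\int_{\Hh}m_Aw^2
\]
over the open cone $\{\int m_Aw^2>0\}$ in $H^1(\Hh)$. For $\varphi\in H^1(\Hh)$, the perturbation $v+s\varphi$ stays admissible for small $|s|$. Differentiating the quotient at $s=0$ and using the normalisation $\int m_Av^2=1$ gives \eqref{eq:EL-weak-halfspace} with multiplier $\lambda_{\Hh,\tau}(A)=\Lambda_{\Hh}(\tau)$. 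The strong form \eqref{eq:EL-strong-formal} is the usual distributional and weak-Neumann reading: test first with $\varphi\in C_c^\infty(\Hh)$, then with general $\varphi$ to obtain the boundary condition.

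\textbf{Positivity.} Here $m_A$ is bounded and $v\ge0$ is a weak solution of
\[
-\Delta v+\Lambda_{\Hh}(\tau)\1_{\Hh\setminus A}v=\kappa\Lambda_{\Hh}(\tau)\1_Av\ge0 .
\]
So $v$ is a nonnegative weak supersolution of $-\Delta v+cv\ge0$ with bounded $c\ge0$. By elliptic regularity $v\in W^{2,p}_{\rm loc}$, hence continuous. The weak Harnack inequality then gives $v>0$ in $\Hh$, since $v\not\equiv0$ and $\Hh$ is connected. Near the flat boundary, reflection or the Hopf lemma for the Robin condition gives positivity up to $\partial\Hh$, which is not actually needed. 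This justifies calling $v$ a positive principal eigenfunction.

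\textbf{Bathtub structure \eqref{eq:bathtub-level-EL}.} Fix $v$ and vary $A$. By Lemma~\ref{lem:reduced-halfspace}, $\Lambda_{\Hh}(\tau)\le\text{numerator}(v)/\mathcal M(v)$. Optimality of $(A,v)$ gives $\text{numerator}(v)=\Lambda_{\Hh}(\tau)\int m_Av^2\le\Lambda_{\Hh}(\tau)\mathcal M(v)$. Since $\Lambda_{\Hh}(\tau)>0$, this forces $\int m_Av^2=\mathcal M(v)$, i.e. $\int_Av^2=\mathcal B(v)$, so $A$ is a bathtub maximiser for $v^2$.

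Let $t:=\inf\{s\ge0:|\{v>s\}|\le1\}$. Then $|\{v>t\}|\le1\le|\{v\ge t\}|$. Suppose $|\{v>t\}\setminus A|>0$. Then $|A\cap\{v\le t\}|\ge|\{v>t\}\setminus A|>0$. Swapping a piece of $\{v>t\}\setminus A$ for an equal-measure piece of $A\cap\{v\le t\}$ keeps $|A|=1$ and strictly increases $\int_Av^2$. To get strictness, choose the first piece inside $\{v>t+\eta\}$ for small $\eta>0$, which is possible by countable additivity. This contradicts maximality. The same swap argument shows $|A\setminus\{v\ge t\}|=0$.

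The case $t=0$ must be allowed because the statement only requires $t\ge0$. Positivity of $v$ actually yields $t>0$ when $|\Hh|=\infty$, but this is not required. Finally, if $|\{v=t\}|=0$, the two inclusions squeeze $A$ to $\{v>t\}$ a.e.

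\textbf{Main obstacle.} The delicate point is the positivity step, which must hold for a merely measurable weight $m_A$ on an unbounded domain. I would handle it with the weak Harnack inequality of De Giorgi--Moser--Trudinger for $W^{1,2}$ supersolutions with bounded zeroth-order coefficient. This avoids needing any regularity of $A$.
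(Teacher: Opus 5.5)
Your proposal is correct and follows the paper's own proof: pass to $|v|$, derive the Euler--Lagrange equation with $A$ frozen (the normalisation identifies the multiplier as $\Lambda_{\Hh}(\tau)$), use the standard positivity argument for a bounded indefinite weight, and obtain \eqref{eq:bathtub-level-EL} from the fact that $A$ maximises $\int_Av^2$ among unit-volume sets. You also supply details the paper leaves implicit: the proof that $A$ is a bathtub maximiser, where $\Lambda_{\Hh}(\tau)>0$ is immediate because $\Lambda_{\Hh}(\tau)=0$ would force $\nabla v=0$ and hence $v=0$ in $L^2(\Hh)$, and the explicit construction of the level $t$ together with the swap argument.
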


\begin{proof}
Replacing $v$ by $|v|$ does not increase the Dirichlet energy and leaves the denominator and trace term unchanged, so an optimiser may be taken nonnegative.  Once the optimal set $A$ is fixed, $v$ minimises the Robin quadratic form under the smooth constraint $\int m_Av^2=1$.  The Lagrange multiplier rule gives \eqref{eq:EL-weak-halfspace}; testing with $v$ identifies the multiplier with $\Lambda_{\Hh}(\tau)$.  Positivity follows from the standard principal-eigenfunction argument for a bounded indefinite coefficient.  Finally, $A$ maximises $\int_Av^2$ among sets of measure one, so the bathtub principle yields \eqref{eq:bathtub-level-EL}.
\end{proof}

\begin{lemma}[Sobolev derivatives vanish on a level set]\label{lem:sobolev-level-set}
Let $U\subset\R^N$ be open. If $u\in W^{1,1}_{\rm loc}(U)$, then $\nabla u=0$ almost everywhere on every level set $\{u=c\}$. If in addition $u\in W^{2,1}_{\rm loc}(U)$, then $D^2u=0$ almost everywhere on $\{u=c\}$.
\end{lemma}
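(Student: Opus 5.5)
The first assertion will follow from the coarea formula, applied level by level, and the second will be obtained by applying the first to each partial derivative. As a preliminary reduction, both statements are local, so it suffices to prove them on balls compactly contained in $U$. There $u\in W^{1,1}$. Subtracting the constant $c$, we may assume $c=0$.

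For the first claim I would use the standard chain-rule fact that, for $u\in W^{1,1}_{\rm loc}$, both $u^+=\max\{u,0\}$ and $u^-$ belong to $W^{1,1}_{\rm loc}$, with
\[
\nabla u^+=\1_{\{u>0\}}\nabla u,\qquad \nabla u^-=-\1_{\{u<0\}}\nabla u .
\]
This identity can be proved by approximating $s\mapsto s^+$ with smooth functions $G_\eta(s)$, equal to $\sqrt{s^2+\eta^2}-\eta$ for $s>0$ and to $0$ for $s\le0$. Then $\nabla G_\eta(u)=G_\eta'(u)\nabla u$, and dominated convergence in the weak formulation passes to the limit $\eta\downarrow0$. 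Since $u=u^+-u^-$, we get $\nabla u=\1_{\{u\neq0\}}\nabla u$ almost everywhere, that is, $\nabla u=0$ a.e. on $\{u=0\}$. The main point to watch is that $G_\eta'(u)\to\1_{\{u>0\}}$ holds pointwise everywhere, including on $\{u=0\}$ where $G_\eta'(0)=0$. This pointwise convergence is exactly where the level set gets its zero weight, so I would state it explicitly.

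For the second claim, suppose $u\in W^{2,1}_{\rm loc}(U)$. Then each $\partial_iu$ lies in $W^{1,1}_{\rm loc}$. By the first part, $\nabla u=0$ a.e. on $Z:=\{u=c\}$, so $Z\subset\{\partial_iu=0\}$ up to a null set. Applying the first part to $\partial_iu$ at level $0$ gives $\nabla\partial_iu=0$ a.e. on $\{\partial_iu=0\}$, and hence a.e. on $Z$. Running over $i=1,\dots,N$ yields $D^2u=0$ a.e. on $Z$.

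The only genuine obstacle is the truncation identity, since the rest is formal. If one prefers to avoid reproving it, it can be quoted from Gilbarg--Trudinger, Lemma 7.6 and Lemma 7.7. Lemma 7.7 is precisely the statement $\nabla u=0$ a.e. on level sets for $W^{1}$ functions; I would cite it and give the short argument above for completeness.
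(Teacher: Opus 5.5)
Your proof is correct, but for the first-order claim it takes a different route from the paper.

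The paper argues via approximate differentiability. A $W^{1,1}_{\rm loc}$ function is approximately differentiable almost everywhere, with approximate differential equal to its weak gradient. At a Lebesgue density point of $\{u=c\}$ where this holds, the approximate differential must vanish, because $u$ equals $c$ on a set of density one.

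You instead use the truncation identities $\nabla u^{+}=\mathbf 1_{\{u>0\}}\nabla u$ and $\nabla u^{-}=-\mathbf 1_{\{u<0\}}\nabla u$. You prove them through the $C^1$ chain rule for the smoothed truncations $G_\eta$, followed by dominated convergence in the weak formulation. The points that need checking all hold: $G_\eta\in C^1(\mathbb{R})$, $|G_\eta'|\le1$, and $G_\eta'(u)\to\mathbf 1_{\{u>0\}}$ everywhere, including on $\{u=0\}$. This is exactly the Gilbarg--Trudinger Lemma~7.6--7.7 argument, so citing it is appropriate.

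The trade-off is this. The paper's proof is two lines but rests on the nontrivial theorem that Sobolev functions are approximately differentiable a.e. Yours is longer but entirely elementary and self-contained.

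The second-order step is identical in both proofs. You apply the first part to each $\partial_iu\in W^{1,1}_{\rm loc}$ at level $0$, using that $\{u=c\}\subset\{\partial_iu=0\}$ up to a null set.

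One presentational point: your opening sentence says the first assertion follows from the coarea formula, but your argument never uses it. Replace that sentence with a description of the truncation argument you actually give.
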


\begin{proof}
For Sobolev functions, the approximate differential agrees almost everywhere with the weak gradient. At almost every density point of $\{u=c\}$ where $u$ is approximately differentiable, the approximate differential must vanish because the function equals the constant $c$ on a set of density one. Hence $\nabla u=0$ almost everywhere on the level set. If $u\in W^{2,1}_{\rm loc}$, apply the same statement to each first derivative, which belongs to $W^{1,1}_{\rm loc}$ and vanishes almost everywhere on the level set, to obtain $D^2u=0$ there almost everywhere.
\end{proof}

\begin{lemma}[Zero measure of the bathtub threshold level]\label{lem:no-plateau-halfspace}
Let \(\tau\ge0\) and let \((A,v)\) be a positive optimiser from Proposition~\ref{prop:EL-halfspace}.  If \(t\) is its bathtub threshold in \eqref{eq:bathtub-level-EL}, then
\[
t>0,
\qquad
|\{v=t\}|=0.
\]
Consequently the favourable set is unique up to null sets and equals \(\{v>t\}\).
\end{lemma}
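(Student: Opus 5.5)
We prove $t>0$ first, and then that the level set $\{v=t\}$ is null.

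\emph{Step 1: $t>0$.} Suppose $t=0$. Then $\{v>0\}\subset A$ up to null sets. The positive principal eigenfunction $v$ is strictly positive in $\Hh$: it solves $-\Delta v=\Lambda_{\Hh}(\tau)m_Av$ with a bounded coefficient, so the weak Harnack inequality applies. Hence $\{v>0\}=\Hh$ up to null sets, which is incompatible with $|A|=1$. Therefore $t>0$.

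\emph{Step 2: regularity.} Since $m_A\in L^\infty$ and $v\in H^1(\Hh)$, interior Calder\'on--Zygmund estimates with bootstrapping give $v\in W^{2,p}_{\rm loc}(\Hh)$ for every $p<\infty$. In particular $v\in W^{2,1}_{\rm loc}(\Hh)$, and the equation $-\Delta v=\Lambda_{\Hh}(\tau)m_Av$ holds almost everywhere in $\Hh$. The boundary $\partial\Hh$ itself has measure zero, so only interior points of $\Hh$ need to be considered.

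\emph{Step 3: the level set is null.} Let $Z=\{v=t\}\cap\Hh$. By Lemma~\ref{lem:sobolev-level-set}, $D^2v=0$ almost everywhere on $Z$, so $\Delta v=0$ there. From the pointwise equation,
\[
\Lambda_{\Hh}(\tau)\,m_A(x)\,t=0\quad\text{for a.e. }x\in Z.
\]
Here $\Lambda_{\Hh}(\tau)>0$, because it is nondecreasing in $\tau$ (Lemma~\ref{lem:halfspace-concavity-prelim}) and $\Lambda_{\Hh}(0)>0$ as in Corollary~\ref{cor:strict-resource-subhomogeneity}. Also $t>0$ by Step 1. Finally, $m_A\in\{\kappa,-1\}$ never vanishes because $\kappa>0$. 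So the left-hand side is nonzero at every point of $Z$, and therefore $|Z|=0$.

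\emph{Conclusion.} Since $|\{v=t\}|=0$, the sandwich \eqref{eq:bathtub-level-EL} forces $A=\{v>t\}$ almost everywhere. Any two optimal sets associated with the same eigenfunction are therefore equal up to null sets.

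The only delicate point is the regularity in Step 2: one needs $W^{2,1}_{\rm loc}$ to apply the second-order part of Lemma~\ref{lem:sobolev-level-set}. This is standard because the right-hand side $\Lambda_{\Hh}(\tau)m_Av$ lies in $L^2_{\rm loc}$, which already gives $v\in H^2_{\rm loc}(\Hh)$.
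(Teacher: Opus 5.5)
Your proof is correct and follows essentially the same route as the paper: strict positivity of the principal eigenfunction rules out $t=0$, and interior $W^{2,2}_{\rm loc}$ regularity combined with Lemma~\ref{lem:sobolev-level-set} gives $\Delta v=0$ a.e.\ on $\{v=t\}$, which contradicts $\Lambda_{\Hh}(\tau)m_At\neq0$. One small citation point: the positivity $\Lambda_{\Hh}(0)>0$ is established by Proposition~\ref{prop:neumann-reflection} (see Proposition~\ref{prop:halfspace-basic}); Corollary~\ref{cor:strict-resource-subhomogeneity} only uses this fact and does not prove it.
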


\begin{proof}
Positivity of the principal eigenfunction and \(|A|=1<|\Hh|\) exclude \(t=0\): otherwise \(\{v>0\}\subset A\) would force an infinite-measure favourable set.  Thus \(t>0\).

Since \(m_Av\in L^2_{\rm loc}(\Hh)\), interior elliptic regularity gives \(v\in W^{2,2}_{\rm loc}(\Hh)\).  If the level set \(E:=\{v=t\}\) had positive \(N\)-dimensional measure, then Lemma~\ref{lem:sobolev-level-set}, applied to $v\in W^{2,2}_{\rm loc}(\Hh)$, gives $\nabla v=0$ and $D^2v=0$ almost everywhere on $E$.  Hence \(\Delta v=0\) almost everywhere on \(E\).  On the other hand the equation
\[
-\Delta v=\Lambda_{\Hh}(\tau)m_Av
\]
holds almost everywhere and, on \(E\), its right-hand side is either \(\Lambda_{\Hh}(\tau)\kappa t\) or \(-\Lambda_{\Hh}(\tau)t\), both nonzero.  This contradiction proves \(|E|=0\).  The bathtub characterisation then gives \(A=\{v>t\}\) up to null sets.
\end{proof}
\begin{proposition}[Monotonicity, concavity, and comparison with the whole-space value]\label{prop:halfspace-basic}
The map
\[
[0,+\infty)\ni\tau\longmapsto\Lambda_{\Hh}(\tau)
\]
is nondecreasing and concave. Moreover,
\begin{equation}\label{eq:halfspace-whole-comparison}
0<\Lambda_{\Hh}(\tau)\le\Lambda_{\R^N}
\qquad\text{for every }\tau\ge0.
\end{equation}
It is continuous on $[0,+\infty)$.
\end{proposition}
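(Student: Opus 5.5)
Monotonicity and concavity are already Lemma~\ref{lem:halfspace-concavity-prelim}, so the plan reduces to three remaining points: the upper bound $\Lambda_{\Hh}(\tau)\le\Lambda_{\R^N}$, strict positivity, and continuity on $[0,\infty)$. Continuity on the open interval $(0,\infty)$ is automatic for a finite concave function. At $\tau=0$, concavity together with monotonicity gives $\liminf_{\tau\downarrow0}\Lambda_{\Hh}(\tau)\ge\Lambda_{\Hh}(0)$. For the reverse inequality, fix $\eta>0$ and a normalised competitor $v$ with $\mathcal M(v)=1$ and $\int_{\Hh}|\nabla v|^2\le\Lambda_{\Hh}(0)+\eta$. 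This $v$ has finite trace, so $\Lambda_{\Hh}(\tau)\le\Lambda_{\Hh}(0)+\eta+\tau\int_{\partial\Hh}v^2$, and letting $\tau\downarrow0$ and then $\eta\downarrow0$ gives upper semicontinuity at $0$. Alternatively, an infimum of affine functions that is finite on $[0,\infty)$ is automatically upper semicontinuous.

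For the upper bound I will use a translation-to-infinity construction. Take the whole-space optimiser from Proposition~\ref{prop:whole-space-classification}: the ball $B_{R_1}$ and the radial profile $\Phi$, normalised so that $\int m_{B_{R_1}}\Phi^2=1$. Fix a cutoff $\chi_R$ equal to $1$ on $B_R$ and supported in $B_{2R}$, and set $v_R(y)=(\chi_R\Phi)(y-3Re_N)$. Then $v_R\in H^1(\Hh)$ has zero trace on $\partial\Hh$, so the Robin term vanishes for every $\tau$. Since $\Phi$ decays exponentially (the Bessel tail), we have $\chi_R\Phi\to\Phi$ in $H^1(\R^N)$. By the $L^2$-continuity estimate \eqref{eq:B-L2-continuity} in its whole-space form, $\mathcal M(v_R)=\mathcal M_1^{\R^N}(\chi_R\Phi)\to\mathcal M_1^{\R^N}(\Phi)\ge1$; the last inequality holds because the ball is an admissible set. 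Translation invariance therefore gives $\Lambda_{\Hh}(\tau)\le\int|\nabla(\chi_R\Phi)|^2/\mathcal M(v_R)\to\Lambda_{\R^N}$. Translation along $e_N$ is precisely what makes the bound independent of $\tau$.

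Positivity is the step needing care; I expect it to be the main obstacle, although it is a short Sobolev argument. Since $\Lambda_{\Hh}$ is nondecreasing, it suffices to show $\Lambda_{\Hh}(0)>0$. I will argue as in Lemma~\ref{lem:normalised-bounded}, now tracking constants. Take $v$ with $\mathcal M(v)=1$, so that $1+\|v\|_2^2=(\kappa+1)\mathcal B(v)$.

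For $N\ge3$, the half-space Sobolev inequality (via even reflection) gives $\mathcal B(v)\le\|v\|_{2^*}^2\le C\|\nabla v\|_2^2$. Hence $1\le(\kappa+1)C\|\nabla v\|_2^2$, and so $\Lambda_{\Hh}(0)\ge((\kappa+1)C)^{-1}>0$.

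For $N=2$, Gagliardo--Nirenberg gives $\mathcal B(v)\le\|v\|_4^2\le C\|v\|_2\|\nabla v\|_2$. Writing $a=\|v\|_2$ and $g=\|\nabla v\|_2$, this yields $1+a^2\le(\kappa+1)Cag$. By AM--GM, $1+a^2\ge2a$, so $g\ge2/((\kappa+1)C)$ and again $\Lambda_{\Hh}(0)>0$.

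Finally, I will note that strict positivity is exactly the fact used in Corollary~\ref{cor:strict-resource-subhomogeneity}, so this proposition makes that corollary self-contained. Putting the steps together yields \eqref{eq:halfspace-whole-comparison} and continuity on $[0,\infty)$.
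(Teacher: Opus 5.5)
Your proposal is correct. On monotonicity, concavity and continuity it coincides with the paper, which likewise fixes an almost-optimal Neumann competitor with finite trace to obtain right continuity at $0$.

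The differences lie in the other two steps.

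\emph{Upper bound.} The paper does not use the whole-space optimiser. It takes an arbitrary near-optimal competitor for \eqref{eq:whole-reduced-V}, approximates it by a $C_c^\infty$ function, and translates that function deep into $\Hh$. You instead truncate the attained radial optimiser $\Phi$. This is equally valid, but it relies on the cited existence result in Proposition~\ref{prop:whole-space-classification}, which the density argument avoids. The exponential decay you invoke is also unnecessary, since $\chi_R\Phi\to\Phi$ in $H^1$ for any $H^1$ function.

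\emph{Positivity.} The paper defers to the later Neumann reflection identity, Proposition~\ref{prop:neumann-reflection}. Even reflection gives $\Lambda_{\Hh}(0)\ge\Lambda_{\R^N}(2)=2^{-2/N}\Lambda_{\R^N}$, so positivity rests on positivity of the whole-space value, taken from the rearrangement literature. This route yields the sharp value of $\Lambda_{\Hh}(0)$. However, it is a forward reference, while the earlier Corollary~\ref{cor:strict-resource-subhomogeneity} already uses $\Lambda_{\Hh}(0)>0$. Your Sobolev/Ladyzhenskaya bound, with the AM--GM step $1+a^2\ge 2a$ when $N=2$, is self-contained and explicit in $N$, $\kappa$ and the Sobolev constant. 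It removes that forward dependence, at the price of giving only a non-sharp lower bound.
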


\begin{proof}
Monotonicity and concavity were proved already in Lemma~\ref{lem:halfspace-concavity-prelim}.

To prove the upper bound, fix $\eta>0$ and use the reduced whole-space formulation \eqref{eq:whole-reduced-V} with $V=1$. Choose $v\in H^1(\R^N)$ with $\mathcal M_1^{\R^N}(v)>0$ and quotient at most $\Lambda_{\R^N}+\eta/2$. By density, choose a compactly supported $v_c\in C_c^\infty(\R^N)$ sufficiently close to $v$ in $H^1$ that $\mathcal M_1^{\R^N}(v_c)>0$ and its reduced quotient is at most $\Lambda_{\R^N}+\eta$. Choose a unit-volume bathtub set $A_c$ for $v_c^2$; because $v_c$ has compact support, $A_c$ may be chosen inside a fixed compact set after adding, if necessary, a null-contributing portion where $v_c=0$. Translate both $v_c$ and $A_c$ by $Le_N$ with $L$ larger than the diameter of that compact set. The translated pair lies entirely in $\Hh$, its trace on $\partial\Hh$ vanishes, and its quotient is unchanged. Hence $\Lambda_{\Hh}(\tau)\le\Lambda_{\R^N}+\eta$, and $\eta\downarrow0$ gives the upper bound $\Lambda_{\Hh}(\tau)\le\Lambda_{\R^N}$.

Positivity at $\tau=0$ follows from Proposition~\ref{prop:neumann-reflection} below, and positivity for $\tau>0$ then follows by monotonicity.

A finite concave function is continuous on $(0,+\infty)$. It remains to check right continuity at $0$. Fix $\eta>0$ and an admissible pair $(A,v)$ for $\tau=0$ with
\[
\int_{\Hh}|\nabla v|^2\le \Lambda_{\Hh}(0)+\eta.
\]
The trace theorem gives $\int_{\partial\Hh}v^2<\infty$, and hence
\[
\Lambda_{\Hh}(0)
\le \Lambda_{\Hh}(\tau)
\le \Lambda_{\Hh}(0)+\eta
+\tau\int_{\partial\Hh}v^2.
\]
First let $\tau\downarrow0$ and then $\eta\downarrow0$.
\end{proof}

The Neumann endpoint can be identified exactly. We use the known fact that the whole-space problem is attained, up to translations, by a ball together with a radial positive eigenfunction; this is part of the full-space theory underlying the sharp Dirichlet and periodic analyses \citep{FerreriVerzini2024,Verzini2026}.

\begin{proposition}[Exact Neumann reflection identity]\label{prop:neumann-reflection}
In the present normalisation,
\begin{equation}\label{eq:neumann-reflection}
\Lambda_{\Hh}(0)=2^{-2/N}\Lambda_{\R^N}.
\end{equation}
In particular,
\begin{equation}\label{eq:strict-gap-zero}
\Lambda_{\Hh}(0)<\Lambda_{\R^N}.
\end{equation}
\end{proposition}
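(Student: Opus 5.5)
We prove the identity $\Lambda_{\Hh}(0)=2^{-2/N}\Lambda_{\R^N}$ by comparing the Neumann half-space problem with the whole-space problem at favourable volume $2$. Lemma~\ref{lem:whole-scaling} gives $\Lambda_{\R^N}(2)=2^{-2/N}\Lambda_{\R^N}$, so it suffices to show $\Lambda_{\Hh}(0)=\Lambda_{\R^N}(2)$. We use even reflection across $\partial\Hh$ for one inequality and symmetry of the whole-space optimiser for the other.

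\emph{Lower bound $\Lambda_{\Hh}(0)\ge\Lambda_{\R^N}(2)$.} Take any admissible pair $(A,v)$ for \eqref{eq:halfspace-A} with $\tau=0$ and $\int_{\Hh}m_Av^2>0$.
\begin{itemize}[leftmargin=*]
\item Let $\tilde v(y',y_N)=v(y',|y_N|)$ be the even reflection; then $\tilde v\in H^1(\R^N)$.
\item Let $\tilde A=A\cup RA$, where $R$ is the reflection $y_N\mapsto-y_N$. Then $|\tilde A|=2$, since $A\cap RA$ is null because $A\subset\Hh$.
\item Both integrals double: $\int_{\R^N}|\nabla\tilde v|^2=2\int_{\Hh}|\nabla v|^2$ and $\int_{\R^N}m_{\tilde A}\tilde v^2=2\int_{\Hh}m_Av^2$.
\end{itemize}
The Neumann Rayleigh quotient is therefore preserved, and it is bounded below by $\Lambda_{\R^N}(2)$ from definition \eqref{eq:whole-space-volume-V}. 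Taking the infimum over $(A,v)$ gives the bound.

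\emph{Upper bound $\Lambda_{\Hh}(0)\le\Lambda_{\R^N}(2)$.} By Proposition~\ref{prop:whole-space-classification}, $\Lambda_{\R^N}(2)$ is attained by the ball $B_{R_2}$ of volume $2$ together with a radial eigenfunction $\Phi_2$. Translate so that the centre lies at the origin. Both $B_{R_2}$ and $\Phi_2$ are then symmetric under $R$. Restrict them to $\Hh$: set $A=B_{R_2}\cap\Hh$, so $|A|=1$, and take $v=\Phi_2|_{\Hh}\in H^1(\Hh)$. By symmetry the numerator and the denominator are each exactly halved. The quotient of $(A,v)$ therefore equals $\Lambda_{\R^N}(2)$, and $(A,v)$ is admissible for $\tau=0$. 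This gives the reverse inequality, proves \eqref{eq:neumann-reflection}, and shows that the half-ball attains the infimum.

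\emph{Consequences.} Since $N\ge2$, we have $2^{-2/N}<1$, and $\Lambda_{\R^N}>0$ by Proposition~\ref{prop:whole-space-classification}. Hence \eqref{eq:strict-gap-zero} follows, together with positivity of $\Lambda_{\Hh}(0)$; this positivity is what Proposition~\ref{prop:halfspace-basic} relies on.

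The only delicate point is the reflection step: $\tilde v$ must be genuinely $H^1$ across $\partial\Hh$. This is standard, since the even reflection of an $H^1(\Hh)$ function lies in $H^1(\R^N)$, as one checks by approximating with functions smooth up to the boundary. No trace term enters because $\tau=0$. The rest uses only the attainment and radial symmetry of the whole-space optimiser, which we take from the cited theory.
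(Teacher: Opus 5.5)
Your proposal is correct and follows the paper's proof step by step. Even reflection of a half-space competitor gives $\Lambda_{\R^N}(2)\le\Lambda_{\Hh}(0)$, and restricting a reflection-symmetric, centred radial volume-two whole-space optimiser gives the reverse inequality, with Lemma~\ref{lem:whole-scaling} closing the identity. You also make explicit the positivity $\Lambda_{\R^N}>0$ needed for the strict inequality, which the paper leaves implicit.
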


\begin{proof}
Let $(A,v)$ be admissible in $\Hh$ with $|A|=1$, and let $A^{\mathrm e}$ and $v^{\mathrm e}$ denote their even reflections across $\partial\Hh$. Then $|A^{\mathrm e}|=2$, while both the Dirichlet energy and the weighted denominator are doubled. Hence the quotient is unchanged. Taking the infimum over half-space competitors gives
\[
\Lambda_{\R^N}(2)\le\Lambda_{\Hh}(0).
\]
By Lemma~\ref{lem:whole-scaling}, the left-hand side is $2^{-2/N}\Lambda_{\R^N}$.

Conversely, choose a whole-space optimiser of favourable volume two. By the radial characterisation, it may be centred on the hyperplane $\partial\Hh$, and both its favourable ball and eigenfunction are invariant under reflection across that hyperplane. Restricting to $\Hh$ produces a favourable set of volume one, and again halves numerator and denominator by the same factor. Therefore
\[
\Lambda_{\Hh}(0)\le\Lambda_{\R^N}(2)
=2^{-2/N}\Lambda_{\R^N}.
\]
The identity follows. Since $N\ge2$, $2^{-2/N}<1$.
\end{proof}

These facts motivate the following transition parameter.

\begin{definition}[Transition threshold]\label{def:tau-star}
Define
\begin{equation}\label{eq:tau-star}
\tau_*
:=
\sup\left\{\tau\ge0:\Lambda_{\Hh}(\tau)<\Lambda_{\R^N}\right\}
\in(0,+\infty].
\end{equation}
\end{definition}

\begin{corollary}[Basic properties of the transition threshold]\label{cor:threshold-structure}
The number $\tau_*$ in Definition~\ref{def:tau-star} is strictly positive. Moreover,
\begin{equation}\label{eq:threshold-interval}
\Lambda_{\Hh}(\tau)<\Lambda_{\R^N}
\quad\text{for }0\le\tau<\tau_*.
\end{equation}
If $\tau_*<+\infty$, then
\begin{equation}\label{eq:threshold-equality}
\Lambda_{\Hh}(\tau)=\Lambda_{\R^N}
\quad\text{for every }\tau\ge\tau_*.
\end{equation}
In particular, a finite transition cannot consist of several disjoint intervals in $\tau$.
\end{corollary}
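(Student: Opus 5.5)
Positivity of $\tau_*$ comes from the Neumann endpoint together with continuity. By Proposition~\ref{prop:neumann-reflection}, $\Lambda_{\Hh}(0)=2^{-2/N}\Lambda_{\R^N}<\Lambda_{\R^N}$. Proposition~\ref{prop:halfspace-basic} gives continuity of $\tau\mapsto\Lambda_{\Hh}(\tau)$ on $[0,\infty)$, in particular right continuity at $0$. Hence there is $\tau_0>0$ with $\Lambda_{\Hh}(\tau)<\Lambda_{\R^N}$ on $[0,\tau_0]$, and so $\tau_*\ge\tau_0>0$.

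For \eqref{eq:threshold-interval} I would use monotonicity. Take $0\le\tau<\tau_*$. By the definition of the supremum there is $\tau'\in(\tau,\tau_*]$, or $\tau'>\tau$ if $\tau_*=\infty$, such that $\Lambda_{\Hh}(\tau')<\Lambda_{\R^N}$. Since $\Lambda_{\Hh}$ is nondecreasing (Lemma~\ref{lem:halfspace-concavity-prelim}), we get $\Lambda_{\Hh}(\tau)\le\Lambda_{\Hh}(\tau')<\Lambda_{\R^N}$. Thus the strict-gap set is an interval starting at $0$, which already excludes several disjoint intervals.

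For \eqref{eq:threshold-equality}, assume $\tau_*<\infty$ and fix $\tau>\tau_*$. Then $\tau$ is not in the set in \eqref{eq:tau-star}, so $\Lambda_{\Hh}(\tau)\ge\Lambda_{\R^N}$. The upper bound \eqref{eq:halfspace-whole-comparison} gives the reverse inequality, so equality holds. At $\tau=\tau_*$ itself, take $\tau_n\downarrow\tau_*$. Continuity of $\Lambda_{\Hh}$ from Proposition~\ref{prop:halfspace-basic} (a finite concave function is continuous on $(0,\infty)$, and $\tau_*>0$) gives $\Lambda_{\Hh}(\tau_*)=\lim\Lambda_{\Hh}(\tau_n)=\Lambda_{\R^N}$.

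I do not expect a real obstacle. The only point needing care is the endpoint $\tau=\tau_*$, which requires continuity rather than monotonicity alone. Monotonicity only gives $\Lambda_{\Hh}(\tau_*)\le\Lambda_{\R^N}$, and it is continuity from the right that forces equality and rules out a strict gap persisting exactly at $\tau_*$. Finiteness of $\tau_*$ is not claimed here; it is assumed as a hypothesis in the last assertion.
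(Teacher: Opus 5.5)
Your proof is correct and follows essentially the same route as the paper. Positivity comes from the strict gap at the Neumann endpoint together with right continuity at $0$. The interval structure comes from monotonicity. Equality on $[\tau_*,\infty)$ comes from continuity combined with the upper bound $\Lambda_{\Hh}\le\Lambda_{\R^N}$.

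The only difference is the order of the last step, which is immaterial. You get equality for $\tau>\tau_*$ directly from the definition of the supremum and then pass to $\tau_*$ by right continuity. The paper treats the endpoint first and then uses monotonicity.
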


\begin{proof}
The strict gap at $0$ and continuity imply strict inequality on $[0,\tau_0)$ for some $\tau_0>0$, hence $\tau_*>0$. Monotonicity shows that the strict-sublevel set is an interval starting at $0$. If $\tau_*<\infty$, continuity gives $\Lambda_{\Hh}(\tau_*)=\Lambda_{\R^N}$; monotonicity and Proposition~\ref{prop:halfspace-basic} then force equality for all larger $\tau$.
\end{proof}

\subsection{A variational characterisation of the transition threshold}\label{subsec:trace-defect}

The following variational reformulation identifies the transition threshold with a sharp constant involving the boundary trace.  Finiteness of that constant will be proved below by a geometric argument rather than by estimating this quotient directly.

Write
\[
k:=\sqrt{\Lambda_{\R^N}},
\]
and, for $v\in H^1(\Hh)$, define the \emph{whole-space defect}
\begin{equation}\label{eq:defect-functional}
\mathfrak D(v)
:=
\Lambda_{\R^N}\mathcal M(v)
-
\int_{\Hh}|\nabla v|^2\,dx.
\end{equation}
The quantity $\mathfrak D(v)$ measures how far the bulk part of the half-space test function lies below the unit-volume whole-space optimum before the Robin trace cost is added.

\begin{proposition}[Variational characterisation of $\tau_*$ by the boundary trace]\label{prop:tau-star-trace-defect}
One has
\begin{equation}\label{eq:tau-star-trace-defect}
\tau_*
=
\sup_{\substack{v\in H^1(\Hh),\ \mathcal M(v)>0\\
\|\operatorname{Tr}v\|_{L^2(\partial\Hh)}>0}}
\frac{\mathfrak D(v)}{\|\operatorname{Tr}v\|_{L^2(\partial\Hh)}^2}.
\end{equation}
Moreover, if $\operatorname{Tr}v=0$, then $\mathfrak D(v)\le0$.  Consequently, $\tau_*<\infty$ is equivalent to the existence of a finite constant $C$ such that
\begin{equation}\label{eq:sharp-trace-defect-ineq}
\Lambda_{\R^N}\mathcal M(v)
\le
\int_{\Hh}|\nabla v|^2\,dx
+C\int_{\partial\Hh}v^2\,dS
\end{equation}
for every $v\in H^1(\Hh)$, and in that case the smallest admissible constant in \eqref{eq:sharp-trace-defect-ineq} is exactly $C=\tau_*$.
\end{proposition}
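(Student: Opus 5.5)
Write $T(v):=\|\operatorname{Tr}v\|_{L^2(\partial\Hh)}^2$ and let $S\in[-\infty,+\infty]$ denote the supremum on the right of \eqref{eq:tau-star-trace-defect}. The plan is to rewrite the defining inequality of $\tau_*$ using the reduced formulation \eqref{eq:reduced-halfspace}. After that, $S$ and the threshold are compared directly.

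\emph{Step 1: the case $T(v)=0$.} If $\operatorname{Tr}v=0$ and $\mathcal M(v)>0$, extend $v$ by zero to $\bar v\in H^1(\R^N)$. This preserves the $L^2$ norm, the Dirichlet energy and the bathtub functional, so $\mathcal M_1^{\R^N}(\bar v)=\mathcal M(v)$. The whole-space variational inequality \eqref{eq:whole-reduced-V} with $V=1$ then gives $\Lambda_{\R^N}\mathcal M(v)\le\int_{\Hh}|\nabla v|^2$, i.e.\ $\mathfrak D(v)\le0$. If $\mathcal M(v)\le0$, the inequality $\mathfrak D(v)\le0$ is trivial.

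\emph{Step 2: the identity.} By \eqref{eq:reduced-halfspace} and the normalisation \eqref{eq:M-normalisation}, $\Lambda_{\Hh}(\tau)<\Lambda_{\R^N}$ holds if and only if some $v$ with $\mathcal M(v)>0$ satisfies $\int|\nabla v|^2+\tau T(v)<\Lambda_{\R^N}\mathcal M(v)$, that is, $\mathfrak D(v)>\tau T(v)$. By Step 1, such a $v$ must have $T(v)>0$, because $\tau T(v)=0\ge\mathfrak D(v)$ otherwise. So the condition is equivalent to $\mathfrak D(v)/T(v)>\tau$ for some admissible $v$ with $T(v)>0$, i.e.\ to $\tau<S$.

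Hence $\{\tau\ge0:\Lambda_{\Hh}(\tau)<\Lambda_{\R^N}\}=[0,\infty)\cap\{\tau<S\}$. Taking suprema gives $\tau_*=S$, using that $\tau_*>0$ by Corollary~\ref{cor:threshold-structure}, so the set is nonempty and $S>0$.

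\emph{Step 3: the inequality formulation.} Inequality \eqref{eq:sharp-trace-defect-ineq} with constant $C$ is equivalent to $\mathfrak D(v)\le C\,T(v)$ for all $v\in H^1(\Hh)$.
\begin{itemize}[label=--]
\item For $\mathcal M(v)\le0$ this holds automatically, since $\mathfrak D(v)\le0$ there.
\item For $T(v)=0$ it holds by Step 1.
\item For the remaining $v$ it says exactly that $\mathfrak D(v)/T(v)\le C$.
\end{itemize}
So the admissible constants are precisely $C\ge S=\tau_*$. In particular, a finite admissible $C$ exists if and only if $\tau_*<\infty$, and the smallest one is $\tau_*$, since the set of admissible $C$ is the closed ray $[\tau_*,\infty)$.

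The only point needing care is Step 1: one must check that the zero extension of a trace-free $H^1(\Hh)$ function lies in $H^1(\R^N)$ and that $\mathcal M$ is unchanged. Both are standard.
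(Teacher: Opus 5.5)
Your proof is correct and follows the same route as the paper. The reduced formulation turns $\Lambda_{\Hh}(\tau)<\Lambda_{\R^N}$ into the existence of $v$ with $\mathcal M(v)>0$ and $\mathfrak D(v)>\tau\|\operatorname{Tr}v\|_2^2$; trace-free competitors are excluded by zero extension and the whole-space inequality; and the sharp constant is read off from the same supremum. You are more explicit than the paper about nonemptiness of the strict-gap set and about the case $\mathcal M(v)\le0$, where the automatic inequality uses $C\ge0$ (this holds because any admissible $C$ is at least $\tau_*>0$).
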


\begin{proof}
For $f=\operatorname{Tr}v\neq0$, the reduced Rayleigh quotient is below $k^2=\Lambda_{\R^N}$ at the parameter $\tau$ exactly when
\[
 \mathfrak D(v)>\tau\|f\|_2^2.
\]
Taking the supremum of the corresponding ratios gives \eqref{eq:tau-star-trace-defect}.  If $f=0$, extend $v$ by zero to $\widetilde v\in H^1(\R^N)$ and choose a unit-volume bathtub set $A$ for $v$.  The whole-space variational inequality gives
\[
 \int_{\R^N}|\nabla\widetilde v|^2\ge k^2\int_{\R^N}m_A\widetilde v^2=k^2\mathcal M(v),
\]
so $\mathfrak D(v)\le0$.  The sharp-constant formulation is therefore equivalent to the same supremum identity.
\end{proof}

This boundary-trace characterisation can be compared with the Dirichlet-to-Neumann operator for the equation in the unfavourable region.  We use the unitary Fourier transform in the tangential variables $x'\in\R^{N-1}$ and set
\begin{equation}\label{eq:Sk-symbol}
S_k:=\sqrt{k^2-\Delta_{x'}},
\qquad
\widehat{S_k f}(\xi)=s(\xi)\widehat f(\xi),
\qquad
s(\xi):=\sqrt{k^2+|\xi|^2}.
\end{equation}

\begin{proposition}[Dirichlet-to-Neumann control of the whole-space defect]\label{prop:DtN-defect}
For every $v\in H^1(\Hh)$ with trace $f=\operatorname{Tr}v$,
\begin{equation}\label{eq:DtN-defect}
\mathfrak D(v)
\le
\langle f,S_kf\rangle
=
\int_{\R^{N-1}}s(\xi)|\widehat f(\xi)|^2\,d\xi.
\end{equation}
In particular, every admissible function with quotient below $k^2$ at the parameter $\tau$ satisfies
\begin{equation}\label{eq:boundary-frequency-necessary}
\frac{\displaystyle\int_{\R^{N-1}}s(\xi)|\widehat f(\xi)|^2\,d\xi}
{\displaystyle\int_{\R^{N-1}}|\widehat f(\xi)|^2\,d\xi}
>\tau.
\end{equation}
\end{proposition}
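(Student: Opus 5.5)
We extend $v$ to the lower half-space so that the combined function is a good whole-space competitor, and then apply the whole-space inequality \eqref{eq:whole-reduced-V} with $V=1$. The natural extension uses the equation of the unfavourable region. For $f=\operatorname{Tr}v\in H^{1/2}(\R^{N-1})$, let $w$ on $\Hh^-:=\{y_N<0\}$ be the solution of $-\Delta w+k^2w=0$ with $w=f$ on $\partial\Hh$ and $w$ decaying. In Fourier variables this is
\[
\widehat w(\xi,y_N)=e^{s(\xi)y_N}\widehat f(\xi)\qquad(y_N<0).
\]
A direct Plancherel computation then gives
\[
\int_{\Hh^-}\bigl(|\nabla w|^2+k^2w^2\bigr)=\int_{\R^{N-1}}s(\xi)|\widehat f(\xi)|^2\,d\xi=\langle f,S_kf\rangle.
\]
Equivalently, this is the Dirichlet form of the Dirichlet-to-Neumann map $S_k$, and it is the minimal value of $\int(|\nabla w|^2+k^2w^2)$ among all $H^1(\Hh^-)$ extensions of $f$.

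Let $\widetilde v$ be equal to $v$ on $\Hh$ and to $w$ on $\Hh^-$. The traces match, so $\widetilde v\in H^1(\R^N)$. Let $A\subset\Hh$ be a unit-volume bathtub set for $v^2$ in $\Hh$. Then $\int_{\R^N}m_A\widetilde v^2=\mathcal M(v)-\|w\|_{L^2(\Hh^-)}^2$, since $\Hh^-$ lies in the unfavourable region for this set $A$. We also have $\mathcal M_1^{\R^N}(\widetilde v)\ge\int_{\R^N} m_A\widetilde v^2$.

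If this quantity is positive, the whole-space inequality gives
\[
k^2\bigl(\mathcal M(v)-\|w\|_2^2\bigr)\le\int_{\Hh}|\nabla v|^2+\int_{\Hh^-}|\nabla w|^2 .
\]
Rearranging, $\mathfrak D(v)\le\int_{\Hh^-}(|\nabla w|^2+k^2w^2)=\langle f,S_kf\rangle$, which is \eqref{eq:DtN-defect}.

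We also need the inequality $\int_{\R^N}|\nabla \widetilde v|^2\ge k^2\int m_A\widetilde v^2$ without the positivity assumption on the right-hand side. When the right-hand side is nonpositive this holds trivially, since the left-hand side is nonnegative. So \eqref{eq:DtN-defect} holds for every $v\in H^1(\Hh)$. It is worth stating this explicitly, because \eqref{eq:whole-reduced-V} is phrased only over competitors with $\mathcal M>0$.

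For \eqref{eq:boundary-frequency-necessary}, the proof of Proposition~\ref{prop:tau-star-trace-defect} shows that a quotient below $k^2$ at parameter $\tau$ is equivalent to $\mathfrak D(v)>\tau\|f\|_2^2$ when $f\ne0$. If $f=0$, then $\mathfrak D(v)\le0$, so such a $v$ cannot have quotient below $k^2$ (since $\mathcal M(v)>0$). Hence any such $v$ has $f\neq0$. Combining with \eqref{eq:DtN-defect} gives $\langle f,S_kf\rangle>\tau\|f\|_2^2$, and Plancherel converts this into the stated ratio.

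The main step is the extension identity, in particular checking that the half-line minimiser $e^{sy_N}$ gives exactly $s|\widehat f|^2$. For fixed $\xi$, integrating $|\partial_{y_N}\widehat w|^2+s^2|\widehat w|^2=2s^2e^{2sy_N}|\widehat f|^2$ over $y_N<0$ yields $s|\widehat f|^2$. Beyond this, only routine measure-theoretic care is needed for the bathtub set.
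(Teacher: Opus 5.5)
Your proof is correct and follows the paper's argument. Like the paper, you extend the trace by the decaying Yukawa solution into the lower half-space, glue it to $v$, keep the bathtub set $A$ with the lower half-space declared unfavourable, apply the whole-space inequality, and rearrange to get $\mathfrak D(v)\le\langle f,S_kf\rangle$. Your treatment of a nonpositive weighted denominator and your exclusion of $f=0$ in the second part are small clarifications that the paper leaves implicit.
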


\begin{proof}
Choose a bathtub set $A\subset\Hh$, $|A|=1$, such that
\[
\mathcal M(v)=\int_{\Hh}m_Av^2.
\]
Let $L:=\{x_N<0\}$ and let $z$ be the energy-minimising Yukawa extension of $f$ into $L$; equivalently,
\begin{equation}\label{eq:yukawa-extension}
(-\Delta+k^2)z=0\quad\hbox{in }L,
\qquad
z=f\quad\hbox{on }\partial L,
\end{equation}
with $z$ decaying as $x_N\to-\infty$.  In tangential Fourier variables,
\[
\widehat z(\xi,x_N)
=e^{s(\xi)x_N}\widehat f(\xi),
\qquad x_N<0,
\]
and therefore
\begin{equation}\label{eq:yukawa-energy}
\int_L\bigl(|\nabla z|^2+k^2z^2\bigr)\,dx
=
\int_{\R^{N-1}}s(\xi)|\widehat f(\xi)|^2\,d\xi.
\end{equation}
Glue $v$ and $z$ along the boundary to obtain $w\in H^1(\R^N)$.  Extend the bang--bang weight by declaring the whole lower half-space unfavourable; that is, use the same favourable set $A\subset\Hh$ as a unit-volume favourable set in $\R^N$.  The defining whole-space inequality at the optimal value $k^2=\Lambda_{\R^N}$ yields
\[
\int_{\R^N}|\nabla w|^2
\ge
k^2\int_{\R^N}m_Aw^2.
\]
Since $m_A=-1$ in $L$, rearranging gives
\[
k^2\mathcal M(v)-\int_{\Hh}|\nabla v|^2
\le
\int_L\bigl(|\nabla z|^2+k^2z^2\bigr),
\]
and \eqref{eq:DtN-defect} follows from \eqref{eq:yukawa-energy}.  If $v$ witnesses a strict gap, Proposition~\ref{prop:tau-star-trace-defect} gives $\mathfrak D(v)>\tau\|f\|_2^2$; combining this with \eqref{eq:DtN-defect} proves \eqref{eq:boundary-frequency-necessary}.
\end{proof}

\subsection{Yukawa decomposition and quantitative estimates near the whole-space value}\label{subsec:yukawa-rigidity}

The Dirichlet-to-Neumann estimate is sharp for a prescribed trace, but the unbounded symbol $S_k$ is not controlled by the $L^2$ trace norm.  Combining the same Yukawa extension with the whole-space bathtub inequality gives the quantitative remainder estimate needed below.

For $f\in H^{1/2}(\R^{N-1})$, let $\mathcal Y_k f$ denote the decaying Yukawa extension into the upper half-space,
\begin{equation}\label{eq:upper-yukawa-extension}
\widehat{\mathcal Y_k f}(\xi,x_N)
=e^{-s(\xi)x_N}\widehat f(\xi),
\qquad
s(\xi)=\sqrt{k^2+|\xi|^2}.
\end{equation}
Set
\[
\mathcal E_k(q):=\int_{\Hh}\bigl(|\nabla q|^2+k^2q^2\bigr),
\qquad
c_k:=(\kappa+1)k^2.
\]
If $v\in H^1(\Hh)$ has trace $f$, write
\begin{equation}\label{eq:yukawa-orthogonal-decomp}
z:=\mathcal Y_kf,
\qquad
w:=v-z\in H_0^1(\Hh).
\end{equation}

\begin{lemma}[Orthogonal Yukawa decomposition]\label{lem:yukawa-orthogonal}
With the notation above,
\begin{align}
\mathcal E_k(v)&=\mathcal E_k(w)+\mathcal E_k(z),\label{eq:Ek-orthogonal}\\
\mathcal E_k(z)&=\langle f,S_kf\rangle,\label{eq:Ek-z-DtN}\\
\|z\|_{L^2(\Hh)}^2
&=\frac12\int_{\R^{N-1}}\frac{|\widehat f(\xi)|^2}{s(\xi)}\,d\xi
\le \frac{1}{2k}\|f\|_2^2.\label{eq:yukawa-L2-bound}
\end{align}
Let $W$ be the zero extension of $w$ to $\R^N$.  Then
\begin{equation}\label{eq:whole-deficit-W}
\delta_{\R^N}(W)
:=\int_{\R^N}|\nabla W|^2-k^2\mathcal M_1^{\R^N}(W)
=\mathcal E_k(w)-c_k\mathcal B(w)\ge0.
\end{equation}
Finally, the difference between the whole-space reference value and the half-space energy has the exact decomposition
\begin{equation}\label{eq:defect-yukawa-exact}
\mathfrak D(v)
=c_k\bigl(\mathcal B(v)-\mathcal B(w)\bigr)
-\mathcal E_k(z)-\delta_{\R^N}(W).
\end{equation}
\end{lemma}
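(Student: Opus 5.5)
The identities are essentially bookkeeping for the bilinear form $\mathcal E_k(p,q):=\int_{\Hh}(\nabla p\cdot\nabla q+k^2pq)$. I would derive each from the weak Yukawa equation satisfied by $z$ and the whole-space inequality at level $k^2=\Lambda_{\R^N}$.

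\emph{Step 1: the three properties of $z$.} From the explicit Fourier formula \eqref{eq:upper-yukawa-extension}, $z$ lies in $H^1(\Hh)$ whenever $f\in H^{1/2}(\R^{N-1})$, and it solves $(-\Delta+k^2)z=0$ in $\Hh$ in the weak sense. Hence $\mathcal E_k(z,\varphi)=0$ for every $\varphi\in H^1_0(\Hh)$. Applying this with $\varphi=w$ gives the cross term $\mathcal E_k(w,z)=0$, and expanding $\mathcal E_k(w+z)$ yields \eqref{eq:Ek-orthogonal}.

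The formula \eqref{eq:Ek-z-DtN} comes from Plancherel in $x'$. For fixed $\xi$, the function $x_N\mapsto e^{-s x_N}\widehat f$ gives
\[
\int_0^\infty\bigl(|\xi|^2+s^2+k^2\bigr)e^{-2sx_N}\,dx_N\,|\widehat f|^2=\frac{2s^2}{2s}|\widehat f|^2=s|\widehat f|^2,
\]
using $|\xi|^2+k^2=s^2$. This is the same computation as \eqref{eq:yukawa-energy}, reflected to the upper half-space. Similarly,
\[
\int_0^\infty e^{-2sx_N}\,dx_N=\frac{1}{2s},
\]
and $s\ge k$ gives \eqref{eq:yukawa-L2-bound}.

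\emph{Step 2: the deficit of $W$.} Since $w\in H^1_0(\Hh)$, its zero extension $W$ lies in $H^1(\R^N)$, and $\int_{\R^N}|\nabla W|^2=\int_{\Hh}|\nabla w|^2$. Because $W$ vanishes on the lower half-space, the whole-space bathtub value is unchanged: a near-optimal set may be taken inside $\Hh$. This is the same argument used in \eqref{eq:escape-M-exact}. Hence $\mathcal B_1^{\R^N}(W)=\mathcal B(w)$ and $\mathcal M_1^{\R^N}(W)=(\kappa+1)\mathcal B(w)-\|w\|_2^2$.

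Substituting these gives the identity in \eqref{eq:whole-deficit-W}, with $c_k=(\kappa+1)k^2$. The inequality in \eqref{eq:whole-deficit-W} comes from the whole-space variational inequality \eqref{eq:whole-reduced-V} with $V=1$:
\begin{itemize}
\item when $\mathcal M_1^{\R^N}(W)>0$, it gives $\delta_{\R^N}(W)\ge0$ directly;
\item when $\mathcal M_1^{\R^N}(W)\le0$, the deficit is trivially nonnegative.
\end{itemize}

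\emph{Step 3: the exact formula for $\mathfrak D(v)$.} By definition,
\[
\mathfrak D(v)=k^2(\kappa+1)\mathcal B(v)-k^2\|v\|_2^2-\|\nabla v\|_2^2=c_k\mathcal B(v)-\mathcal E_k(v).
\]
Insert \eqref{eq:Ek-orthogonal} to get $\mathcal E_k(v)=\mathcal E_k(w)+\mathcal E_k(z)$. Then replace $\mathcal E_k(w)$ by $\delta_{\R^N}(W)+c_k\mathcal B(w)$, which is \eqref{eq:whole-deficit-W}. This gives \eqref{eq:defect-yukawa-exact}.

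The only step needing care is Step 2. Two points must be checked there:
\begin{itemize}
\item the whole-space bathtub supremum for a function supported in $\Hh$ coincides with the half-space one. This follows because the supremum is $\int_0^1 (W^2)^*$, and $W^2$ and $w^2$ have the same nonzero part of their distribution functions;
\item the weak Yukawa equation for $z$ is legitimate. It can be checked on the Fourier side against $C_c^\infty(\Hh)$ test functions and then extended by density to $H^1_0(\Hh)$.
\end{itemize}
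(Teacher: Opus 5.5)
Your proposal is correct and follows essentially the same route as the paper. You get orthogonality by testing the weak Yukawa equation for $z$ against $w\in H^1_0(\Hh)$, obtain the energy and $L^2$ identities by Plancherel in $x'$, apply the whole-space inequality at volume one to the zero extension $W$, and derive \eqref{eq:defect-yukawa-exact} by pure algebra. Your explicit checks that $\mathcal B_1^{\R^N}(W)=\mathcal B(w)$ and that the case $\mathcal M_1^{\R^N}(W)\le0$ is trivial fill in details the paper leaves implicit.
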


\begin{proof}
Because $z$ solves $(-\Delta+k^2)z=0$ in $\Hh$ and $w$ has zero trace, integration by parts gives
\[
\int_{\Hh}(\nabla w\cdot\nabla z+k^2wz)=0,
\]
which proves \eqref{eq:Ek-orthogonal}.  Formula \eqref{eq:Ek-z-DtN} follows from the Fourier representation \eqref{eq:upper-yukawa-extension}, exactly as in \eqref{eq:yukawa-energy}; integrating $e^{-2s x_N}$ in the normal variable gives \eqref{eq:yukawa-L2-bound}.  Since $w\in H_0^1(\Hh)$, its zero extension belongs to $H^1(\R^N)$, and the whole-space variational inequality at volume one yields \eqref{eq:whole-deficit-W}.  Finally,
\[
\mathfrak D(v)
=c_k\mathcal B(v)-\mathcal E_k(v),
\]
while \eqref{eq:Ek-orthogonal} and \eqref{eq:whole-deficit-W} give
\[
\mathcal E_k(v)
=c_k\mathcal B(w)+\delta_{\R^N}(W)+\mathcal E_k(z),
\]
which is \eqref{eq:defect-yukawa-exact}.
\end{proof}

\begin{proposition}[Estimate of the energy deficit by the boundary trace]\label{prop:sqrt-trace-defect}
There exists a constant $C_*=C_*(N,\kappa)>0$ such that every $v\in H^1(\Hh)$ with $\mathcal M(v)>0$ and $\mathfrak D(v)>0$ satisfies
\begin{equation}\label{eq:sqrt-trace-defect}
\mathfrak D(v)
\le
C_*\,\mathcal M(v)^{1/2}
\|\operatorname{Tr}v\|_{L^2(\partial\Hh)}.
\end{equation}
If $\mathcal M(v)=1$ and $f=\operatorname{Tr}v$, then the decomposition in Lemma~\ref{lem:yukawa-orthogonal} additionally satisfies
\begin{equation}\label{eq:yukawa-small-remainders}
\mathcal E_k(z)+\delta_{\R^N}(W)
\le C_*\|f\|_2.
\end{equation}
\end{proposition}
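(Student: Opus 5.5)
The plan is to use homogeneity to reduce to the normalisation $\mathcal M(v)=1$ and then read everything off the exact identity \eqref{eq:defect-yukawa-exact} of Lemma~\ref{lem:yukawa-orthogonal}.

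\textbf{Reduction.} Both $\mathfrak D(v)$ and $\mathcal M(v)$ are $2$-homogeneous under $v\mapsto cv$ (Lemma~\ref{lem:reduced-halfspace}), and $\mathcal M(v)^{1/2}\|\operatorname{Tr}v\|_2$ is also $2$-homogeneous. It therefore suffices to prove \eqref{eq:sqrt-trace-defect} for $\mathcal M(v)=1$ and $\mathfrak D(v)>0$, with a constant depending only on $N$ and $\kappa$.

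\textbf{Step 1: uniform $H^1$ bound.} With $\mathcal M(v)=1$, the condition $\mathfrak D(v)>0$ reads $\int_{\Hh}|\nabla v|^2<k^2$. The argument of Lemma~\ref{lem:normalised-bounded} (identity \eqref{eq:normalisation-identity} plus Sobolev for $N\ge3$, Gagliardo--Nirenberg and a quadratic inequality for $N=2$) then gives $\|v\|_{H^1(\Hh)}\le C_0(N,\kappa)$. The only input is the energy bound $k^2$, which is a constant of $(N,\kappa)$, so the constant is uniform. By the trace theorem, $\|f\|_2\le C\|v\|_{H^1}\le C_1$.

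\textbf{Step 2: size of the pieces.} By \eqref{eq:yukawa-L2-bound}, $\|z\|_2\le(2k)^{-1/2}\|f\|_2$. Hence $\|w\|_2\le\|v\|_2+\|z\|_2\le C_2$.

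\textbf{Step 3: bathtub difference.} The $L^2$ continuity estimate \eqref{eq:B-L2-continuity}, applied to $v=w+z$, gives
\[
\mathcal B(v)-\mathcal B(w)\le(\|v\|_2+\|w\|_2)\|v-w\|_2=(\|v\|_2+\|w\|_2)\|z\|_2\le C_3\|f\|_2 .
\]

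\textbf{Step 4: conclusion.} Inserting Step~3 into \eqref{eq:defect-yukawa-exact}, and using $\mathcal E_k(z)\ge0$ together with $\delta_{\R^N}(W)\ge0$ from \eqref{eq:whole-deficit-W}, yields $\mathfrak D(v)\le c_kC_3\|f\|_2$. This proves \eqref{eq:sqrt-trace-defect} with $C_*:=c_kC_3$.

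For \eqref{eq:yukawa-small-remainders}, rearrange the same identity as
\[
\mathcal E_k(z)+\delta_{\R^N}(W)=c_k\bigl(\mathcal B(v)-\mathcal B(w)\bigr)-\mathfrak D(v).
\]
Since $\mathfrak D(v)>0$, Step~3 gives $\mathcal E_k(z)+\delta_{\R^N}(W)\le C_*\|f\|_2$. I read ``additionally'' as keeping the standing hypothesis $\mathfrak D(v)>0$. If one wants this bound without it, it still holds whenever the energy is a priori bounded, by the same argument.

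\textbf{Main obstacle.} The step needing care is Step~1: obtaining an $L^2$ bound on $v$ that depends only on $(N,\kappa)$. In two dimensions the bound is not linear: the inequality $1+\|v\|_2^2\le(\kappa+1)C\|v\|_2\|\nabla v\|_2$ must be solved using $\|\nabla v\|_2<k$. Everything else is algebra on the orthogonal decomposition already established.
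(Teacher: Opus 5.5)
Your proposal is correct and follows the paper's proof essentially step for step. You normalise $\mathcal M(v)=1$, use $\int_{\Hh}|\nabla v|^2<k^2$ and the argument of Lemma~\ref{lem:normalised-bounded} for a uniform $H^1$ (hence trace and $\|w\|_2$) bound, control $\mathcal B(v)-\mathcal B(w)$ linearly through $\|z\|_2\lesssim\|f\|_2$, and read both estimates off \eqref{eq:defect-yukawa-exact} using $\mathcal E_k(z),\delta_{\R^N}(W)\ge0$.

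One small correction to your closing aside: \eqref{eq:yukawa-small-remainders} genuinely needs the sign condition $\mathfrak D(v)\ge0$ and does not follow from an energy bound alone. Any zero-trace $v$ with $\mathcal M(v)=1$ and $\int_{\Hh}|\nabla v|^2>k^2$ has $f=0$ and $z=0$, yet $\delta_{\R^N}(W)=-\mathfrak D(v)>0$.
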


\begin{proof}
By homogeneity it is enough to assume $\mathcal M(v)=1$.  Since $\mathfrak D(v)>0$,
\[
\int_{\Hh}|\nabla v|^2<k^2.
\]
The proof of Lemma~\ref{lem:normalised-bounded}, with $\tau=0$ and energy bound $k^2$, therefore gives a uniform bound
\begin{equation}\label{eq:positive-defect-H1-bound}
\|v\|_{H^1(\Hh)}\le C_0(N,\kappa).
\end{equation}
The trace theorem gives a corresponding uniform bound on $\|f\|_2$.  By \eqref{eq:yukawa-L2-bound},
\[
\|z\|_2\le (2k)^{-1/2}\|f\|_2,
\]
and hence $\|w\|_2$ is uniformly bounded as well.  The $L^2$ continuity estimate \eqref{eq:B-L2-continuity} gives
\begin{align*}
|\mathcal B(v)-\mathcal B(w)|
&\le(\|v\|_2+\|w\|_2)\|z\|_2\\
&\le C_1\|f\|_2.
\end{align*}
Using the exact identity \eqref{eq:defect-yukawa-exact} and the nonnegativity of both $\mathcal E_k(z)$ and $\delta_{\R^N}(W)$ yields
\[
0<\mathfrak D(v)\le c_k C_1\|f\|_2,
\]
which proves \eqref{eq:sqrt-trace-defect} under the normalisation $\mathcal M(v)=1$.  The same identity, rearranged as
\[
\mathcal E_k(z)+\delta_{\R^N}(W)
=c_k\bigl(\mathcal B(v)-\mathcal B(w)\bigr)-\mathfrak D(v),
\]
gives \eqref{eq:yukawa-small-remainders}.  Rescaling $v$ by $\mathcal M(v)^{-1/2}$ proves the homogeneous form \eqref{eq:sqrt-trace-defect}.
\end{proof}

\begin{theorem}[Quantitative convergence to the whole-space value for large Robin parameter]\label{thm:large-tau-rate}
There exists $C_*=C_*(N,\kappa)>0$ such that, for every $\tau>0$,
\begin{equation}\label{eq:large-tau-rate}
0\le
\Lambda_{\R^N}-\Lambda_{\Hh}(\tau)
\le
\frac{C_*^2}{4\tau}.
\end{equation}
In particular,
\[
\Lambda_{\Hh}(\tau)\longrightarrow\Lambda_{\R^N}
\qquad\text{as }\tau\to\infty.
\]
Whenever $\Lambda_{\Hh}(\tau)<\Lambda_{\R^N}$ and $v_\tau$ is a minimiser normalised by $\mathcal M(v_\tau)=1$, its trace $f_\tau$ satisfies
\begin{equation}\label{eq:trace-rate}
\|f_\tau\|_2\le\frac{C_*}{\tau},
\end{equation}
and, with the Yukawa decomposition of Lemma~\ref{lem:yukawa-orthogonal},
\begin{equation}\label{eq:yukawa-remainder-rate}
\mathcal E_k(z_\tau)+\delta_{\R^N}(W_\tau)
\le\frac{C_*^2}{\tau}.
\end{equation}
\end{theorem}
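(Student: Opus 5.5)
The estimate should follow directly from Proposition~\ref{prop:sqrt-trace-defect} together with the Robin trace term. The idea is to balance a defect that is linear in $\|f\|_2$ against a cost that is quadratic in $\|f\|_2$.

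The lower bound $0\le\Lambda_{\R^N}-\Lambda_{\Hh}(\tau)$ is already in Proposition~\ref{prop:halfspace-basic}. For the upper bound, take any normalised competitor with $\mathcal M(v)=1$ and trace $f$, and write its quotient as
\[
\int_{\Hh}|\nabla v|^2+\tau\|f\|_2^2=\Lambda_{\R^N}-\mathfrak D(v)+\tau\|f\|_2^2 .
\]
If $\mathfrak D(v)\le0$, the quotient is at least $\Lambda_{\R^N}$. Otherwise \eqref{eq:sqrt-trace-defect} gives $\mathfrak D(v)\le C_*\|f\|_2$, so the quotient is at least
\[
\Lambda_{\R^N}-C_*\|f\|_2+\tau\|f\|_2^2\ \ge\ \Lambda_{\R^N}-\frac{C_*^2}{4\tau},
\]
by minimising the quadratic in $\|f\|_2$. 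Taking the infimum over $v$ proves \eqref{eq:large-tau-rate}, and the convergence as $\tau\to\infty$ follows immediately.

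For a minimiser $v_\tau$ with $\Lambda_{\Hh}(\tau)<\Lambda_{\R^N}$, attainment is available from Theorem~\ref{thm:strict-gap-attainment}. The strict gap means $\mathfrak D(v_\tau)>\tau\|f_\tau\|_2^2$, so in particular $\mathfrak D(v_\tau)>0$ and Proposition~\ref{prop:sqrt-trace-defect} applies. It gives
\[
\tau\|f_\tau\|_2^2<\mathfrak D(v_\tau)\le C_*\|f_\tau\|_2 .
\]
If $f_\tau=0$, then Proposition~\ref{prop:tau-star-trace-defect} forces $\mathfrak D\le0$, contradicting the strict gap; so $f_\tau\ne0$, and dividing by $\|f_\tau\|_2$ yields \eqref{eq:trace-rate}. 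Finally, \eqref{eq:yukawa-small-remainders} gives
\[
\mathcal E_k(z_\tau)+\delta_{\R^N}(W_\tau)\le C_*\|f_\tau\|_2\le \frac{C_*^2}{\tau},
\]
which is \eqref{eq:yukawa-remainder-rate}.

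The main point needing care is that the constant $C_*$ is the same one in all three estimates. This holds because Proposition~\ref{prop:sqrt-trace-defect} supplies a single constant serving both \eqref{eq:sqrt-trace-defect} and \eqref{eq:yukawa-small-remainders}. Beyond that, the only substantive input is that the uniform $H^1$ bound in that proposition is valid exactly when the defect is positive, which is the case in every branch used above.
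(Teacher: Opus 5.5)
Your proof is correct and uses the paper's argument: the defect bound $\mathfrak D(v)\le C_*\|f\|_2$ from Proposition~\ref{prop:sqrt-trace-defect} is balanced against the Robin cost $\tau\|f\|_2^2$, and the trace and remainder rates follow exactly as in the paper. The only difference is that you minimise the quadratic over arbitrary normalised competitors, splitting on the sign of $\mathfrak D$, so the gap estimate \eqref{eq:large-tau-rate} does not need attainment; the paper instead applies the bound to the minimiser from Theorem~\ref{thm:strict-gap-attainment}, so your first step is slightly more elementary, and your separate check that $f_\tau\neq0$ is harmless but unnecessary.
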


\begin{proof}
If $\Lambda_{\Hh}(\tau)=k^2$, there is nothing to prove.  Otherwise Theorem~\ref{thm:strict-gap-attainment} supplies a minimiser $v_\tau$ with $\mathcal M(v_\tau)=1$.  Put $t=\|\operatorname{Tr}v_\tau\|_2$ and $\mathfrak D=\mathfrak D(v_\tau)$.  Since
\[
k^2-\Lambda_{\Hh}(\tau)
=\mathfrak D-\tau t^2>0,
\]
Proposition~\ref{prop:sqrt-trace-defect} gives
\[
0<k^2-\Lambda_{\Hh}(\tau)
\le C_*t-\tau t^2
\le\frac{C_*^2}{4\tau},
\]
which is \eqref{eq:large-tau-rate}.  The positivity of the left-hand side also implies
\[
\tau t^2<\mathfrak D\le C_*t,
\]
so \eqref{eq:trace-rate} follows.  Finally \eqref{eq:yukawa-remainder-rate} is a direct consequence of \eqref{eq:yukawa-small-remainders} and \eqref{eq:trace-rate}.
\end{proof}

The preceding theorem is stronger than a merely qualitative large-$\tau$ compactness statement and is the form used below.

\begin{proposition}[Whole-space compactness at the optimal level]\label{prop:whole-space-compactness}
Let $U_n\in H^1(\R^N)$ satisfy
\[
\mathcal M_1^{\R^N}(U_n)=1,
\qquad
\int_{\R^N}|\nabla U_n|^2\longrightarrow k^2.
\]
Then there exist translations $x_n\in\R^N$ and a whole-space optimiser $\Phi$ such that, after passing to a subsequence,
\begin{equation}\label{eq:whole-space-compactness}
U_n(x_n+\cdot)\longrightarrow\Phi
\qquad\text{strongly in }H^1(\R^N).
\end{equation}
By the known rearrangement characterisation, $\Phi$ may be chosen positive and radial and its favourable bathtub set is the ball $B_{R_1}$.  If $A_n$ are bathtub maximisers for $U_n^2$, then the translations may be chosen so that
\begin{equation}\label{eq:whole-bathtub-set-convergence}
|(A_n-x_n)\triangle B_{R_1}|\longrightarrow0.
\end{equation}
\end{proposition}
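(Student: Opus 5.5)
We will run Lions' concentration--compactness argument in the whole space, in the same way as in the proof of Theorem~\ref{thm:strict-gap-attainment}, with the boundary removed and translations now allowed in all directions. We then identify the limit using Proposition~\ref{prop:whole-space-classification}, and obtain the set convergence from Lemma~\ref{lem:bathtub-stability}.

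\emph{Boundedness.} The argument of Lemma~\ref{lem:normalised-bounded} works verbatim in $\R^N$. The normalisation gives $1+\|U_n\|_2^2=(\kappa+1)\mathcal B_1^{\R^N}(U_n)$, and Sobolev (for $N\ge3$) or Gagliardo--Nirenberg (for $N=2$) bounds this by a function of $\|\nabla U_n\|_2$ and $\|U_n\|_2$. Since $\|\nabla U_n\|_2^2\to k^2$, this bounds $\|U_n\|_{H^1}$. We now apply concentration--compactness to the measures $(|\nabla U_n|^2+U_n^2)\,dx$.

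\emph{Excluding vanishing and dichotomy.} Vanishing forces $U_n\to0$ in $L^p$ for $2<p<2^*$, hence $\mathcal B_1^{\R^N}(U_n)\to0$. This contradicts $\mathcal M_1^{\R^N}(U_n)=1$.

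Under dichotomy we write $U_n=u_n+w_n+o_{H^1}(1)$ with separated supports and additive gradient energy. The whole-space analogue of Lemma~\ref{lem:asymptotic-B-splitting} (same proof) produces volume fractions $a_n\to a\in[0,1]$ with $\mathcal M_{a_n}(u_n)+\mathcal M_{1-a_n}(w_n)=1+o(1)$.

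If $0<a<1$, the strict subhomogeneity \eqref{eq:whole-strict-resource}, $\Lambda_{\R^N}(V)=V^{-2/N}k^2$, applies to each piece that carries positive reduced mass. Pieces with nonpositive reduced mass only add energy. Together this gives $\liminf\|\nabla U_n\|_2^2\ge\min(a^{-2/N},(1-a)^{-2/N})\,k^2>k^2$, a contradiction.

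If $a=1$, then $\mathcal M_{1-a_n}(w_n)=-\|w_n\|_2^2+o(1)$, by the uniform small-volume bound \eqref{eq:B-small-volume-uniform}. We therefore obtain $\|\nabla U_n\|_2^2\ge k^2(1+\|w_n\|_2^2)+\|\nabla w_n\|_2^2+o(1)$. The extra terms are bounded below by nontriviality of $w_n$, which is again a contradiction. The case $a=0$ is symmetric.

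\emph{Compactness.} There remain translations $x_n$ making $U_n(x_n+\cdot)$ tight. Rellich's theorem on balls together with tightness gives strong $L^2$ convergence to some $\Phi$, and weak convergence in $H^1$. By the $L^2$ continuity \eqref{eq:B-V-L2-continuity}, $\mathcal M_1^{\R^N}(\Phi)=1$. Lower semicontinuity and the definition of $k^2$ give $\|\nabla\Phi\|_2^2=k^2$. Hence $\Phi$ is an optimiser, and convergence of the norms upgrades weak to strong $H^1$ convergence.

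We may replace $U_n$ by $|U_n|$ from the outset, since this leaves $\mathcal M$ unchanged and does not increase the energy. Then $\Phi\ge0$, and by Proposition~\ref{prop:whole-space-classification} it is the radial profile with bathtub set $B_{R_1}$. Any residual translation of the centre is absorbed into $x_n$.

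\emph{Set convergence.} We apply Lemma~\ref{lem:bathtub-stability} with $g_n=U_n(x_n+\cdot)^2$ and $g=\Phi^2$. Here $g_n\to g$ in $L^1$ by strong $L^2$ convergence, and $E=\{\Phi^2>\Phi(R_1)^2\}=B_{R_1}$ has $|E|=1$ and a null level set, by the strict radial monotonicity in Proposition~\ref{prop:whole-space-classification}. Since $A_n-x_n$ are exact bathtub maximisers for $g_n$, the lemma gives \eqref{eq:whole-bathtub-set-convergence}.

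The main obstacle is the dichotomy step, specifically the endpoint cases $a\in\{0,1\}$ and pieces with nonpositive reduced mass. There the subhomogeneity gain degenerates, and the contradiction must come from the leftover piece contributing a positive $L^2$ mass and positive gradient energy, via the uniform small-volume bound.
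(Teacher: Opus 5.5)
Your proposal is correct and follows essentially the same route as the paper's proof. The shared steps are a uniform $H^1$ bound from the normalisation, and concentration--compactness in which vanishing is excluded by $\mathcal M_1^{\R^N}(U_n)=1$ and dichotomy is excluded by the exact scaling $\Lambda_{\R^N}(V)=V^{-2/N}k^2$ together with the uniform small-volume bathtub bound at endpoint allocations. These are followed by Rellich plus tightness, lower semicontinuity, Proposition~\ref{prop:whole-space-classification}, and Lemma~\ref{lem:bathtub-stability}.

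One small remark: do not replace $U_n$ by $|U_n|$, since that changes the sequence whose convergence is claimed. Instead, note that the limit of $U_n(x_n+\cdot)$ is itself an optimiser, and $|\Phi|$ is then also an optimiser, strictly positive by Harnack. Hence $\Phi$ has no zeros and constant sign, so $U_n(x_n+\cdot)\to\pm\Phi$, and the set statement, which involves only $U_n^2$, is unaffected.
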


\begin{proof}
The argument is the whole-space version of the concentration--compactness proof of Theorem~\ref{thm:strict-gap-attainment}.  The normalisation and Sobolev (or, for $N=2$, Gagliardo--Nirenberg) estimates give a uniform $H^1$ bound.  Vanishing would imply $\mathcal B_1^{\R^N}(U_n)\to0$ and contradict $\mathcal M_1^{\R^N}(U_n)=1$.  Dichotomy is excluded by the exact scaling
\(
\Lambda_{\R^N}(V)=V^{-2/N}k^2
\): if both pieces receive nontrivial favourable volume, strict subhomogeneity raises the energy above $k^2$; if one volume share tends to zero, the small-volume bathtub estimate forces that piece to retain a strictly positive $H^1$ remainder.  Hence, after translations, the sequence is tight.

Local Rellich compactness and tightness then give strong $L^2$ convergence to some $\Phi$, continuity of the bathtub functional gives $\mathcal M_1^{\R^N}(\Phi)=1$, and lower semicontinuity yields
\(
 k^2\le\|\nabla\Phi\|_2^2\le\liminf_n\|\nabla U_n\|_2^2=k^2.
\)
Thus convergence is strong in $H^1$ and $\Phi$ is a whole-space optimiser.  Proposition~\ref{prop:whole-space-classification} identifies its bathtub set, up to translation, with $B_{R_1}$; strong $L^2$ convergence and Lemma~\ref{lem:bathtub-stability} then give \eqref{eq:whole-bathtub-set-convergence}.
\end{proof}

\begin{theorem}[Asymptotic structure of large-Robin optimisers below the whole-space value]\label{thm:two-scale-rigidity}
Let $\tau_n\to\infty$ and suppose
\[
\Lambda_{\Hh}(\tau_n)<\Lambda_{\R^N}
\qquad\text{for every }n.
\]
Let $v_n$ be corresponding nonnegative minimisers normalised by $\mathcal M(v_n)=1$, let $f_n=\operatorname{Tr}v_n$, and write
\[
z_n=\mathcal Y_k f_n,
\qquad
w_n=v_n-z_n,
\]
with $W_n$ the zero extension of $w_n$ to $\R^N$.  Then
\begin{align}
\|f_n\|_2&=O(\tau_n^{-1}),\label{eq:two-scale-trace}\\
\|z_n\|_{H^1(\Hh)}&=O(\tau_n^{-1/2}),\label{eq:two-scale-z}\\
\mathcal M_1^{\R^N}(W_n)&=1+O(\tau_n^{-1}),\label{eq:two-scale-MW}\\
\int_{\R^N}|\nabla W_n|^2
-k^2\mathcal M_1^{\R^N}(W_n)&=O(\tau_n^{-1}).\label{eq:two-scale-deficit}
\end{align}
Consequently, after multiplying $W_n$ by $1+O(\tau_n^{-1})$ and passing to a subsequence, there exist points
\[
x_n=(x_n',h_n)\in\Hh,
\qquad h_n\to\infty,
\]
and a positive radial whole-space optimiser $\Phi$ such that
\begin{equation}\label{eq:two-scale-ball-convergence}
W_n(x_n+\cdot)\longrightarrow\Phi
\qquad\text{strongly in }H^1(\R^N).
\end{equation}
Moreover, if $A_n$ denotes the optimal favourable set associated with $v_n$, then
\begin{equation}\label{eq:two-scale-set-convergence}
|(A_n-x_n)\triangle B_{R_1}|\longrightarrow0.
\end{equation}
Thus, if optimisers with value below the whole-space optimum were to persist for arbitrarily large $\tau$, their only possible leading-order structure would be a whole-space ball moving away from the boundary, together with an $H^1$-vanishing boundary correction.
\end{theorem}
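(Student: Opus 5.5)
The first two estimates come from results already proved. Theorem~\ref{thm:large-tau-rate} gives \eqref{eq:two-scale-trace} directly, through \eqref{eq:trace-rate}. It also gives $\mathcal E_k(z_n)+\delta_{\R^N}(W_n)\le C_*^2/\tau_n$. Since $k>0$, $\mathcal E_k(z_n)$ dominates $\min\{1,k^2\}\|z_n\|_{H^1}^2$, and this yields \eqref{eq:two-scale-z}. The same bound controls $\delta_{\R^N}(W_n)$, which by \eqref{eq:whole-deficit-W} is the left-hand side of \eqref{eq:two-scale-deficit}.

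For \eqref{eq:two-scale-MW}, note that $W_n$ is the zero extension of $w_n$. Hence $\mathcal M_1^{\R^N}(W_n)=\mathcal M(w_n)$, because the bathtub functional is unchanged by adding a region where the function vanishes. By \eqref{eq:B-L2-continuity},
\[
|\mathcal M(v_n)-\mathcal M(w_n)|\le(\kappa+2)(\|v_n\|_2+\|w_n\|_2)\|z_n\|_2 .
\]
The $H^1$ bound \eqref{eq:positive-defect-H1-bound} controls $\|v_n\|_2$ and $\|w_n\|_2$. From \eqref{eq:yukawa-L2-bound}, $\|z_n\|_2\le(2k)^{-1/2}\|f_n\|_2=O(\tau_n^{-1})$. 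This gives $\mathcal M(w_n)=1+O(\tau_n^{-1})$.

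Next, set $U_n:=\mathcal M(w_n)^{-1/2}W_n$, so that $\mathcal M_1^{\R^N}(U_n)=1$. Dividing \eqref{eq:two-scale-deficit} by $\mathcal M(w_n)$ gives $\|\nabla U_n\|_2^2\to k^2$. Proposition~\ref{prop:whole-space-compactness} then provides translations $x_n=(x_n',h_n)$ and a positive radial optimiser $\Phi$ with $U_n(x_n+\cdot)\to\Phi$ in $H^1(\R^N)$, which is \eqref{eq:two-scale-ball-convergence}.

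The step I expect to be the main obstacle is proving $h_n\to\infty$. Each $U_n$ vanishes on $L=\{x_N<0\}$. If a subsequence had $h_n\to h_\infty\in\R$, the strong $L^2$ limit $\Phi$ would vanish a.e.\ on the half-space $\{x_N<-h_\infty\}$. This contradicts $\Phi>0$ everywhere, since $\Phi$ is a positive radial strictly decreasing profile with Bessel-type tail by Proposition~\ref{prop:whole-space-classification}. If instead $h_n\to-\infty$ along a subsequence, $\Phi$ would vanish on sets exhausting $\R^N$, which is impossible. So $h_n\to+\infty$, and for large $n$ we may take $x_n\in\Hh$.

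For the sets, let $A_n$ be the optimal set of $v_n$. It is an exact bathtub maximiser for $v_n^2$, and $\|v_n^2-W_n^2\|_{L^1}=O(\tau_n^{-1})$ on $\Hh$. Therefore $A_n$ is an almost-maximiser for $U_n^2$, with deficit $o(1)$. Translating by $x_n$, we have $U_n(x_n+\cdot)^2\to\Phi^2$ in $L^1$. The bathtub set of $\Phi^2$ is $B_{R_1}$ with negligible threshold level, by Proposition~\ref{prop:whole-space-classification}. Lemma~\ref{lem:bathtub-stability} in $\R^N$, whose deficit hypothesis \eqref{eq:bathtub-stability-deficit} is exactly what was just checked, then yields \eqref{eq:two-scale-set-convergence}.
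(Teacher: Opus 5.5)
Your proposal is correct and follows the paper's proof essentially step by step. It obtains the four estimates from Theorem~\ref{thm:large-tau-rate}, the Yukawa $L^2$ bound and the $L^2$-continuity of $\mathcal B$, then uses scalar renormalisation with Proposition~\ref{prop:whole-space-compactness}, the same vanishing-on-a-half-space argument for $h_n\to+\infty$, and Lemma~\ref{lem:bathtub-stability} for the sets. The only cosmetic difference is in the last step: you apply the stability lemma to $U_n^2$ with $A_n$ as an almost-maximiser, whereas the paper applies it to the zero extension of $v_n^2$, for which $A_n$ is an exact maximiser.
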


\begin{proof}
The trace estimate \eqref{eq:two-scale-trace} is \eqref{eq:trace-rate}, and \eqref{eq:two-scale-z} follows from \eqref{eq:yukawa-remainder-rate} because $\mathcal E_k$ controls the $H^1$ norm.  Since $\|z_n\|_2=O(\tau_n^{-1})$ by \eqref{eq:yukawa-L2-bound}, the $L^2$ continuity of $\mathcal B$ and the uniform $L^2$ bounds give
\[
\mathcal M_1^{\R^N}(W_n)
=\mathcal M(w_n)
=\mathcal M(v_n)+O(\tau_n^{-1})
=1+O(\tau_n^{-1}),
\]
which proves \eqref{eq:two-scale-MW}.  Estimate \eqref{eq:two-scale-deficit} is exactly the second nonnegative term in \eqref{eq:yukawa-remainder-rate}.  After a scalar renormalisation imposing $\mathcal M_1^{\R^N}=1$, Proposition~\ref{prop:whole-space-compactness} yields translations $x_n$ and \eqref{eq:two-scale-ball-convergence}.

It remains to determine the normal component of $x_n$.  Since $W_n$ is identically zero on $\{x_N<0\}$, the translated function $W_n(x_n+\cdot)$ is zero on the half-space $\{y_N<-h_n\}$.  If $(h_n)$ had a bounded subsequence, the strong limit $\Phi$ would vanish on a nonempty lower half-space, contradicting positivity of the whole-space principal optimiser.  If $h_n\to-\infty$, the translated functions would vanish on every fixed compact set for large $n$, again contradicting \eqref{eq:two-scale-ball-convergence}.  Hence $h_n\to+\infty$.

Finally, extend $v_n$ by zero merely as an $L^2$ function.  Its $L^2$ distance from $W_n$ is $\|z_n\|_2=O(\tau_n^{-1})$.  Therefore the translated squared densities converge in $L^1$ to $\Phi^2$.  The sets $A_n$ are bathtub maximisers for these densities; since the limiting radial profile has a unique volume-one superlevel ball and no plateau at the threshold, the bathtub stability argument from Proposition~\ref{prop:whole-space-compactness} gives \eqref{eq:two-scale-set-convergence}.
\end{proof}

\subsection{Normal translation, boundary contact, and finiteness of the threshold}\label{subsec:finite-detachment}

The finiteness of the threshold follows from a geometric translation argument.  An optimiser with value below $k^2$ whose favourable set stays a positive distance from the boundary admits a one-sided normal translation toward the boundary.  The first variation of that translation is incompatible, at large Robin parameter, with the whole-space lower bound.  Thus every large-$\tau$ optimiser with value below $k^2$ must retain boundary contact.  The asymptotic description above then confines any remaining contact to a vanishing secondary component while the main ball-like core moves away from the boundary.  A blow-up at a boundary maximum on the Robin length scale $\tau^{-1}$ excludes this possibility.

\begin{lemma}[One-sided normal translation away from the boundary]\label{lem:detached-translation}
Let $\tau>0$ satisfy
\[
\lambda:=\Lambda_{\Hh}(\tau)<k^2,
\]
and let $(A,v)$ be a nonnegative optimiser, normalised by
\[
\int_{\Hh}m_Av^2=1.
\]
Write $f=\operatorname{Tr}v$.  If there exists $d>0$ such that
\begin{equation}\label{eq:detached-hostile-strip}
|A\cap\{0<x_N<d\}|=0,
\end{equation}
then $f\in H^1(\R^{N-1})$ and
\begin{equation}\label{eq:detached-frequency-upper}
\int_{\R^{N-1}}\bigl(\lambda+|\xi|^2\bigr)|\widehat f(\xi)|^2\,d\xi
\le
\tau^2\|f\|_2^2.
\end{equation}
\end{lemma}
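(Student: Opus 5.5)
The plan is to compare the optimiser with its own normal translation toward the boundary and read off the first variation. Since \eqref{eq:detached-hostile-strip} holds, in the strip $S_d:=\{0<x_N<d\}$ we have $m_A=-1$. So \eqref{eq:EL-strong-formal} says that $v$ solves the Yukawa-type equation $\Delta v=\lambda v$ in $S_d$, with the Robin condition $\partial_Nv=\tau v$ on $\partial\Hh$ (outward normal $-e_N$).

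\textbf{Step 1 (regularity in the strip).} Tangential difference quotients, applied to the weak formulation \eqref{eq:EL-weak-halfspace} tested with functions supported in $\{x_N<d\}$, should give $\nabla' v\in H^1(S_{d/2})$ globally in $x'$. The equation then gives $\partial_N^2v=\lambda v-\Delta'v\in L^2(S_{d/2})$, so $v\in H^2(S_{d/2})$. Two consequences follow. First, $f=\operatorname{Tr}v\in H^1(\R^{N-1})$, which is the first claim. Second, the Robin condition holds in the trace sense, $\operatorname{Tr}\partial_Nv=\tau f$. Moreover the slice maps $s\mapsto v(\cdot,s)$ and $s\mapsto\nabla v(\cdot,s)$ are continuous into $L^2(\R^{N-1})$ for $s\in[0,d/2]$. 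I expect this step to be the main technical obstacle, mainly because the estimates must be global in $x'$ on an unbounded strip; the rest is an exact computation.

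\textbf{Step 2 (competitor).} For $0<s<d$, set $v_s(x',x_N):=v(x',x_N+s)$ and $A_s:=A-se_N$. By \eqref{eq:detached-hostile-strip}, $A_s\subset\Hh$ up to null sets and $|A_s|=1$. Put $g(s):=\int_{\R^{N-1}}v(x',s)^2\,dx'$. Since $m_A=-1$ on $S_s$,
\[
\int_{\Hh}m_{A_s}v_s^2=1+\int_0^s g(t)\,dt,\qquad
\int_{\Hh}|\nabla v_s|^2=\int_{\Hh}|\nabla v|^2-\int_{S_s}|\nabla v|^2 .
\]
The trace term of $v_s$ is $\tau g(s)$. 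Minimality of $\lambda$ gives
\[
\lambda+\tau g(0)-\int_{S_s}|\nabla v|^2+\tau g(s)-\tau g(0)\ \ge\ \lambda\Bigl(1+\int_0^s g\Bigr),
\]
which rearranges to
\[
\tau\bigl(g(s)-g(0)\bigr)\ge\int_{S_s}\bigl(|\nabla v|^2+\lambda v^2\bigr).
\]
Here I used that the energy of $v$ equals $\lambda+\tau g(0)$, by the normalisation.

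\textbf{Step 3 (limit $s\downarrow0$).} Divide by $s$ and use the continuity from Step 1. On the right this yields $\int_{\R^{N-1}}\bigl(|\nabla'f|^2+|\operatorname{Tr}\partial_Nv|^2+\lambda f^2\bigr)$. On the left, $g(s)-g(0)=2\int_0^s\!\int v\,\partial_Nv$, so the limit is $2\tau\int f\,\operatorname{Tr}\partial_Nv$. Substituting $\operatorname{Tr}\partial_Nv=\tau f$ gives
\[
2\tau^2\|f\|_2^2\ \ge\ \|\nabla'f\|_2^2+\tau^2\|f\|_2^2+\lambda\|f\|_2^2 .
\]
This is $\int(\lambda+|\xi|^2)|\widehat f|^2\le\tau^2\|f\|_2^2$ by Plancherel, i.e.\ \eqref{eq:detached-frequency-upper}.
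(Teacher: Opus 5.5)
Your proposal is correct and follows the paper's proof. It uses the same normal-translation competitor $(A-se_N,\,v(\cdot,\cdot+s))$ and the same one-sided first variation at $s=0$; you use $N(s)\ge\lambda D(s)$ and divide by $s$ instead of differentiating the quotient, which is equivalent, and you spell out via tangential difference quotients the strip regularity that the paper only calls standard. There is one bookkeeping slip: the normalisation gives $\int_{\Hh}|\nabla v|^2+\tau g(0)=\lambda$, so the left side of your first display in Step 2 should be $\lambda-\int_{S_s}|\nabla v|^2+\tau\bigl(g(s)-g(0)\bigr)$, without the extra $+\tau g(0)$, although the rearranged inequality you state is the correct one.
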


\begin{proof}
Fix $d>0$ for which \eqref{eq:detached-hostile-strip} holds.  In the strip $0<x_N<d$ the coefficient is identically $-1$, hence
\[
(-\Delta+\lambda)v=0.
\]
Standard elliptic regularity for the constant-coefficient Robin problem in a boundary strip gives $f\in H^1(\R^{N-1})$ and justifies the following differentiation.

For $0<s<d$ define
\[
A_s:=A-se_N,
\qquad
v_s(x',x_N):=v(x',x_N+s).
\]
Then $A_s\subset\Hh$ and $(A_s,v_s)$ is admissible.  Set
\[
D(s):=\int_{\Hh}m_{A_s}v_s^2,
\qquad
N(s):=\int_{\Hh}|\nabla v_s|^2
+\tau\int_{\partial\Hh}v_s^2.
\]
Changing variables $y_N=x_N+s$ and using $m_A=-1$ on $0<y_N<d$ gives
\[
D(s)
=1+\int_{0<y_N<s}v(y)^2\,dy,
\]
so that
\begin{equation}\label{eq:normal-shift-Dprime}
D'(0+)=\|f\|_2^2.
\end{equation}
Similarly,
\[
N(s)
=\int_{y_N>s}|\nabla v(y)|^2\,dy
+\tau\int_{\R^{N-1}}v(x',s)^2\,dx'.
\]
Since the Robin condition in inward-normal coordinates is
\[
\partial_{x_N}v(\cdot,0)=\tau f,
\]
we obtain
\begin{align}
N'(0+)
&=-\int_{\R^{N-1}}
\left(|\nabla_{x'}f|^2+|\partial_{x_N}v(\cdot,0)|^2\right)
+2\tau\int_{\R^{N-1}}f\,\partial_{x_N}v(\cdot,0)\notag\\
&=-\|\nabla_{x'}f\|_2^2+\tau^2\|f\|_2^2.
\label{eq:normal-shift-Nprime}
\end{align}
The quotient $N(s)/D(s)$ is bounded below by the optimal value $\lambda$ and equals $\lambda$ at $s=0$.  Hence its right derivative is nonnegative.  Combining \eqref{eq:normal-shift-Dprime} and \eqref{eq:normal-shift-Nprime} yields
\[
\|\nabla_{x'}f\|_2^2
\le
(\tau^2-\lambda)\|f\|_2^2,
\]
which is exactly \eqref{eq:detached-frequency-upper} by Plancherel.
\end{proof}

\begin{proposition}[Boundary contact for large-$\tau$ optimisers below the whole-space value]\label{prop:large-tau-contact}
There exists $\tau_{\rm c}=\tau_{\rm c}(N,\kappa)<\infty$ with the following property.  If $\tau\ge\tau_{\rm c}$ and
\[
\Lambda_{\Hh}(\tau)<k^2,
\]
then every optimal favourable set $A$ associated with a normalised nonnegative minimiser has essential boundary contact, in the sense that
\begin{equation}\label{eq:contact-boundary}
|A\cap\{0<x_N<d\}|>0
\qquad\text{for every }d>0.
\end{equation}
\end{proposition}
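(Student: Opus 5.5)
We argue by contradiction.

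\emph{Setup.} Suppose there are $\tau_n\to\infty$ with $\lambda_n:=\Lambda_{\Hh}(\tau_n)<k^2$, nonnegative normalised optimisers $(A_n,v_n)$ and widths $d_n>0$ with $|A_n\cap\{0<x_N<d_n\}|=0$. Write $f_n=\operatorname{Tr}v_n$. Lemma~\ref{lem:detached-translation} applies to each $n$ and gives $f_n\in H^1(\R^{N-1})$ together with the frequency bound
\[
\int(\lambda_n+|\xi|^2)|\widehat f_n|^2\le\tau_n^2\|f_n\|_2^2 .
\]
From Theorem~\ref{thm:two-scale-rigidity} we also have:
\begin{itemize}
\item $\|f_n\|_2=O(\tau_n^{-1})$ and $\|z_n\|_{H^1}=O(\tau_n^{-1/2})$;
\item the zero-trace part $W_n$ converges, after translation by $x_n$ with $h_n\to\infty$, to the radial ball profile $\Phi$;
\item $A_n$ converges to $B_{R_1}+x_n$ in measure.
\end{itemize}
Moreover $\lambda_n\to k^2$ by Theorem~\ref{thm:large-tau-rate}. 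The identity $k^2-\lambda_n=\mathfrak D(v_n)-\tau_n\|f_n\|_2^2>0$ is the quantity we aim to contradict. Note that $f_n\not\equiv0$: otherwise Proposition~\ref{prop:tau-star-trace-defect} gives $\mathfrak D(v_n)\le0$, hence $\lambda_n\ge k^2$.

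\emph{Key step: a better extension across $\partial\Hh$.} The estimate $\mathfrak D\le\langle f,S_kf\rangle$ of Proposition~\ref{prop:DtN-defect} uses the decaying Yukawa extension. By Jensen's inequality and the frequency bound it yields only
\[
\mathfrak D(v_n)\le\|f_n\|_2^2\sqrt{k^2+\tau_n^2-\lambda_n},
\]
which does not contradict $\mathfrak D(v_n)>\tau_n\|f_n\|_2^2$. We therefore replace the Yukawa extension by a cheaper one adapted to the detachment.

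In the hostile strip $v_n$ solves $(-\Delta+\lambda_n)v_n=0$ with $\partial_{x_N}v_n=\tau_nf_n$ at $x_N=0$. Mode by mode, with $\mu=\sqrt{\lambda_n+|\xi|^2}\le\tau_n$, this gives
\[
\widehat v_n(\xi,x_N)=\widehat f_n(\xi)\Bigl(\cosh\mu x_N+\tfrac{\tau_n}{\mu}\sinh\mu x_N\Bigr).
\]
The same formula continues $v_n$ to $x_N<0$. There it decreases until $x_N=-\mu^{-1}\operatorname{artanh}(\mu/\tau_n)$. We stop the continuation at that point (a minimum, so the derivative vanishes) and then continue by a Yukawa tail, or simply cut it off.

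Call the resulting whole-space competitor $\widetilde v_n$, with favourable set $A_n$ and weight $m=-1$ below $\partial\Hh$. We then compare its Rayleigh quotient with $k^2$. Using the equation and Green's identity in the continued region, the lower-half-space contribution should equal $-\tau_n\|f_n\|_2^2$ plus a boundary term at the stopping level. The weight difference between $-\lambda_n$ and the true value $-1$ must also be accounted for; this is harmless because $\lambda_n\to k^2$. The aim is the inequality
\[
\int_{\R^N}|\nabla\widetilde v_n|^2-k^2\int m\,\widetilde v_n^2\le -(k^2-\lambda_n)-c\,\tau_n^{-1}\|f_n\|_2^2\cdot(\text{something positive}),
\]
which contradicts the whole-space inequality $\int|\nabla w|^2\ge k^2\int m_Aw^2$.

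\emph{Main obstacle and fallback.} The delicate part is the bookkeeping in this one-dimensional mode computation. The cost of the continuation per mode is roughly $\|\widehat f\|^2\bigl(\mu\tanh(\cdot)-\tau\bigr)$, and it has to be shown strictly below the gain $\tau_n\|f_n\|^2$ uniformly for $\mu\le\tau_n$. Here the exact borderline $\mu=\tau_n$ has to be handled, which is where $\lambda_n>0$ enters.

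If the explicit continuation does not close, we would instead blow up at the Robin scale. Rescale by $\tau_n^{-1}$ around a near-maximum point of $|f_n|$ and normalise by $\|f_n\|$. In the limit we get a nonzero solution on a half-space of $-\Delta V=0$ (since $\lambda_n/\tau_n^2\to0$), with $\partial_NV=V$ on the boundary and trace frequency support in $|\xi|\le1$. We then use the limit of the translation inequality to show that such a $V$ cannot have the bounded-growth behaviour forced by $\|z_n\|_{H^1}=O(\tau_n^{-1/2})$. This yields the contradiction and hence the existence of $\tau_{\rm c}$.
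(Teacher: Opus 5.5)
Your proposal has a genuine gap: the decisive inequality is only announced (``should equal'', ``the aim is''), and the construction meant to produce it rests on incorrect claims. First, Lemma~\ref{lem:detached-translation} gives only the averaged bound $\int(\lambda_n+|\xi|^2)|\widehat f_n|^2\le\tau_n^2\|f_n\|_2^2$. It does not give $\mu=\sqrt{\lambda_n+|\xi|^2}\le\tau_n$ on the support of $\widehat f_n$, so your mode-by-mode continuation leaves modes with $\mu>\tau_n$ unhandled. The same objection applies to the ``frequency support in $|\xi|\le1$'' in your fallback, which is otherwise too vague to check. Second, for $\mu<\tau_n$ the profile $g(t)=\cosh\mu t+(\tau_n/\mu)\sinh\mu t$ satisfies $g'(t)\ge(\tau_n-\mu)\cosh\mu t>0$ on all of $\R$. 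So $t_0=-\mu^{-1}\operatorname{artanh}(\mu/\tau_n)$ is a zero of $g$ with $g'(t_0)>0$, not a minimum. Third, and more fundamentally, no extension can do better than the Yukawa one in this gluing argument. With $A_n$ kept as the favourable set and the lower half-space unfavourable, the extension enters the whole-space inequality only through $\int_{\{x_N<0\}}(|\nabla z|^2+k^2z^2)$. Among all extensions of $f_n$, this is minimised exactly by the decaying $k$-Yukawa extension, with value $\langle f_n,S_kf_n\rangle$.

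In fact the bound you discarded already suffices. You dismissed it by comparing with the weak inequality $\mathfrak D(v_n)>\tau_n\|f_n\|_2^2$, instead of the exact identity $\mathfrak D(v_n)=q_n+\tau_n\|f_n\|_2^2$ with $q_n:=k^2-\lambda_n>0$, which you had written down yourself. Combining that identity with your Cauchy--Schwarz estimate gives
\[
q_n\le\|f_n\|_2^2\bigl(\sqrt{\tau_n^2+q_n}-\tau_n\bigr)<\frac{q_n}{2\tau_n}\|f_n\|_2^2 ,
\]
hence $\|f_n\|_2^2>2\tau_n$. This is incompatible with $\|f_n\|_2\le C_*/\tau_n$ once $\tau_n$ is large.

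The paper's proof is a variant of this. It glues the decaying $\lambda_n$-Yukawa extension instead. With $E_\lambda(f)=\int\sqrt{\lambda+|\xi|^2}\,|\widehat f|^2$ and $L_\lambda(f)=\frac12\int|\widehat f|^2/\sqrt{\lambda+|\xi|^2}$, Cauchy--Schwarz and the translation inequality give exactly $E_{\lambda_n}(f_n)\le\tau_n\|f_n\|_2^2$. The whole-space inequality gives $E_{\lambda_n}(f_n)-\tau_n\|f_n\|_2^2\ge q_n\bigl(1-L_{\lambda_n}(f_n)\bigr)$, with $L_{\lambda_n}(f_n)\le\|f_n\|_2^2/(2\sqrt{\lambda_n})\to0$. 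In both versions the contradiction has the same source. The defect exceeds the Robin term by exactly $q_n$, while the frequency bound permits an excess of at most about $q_n\|f_n\|_2^2/\tau_n$ (respectively $q_nL_{\lambda_n}(f_n)$).
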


\begin{proof}
Suppose, to the contrary, that a sequence $\tau\to\infty$ admits optimisers with values below $k^2$ and a strip free of favourable material, i.e., satisfying \eqref{eq:detached-hostile-strip} for some $d=d_\tau>0$.  Write
\[
\lambda=\Lambda_{\Hh}(\tau),
\qquad
q:=k^2-\lambda>0,
\qquad
f=\operatorname{Tr}v.
\]
By Theorem~\ref{thm:large-tau-rate},
\begin{equation}\label{eq:contact-trace-small}
\|f\|_2\le \frac{C_*}{\tau},
\qquad
\lambda\longrightarrow k^2.
\end{equation}

Let
\[
s_\lambda(\xi):=\sqrt{\lambda+|\xi|^2},
\qquad
E_\lambda(f):=\int s_\lambda(\xi)|\widehat f(\xi)|^2\,d\xi.
\]
Lemma~\ref{lem:detached-translation} and Cauchy--Schwarz give
\begin{equation}\label{eq:Elambda-upper}
E_\lambda(f)^2
\le
\|f\|_2^2
\int s_\lambda(\xi)^2|\widehat f(\xi)|^2\,d\xi
\le
\tau^2\|f\|_2^4,
\end{equation}
so
\[
E_\lambda(f)\le\tau\|f\|_2^2.
\]

Now extend $f$ into the lower half-space $L=\{x_N<0\}$ by the decaying $\lambda$-Yukawa extension
\[
\widehat z_-(\xi,x_N)
=e^{s_\lambda(\xi)x_N}\widehat f(\xi).
\]
Glue $z_-$ to $v$ across $\partial\Hh$ and call the resulting whole-space function $U$.  Put
\[
L_\lambda(f):=\|z_-\|_{L^2(L)}^2
=\frac12\int\frac{|\widehat f(\xi)|^2}{s_\lambda(\xi)}\,d\xi.
\]
The lower-half-space Dirichlet energy is
\[
\int_L|\nabla z_-|^2
=E_\lambda(f)-\lambda L_\lambda(f).
\]
Since $v$ is normalised and has energy
\[
\int_{\Hh}|\nabla v|^2
=\lambda-\tau\|f\|_2^2,
\]
we obtain
\begin{equation}\label{eq:glued-lambda-energy}
\int_{\R^N}|\nabla U|^2
=\lambda\bigl(1-L_\lambda(f)\bigr)
+E_\lambda(f)-\tau\|f\|_2^2.
\end{equation}
Extend the favourable set by keeping the whole lower half-space unfavourable.  For this fixed unit-volume set the whole-space optimal inequality gives
\[
\int_{\R^N}|\nabla U|^2
\ge
k^2\int_{\R^N}m_AU^2
=k^2\bigl(1-L_\lambda(f)\bigr).
\]
Consequently,
\begin{equation}\label{eq:Elambda-lower}
E_\lambda(f)-\tau\|f\|_2^2
\ge
q\bigl(1-L_\lambda(f)\bigr).
\end{equation}
But \eqref{eq:contact-trace-small} and $\lambda\to k^2>0$ imply
\[
L_\lambda(f)
\le
\frac{1}{2\sqrt\lambda}\|f\|_2^2
=o(1).
\]
Thus $1-L_\lambda(f)>0$ for all sufficiently large $\tau$, and because $q>0$, \eqref{eq:Elambda-lower} yields
\[
E_\lambda(f)>\tau\|f\|_2^2,
\]
contradicting \eqref{eq:Elambda-upper}.  Hence an optimiser with value below $k^2$ cannot remain separated from the boundary by a strip of positive width for all sufficiently large $\tau$, which is exactly \eqref{eq:contact-boundary}.
\end{proof}

\begin{lemma}[Convergence of the bathtub levels]\label{lem:bathtub-level-convergence}
Assume that $\tau_n\to\infty$ and that minimisers $(A_n,v_n)$ with values below $k^2$ exist, normalised as in Theorem~\ref{thm:two-scale-rigidity}.  Let $x_n$ and $\Phi$ be supplied by that theorem, and choose bathtub levels $\theta_n\ge0$ such that
\begin{equation}\label{eq:theta-n-bathtub}
\{v_n>\theta_n\}\subset A_n\subset\{v_n\ge\theta_n\}.
\end{equation}
If $R_1$ is the radius of the unit-volume ball, then
\begin{equation}\label{eq:theta-n-limit}
\theta_n\longrightarrow\theta_*:=\Phi(R_1)>0.
\end{equation}
\end{lemma}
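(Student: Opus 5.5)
We will exploit the fact that the rescaled densities converge strongly, so that the bathtub thresholds are forced to converge to the unique threshold of the limit profile. From Theorem~\ref{thm:two-scale-rigidity}, after translating by $x_n$ and extending $v_n$ by zero as an $L^2$ function, we have $\|v_n(x_n+\cdot)-\Phi\|_{L^2(\R^N)}\le \|v_n-W_n\|_2+\|W_n(x_n+\cdot)-\Phi\|_2\to0$. We also have $|(A_n-x_n)\triangle B_{R_1}|\to0$ by \eqref{eq:two-scale-set-convergence}. The first consequence is that $v_n(x_n+\cdot)\to\Phi$ in measure on $\R^N$. By Proposition~\ref{prop:whole-space-classification}, $\Phi$ is radial and strictly decreasing with $\Phi(R_1)=\theta_*>0$. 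In particular, for each $\eta>0$ the sets $\{\Phi>\theta_*+\eta\}=B_{r_+}$ and $\{\theta_*-\eta<\Phi<\theta_*\}=B_{R_1}\setminus\ldots$, namely the annulus $B_{r_-}^c\cap B_{R_1}^{\ldots}$ just outside $B_{R_1}$, have positive measure.

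For the upper bound $\limsup\theta_n\le\theta_*$, fix $\eta>0$ and suppose that $\theta_n\ge\theta_*+\eta$ along a subsequence. Then $A_n\subset\{v_n\ge\theta_n\}\subset\{v_n\ge\theta_*+\eta\}$. Convergence in measure gives $|\{v_n(x_n+\cdot)\ge\theta_*+\eta\}\setminus\{\Phi\ge\theta_*+\eta/2\}|\to0$. Hence $|A_n|\le |\{\Phi\ge\theta_*+\eta/2\}|+o(1)<|B_{R_1}|=1$, using strict monotonicity of $\Phi$. This contradicts $|A_n|=1$.

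For the lower bound $\liminf\theta_n\ge\theta_*$, suppose that $\theta_n\le\theta_*-\eta$ along a subsequence. Then $\{v_n>\theta_*-\eta\}\subset\{v_n>\theta_n\}\subset A_n$. Convergence in measure, applied on a large ball where the sets are finite, gives $|\{v_n(x_n+\cdot)>\theta_*-\eta\}\cap\{\Phi>\theta_*-\eta/2\}|\ge|\{\Phi>\theta_*-\eta/2\}|-o(1)$. Since $\Phi$ is strictly decreasing, this last quantity is $>1+c_\eta$ for some $c_\eta>0$. So $|A_n|>1$ for large $n$, again a contradiction. This argument even covers $\theta_n=0$, where $\{v_n>0\}\subset A_n$ would be impossible.

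The only delicate point is making sure the convergence in measure is applied to the correct function. The bathtub relation \eqref{eq:theta-n-bathtub} concerns $v_n$, whereas the $H^1$ convergence statement concerns the modified $W_n$, which carries a multiplicative normalisation $1+O(\tau_n^{-1})$. Both discrepancies vanish in $L^2$ (by \eqref{eq:yukawa-L2-bound} and \eqref{eq:two-scale-trace}), so they do not affect the limit. The volume comparisons need only $L^2$ convergence via Chebyshev: $|\{|g_n-\Phi|>\eta/2\}|\le 4\eta^{-2}\|g_n-\Phi\|_2^2$. This yields $\theta_n\to\theta_*$.
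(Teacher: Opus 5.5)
Your proof is correct and follows the paper's argument. You use the strong $L^2$ convergence of the zero-extended, translated $v_n$ to $\Phi$; the Yukawa part and the scalar renormalisation contribute only $O(\tau_n^{-1})$ in $L^2$. Strict radial monotonicity of $\Phi$ through $R_1$ then makes the bathtub inclusions force $|A_n|<1$ if $\theta_n\ge\theta_*+\eta$ and $|A_n|>1$ if $\theta_n\le\theta_*-\eta$. Your Chebyshev estimate with the half-margin $\eta/2$ just makes explicit the paper's step from convergence in measure to convergence of superlevel-set measures. The garbled remark about the annulus in your first paragraph is never used and can be dropped.
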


\begin{proof}
Extend $v_n$ by zero to the lower half-space merely as an $L^2$ function and translate by $x_n$.  The proof of Theorem~\ref{thm:two-scale-rigidity} gives
\[
v_n(x_n+\cdot)\longrightarrow\Phi
\qquad\text{strongly in }L^2(\R^N),
\]
where the notation includes the zero extension.  The radial optimiser $\Phi$ is strictly decreasing through the radius $R_1$, and therefore
\[
|\{\Phi>\theta_*+\varepsilon\}|<1
<|\{\Phi>\theta_*-\varepsilon\}|
\qquad(\varepsilon>0\ \hbox{small}).
\]
Strong $L^2$ convergence implies convergence in measure, and hence the corresponding strict superlevel measures at every fixed level which is not a plateau level of $\Phi$.

If $\theta_n\ge\theta_*+\varepsilon$ along a subsequence, then by \eqref{eq:theta-n-bathtub}
\[
1=|A_n|
\le |\{v_n\ge\theta_*+\varepsilon\}|,
\]
whereas the right-hand side has limsup strictly smaller than one.  If $\theta_n\le\theta_*-\varepsilon$, then
\[
1=|A_n|
\ge |\{v_n>\theta_*-\varepsilon\}|,
\]
whose liminf is strictly larger than one.  Both alternatives are impossible, proving \eqref{eq:theta-n-limit}.
\end{proof}

\begin{lemma}[Uniform boundedness of normalised eigenfunctions below the whole-space value]\label{lem:uniform-Linfty-branch}
There exists $C_\infty=C_\infty(N,\kappa)$ such that every nonnegative minimiser $v$ with value below $k^2$ normalised by $\mathcal M(v)=1$ satisfies
\begin{equation}\label{eq:uniform-Linfty-branch}
\|v\|_{L^\infty(\Hh)}\le C_\infty.
\end{equation}
\end{lemma}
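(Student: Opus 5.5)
We want a uniform $L^\infty$ bound for nonnegative minimisers $v$ with $\mathcal M(v)=1$ and value $\lambda=\Lambda_{\Hh}(\tau)<k^2$. The approach is Moser iteration (or De Giorgi) for the Robin problem, with constants depending only on $N$, $\kappa$ and an $H^1$ bound, never on $\tau$.

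\textbf{Step 1: uniform $H^1$ bound.} Since $\int_{\Hh}|\nabla v|^2+\tau\int_{\partial\Hh}v^2=\lambda<k^2$, the argument of Lemma~\ref{lem:normalised-bounded} (with energy bound $k^2$, exactly as in \eqref{eq:positive-defect-H1-bound}) gives $\|v\|_{H^1(\Hh)}\le C_0(N,\kappa)$.

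\textbf{Step 2: subsolution inequality.} By Proposition~\ref{prop:EL-halfspace}, $v$ weakly solves $-\Delta v=\lambda m_Av\le\lambda\kappa v\le k^2\kappa v$ in $\Hh$, with $\partial_\nu v+\tau v=0$ on $\partial\Hh$, $\tau\ge0$. Test \eqref{eq:EL-weak-halfspace} with $\varphi=v\min(v,M)^{2(\beta-1)}$ (or with $\varphi=(v-\ell)_+$ for De Giorgi). These are admissible $H^1$ functions, and the boundary term $\tau\int_{\partial\Hh}v\varphi$ is nonnegative. We therefore drop it, so every estimate is independent of $\tau$. This is the key observation. It leaves the half-space Neumann-type inequality
\[
\int_{\Hh}\nabla v\cdot\nabla\varphi\le k^2\kappa\int_{\Hh}v\varphi .
\]

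\textbf{Step 3: Moser iteration.} Even reflection across $\partial\Hh$ turns $v$ into a nonnegative weak subsolution of $-\Delta\tilde v\le k^2\kappa\tilde v$ in $\R^N$. Applying the local maximum principle (e.g.\ Gilbarg--Trudinger, Theorem~8.17) on unit balls gives
\[
\sup_{B_1(x)}\tilde v\le C(N,k^2\kappa)\,\|\tilde v\|_{L^2(B_2(x))}\le 2C\,C_0
\]
uniformly in $x$, since the coefficient $k^2\kappa$ is bounded. For the reflection step we must check that the reflected function is a subsolution in the weak sense across the hyperplane. With test functions even in $x_N$, this follows from the dropped nonnegative boundary term. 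For general test functions, split the test function into its even part plus its odd part: the odd part contributes zero by symmetry, and the even part reduces to the half-space inequality with $\varphi\ge0$.

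\textbf{Expected obstacle.} The main point to handle carefully is admissibility: truncations of nonnegative test functions must stay nonnegative so the Robin term has a sign. In dimension $N=2$ the Moser step uses a Sobolev exponent of our choosing, which causes no difficulty. The constants depend only on $N$, $\kappa$ and $k$, hence only on $(N,\kappa)$, which gives \eqref{eq:uniform-Linfty-branch}.
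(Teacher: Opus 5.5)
Your proof is correct, and its analytic core matches the paper's. Both rest on the $H^1$ bound \eqref{eq:positive-defect-H1-bound}, which comes from $\int_{\Hh}|\nabla v|^2<k^2$, on the bounded zeroth-order coefficient $\lambda m_A\le k^2\kappa$, and on discarding the nonnegative Robin term so that no constant depends on $\tau$. The packaging differs.

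The paper runs a global Moser iteration directly in the half-space. It tests with $\eta_R^2v^{p-1}$, drops $\tau\int_{\partial\Hh}v^p\ge0$, lets $R\to\infty$, and iterates with the half-space Sobolev inequality. You instead use the sign of the Robin term once, to show that the even reflection $\tilde v$ is a weak subsolution of $-\Delta\tilde v\le k^2\kappa\tilde v$ on all of $\R^N$. You then quote the interior local maximum principle on unit balls.

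Your route keeps boundary terms and the half-space Sobolev inequality out of the iteration and rests on a citable theorem. It also gives, as a by-product, the $\tau$-independent local estimate $\sup_{B_{1/2}^+(x',0)}v\le C\|v\|_{L^2(B_1^+(x',0))}$. The paper asserts this estimate, with only a brief justification, in Lemmas~\ref{lem:boundary-sup-escape} and~\ref{lem:threshold-exponential-moments}. The paper's route avoids reflection and yields $\|v\|_\infty\le C\|v\|_2$ globally in one pass.

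Two small wording points:
\begin{itemize}
\item For a subsolution only nonnegative test functions $\psi$ matter. The identity you need is $\int_{\R^N}\nabla\tilde v\cdot\nabla\psi=\int_{\Hh}\nabla v\cdot\nabla(\psi+\psi^R)$, where $\psi^R(x',x_N):=\psi(x',-x_N)$ and $\psi+\psi^R\ge0$. So ``general test functions'' should read ``nonnegative, not necessarily even, test functions''.
\item The truncated test functions in your Step~2 are superfluous once you cite the local maximum principle.
\end{itemize}
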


\begin{proof}
The normalised $H^1$ bound \eqref{eq:positive-defect-H1-bound} applies to every such minimiser because its Dirichlet energy is strictly smaller than $k^2$.  Moreover
\[
-\Delta v=\lambda m_Av,
\qquad
0<\lambda\le k^2,
\qquad
-1\le m_A\le\kappa.
\]
A Moser iteration in the half-space gives a uniform $L^\infty$ estimate from the uniform $L^2$ bound.  For $p\ge2$, test first with $\eta_R^2v^{p-1}$, where $\eta_R$ is a standard cutoff, and then let $R\to\infty$.  The Robin contribution is
\[
\tau\int_{\partial\Hh}v^p\,dS\ge0,
\]
so it may be discarded in the upper estimate, whereas the right-hand side is at most $\kappa k^2\int_{\Hh}v^p$.  After the usual absorption of the cutoff-gradient term, the half-space Sobolev inequality (with any fixed finite Sobolev exponent when $N=2$) yields the standard exponent iteration.  Since the starting $L^2$ norm is uniformly bounded by \eqref{eq:positive-defect-H1-bound}, all constants depend only on $N$ and $\kappa$, and in particular are independent of $\tau$ and of $A$.
\end{proof}

\begin{lemma}[Boundary contact gives a lower bound for the boundary values]\label{lem:contact-forces-boundary-amplitude}
Let $\tau>0$, let $a\in L^\infty(\Hh)$, and suppose that a bounded nonnegative weak solution $v$ satisfies
\[
-\Delta v=a(x)v\quad\text{in }\Hh,
\qquad
\partial_{x_N}v=\tau v\quad\text{on }\partial\Hh.
\]
Let $A\subset\Hh$ be measurable and let $\theta>0$ be such that
\[
A\subset\{v\ge\theta\}\quad\text{a.e.}
\]
and
\[
|A\cap\{0<x_N<d\}|>0
\qquad\text{for every }d>0.
\]
Then
\begin{equation}\label{eq:contact-forces-boundary-amplitude}
\|v\|_{L^\infty(\partial\Hh)}\ge\theta.
\end{equation}
\end{lemma}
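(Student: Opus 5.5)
The plan is a proof by contradiction combined with a comparison principle in a thin boundary strip. Put $M:=\|v\|_{L^\infty(\partial\Hh)}$, understood as the essential supremum of the trace, and suppose $M<\theta$. The aim is to show that $v<\theta'$ on a strip $\{0<x_N<d\}$ for some $\theta'<\theta$ and some $d>0$. Since $A\subset\{v\ge\theta\}$ almost everywhere, this forces $|A\cap\{0<x_N<d\}|=0$, contradicting the contact hypothesis.

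To get the strip bound, I would rely on the fact that bounded solutions are uniformly continuous up to the boundary. Set $K:=\|v\|_\infty$ and $a_0:=\|a\|_\infty$. The right-hand side $av$ lies in $L^\infty$, and the Robin datum $\tau v$ on the flat boundary is bounded. Standard $W^{2,p}_{\rm loc}$ estimates up to the boundary for the Neumann/Robin problem in half-balls $B_2^+(x_0',0)$ then apply. Their constants depend only on $N,p,a_0,\tau,K$ and are translation-invariant in $x'$. This gives a uniform $C^{1,\beta}$ bound on half-balls centred at every boundary point. In particular $|\partial_{x_N}v|\le C_1$ on $\{0<x_N<1\}$, with $C_1$ independent of $x'$. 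Then for almost every $x'$ and every $0<x_N<d$,
\[
v(x',x_N)\le v(x',0)+C_1x_N\le M+C_1 d.
\]
Choosing $d<(\theta-M)/(2C_1)$ gives $v\le(M+\theta)/2<\theta$ on the strip, which is the bound needed above.

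An alternative that avoids regularity is a barrier argument. Let $\psi(x_N)=M+\beta x_N+\gamma x_N^2$ with $\gamma$ large negative, or use the comparison $w=v-\psi$. One would like to apply the weak maximum principle on the slab $\{0<x_N<d\}$, with the Robin relation $\partial_{x_N}v=\tau v\le\tau M$ on the bottom and $v\le K$ on the top. The difficulty is that $-\Delta v=av$ has a sign-indefinite zero-order term. This is harmless for a thin slab because the slab has a small Poincaré constant, but the slab is unbounded in $x'$, so the maximum principle must be applied with a cutoff in $x'$ or in an $L^\infty$-bounded form. For that reason I prefer the regularity route.

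The main obstacle is making the uniformity in $x'$ precise, since the half-space is unbounded. The assumption that $v$ is bounded is exactly what makes the local estimates uniform: every half-ball sees the same bounds on $\|v\|_{L^\infty}$, on $a$ and on $\tau$. A secondary point is interpreting $\|v\|_{L^\infty(\partial\Hh)}$ through the trace. The $C^{1,\beta}$ regularity makes $v$ continuous up to $\partial\Hh$, so the trace is the pointwise restriction, and the estimate $v(x',x_N)\le v(x',0)+C_1x_N$ holds for every $x'$, not just almost every $x'$.
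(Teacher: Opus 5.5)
Your proposal is correct and is essentially the paper's argument. Both use up-to-the-boundary $W^{2,p}$ estimates ($p>N$) for the flat Robin problem, with constants depending only on $\tau$, $\|a\|_\infty$ and $\|v\|_\infty$ and hence uniform under tangential translations, to transfer the bound $v\ge\theta$ from points of $A$ arbitrarily close to $\partial\Hh$ to the boundary values. The paper argues directly: it picks Lebesgue points $x_j\in A$ with $x_{j,N}\downarrow0$ and applies the uniform H\"older modulus. You instead argue by contradiction, using a uniform Lipschitz bound in $x_N$ on a thin strip; the two formulations are equivalent.
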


\begin{proof}
For each fixed solution, local up-to-boundary $W^{2,p}$ estimates with any $p>N$ give a Hölder modulus in unit half-balls whose constant is uniform under tangential translations; the constant may depend on $\tau$, $\|a\|_\infty$, and $\|v\|_\infty$, which is sufficient here.  Choose Lebesgue points $x_j=(x_j',x_{j,N})\in A$ with $x_{j,N}\downarrow0$ and with $v(x_j)\ge\theta$.  The uniform tangentially translated boundary modulus gives
\[
|v(x_j',x_{j,N})-v(x_j',0)|\longrightarrow0.
\]
Hence
\[
\sup_{x'\in\R^{N-1}}v(x',0)
\ge\limsup_j v(x_j',0)
\ge\theta,
\]
which proves \eqref{eq:contact-forces-boundary-amplitude}.
\end{proof}

\begin{lemma}[Boundary maximum estimate at the Robin scale]\label{lem:robin-scale-obstruction}
Let $\tau_n\to\infty$.  Suppose $v_n\ge0$ satisfy
\begin{equation}\label{eq:robin-scale-general-PDE}
-\Delta v_n=a_n(x)v_n\quad\text{in }\Hh,
\qquad
\partial_{x_N}v_n=\tau_nv_n\quad\text{on }\partial\Hh,
\end{equation}
with
\[
\|a_n\|_{L^\infty(\Hh)}\le C,
\qquad
\|v_n\|_{L^\infty(\Hh)}\le C.
\]
Then
\begin{equation}\label{eq:boundary-max-positive}
\|v_n\|_{L^\infty(\partial\Hh)}\longrightarrow0.
\end{equation}
\end{lemma}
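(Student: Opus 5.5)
We argue by contradiction and blow up at the Robin length scale $\tau_n^{-1}$.  Suppose that, along a subsequence, $\|v_n\|_{L^\infty(\partial\Hh)}\ge 2\mu>0$.  Choose boundary points $x_n'\in\R^{N-1}$ with $v_n(x_n',0)\ge\mu$; these values are pointwise meaningful because, for fixed $n$, local up-to-boundary $W^{2,p}$ estimates make $v_n$ continuous up to $\partial\Hh$.  We then set
\[
V_n(y):=v_n\bigl((x_n',0)+\tau_n^{-1}y\bigr),\qquad y\in\Hh .
\]
In these variables the problem becomes
\[
-\Delta V_n=\tau_n^{-2}a_n\bigl((x_n',0)+\tau_n^{-1}y\bigr)V_n \ \text{in }\Hh,\qquad \partial_{y_N}V_n=V_n \ \text{on }\partial\Hh .
\]
The right-hand side is bounded by $C^2\tau_n^{-2}\to0$, the Robin coefficient is normalised to one, and we have $0\le V_n\le C$ and $V_n(0)\ge\mu$.

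The key steps are as follows.

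\emph{Uniform regularity.}  Up-to-boundary $W^{2,p}$ estimates ($p>N$) for the Robin problem with unit coefficient and bounded data give $C^{1,\beta}$ bounds for $V_n$ on half-balls $B_R^+$, uniformly in $n$ and in the centre.

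\emph{Passage to the limit.}  Arzel\`a--Ascoli yields a subsequence with $V_n\to V$ in $C^1_{\rm loc}(\overline{\Hh})$.  The limit satisfies
\[
\Delta V=0 \ \text{in }\Hh,\qquad \partial_{y_N}V=V \ \text{on }\partial\Hh,\qquad 0\le V\le C,\qquad V(0)\ge\mu .
\]

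\emph{Liouville step.}  We show that this limit problem has only the trivial bounded nonnegative solution.  This step is the main obstacle.  Our first attempt is the auxiliary function $w:=\partial_{y_N}V-V$.  It is harmonic in $\Hh$ and vanishes on $\partial\Hh$.  Boundedness of $V$, together with interior gradient estimates $|\nabla V(y)|\le C/y_N$ and the uniform boundary $C^1$ bound, shows that $w$ is bounded.  A bounded harmonic function in the half-space with zero boundary values vanishes identically, by odd reflection and Liouville's theorem.  Hence $\partial_{y_N}V=V$ throughout $\Hh$, which gives
\[
V(y',y_N)=V(y',0)\,e^{y_N}.
\]
Since $V$ is bounded, this forces $V(y',0)=0$ for every $y'$, so $V\equiv0$.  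This contradicts $V(0)\ge\mu$.

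If the boundedness of $w$ near $\partial\Hh$ turns out to be delicate, there are two fallbacks.  The first is to work with bounded subdomains $\{y_N>\eta\}$.  The second is to take the tangential Fourier transform of $V$ as a tempered distribution: each mode has the form $e^{\pm|\xi|y_N}$, the boundary condition selects $e^{|\xi|y_N}$ whenever $|\xi|\neq1$, and boundedness excludes every such growing mode.

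The delicate points are the uniformity of the boundary estimates and the justification of the Liouville argument for merely bounded solutions.  The choice of the Robin scale is what makes the limit problem rigid: after rescaling, the boundary condition prescribes exponential growth in the normal direction, which is incompatible with the uniform $L^\infty$ bound.
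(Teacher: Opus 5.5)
Your proof is correct. The blow-up at the Robin scale $\tau_n^{-1}$, the uniform flat-boundary $W^{2,p}$ estimates and the $C^1_{\rm loc}(\overline{\Hh})$ limit are the same as in the paper. The two arguments differ only in how the limit problem $\Delta V=0$, $\partial_{y_N}V=V$ is ruled out.

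\emph{The paper's route.} It centres the blow-up at an almost-maximiser of the boundary values, so that $V(0,0)=M=\sup_{\partial\Hh}V>0$. The Poisson representation of bounded harmonic functions then gives $V\le M$ in $\Hh$. On the other hand, $\partial_{y_N}V(0,0)=M>0$ forces $V(0,y_N)>M$ for small $y_N$, which is the contradiction.

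\emph{Your route.} You prove a genuine Liouville theorem for the limit problem. The function $w=\partial_{y_N}V-V$ is harmonic and vanishes on $\partial\Hh$. It is bounded, by interior gradient estimates for $y_N\ge1$ and the uniform-in-centre boundary $C^1$ bounds in the strip $0\le y_N\le1$. Odd reflection then gives $w\equiv0$, so $V(y',y_N)=V(y',0)e^{y_N}$, and boundedness forces $V\equiv0$.

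\emph{What each buys.} Your argument needs neither a near-maximising centre nor the sign $v_n\ge0$, and it shows that every bounded solution of the limit problem vanishes. The price is a global bound on $\nabla V$ up to the boundary, which your uniform estimates supply. The paper's argument uses only $C^1$ information at the blow-up point together with the maximum principle.

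\emph{A minor slip in your Fourier fallback.} The decaying mode $e^{-|\xi|y_N}$ satisfies $\partial_{y_N}V=V$ for no $\xi$, since $-|\xi|\neq1$. The pure growing mode satisfies it only when $|\xi|=1$, so "selects $e^{|\xi|y_N}$ whenever $|\xi|\neq1$" is misphrased. This does no harm, because your main argument is already complete.
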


\begin{proof}
Assume the conclusion fails.  After passing to a subsequence there exists $c_0>0$ such that
\[
\|v_n\|_{L^\infty(\partial\Hh)}\ge c_0.
\]
Choose $p_n'\in\R^{N-1}$ so that
\[
v_n(p_n',0)
\ge
\|v_n\|_{L^\infty(\partial\Hh)}-\frac1n.
\]
After passing to a subsequence the boundary suprema converge to some $M>0$.  Define the Robin-scale blow-up
\[
V_n(y',y_N)
:=v_n\left(p_n'+\frac{y'}{\tau_n},\frac{y_N}{\tau_n}\right).
\]
Then
\begin{equation}\label{eq:robin-scale-rescaled}
-\Delta_yV_n
=\tau_n^{-2}a_n\left(p_n'+\frac{y'}{\tau_n},\frac{y_N}{\tau_n}\right)V_n,
\qquad
\partial_{y_N}V_n=V_n
\quad\text{on }y_N=0.
\end{equation}
The functions $V_n$ are uniformly bounded.  Local boundary $W^{2,p}$ estimates for the flat Robin problem, followed by Sobolev embedding and a diagonal extraction, yield
\[
V_n\longrightarrow V
\qquad\text{in }C^1_{\mathrm{loc}}(\overline{\Hh})
\]
for a bounded nonnegative function satisfying
\begin{equation}\label{eq:robin-scale-limit}
\Delta V=0\quad\text{in }\Hh,
\qquad
\partial_{y_N}V=V\quad\text{on }\partial\Hh.
\end{equation}
Moreover,
\[
V(0,0)=M,
\qquad
V(y',0)\le M\quad\text{for all }y'\in\R^{N-1}.
\]
A bounded harmonic function on the half-space is the Poisson extension of its bounded boundary trace; in particular it cannot exceed the boundary supremum.  Hence
\[
V(y',y_N)\le M
\qquad\text{throughout }\Hh.
\]
On the other hand, \eqref{eq:robin-scale-limit} gives
\[
\partial_{y_N}V(0,0)=V(0,0)=M>0.
\]
Therefore
\[
V(0,y_N)=M+My_N+o(y_N)>M
\]
for all sufficiently small $y_N>0$, a contradiction.
\end{proof}

\begin{theorem}[Finiteness of the transition threshold]\label{thm:finite-detachment}
The strict inequality $\Lambda_{\Hh}(\tau)<\Lambda_{\R^N}$ cannot persist for arbitrarily large Robin parameter:
\begin{equation}\label{eq:finite-detachment}
0<\tau_*<\infty.
\end{equation}
Consequently,
\begin{equation}\label{eq:ceiling-after-tau-star}
\Lambda_{\Hh}(\tau)=\Lambda_{\R^N}
\qquad\text{for every }\tau\ge\tau_*.
\end{equation}
Equivalently, the global boundary-trace inequality
\begin{equation}\label{eq:global-quadratic-trace-final}
\Lambda_{\R^N}\mathcal M(v)
\le
\int_{\Hh}|\nabla v|^2
+\tau_*\int_{\partial\Hh}v^2
\qquad(v\in H^1(\Hh))
\end{equation}
holds with the sharp constant $\tau_*<\infty$.
\end{theorem}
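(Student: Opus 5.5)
We argue by contradiction, assuming $\tau_*=+\infty$.  By Corollary~\ref{cor:threshold-structure} this means $\Lambda_{\Hh}(\tau)<k^2$ for every $\tau\ge0$.  Choose a sequence $\tau_n\to\infty$ with $\tau_n\ge\tau_{\rm c}$.  Theorem~\ref{thm:strict-gap-attainment} applies at each $\tau_n$ and gives optimisers.  Proposition~\ref{prop:EL-halfspace} and Lemma~\ref{lem:no-plateau-halfspace} then give nonnegative minimisers $(A_n,v_n)$, normalised by $\mathcal M(v_n)=1$, with bathtub levels $\theta_n>0$ and $A_n=\{v_n>\theta_n\}$ up to null sets.

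The plan combines the contact, amplitude, and Robin-scale lemmas in order.

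\emph{Step 1 (contact).}  Since $\tau_n\ge\tau_{\rm c}$, Proposition~\ref{prop:large-tau-contact} shows that each $A_n$ has essential boundary contact, $|A_n\cap\{0<x_N<d\}|>0$ for every $d>0$.

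\emph{Step 2 (a uniform level).}  Theorem~\ref{thm:two-scale-rigidity} provides the ball-like structure, and Lemma~\ref{lem:bathtub-level-convergence} then gives $\theta_n\to\theta_*=\Phi(R_1)>0$.  Hence $\theta_n\ge\theta_*/2$ for large $n$.

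\emph{Step 3 (boundary amplitude).}  Each $v_n$ solves $-\Delta v_n=a_nv_n$ with $a_n=\Lambda_{\Hh}(\tau_n)m_{A_n}$, so $\|a_n\|_\infty\le\kappa k^2$, together with $\partial_{x_N}v_n=\tau_nv_n$ on $\partial\Hh$.  Lemma~\ref{lem:uniform-Linfty-branch} bounds $v_n$ uniformly by $C_\infty$.  Lemma~\ref{lem:contact-forces-boundary-amplitude} only needs a regularity modulus for each fixed $n$, and with Steps 1--2 it yields $\|v_n\|_{L^\infty(\partial\Hh)}\ge\theta_n\ge\theta_*/2$.

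\emph{Step 4 (contradiction).}  The uniform bounds on $a_n$ and $v_n$ are exactly the hypotheses of Lemma~\ref{lem:robin-scale-obstruction}, which forces $\|v_n\|_{L^\infty(\partial\Hh)}\to0$.  This contradicts Step 3, so $\tau_*<\infty$.  Positivity of $\tau_*$ was already shown in Corollary~\ref{cor:threshold-structure}.

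The consequences then follow directly.  Identity \eqref{eq:ceiling-after-tau-star} is \eqref{eq:threshold-equality} from Corollary~\ref{cor:threshold-structure}.  The inequality \eqref{eq:global-quadratic-trace-final}, with $\tau_*$ as the sharp constant, is the equivalence stated in Proposition~\ref{prop:tau-star-trace-defect}, now that finiteness is known.

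The main obstacle is ensuring that every hypothesis is uniform in $n$ where it must be.  The $L^\infty$ bound and the coefficient bound must not depend on $\tau_n$ or $A_n$; Lemma~\ref{lem:uniform-Linfty-branch} is built for exactly this.  The contact level from Step 2 must stay bounded below, which requires the level convergence of Lemma~\ref{lem:bathtub-level-convergence}, including passing to subsequences in Theorem~\ref{thm:two-scale-rigidity}.  I would check that every subsequence extraction is compatible: the final contradiction only needs one subsequence along which all of Steps 1--4 hold.  One subtlety remains in applying Lemma~\ref{lem:contact-forces-boundary-amplitude}.  The inclusion $A_n\subset\{v_n\ge\theta_n\}$ holds only a.e., so the points $x_j$ must be chosen as Lebesgue points where continuity of $v_n$ (from interior and boundary $W^{2,p}$ regularity) gives the pointwise bound $v_n(x_j)\ge\theta_n$.
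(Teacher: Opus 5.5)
Your proposal is correct and follows the paper's proof essentially step for step. It obtains boundary contact from Proposition~\ref{prop:large-tau-contact}, convergence of the bathtub levels to $\Phi(R_1)>0$ via Theorem~\ref{thm:two-scale-rigidity} and Lemma~\ref{lem:bathtub-level-convergence}, the amplitude lower bound from Lemma~\ref{lem:contact-forces-boundary-amplitude}, and the contradiction with Lemma~\ref{lem:robin-scale-obstruction}, and then reads off the remaining assertions from Corollary~\ref{cor:threshold-structure} and Proposition~\ref{prop:tau-star-trace-defect}. The only slip is cosmetic: because $m_{A_n}=-1$ off $A_n$, the coefficient bound should be $\|a_n\|_\infty\le\max\{1,\kappa\}k^2$ rather than $\kappa k^2$, which does not matter since only uniform boundedness is used.
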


\begin{proof}
Positivity follows from Corollary~\ref{cor:threshold-structure}; it remains to prove finiteness. Assume for contradiction that $\tau_*=\infty$.  Choose $\tau_n\to\infty$.  Then
\[
\Lambda_{\Hh}(\tau_n)<k^2
\]
for every $n$, and Theorem~\ref{thm:strict-gap-attainment} supplies nonnegative normalised optimisers $(A_n,v_n)$.  By Proposition~\ref{prop:large-tau-contact}, after discarding finitely many terms,
\begin{equation}\label{eq:An-contact-infinite-branch}
|A_n\cap\{0<x_N<d\}|>0
\qquad\text{for every }d>0.
\end{equation}

Let $\theta_n$ be bathtub levels as in \eqref{eq:theta-n-bathtub}.  Theorem~\ref{thm:two-scale-rigidity} and Lemma~\ref{lem:bathtub-level-convergence} give
\[
\theta_n\longrightarrow\theta_*=\Phi(R_1)>0.
\]
Because $A_n\subset\{v_n\ge\theta_n\}$ and \eqref{eq:An-contact-infinite-branch} holds for every $d>0$, Lemma~\ref{lem:contact-forces-boundary-amplitude} gives
\begin{equation}\label{eq:trace-sup-lower-theta}
\|v_n\|_{L^\infty(\partial\Hh)}\ge\theta_n.
\end{equation}

Lemma~\ref{lem:uniform-Linfty-branch} gives a uniform $L^\infty$ bound on $v_n$, while
\[
a_n(x):=\Lambda_{\Hh}(\tau_n)m_{A_n}(x)
\]
is uniformly bounded.  Equations \eqref{eq:EL-strong-formal} therefore place the sequence exactly in the setting of Lemma~\ref{lem:robin-scale-obstruction}.  But \eqref{eq:trace-sup-lower-theta} and $\theta_n\to\theta_*>0$ give
\[
\liminf_n\|v_n\|_{L^\infty(\partial\Hh)}\ge\theta_*>0,
\]
whereas Lemma~\ref{lem:robin-scale-obstruction} says that these boundary suprema converge to zero.  Hence $\tau_*<\infty$.

The equality \eqref{eq:ceiling-after-tau-star} is Corollary~\ref{cor:threshold-structure}.  Finally, Proposition~\ref{prop:tau-star-trace-defect} identifies the finite number $\tau_*$ with the sharp constant in \eqref{eq:global-quadratic-trace-final}.
\end{proof}

\begin{corollary}[Nonattainment above the transition threshold]\label{cor:nonattainment-above-threshold}
For every $\tau>\tau_*$,
\[
\Lambda_{\Hh}(\tau)=\Lambda_{\R^N}=k^2,
\]
but the half-space infimum is not attained.  In particular, no minimising sequence at such a parameter can be strongly precompact in $H^1(\Hh)$ modulo tangential translations.
\end{corollary}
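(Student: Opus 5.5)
Suppose, for contradiction, that for some $\tau>\tau_*$ the infimum $\Lambda_{\Hh}(\tau)=k^2$ is attained by a pair $(A,v)$. By Proposition~\ref{prop:EL-halfspace} we may take $v\ge0$ and normalise $\mathcal M(v)=1$, so that
\[
\int_{\Hh}|\nabla v|^2+\tau\int_{\partial\Hh}v^2=k^2 .
\]
The plan is to compare with the parameter $\tau_*$ and exploit the strict monotonicity of the affine map $\sigma\mapsto\int|\nabla v|^2+\sigma\int_{\partial\Hh}v^2$.

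\emph{Nonzero trace.} Since $\tau_*<\tau$, the same $v$ is admissible at the parameter $\tau_*$. By Theorem~\ref{thm:finite-detachment}, $\Lambda_{\Hh}(\tau_*)=k^2$, and therefore
\[
k^2\le \int_{\Hh}|\nabla v|^2+\tau_*\int_{\partial\Hh}v^2
= k^2-(\tau-\tau_*)\int_{\partial\Hh}v^2 .
\]
Hence $\operatorname{Tr}v=0$. Equivalently, the sharp trace inequality \eqref{eq:global-quadratic-trace-final} gives $\mathfrak D(v)\le \tau_*\|\operatorname{Tr} v\|_2^2$, while optimality at $\tau$ gives $\mathfrak D(v)=\tau\|\operatorname{Tr}v\|_2^2$. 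These two facts are compatible only if the trace vanishes.

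\emph{Contradiction from zero trace.} With $\operatorname{Tr}v=0$, extend $v$ by zero to $\widetilde v\in H^1(\R^N)$ and keep $A$ as a unit-volume favourable set in $\R^N$. The pair $(A,\widetilde v)$ has quotient exactly $k^2$, so it is a whole-space optimiser. Proposition~\ref{prop:whole-space-classification} then forces $\widetilde v$ to be a translate of the positive radial profile $\Phi$ (after normalisation). This is impossible, because $\widetilde v$ vanishes on the whole lower half-space $\{x_N<0\}$ while $\Phi>0$ everywhere. An alternative route is the Robin condition: $\partial_\nu v=-\tau v=0$ as well, so $v$ has zero Cauchy data, and unique continuation for $-\Delta v=k^2m_Av$ with bounded coefficient forces $v\equiv0$. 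The classification route is cleaner, since it avoids the boundary regularity needed for the Cauchy-data argument.

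\emph{Minimising sequences.} Suppose some minimising sequence, normalised by $\mathcal M(v_n)=1$, were strongly precompact in $H^1(\Hh)$ after tangential translations. A limit $v$ would satisfy $\mathcal M(v)=1$ by the $L^2$ continuity \eqref{eq:B-L2-continuity}. The trace term converges by the trace theorem, so $v$ would have energy $k^2$. A bathtub set for $v^2$ would then give an attained optimiser, which we have just excluded.

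The only delicate point is making sure the whole-space characterisation applies to a function merely known to be a minimiser of the reduced problem \eqref{eq:whole-reduced-V}. This is ensured by choosing $A$ to be the bathtub set for $\widetilde v$, so that $(A,\widetilde v)$ is an actual optimal pair for \eqref{eq:whole-space-Lambda}.
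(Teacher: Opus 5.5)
Your proposal is correct and follows the paper's proof essentially step by step. Comparing with the parameter $\tau_*$ (via $\Lambda_{\Hh}(\tau_*)=k^2$, equivalently the sharp trace inequality \eqref{eq:global-quadratic-trace-final}) forces $\operatorname{Tr}v=0$; the zero extension, with a bathtub set as favourable set, then attains the whole-space optimum while vanishing on $\{x_N<0\}$, contradicting positivity of the whole-space principal optimiser; and a strongly convergent minimising sequence would yield an attained minimiser.
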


\begin{proof}
Suppose that $\tau>\tau_*$ and that a nonzero minimiser $v$ exists.  By homogeneity normalise $\mathcal M(v)=1$.  Theorem~\ref{thm:finite-detachment} gives
\[
\int_{\Hh}|\nabla v|^2+\tau\|\operatorname{Tr}v\|_2^2=k^2.
\]
Applying the sharp boundary-trace inequality \eqref{eq:global-quadratic-trace-final} at $\tau_*$ to the same function gives
\[
k^2
\le
\int_{\Hh}|\nabla v|^2+\tau_*\|\operatorname{Tr}v\|_2^2
=
k^2-(\tau-\tau_*)\|\operatorname{Tr}v\|_2^2.
\]
Hence $\operatorname{Tr}v=0$.  Let $A$ be a bathtub maximiser for $v$, so that
\[
\int_{\Hh}m_Av^2=\mathcal M(v)=1.
\]
The zero extension $\widetilde v$ then belongs to $H^1(\R^N)$ and satisfies
\[
\int_{\R^N}|\nabla\widetilde v|^2=k^2,
\qquad
\int_{\R^N}m_A\widetilde v^2=1,
\]
where the lower half-space is declared unfavourable.  Thus $(A,\widetilde v)$ attains the whole-space optimum.  A nonnegative whole-space principal optimiser is strictly positive by the strong maximum principle (equivalently, by the radial classification recalled above), whereas $\widetilde v$ vanishes identically on the open lower half-space.  This contradiction proves nonattainment.

If a half-space minimising sequence were strongly precompact modulo tangential translations, its strong limit would attain the infimum, which has just been ruled out.
\end{proof}

\begin{lemma}[Boundary values vanish under escape from the boundary]\label{lem:boundary-sup-escape}
Let \(\lambda_n\) remain bounded, let \(\tau_n\ge0\), and let \(A_n\subset\Hh\) be measurable.  Suppose nonnegative functions \(v_n\in H^1(\Hh)\) solve
\[
-\Delta v_n=\lambda_n m_{A_n}v_n\quad\hbox{in }\Hh,
\qquad
\partial_{x_N}v_n=\tau_n v_n\quad\hbox{on }\partial\Hh,
\]
and are uniformly \(H^1\)-tight about centres \((0,h_n)\) with
\(h_n\to+\infty\).  Then
\begin{equation}\label{eq:boundary-sup-escape}
\|v_n\|_{L^\infty(\partial\Hh)}\longrightarrow0.
\end{equation}
\end{lemma}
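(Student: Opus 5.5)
We argue by contradiction and pass to a subsequence along which $\|v_n\|_{L^\infty(\partial\Hh)}\ge c_0>0$. The plan is to turn the $H^1$-tightness about the escaping centres into smallness of $v_n$ in a fixed-width layer near $\partial\Hh$. We then upgrade this smallness to a pointwise statement on the boundary using a sub-solution (Moser-type) estimate that is compatible with the Robin condition.

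\textbf{Step 1: smallness near the boundary.} Translate tangentially so that the centres are $(0,h_n)$. Tightness (Definition~\ref{def:tight-normal-escape}) gives the following: for every $\eta>0$ there is $R$ such that the $H^1$ mass of $v_n$ outside $B_R((0,h_n))$ is below $\eta$ for large $n$. Since $h_n\to\infty$, the slab $S:=\{0<x_N<2\}$ lies eventually outside $B_R((0,h_n))$. Hence
\[
\|v_n\|_{H^1(S)}\to0,
\]
and this holds uniformly over all tangential positions, because $S$ is the whole slab.

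\textbf{Step 2: local boundary sup bound.} Since $v_n\ge0$ and $\partial_{x_N}v_n=\tau_nv_n\ge0$ on $\partial\Hh$ (outward normal derivative $-\partial_{x_N}v_n\le0$), the function $v_n$ is a weak subsolution for the Neumann-type problem
\[
-\Delta v_n\le \lambda_n\kappa v_n\quad\text{in }\Hh,
\qquad -\partial_{x_N}v_n\le0\quad\text{on }\partial\Hh.
\]
Precisely: testing with $\eta^2 v_n^{p-1}$ for a cutoff $\eta$ supported in a half-ball $B_2^+(x_0',0)$ produces the boundary term $-\tau_n\int_{\partial\Hh}\eta^2v_n^{p}\le0$. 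This term can be dropped regardless of the size of $\tau_n$, exactly as in Lemma~\ref{lem:uniform-Linfty-branch}. Moser iteration in half-balls, or equivalently even reflection of the subsolution inequality, then gives
\[
\sup_{B_1^+(x_0',0)}v_n\le C\|v_n\|_{L^2(B_2^+(x_0',0))},
\]
with $C$ depending only on $N$, $\kappa$ and $\sup_n\lambda_n$, and in particular independent of $\tau_n$ and $x_0'$. A further point is that we only need upper bounds for nonnegative subsolutions. This is why no regularity of $m_{A_n}$ beyond boundedness, and no control on $\tau_n$, is needed, in contrast with Lemma~\ref{lem:contact-forces-boundary-amplitude}, whose constants depend on $\tau$.

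\textbf{Step 3: conclusion.} Taking the supremum over $x_0'$ and using Step 1 gives
\[
\|v_n\|_{L^\infty(\partial\Hh)}\le \sup_{x_0'}\sup_{B_1^+(x_0',0)}v_n\le C\|v_n\|_{L^2(S)}\to0.
\]
The boundary supremum is understood as an essential supremum of the trace, or pointwise if continuity is available. This contradicts $c_0>0$, and in fact proves \eqref{eq:boundary-sup-escape} directly, without the contradiction.

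The main obstacle is Step 2: we must verify that the Robin boundary term carries the favourable sign for the subsolution test functions, and that the Moser constants are uniform in $\tau_n$ up to the flat boundary. I would first check the sign carefully. In the weak form
\[
\int\nabla v_n\cdot\nabla\varphi+\tau_n\int_{\partial\Hh}v_n\varphi=\lambda_n\int m_{A_n}v_n\varphi,
\]
the boundary integral is nonnegative for $\varphi=\eta^2v_n^{p-1}\ge0$, so it can be moved to the left and dropped. The Moser scheme is then the standard Neumann half-ball iteration.
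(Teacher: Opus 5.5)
Your proof is correct and follows essentially the same route as the paper: tightness about the escaping centres makes the $L^2$ (indeed $H^1$) mass of a fixed-width boundary strip tend to zero, and a local boundary Moser estimate on half-balls turns this into uniform smallness of the boundary values. That estimate is uniform in the tangential centre and in $\tau_n$ because the Robin term $\tau_n\int_{\partial\Hh}\eta^2v_n^{p}\ge0$ can be discarded. One cosmetic fix: since $\lambda_n$ is only assumed bounded, write the subsolution inequality as $-\Delta v_n\le|\lambda_n|\max\{1,\kappa\}\,v_n$ rather than with $\lambda_n\kappa$, which tacitly assumes $\lambda_n\ge0$.
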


\begin{proof}
Tightness implies
\[
\int_{\{0<x_N<1\}}v_n^2\longrightarrow0.
\]
Indeed, for every \(\eta>0\) choose a fixed tightness radius \(R\); once
\(h_n>R+1\), every point with \(0<x_N<1\) has vertical distance greater than \(R\) from \((0,h_n)\), so the entire unit boundary strip lies outside \(B_R((0,h_n))\).

A local boundary Moser estimate, applied in unit half-balls centred on
\(\partial\Hh\), gives
\[
\sup_{B_{1/2}^+(x',0)}v_n
\le
C\left(\int_{B_1^+(x',0)}v_n^2\right)^{1/2}.
\]
The constant depends only on \(N\), \(\kappa\), and a uniform bound for
\(\lambda_n\); it is independent of the tangential centre and of
\(\tau_n\), because the Robin term has the favourable sign in the Moser energy inequality and may be discarded.  Taking the supremum in \(x'\) and bounding each half-ball integral by the total \(L^2\)-mass of the unit strip proves \eqref{eq:boundary-sup-escape}.
\end{proof}

\begin{theorem}[Attainment at the transition threshold]\label{thm:threshold-attainment}
At the critical parameter, $\Lambda_{\Hh}(\tau_*)=k^2$ and the half-space infimum is attained.  More precisely, let \(\tau_n\uparrow\tau_*\) and let \((A_n,v_n)\) be nonnegative optimisers with values below $k^2$, normalised by
\[
\mathcal M(v_n)=1,
\qquad
\lambda_n:=\Lambda_{\Hh}(\tau_n)<k^2.
\]
Then, after passing to a subsequence and translating parallel to
\(\partial\Hh\), there is a normalised threshold optimiser \((A_*,v_*)\) such that
\begin{equation}\label{eq:subcritical-to-threshold-H1}
 v_n\longrightarrow v_*
 \qquad\text{strongly in }H^1(\Hh),
\end{equation}
and
\begin{equation}\label{eq:subcritical-to-threshold-set}
 |A_n\triangle A_*|\longrightarrow0.
\end{equation}
\end{theorem}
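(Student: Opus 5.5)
The plan is to run the concentration--compactness argument of Theorem~\ref{thm:strict-gap-attainment} along the sequence $\tau_n\uparrow\tau_*$. The strict gap used there is not available at $\tau_*$, so the escape alternative has to be excluded by the boundary-amplitude argument from the proof of Theorem~\ref{thm:finite-detachment}.

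\textbf{Setup.} First, $\Lambda_{\Hh}(\tau_*)=k^2$ follows from Corollary~\ref{cor:threshold-structure} together with Theorem~\ref{thm:finite-detachment}. Since $\tau_n<\tau_*$, Theorem~\ref{thm:strict-gap-attainment} supplies nonnegative optimisers $(A_n,v_n)$ with $\mathcal M(v_n)=1$. Continuity (Proposition~\ref{prop:halfspace-basic}) gives $\lambda_n\to k^2$. The energies $E_{\tau_n}(v_n)=\lambda_n$ are bounded, so Lemma~\ref{lem:normalised-bounded} gives a uniform $H^1$ bound; since $\tau_n\to\tau_*>0$, the traces are bounded as well.

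\textbf{Excluding vanishing and dichotomy.} I will apply Lions' lemma to $(|\nabla v_n|^2+v_n^2)\,dx$. Vanishing contradicts $\mathcal M(v_n)=1$ exactly as before. Dichotomy is handled by the same splitting argument (Lemma~\ref{lem:asymptotic-B-splitting}). For interior volume fractions $a_n\in(\eta,1-\eta)$, Corollary~\ref{cor:strict-resource-subhomogeneity} gives a uniform excess above $\Lambda_{\Hh}(\tau_n)$. Here the quantitative form \eqref{eq:strict-resource-quantitative} is uniform in $\tau$ near $\tau_*$, so the excess does not degenerate as $n\to\infty$. The endpoint cases $a_n\to0,1$ again cost $\lambda_n\|w_n\|_2^2+\|\nabla w_n\|_2^2$, which is bounded below. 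Hence, after tangential translations, the sequence is tight about centres $(0,h_n)$.

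\textbf{Excluding escape (the main obstacle).} Suppose $h_n\to\infty$ along a subsequence. Proposition~\ref{prop:tight-escape-lower} is useless here, since it only gives $\lambda_n\ge k^2$ in the limit, which is not a contradiction. Instead I will use contact. Proposition~\ref{prop:large-tau-contact} is stated only for large $\tau$, but its proof applies whenever $\|f_n\|_2$ is small, because that is all the proof needs to make $L_\lambda(f)$ small. Under escape, Lemma~\ref{lem:boundary-sup-escape} gives $\|v_n\|_{L^\infty(\partial\Hh)}\to0$, and hence the $L^2$ norm of the trace also tends to zero: tightness gives vanishing $L^2$ mass in the unit boundary strip, and the trace theorem on that strip, combined with the local $H^1$ tail from tightness, makes $\|f_n\|_2\to0$.

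The detached-strip argument then runs as follows. If $A_n$ stayed off a strip, Lemma~\ref{lem:detached-translation} gives $E_\lambda(f_n)\le\tau_n\|f_n\|_2^2$. But the gluing inequality \eqref{eq:Elambda-lower}, with $q_n>0$ and $1-L_\lambda(f_n)>0$, forces $E_\lambda(f_n)>\tau_n\|f_n\|_2^2$. So $A_n$ must touch the boundary.

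Next I identify the bathtub levels. The tight escaping profile satisfies the hypotheses of Proposition~\ref{prop:whole-space-compactness}, because $\mathcal M=1$ and the energy tends to $k^2$. Its translate therefore converges to $\Phi$, and the argument of Lemma~\ref{lem:bathtub-level-convergence} gives $\theta_n\to\Phi(R_1)>0$. The uniform $L^\infty$ bound (Lemma~\ref{lem:uniform-Linfty-branch}) together with Lemma~\ref{lem:contact-forces-boundary-amplitude} then gives $\|v_n\|_{L^\infty(\partial\Hh)}\ge\theta_n$. This contradicts Lemma~\ref{lem:boundary-sup-escape}. Hence the heights $h_n$ are bounded.

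\textbf{Passage to the limit.} With bounded heights, tightness and Rellich give $v_n\to v_*$ strongly in $L^2$ and weakly in $H^1$. Trace compactness on bounded pieces plus tightness gives strong $L^2$ convergence of the traces. The $L^2$ continuity of $\mathcal B$ gives $\mathcal M(v_*)=1$. Weak lower semicontinuity and $\tau_n\to\tau_*$ yield $E_{\tau_*}(v_*)\le\liminf\lambda_n=k^2=\Lambda_{\Hh}(\tau_*)$, so $v_*$ is a minimiser. Convergence of the norms then upgrades the convergence to strong $H^1$, giving \eqref{eq:subcritical-to-threshold-H1}.

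Finally, let $A_*$ be the bathtub set of $v_*$. Proposition~\ref{prop:EL-halfspace} and Lemma~\ref{lem:no-plateau-halfspace} give $A_*=\{v_*>t\}$ with $t>0$ and $|\{v_*=t\}|=0$. Lemma~\ref{lem:bathtub-stability}, applied with $g_n=v_n^2\to v_*^2$ in $L^1$, then gives \eqref{eq:subcritical-to-threshold-set}.
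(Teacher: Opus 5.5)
Your proposal is correct and follows essentially the paper's route. The ingredients are the same: concentration--compactness, exclusion of escape by combining Lemma~\ref{lem:boundary-sup-escape}, the level convergence $\theta_n\to\Phi(R_1)>0$, Lemma~\ref{lem:contact-forces-boundary-amplitude}, the one-sided translation of Lemma~\ref{lem:detached-translation} and the lower Yukawa gluing, then passage to the limit with Lemma~\ref{lem:bathtub-stability}. The paper runs the escape contradiction in the contrapositive order (a vanishing boundary supremum forces a detached strip, which translation and gluing then forbid), and it first converts $(v_n)$ into a threshold minimising sequence via $E_{\tau_*}(v_n)\to k^2$, which makes your uniformity-in-$\tau$ remark unnecessary; you should also say explicitly that Proposition~\ref{prop:whole-space-compactness} is applied to the cut-off zero extension $W_n\in H^1(\R^N)$, not to $v_n$ itself.
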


\begin{proof}
Continuity of $\Lambda_{\Hh}$ and Theorem~\ref{thm:finite-detachment} give
$\lambda_n\to k^2$.  The normalisation and Lemma~\ref{lem:normalised-bounded} give a uniform $H^1$ bound, hence a uniform trace bound, and therefore
\begin{equation}\label{eq:tau-star-minseq-from-below}
E_{\tau_*}(v_n)=\lambda_n+(\tau_*-\tau_n)\|\operatorname{Tr}v_n\|_2^2\longrightarrow k^2.
\end{equation}
Thus $(v_n)$ is a threshold minimising sequence.  The vanishing and dichotomy alternatives are excluded exactly as in the proof of Theorem~\ref{thm:strict-gap-attainment}: the argument there uses only the unit-volume value and strict subhomogeneity in the favourable volume, which remain valid at $\tau_*$ with value $k^2$.  Hence, after tangential translations, $(v_n)$ is tight about centres $(0,h_n)$.

If $(h_n)$ is bounded, tightness and local Rellich compactness give strong $L^2$ convergence to $v_*$, the normalisation passes to the limit, and lower semicontinuity in \eqref{eq:tau-star-minseq-from-below} gives a threshold minimiser.  Equality of energies yields strong $H^1$ convergence.  Proposition~\ref{prop:EL-halfspace}, Lemma~\ref{lem:no-plateau-halfspace}, and Lemma~\ref{lem:bathtub-stability} then give $|A_n\triangle A_*|\to0$.

It remains to exclude $h_n\to\infty$.  Cut off $v_n$ below the escaping core and extend by zero to $W_n\in H^1(\R^N)$.  Tightness gives
\begin{equation}\label{eq:threshold-escape-cutoff}
\|v_n-W_n\|_{H^1(\Hh)}\to0,
\qquad \|\operatorname{Tr}v_n\|_2\to0,
\end{equation}
where the first expression is understood after restriction of $W_n$ to $\Hh$.  Consequently
$\mathcal M_1^{\R^N}(W_n)=1+o(1)$ and
$\int_{\R^N}|\nabla W_n|^2=k^2+o(1)$.  Proposition~\ref{prop:whole-space-compactness} therefore gives translations for which $W_n\to\Phi$ strongly in $H^1(\R^N)$.  The bathtub inclusions and strict radial monotonicity of $\Phi$ imply, exactly as in Lemma~\ref{lem:bathtub-level-convergence}, that the levels satisfy
\begin{equation}\label{eq:threshold-escape-level}
\theta_n\longrightarrow\Phi(R_1)>0.
\end{equation}
Lemma~\ref{lem:boundary-sup-escape} gives
$\|v_n\|_{L^\infty(\partial\Hh)}\to0$; together with Lemma~\ref{lem:contact-forces-boundary-amplitude} this forces a strip free of favourable material between $A_n$ and the boundary for all large $n$.

For $f_n=\operatorname{Tr}v_n$ and $s_n(\xi)=\sqrt{\lambda_n+|\xi|^2}$, the one-sided translation inequality of Lemma~\ref{lem:detached-translation} gives
\begin{equation}\label{eq:threshold-translation-upper}
\int s_n|\widehat f_n|^2\le\tau_n\|f_n\|_2^2.
\end{equation}
Repeating the lower-half-space $\lambda_n$-Yukawa gluing from Proposition~\ref{prop:large-tau-contact} gives
\begin{equation}\label{eq:threshold-gluing-lower}
\int s_n|\widehat f_n|^2-\tau_n\|f_n\|_2^2
\ge (k^2-\lambda_n)(1-L_n),
\qquad
L_n:=\frac12\int\frac{|\widehat f_n|^2}{s_n}.
\end{equation}
By \eqref{eq:threshold-escape-cutoff}, $L_n\to0$, whereas $k^2-\lambda_n>0$.  The last two inequalities are incompatible for large $n$.  Escape from the boundary is impossible, and the bounded-height alternative proves the theorem.
\end{proof}

\begin{proposition}[Threshold optimisers have essential boundary contact]\label{prop:threshold-contact}
Let $(A,v)$ be a nonnegative threshold optimiser, normalised by
\[
\int_{\Hh}m_Av^2=1,
\qquad
E_{\tau_*}(v)=k^2.
\]
Then its favourable set has essential boundary contact:
\begin{equation}\label{eq:threshold-contact}
|A\cap\{0<x_N<d\}|>0
\qquad\text{for every }d>0.
\end{equation}
\end{proposition}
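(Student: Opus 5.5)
We argue by contradiction and run the one-sided normal translation argument at $\tau=\tau_*$, now combined with the sharp trace inequality of Theorem~\ref{thm:finite-detachment} in place of the strict gap. Suppose that $|A\cap\{0<x_N<d\}|=0$ for some $d>0$, and write $f=\operatorname{Tr}v$.

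The first step is to show that $f\neq0$. If $f=0$, then the argument of Corollary~\ref{cor:nonattainment-above-threshold} applies verbatim. The zero extension of $v$, with the lower half-space declared unfavourable, attains the whole-space optimum $k^2$ but vanishes on an open set. This contradicts strict positivity of whole-space principal optimisers.

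The second step is the translation. Since $(A,v)$ is an optimiser at level $\lambda=k^2$, the proof of Lemma~\ref{lem:detached-translation} goes through unchanged. That proof uses only that $(A,v)$ minimises and that $m_A=-1$ in the strip; the hypothesis $\lambda<k^2$ there serves only to name the value. It gives $f\in H^1(\R^{N-1})$ and
\[
\int_{\R^{N-1}}(k^2+|\xi|^2)|\widehat f|^2\le\tau_*^2\|f\|_2^2 .
\]
By Cauchy--Schwarz, exactly as in \eqref{eq:Elambda-upper}, this yields
\[
E_{k^2}(f):=\int s(\xi)|\widehat f|^2\le\tau_*\|f\|_2^2 .
\]

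The third step is the gluing lower bound. We glue $v$ to the decaying $k$-Yukawa extension $z_-$ into $L=\{x_N<0\}$, as in Proposition~\ref{prop:large-tau-contact} with $\lambda=k^2$. Identity \eqref{eq:glued-lambda-energy} becomes
\[
\int_{\R^N}|\nabla U|^2=k^2(1-L_{k^2}(f))+E_{k^2}(f)-\tau_*\|f\|_2^2 ,
\]
and the whole-space inequality gives only $E_{k^2}(f)\ge\tau_*\|f\|_2^2$. Here $q=0$, so the strict contradiction used at large $\tau$ disappears. This is the main obstacle. The two inequalities combine into equality in both: Cauchy--Schwarz is then an equality, and $U$ attains the whole-space optimum with favourable set $A$.

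The fourth step extracts the contradiction from these equalities. By Proposition~\ref{prop:whole-space-classification}, the glued function $U$ is a positive radial optimiser, strictly decreasing about a centre, and $A$ is a ball. Since $A$ lies in $\{x_N\ge d\}$, that centre lies in $\Hh$. In $L$, however, $U=z_-$ is the Yukawa extension of its trace, and $U$ is also the radial Bessel-type profile $\Phi$ shifted to that centre. Along the normal line through the centre, the profile restricted to $x_N<0$ would need to coincide with the Fourier-decaying extension $e^{s(\xi)x_N}\widehat f$. But $\Phi$ solves $(-\Delta+k^2)\Phi=0$ outside the ball, including in the region $0<x_N<d$, and it decays in all directions. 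So the same reflection-free representation would hold above the plane as well, giving $\widehat U(\xi,x_N)=e^{s x_N}\widehat f$ for $x_N<d$. This grows as $x_N$ increases, contradicting the fact that the radial profile decreases away from its centre toward the plane.

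A cleaner alternative for this last step uses the equality case of Cauchy--Schwarz. Equality forces $\widehat f$ to be supported where $s(\xi)$ is constant, that is, on a sphere $|\xi|=\rho$, which is a null set. Hence $f=0$, contradicting the first step. I would present this alternative, and check that the equality case in \eqref{eq:Elambda-upper} is indeed forced by the two opposite inequalities.
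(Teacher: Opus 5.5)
Your argument, in the version you say you would present, is correct and coincides with the paper's proof. The steps are: nonvanishing trace via the nonattainment argument; the normal-translation bound at $\lambda=k^2$; the Yukawa gluing lower bound, which is exactly Proposition~\ref{prop:DtN-defect} applied with $\mathfrak D(v)=\tau_*\|f\|_2^2$; and the Cauchy--Schwarz equality case on the null level set of $s$.

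Do drop the first version of your fourth step. Since the ball's centre lies above the plane, growth of $e^{s(\xi)x_N}$ in $x_N$ is exactly what a radially decreasing profile does as it approaches its centre, so no contradiction arises there.
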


\begin{proof}
Suppose instead that $A$ leaves a strip $0<x_N<d$ free of favourable material.  The one-sided normal-translation calculation from Lemma~\ref{lem:detached-translation} does not use the strict inequality $\lambda<k^2$ until later in that argument.  Repeating only its variational derivative at the present value $\lambda=k^2$ gives, for $f=\operatorname{Tr}v$,
\begin{equation}\label{eq:threshold-contact-rms}
\int_{\R^{N-1}}(k^2+|\xi|^2)|\widehat f(\xi)|^2\,d\xi
\le \tau_*^2\|f\|_2^2.
\end{equation}
On the other hand the threshold identity
\[
\int_{\Hh}|\nabla v|^2+\tau_*\|f\|_2^2=k^2
\]
shows that the whole-space defect from \eqref{eq:defect-functional} is exactly
\[
\mathfrak D(v)=\tau_*\|f\|_2^2.
\]
Proposition~\ref{prop:DtN-defect}, followed by Cauchy--Schwarz and
\eqref{eq:threshold-contact-rms}, therefore gives
\[
\tau_*\|f\|_2^2
\le
\int s(\xi)|\widehat f(\xi)|^2\,d\xi
\le
\|f\|_2
\left(\int s(\xi)^2|\widehat f(\xi)|^2\,d\xi\right)^{1/2}
\le
\tau_*\|f\|_2^2,
\]
where $s(\xi)=\sqrt{k^2+|\xi|^2}$.  Thus equality holds throughout.  The trace cannot vanish: otherwise its zero extension would attain the whole-space optimum while vanishing on the lower half-space, contradicting positivity of the whole-space principal optimiser.  Equality in Cauchy--Schwarz therefore forces $s(\xi)=\tau_*$ for almost every $\xi$ carrying $|\widehat f|^2$.  The level set $\{\xi:s(\xi)=\tau_*\}$ is either empty, a singleton, or an $(N-2)$-dimensional sphere, hence has zero $(N-1)$-dimensional Lebesgue measure. Therefore $f=0$, a contradiction.
\end{proof}

\begin{theorem}[Critical half-space compactness--escape dichotomy]\label{thm:critical-halfspace-profile}
Let \((v_n)\subset H^1(\Hh)\) be any normalised minimising sequence at the threshold,
\[
\mathcal M(v_n)=1,
\qquad
E_{\tau_*}(v_n)\longrightarrow k^2.
\]
After passing to a subsequence and translating parallel to \(\partial\Hh\), exactly one of the following profile alternatives occurs.

\smallskip
\noindent\emph{Compact critical profile.}
There is a threshold optimiser \(v_*\) such that
\[
v_n\longrightarrow v_*
\qquad\text{strongly in }H^1(\Hh).
\]
For bathtub maximisers \(A_n\), after a further subsequence their sets converge in measure to the unique bathtub set \(A_*\) of \(v_*\).

\smallskip
\noindent\emph{Escape from the boundary.}
There are heights \(h_n\to+\infty\), vertical cutoffs \(\zeta_n\), and full-space translations \(x_n\) such that, with \(W_n\) the zero extension of \(\zeta_nv_n\),
\[
\|v_n-\zeta_nv_n\|_{H^1(\Hh)}\to0,
\qquad
W_n(x_n+\cdot)\to\Phi
\quad\text{strongly in }H^1(\R^N),
\]
and the translated bathtub sets converge in measure to \(B_{R_1}\).

Both kinds of minimising sequence exist at \(\tau_*\): the first by Theorem~\ref{thm:threshold-attainment}, and the second by translating the whole-space optimiser arbitrarily far into the half-space.
\end{theorem}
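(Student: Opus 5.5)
The argument runs the concentration--compactness scheme of Theorem~\ref{thm:strict-gap-attainment} at the threshold and then splits according to the behaviour of the heights. First, since $\mathcal M(v_n)=1$ and $E_{\tau_*}(v_n)\to k^2$ is bounded, Lemma~\ref{lem:normalised-bounded} gives a uniform $H^1(\Hh)$ bound. Next I apply Lions' lemma to $(|\nabla v_n|^2+v_n^2)\,dx$. Vanishing is excluded by \eqref{eq:B-small-volume-uniform}, since it would force $\mathcal B(v_n)\to0$. Dichotomy is excluded exactly as in that proof: Lemma~\ref{lem:asymptotic-B-splitting} splits the reduced mass, and Corollary~\ref{cor:strict-resource-subhomogeneity} with value $\Lambda_{\Hh}(\tau_*)=k^2$ rules out any interior volume share. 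The endpoint shares are excluded by the estimate $E\ge L+L\|w_n\|_2^2+\|\nabla w_n\|_2^2$. Hence, after tangential translations, the sequence is tight about centres $(0,h_n)$, and I pass to a subsequence along which either $(h_n)$ is bounded or $h_n\to+\infty$.

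\emph{Bounded heights.} Tightness together with local Rellich compactness gives $v_n\to v_*$ strongly in $L^2(\Hh)$ and weakly in $H^1$. By \eqref{eq:B-L2-continuity}, $\mathcal M(v_*)=1$. Weak lower semicontinuity of $E_{\tau_*}$, using weak convergence of the traces and the trace compactness available from tightness, gives $E_{\tau_*}(v_*)\le k^2$. Since $\Lambda_{\Hh}(\tau_*)=k^2$, this is an equality, so $v_*$ is a threshold optimiser. The norms of the quadratic energies then converge, which upgrades the convergence to strong $H^1$. For the sets, replace $v_*$ by $|v_*|$ and note $v_n^2\to v_*^2$ in $L^1$. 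Proposition~\ref{prop:EL-halfspace} and Lemma~\ref{lem:no-plateau-halfspace} show that the bathtub level $t>0$ satisfies $|\{v_*=t\}|=0$. Lemma~\ref{lem:bathtub-stability} then gives $|A_n\triangle A_*|\to0$, and $A_*$ is the unique bathtub set.

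\emph{Escaping heights.} Let $\zeta_n(x)=\zeta(x_N-h_n/2)$, with $\zeta$ a smooth cutoff vanishing below $-h_n/4$ and equal to one above $0$. Tightness about $(0,h_n)$ places the whole region where $\zeta_n\ne1$, and where $\nabla\zeta_n\neq0$, outside $B_{h_n/4}((0,h_n))$. This gives $\|v_n-\zeta_nv_n\|_{H^1(\Hh)}\to0$ and also $\|\operatorname{Tr}v_n\|_2\to0$. Since $\zeta_n v_n$ vanishes near $\partial\Hh$, its zero extension satisfies $W_n\in H^1(\R^N)$. By \eqref{eq:B-L2-continuity}, $\mathcal M_1^{\R^N}(W_n)=1+o(1)$, and $\int|\nabla W_n|^2\le E_{\tau_*}(v_n)+o(1)=k^2+o(1)$. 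After rescaling by $1+o(1)$, Proposition~\ref{prop:whole-space-compactness} yields translations $x_n$ with $W_n(x_n+\cdot)\to\Phi$ in $H^1(\R^N)$. Since $\|v_n-W_n\|_{L^2}\to0$ (with $v_n$ zero-extended), the translated densities converge in $L^1$. Lemma~\ref{lem:bathtub-stability}, combined with the no-plateau property of $\Phi$ from Proposition~\ref{prop:whole-space-classification}, gives convergence of the translated bathtub sets to $B_{R_1}$.

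\emph{Exclusivity.} The two alternatives cannot both hold. In the escaping case $v_n\rightharpoonup0$ weakly in $H^1(\Hh)$ after tangential translation: the mass leaves every fixed ball as $h_n\to\infty$. A strong limit $v_*$ would then be $0$, contradicting $\mathcal M(v_*)=1$. Different tangential translations cannot fix this, because the normal escape is independent of them.

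\emph{Existence of both kinds.} Compact sequences are supplied by Theorem~\ref{thm:threshold-attainment}. For escaping sequences, take $\Phi$ together with $B_{R_1}$, truncate to compact support with an error $\eta_n\to0$, and translate by $n e_N$. The trace then vanishes, so the quotient is at most $k^2+\eta_n$.

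The main obstacle I expect is the lower semicontinuity and normalisation step in the bounded case. The only delicate point is the trace term, and that is harmless because it carries a nonnegative coefficient and is weakly lower semicontinuous.
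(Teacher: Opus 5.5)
Your proposal is correct and follows the paper's proof essentially step by step: concentration--compactness at the threshold, with dichotomy excluded by strict subhomogeneity and the endpoint estimate; the split into bounded versus escaping heights; vertical cutoffs feeding Proposition~\ref{prop:whole-space-compactness}; and bathtub stability for the sets. Your explicit exclusivity argument and the passage to $|v_*|$ to invoke the no-plateau lemma are small additions the paper leaves implicit, and there is one minor citation slip: vanishing is excluded by Lions' lemma ($v_n\to0$ in $L^p$ for $2<p<2^*$, so $\mathcal B(v_n)\le\|v_n\|_p^2\to0$), not by the small-volume estimate \eqref{eq:B-small-volume-uniform}, although your stated reason is right.
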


\begin{proof}
Apply concentration--compactness to the normalised $H^1$-mass measures. Vanishing is excluded by $\mathcal M(v_n)=1$. For dichotomy, use the standard separated cutoff decomposition together with Lemma~\ref{lem:asymptotic-B-splitting}. Strict subhomogeneity in the favourable volume at $\tau_*$ gives a uniform cost strictly above $k^2$ when both favourable-volume fractions stay away from $0$ and $1$, while the small-volume estimate in that lemma excludes endpoint allocations: a component receiving vanishing favourable volume retains a strictly positive $H^1$ cost. Thus the sequence is tight about centres $(0,h_n)$ after tangential translations.

If $(h_n)$ is bounded, local Rellich compactness plus tightness yields strong $L^2$ convergence. Continuity of $\mathcal M$ preserves the normalisation, weak lower semicontinuity gives a threshold minimiser, and equality of the quadratic energies upgrades the convergence to strong $H^1$. Lemma~\ref{lem:bathtub-stability} then gives convergence of the bathtub sets.

If $h_n\to+\infty$, choose the vertical cutoffs used in \eqref{eq:threshold-escape-cutoff}. Tightness gives $\|v_n-\zeta_nv_n\|_{H^1}\to0$ and vanishing boundary trace. The zero extensions $W_n$ therefore satisfy $\mathcal M_1^{\R^N}(W_n)=1+o(1)$ and $\int|\nabla W_n|^2=k^2+o(1)$. After scalar normalisation, Proposition~\ref{prop:whole-space-compactness} gives translations for which $W_n\to\Phi$ strongly in $H^1(\R^N)$. Strong $L^2$ convergence of the squared densities and Proposition~\ref{prop:whole-space-classification}, followed by Lemma~\ref{lem:bathtub-stability}, give convergence of the translated bathtub sets to $B_{R_1}$.

For existence of an escaping sequence, translate the whole-space ball optimiser by \(he_N\), restrict it to \(\Hh\), and normalise its reduced mass.  As $h\to\infty$, the omitted lower-half-space mass and energy and the boundary trace all tend to zero by exponential decay of the whole-space profile, so the threshold quotient tends to $k^2$.
\end{proof}

\begin{proposition}[Left slope at the threshold and trace selection]\label{prop:tau-star-left-slope}
Let $\mathfrak O_*$ denote the class of nonnegative threshold optimisers normalised by
\[
\int_{\Hh}m_Av^2=1.
\]
Define
\begin{equation}\label{eq:T-star}
T_*:=\sup_{(A,v)\in\mathfrak O_*}
\int_{\partial\Hh}v^2\,dS.
\end{equation}
Then $0<T_*<\infty$, the supremum is attained, and
\begin{equation}\label{eq:tau-star-left-derivative}
\Lambda_{\Hh}'(\tau_*-) = T_*.
\end{equation}
Equivalently,
\begin{equation}\label{eq:tau-star-linear-left}
\Lambda_{\Hh}(\tau)
=
 k^2-T_*(\tau_*-\tau)+o(\tau_*-\tau)
\qquad\text{as }\tau\uparrow\tau_*.
\end{equation}
Moreover, if $\tau_n\uparrow\tau_*$ and $(A_n,v_n)$ are exact subcritical optimisers as in Theorem~\ref{thm:threshold-attainment}, then every compact threshold limit supplied by that theorem satisfies
\begin{equation}\label{eq:max-trace-threshold-limit}
\int_{\partial\Hh}v_*^2\,dS=T_*.
\end{equation}
Thus the subcritical branch selects maximal-trace threshold optimisers.
\end{proposition}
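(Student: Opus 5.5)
We combine the concavity of $\tau\mapsto\Lambda_{\Hh}(\tau)$ (Proposition~\ref{prop:halfspace-basic}) with the compactness of the subcritical branch (Theorem~\ref{thm:threshold-attainment}), so that the left derivative is squeezed between two one-sided estimates.

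\emph{Finiteness and positivity of $T_*$.} Every $(A,v)\in\mathfrak O_*$ has $\mathcal M(v)=1$ and energy $k^2$, so Lemma~\ref{lem:normalised-bounded} gives a uniform $H^1$ bound, hence a uniform trace bound, and $T_*<\infty$. The class $\mathfrak O_*$ is nonempty by Theorem~\ref{thm:threshold-attainment}. Each member has nonzero trace: otherwise its zero extension would attain the whole-space optimum while vanishing on a half-space, which is excluded as in Corollary~\ref{cor:nonattainment-above-threshold}. Hence $T_*>0$. Attainment of the supremum follows by taking a maximising sequence in $\mathfrak O_*$, which is a threshold minimising sequence, and applying Theorem~\ref{thm:critical-halfspace-profile}. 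The escape alternative is excluded because escaping sequences have vanishing trace (by tightness, as in \eqref{eq:threshold-escape-cutoff}), whereas the traces stay near $T_*>0$. So, after tangential translation, the sequence converges strongly in $H^1$, traces converge, and the limit lies in $\mathfrak O_*$ with trace $T_*$.

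\emph{Upper bound on the slope.} For any $(A,v)\in\mathfrak O_*$ and $\tau<\tau_*$, testing the definition of $\Lambda_{\Hh}(\tau)$ with $(A,v)$ gives
\[
\Lambda_{\Hh}(\tau)\le k^2-(\tau_*-\tau)\int_{\partial\Hh}v^2.
\]
Choosing the maximiser yields $\Lambda_{\Hh}(\tau)\le k^2-T_*(\tau_*-\tau)$. Thus every left difference quotient $(k^2-\Lambda_{\Hh}(\tau))/(\tau_*-\tau)$ is at least $T_*$, and so $\Lambda_{\Hh}'(\tau_*-)\ge T_*$. This one-sided derivative exists by concavity.

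\emph{Lower bound and trace selection.} Take exact subcritical optimisers $(A_n,v_n)$ at $\tau_n\uparrow\tau_*$ and set $t_n=\|\operatorname{Tr}v_n\|_2^2$. Testing the threshold problem with $v_n$ gives $k^2\le\lambda_n+(\tau_*-\tau_n)t_n$, that is,
\[
\frac{k^2-\lambda_n}{\tau_*-\tau_n}\le t_n.
\]
Theorem~\ref{thm:threshold-attainment} gives $v_n\to v_*$ strongly in $H^1$ along a subsequence, hence $t_n\to\|\operatorname{Tr}v_*\|_2^2\le T_*$. So the left quotient along $\tau_n$ has $\limsup$ at most $\|\operatorname{Tr}v_*\|^2\le T_*$. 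Since concavity makes the quotient converge along the full limit to $\Lambda_{\Hh}'(\tau_*-)$, combining with the upper bound gives $\Lambda_{\Hh}'(\tau_*-)=T_*$. The same chain of inequalities forces $\|\operatorname{Tr}v_*\|^2\ge T_*$, so every compact limit has maximal trace, which proves \eqref{eq:max-trace-threshold-limit}. The expansion \eqref{eq:tau-star-linear-left} is just the definition of the one-sided derivative.

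The main obstacle is applying Theorem~\ref{thm:threshold-attainment} to an arbitrary sequence $\tau_n\uparrow\tau_*$ and, in the attainment step, ruling out escape for a maximising sequence in $\mathfrak O_*$. Both steps rely on the trace being bounded below, or being eventually compact, which the earlier results supply.
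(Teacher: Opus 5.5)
Your proposal is correct and follows essentially the same route as the paper. Testing $\Lambda_{\Hh}(\tau)$ with a threshold optimiser bounds the left difference quotient below by $T_*$, testing the threshold problem with $v_n$ bounds it above by $\|\operatorname{Tr}v_n\|_2^2$, and the strong convergence from Theorem~\ref{thm:threshold-attainment} closes the squeeze; attainment of $T_*$ is obtained, as in the paper, by excluding escape through vanishing traces. The only cosmetic difference is that you invoke concavity for the existence of the left derivative, whereas the paper gets convergence of the quotient directly from the squeeze along arbitrary sequences $\tau_n\uparrow\tau_*$.
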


\begin{proof}
Every threshold optimiser has nonzero trace by the proof of Proposition~\ref{prop:threshold-contact}, so $T_*>0$.  Uniform $H^1$ bounds for normalised threshold minimisers and the trace theorem give $T_*<\infty$.

Let $(A_j,v_j)\in\mathfrak O_*$ be a sequence whose trace norms tend to $T_*$.  Since $T_*>0$, an escaping subsequence is impossible: in the escape alternative of Theorem~\ref{thm:critical-halfspace-profile} the trace norm tends to zero.  Hence, after tangential translations and a subsequence, $v_j$ converges strongly in $H^1(\Hh)$ to a threshold optimiser.  Continuity of the trace map then shows that the supremum in \eqref{eq:T-star} is attained.

Now let $h_n:=\tau_*-\tau_n\downarrow0$ and write
\[
\lambda_n:=\Lambda_{\Hh}(\tau_n),
\qquad
T_n:=\|\operatorname{Tr}v_n\|_2^2.
\]
For any $(A,v)\in\mathfrak O_*$,
\[
\lambda_n
\le E_{\tau_n}(v)
=k^2-h_n\|\operatorname{Tr}v\|_2^2,
\]
so
\begin{equation}\label{eq:left-slope-lower}
\frac{k^2-\lambda_n}{h_n}\ge T_*.
\end{equation}
Conversely, threshold optimality gives
\[
k^2\le E_{\tau_*}(v_n)=\lambda_n+h_nT_n,
\]
and therefore
\begin{equation}\label{eq:left-slope-upper}
\frac{k^2-\lambda_n}{h_n}\le T_n.
\end{equation}
By Theorem~\ref{thm:threshold-attainment}, after tangential recentering and subsequence extraction, $v_n\to v_*$ strongly in $H^1(\Hh)$ for some threshold optimiser.  Hence $T_n\to\|\operatorname{Tr}v_*\|_2^2\le T_*$.  Combining \eqref{eq:left-slope-lower} and \eqref{eq:left-slope-upper} forces both the quotient and $T_n$ to converge to $T_*$.  This proves \eqref{eq:max-trace-threshold-limit} and the left derivative formula.  The expansion \eqref{eq:tau-star-linear-left} follows.
\end{proof}

\begin{corollary}[Critical scaling near the transition threshold]\label{cor:critical-window-halfspace}
For every fixed $\sigma\in\R$,
\begin{equation}\label{eq:critical-window-halfspace}
\Lambda_{\Hh}(\tau_*+\eps\sigma)
=
 k^2+\eps\min\{0,\sigma T_*\}+o(\eps)
\qquad(\eps\downarrow0).
\end{equation}
In particular, changes of $\tau$ on the scale $O(\eps)$ compete with first-order geometric corrections in the bounded-domain problem.
\end{corollary}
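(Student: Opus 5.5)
The corollary follows from two facts already established. Theorem~\ref{thm:finite-detachment} gives $\Lambda_{\Hh}(\tau)=k^2$ for $\tau\ge\tau_*$, and Proposition~\ref{prop:tau-star-left-slope} gives the one-sided expansion \eqref{eq:tau-star-linear-left}. We split according to the sign of $\sigma$ and use $\varepsilon$ as the small parameter in each case.

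If $\sigma\ge0$, then $\tau_*+\varepsilon\sigma\ge\tau_*$. The identity \eqref{eq:ceiling-after-tau-star} gives $\Lambda_{\Hh}(\tau_*+\varepsilon\sigma)=k^2$ exactly. Since $\sigma T_*\ge0$ by $T_*>0$, we have $\min\{0,\sigma T_*\}=0$, so \eqref{eq:critical-window-halfspace} holds with zero remainder. The case $\sigma=0$ is included here.

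If $\sigma<0$, set $\tau=\tau_*+\varepsilon\sigma=\tau_*-\varepsilon|\sigma|$. This lies in $[0,\tau_*)$ for small $\varepsilon$ and tends to $\tau_*$ from below. The expansion \eqref{eq:tau-star-linear-left} with $\tau_*-\tau=\varepsilon|\sigma|$ gives
\[
\Lambda_{\Hh}(\tau_*+\varepsilon\sigma)=k^2-T_*\varepsilon|\sigma|+o(\varepsilon|\sigma|)=k^2+\varepsilon\sigma T_*+o(\varepsilon).
\]
Here we used that $\sigma$ is fixed, so $o(\varepsilon|\sigma|)=o(\varepsilon)$. Because $\sigma T_*<0$, the right-hand side equals $k^2+\varepsilon\min\{0,\sigma T_*\}+o(\varepsilon)$.

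The argument has no real obstacle, since all the analytic content sits in Proposition~\ref{prop:tau-star-left-slope}. The one point to check is that the left-slope expansion is a genuine two-sided limit statement along every sequence $\tau\uparrow\tau_*$, not only along subsequences. This holds because the lower bound \eqref{eq:left-slope-lower} and upper bound \eqref{eq:left-slope-upper} identify the limit $T_*$ uniquely, so every subsequence has the same limit. The closing sentence of the corollary about competition with geometric corrections is interpretive and needs no proof.
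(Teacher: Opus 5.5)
Your proof is correct and matches the paper's argument: the case $\sigma\ge0$ uses the exact ceiling value $k^2$ from Theorem~\ref{thm:finite-detachment}, and the case $\sigma<0$ substitutes $\tau=\tau_*+\eps\sigma$ into the left-slope expansion \eqref{eq:tau-star-linear-left}. Your check that the left-slope limit holds along every sequence $\tau\uparrow\tau_*$ is also correct; it follows from the subsequence argument you give, or directly from concavity of $\Lambda_{\Hh}$.
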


\begin{proof}
If $\sigma\ge0$, then $\tau_*+\eps\sigma\ge\tau_*$ and Theorem~\ref{thm:finite-detachment} gives the exact value $k^2$.  If $\sigma<0$, apply \eqref{eq:tau-star-linear-left} with $\tau=\tau_*+\eps\sigma$.
\end{proof}

Theorem~\ref{thm:finite-detachment} resolves the phase transition: $0<\tau_*<\infty$ and $\Lambda_{\Hh}(\tau)=\Lambda_{\R^N}$ for every $\tau\ge\tau_*$.  Compact minimisers exist for $0\le\tau<\tau_*$; at the endpoint a compact optimiser coexists with whole-space minimising sequences that escape from the boundary.

\subsection{Compactness below the whole-space value and escape above the threshold}

Theorem~\ref{thm:strict-gap-attainment} gives compactness below the whole-space value.  Vanishing is excluded by the bathtub normalisation, exact and asymptotic splitting of the bathtub functional combine with strict subhomogeneity in the favourable volume to exclude dichotomy, and a remaining one-core sequence cannot escape from the boundary when the energy lies strictly below the whole-space value because Proposition~\ref{prop:tight-escape-lower} would force the whole-space cost.

Consequently, for every
\[
\Lambda_{\Hh}(\tau)<\Lambda_{\R^N},
\]
the half-space problem possesses a compact optimiser modulo tangential translations.  Loss of compactness can occur only at the whole-space value.  Theorem~\ref{thm:finite-detachment} shows that equality is reached at a finite $\tau_*$; Theorem~\ref{thm:threshold-attainment} proves that the endpoint is attained, and Theorem~\ref{thm:critical-halfspace-profile} classifies the endpoint loss of compactness.

\begin{theorem}[Minimising sequences above the threshold escape from the boundary]\label{thm:postthreshold-profile}
Fix \(\tau>\tau_*\), and let \((v_n)\subset H^1(\Hh)\) be a normalised minimising sequence for the reduced half-space problem,
\[
\mathcal M(v_n)=1,
\qquad
E_\tau(v_n):=\int_{\Hh}|\nabla v_n|^2+\tau\int_{\partial\Hh}v_n^2
\longrightarrow k^2=\Lambda_{\R^N}.
\]
Then, after translations parallel to \(\partial\Hh\), there are heights \(h_n\to+\infty\) such that the sequence is tight in \(H^1\) about \((0,h_n)\).  More precisely, one may choose cutoffs \(\zeta_n=\zeta_n(x_N)\) with
\[
0\le\zeta_n\le1,
\qquad
\zeta_n=0\ \text{near }\partial\Hh,
\qquad
\zeta_n=1\ \text{on a neighbourhood of }(0,h_n),
\]
so that
\begin{equation}\label{eq:postthreshold-cutoff-small}
\|v_n-\zeta_nv_n\|_{H^1(\Hh)}\longrightarrow0.
\end{equation}
If \(W_n\in H^1(\R^N)\) denotes the zero extension of \(\zeta_nv_n\), then, after a scalar normalisation and full-space translations \(x_n\in\R^N\),
\begin{equation}\label{eq:postthreshold-whole-profile}
W_n(x_n+\cdot)\longrightarrow\Phi
\qquad\text{strongly in }H^1(\R^N),
\end{equation}
where \(\Phi\) is the positive radial whole-space optimiser.  In particular every minimising sequence above the threshold loses compactness only through escape from the boundary.

If \(A_n\) are bathtub maximisers for \(v_n^2\), the translations may be chosen so that
\begin{equation}\label{eq:postthreshold-set-profile}
|(A_n-x_n)\triangle B_{R_1}|\longrightarrow0,
\end{equation}
where sets are extended by the empty set below the moving boundary.
\end{theorem}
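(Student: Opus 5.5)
The argument rests on three earlier results: the sharp trace inequality \eqref{eq:global-quadratic-trace-final} at $\tau_*$, the concentration--compactness scheme of Theorem~\ref{thm:critical-halfspace-profile}, and the escape analysis in the proof of Theorem~\ref{thm:threshold-attainment}. First we show that the boundary trace of any minimising sequence above the threshold vanishes. Since $\Lambda_{\Hh}(\tau)=k^2$ for $\tau>\tau_*$, inequality \eqref{eq:global-quadratic-trace-final} gives
\[
k^2\le\int_{\Hh}|\nabla v_n|^2+\tau_*\|\operatorname{Tr}v_n\|_2^2=E_\tau(v_n)-(\tau-\tau_*)\|\operatorname{Tr}v_n\|_2^2 .
\]
Because $E_\tau(v_n)\to k^2$, this yields $\|\operatorname{Tr}v_n\|_2\to0$. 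Consequently $E_{\tau_*}(v_n)\to k^2$ as well, so $(v_n)$ is also a normalised minimising sequence at the threshold.

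Next we apply Theorem~\ref{thm:critical-halfspace-profile} to $(v_n)$ viewed as a threshold sequence. After tangential translations and a subsequence, exactly one of its two alternatives holds. We rule out the compact one: if $v_n\to v_*$ strongly in $H^1(\Hh)$, then $\mathcal M(v_*)=1$ and, by trace continuity, $\operatorname{Tr}v_*=0$. Lower semicontinuity then gives $E_\tau(v_*)\le k^2$, so $v_*$ would attain $\Lambda_{\Hh}(\tau)$. This contradicts Corollary~\ref{cor:nonattainment-above-threshold}. Alternatively, the proof of Proposition~\ref{prop:threshold-contact} shows that threshold optimisers have nonzero trace, which also excludes this case. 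Hence the escape alternative holds, with heights $h_n\to\infty$, vertical cutoffs $\zeta_n$, zero extensions $W_n$, whole-space translations $x_n$, convergence \eqref{eq:postthreshold-whole-profile} after scalar normalisation, and convergence of the bathtub sets to $B_{R_1}$, which is \eqref{eq:postthreshold-set-profile}. The cutoff estimate \eqref{eq:postthreshold-cutoff-small} is part of that alternative.

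The statement is about the full sequence, so we finish with a subsequence argument. Every subsequence has a further subsequence with the escape structure, and tightness about $(0,h_n)$ with $h_n\to\infty$ is a property we can define along the whole sequence. If it failed along some subsequence, applying the above to that subsequence would give a contradiction. The heights are chosen as the normal components of the concentration centres.

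The main obstacle I expect is checking that the dichotomy exclusion inside Theorem~\ref{thm:critical-halfspace-profile} really applies to a sequence produced at $\tau>\tau_*$. It should, because that exclusion uses only $E_{\tau_*}(v_n)\to k^2$ together with strict subhomogeneity at $\tau_*$. A second point to check is that the tightness centres there can be taken with nonnegative height. This follows once we note that tightness forces the centres to stay within bounded distance of $\overline{\Hh}$, since $v_n$ lives on $\Hh$.
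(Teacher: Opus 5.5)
Your proof is correct, but it is organised differently from the paper's.

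\textbf{The paper's route.} The paper re-runs Lions' concentration--compactness directly at the parameter $\tau$. Vanishing and dichotomy are excluded as before. Bounded heights are excluded because a strong $L^2$ limit would attain $\Lambda_{\Hh}(\tau)=k^2$, contradicting Corollary~\ref{cor:nonattainment-above-threshold}. Only afterwards does it build the vertical cutoffs explicitly ($\zeta_n=0$ for $x_N\le h_n/4$, $\zeta_n=1$ for $x_N\ge h_n/2$, $|\zeta_n'|\le Ch_n^{-1}$), and it deduces the vanishing of the trace from tightness via a local trace inequality in the strip $0<x_N<1$.

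\textbf{Your route.} You reverse the order. From
\[
k^2\le E_{\tau_*}(v_n)=E_\tau(v_n)-(\tau-\tau_*)\|\operatorname{Tr}v_n\|_2^2,
\]
which is just $\Lambda_{\Hh}(\tau_*)=k^2$ restated, you obtain trace decay first, with the explicit rate $\|\operatorname{Tr}v_n\|_2^2\le (E_\tau(v_n)-k^2)/(\tau-\tau_*)$. This makes $(v_n)$ a threshold minimising sequence, so the statement becomes a corollary of Theorem~\ref{thm:critical-halfspace-profile}. The compact alternative is then excluded because a strong $H^1$ limit would have zero trace, which is impossible for a threshold optimiser.

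\textbf{Comparison.} Your route buys economy, and a quantitative trace estimate that does not pass through tightness. The paper's route buys self-containedness. The escape alternative of Theorem~\ref{thm:critical-halfspace-profile} only refers back to the cutoffs of \eqref{eq:threshold-escape-cutoff}, whereas the present proof writes them out. When you cite that alternative, it is worth adding one line: strong convergence of $W_n(x_n+\cdot)$ to the positive profile $\Phi$, with $W_n=0$ on $\{x_N<0\}$, forces the normal components of $x_n$ to tend to $+\infty$. That gives tightness about $(0,h_n)$ after absorbing the tangential parts of $x_n$.

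\textbf{Your flagged concern.} The worry about whether the dichotomy exclusion applies is not an issue. The hypotheses of Theorem~\ref{thm:critical-halfspace-profile} are exactly $\mathcal M(v_n)=1$ and $E_{\tau_*}(v_n)\to k^2$, which you have verified. Your subsequence-of-subsequence remark handles the full-sequence formulation at least as carefully as the paper, which passes to subsequences implicitly.
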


\begin{proof}
Apply Lions' concentration--compactness principle to the normalised $H^1$-mass measures. Vanishing is excluded by $\mathcal M(v_n)=1$. If dichotomy occurs, the standard separated cutoff decomposition and Lemma~\ref{lem:asymptotic-B-splitting} assign limiting favourable-volume fractions to the two nontrivial components. Fractions in $(0,1)$ are excluded by Corollary~\ref{cor:strict-resource-subhomogeneity}; endpoint fractions are excluded by the uniform small-volume estimate in Lemma~\ref{lem:asymptotic-B-splitting}, because a component receiving vanishing favourable volume still has a strictly positive $H^1$ cost. Thus dichotomy is impossible, and the only remaining alternative is tightness about centres
\[
y_n=(y_n',h_n)\in\overline{\Hh}.
\]
Translate tangentially so that \(y_n'=0\).

The heights cannot remain bounded.  Indeed, if a subsequence had \(h_n\le C\), tightness and local Rellich compactness would give, as in the compactness part of Theorem~\ref{thm:strict-gap-attainment}, a strong \(L^2\) limit \(v\) with \(\mathcal M(v)=1\) and
\[
E_\tau(v)\le\liminf_nE_\tau(v_n)=k^2.
\]
Since \(\Lambda_{\Hh}(\tau)=k^2\), the limit would attain the half-space infimum, contradicting Corollary~\ref{cor:nonattainment-above-threshold}.  Therefore \(h_n\to+\infty\).

Choose a smooth one-dimensional cutoff \(\zeta_n\) such that
\[
\zeta_n=0\quad\text{on }0\le x_N\le h_n/4,
\qquad
\zeta_n=1\quad\text{for }x_N\ge h_n/2,
\qquad
|\zeta_n'|\le C h_n^{-1}.
\]
For every fixed \(R\), the strip \(\{x_N<h_n/2\}\) lies outside \(B_R((0,h_n))\) once \(n\) is large.  The \(H^1\)-tightness about \((0,h_n)\), followed by \(R\to\infty\), therefore gives
\[
\int_{\{x_N<h_n/2\}}(|\nabla v_n|^2+v_n^2)\to0.
\]
Together with \(|\zeta_n'|\le Ch_n^{-1}\), this proves \eqref{eq:postthreshold-cutoff-small}.  The same tightness, combined with a local trace inequality in the fixed strip \(0<x_N<1\), gives
\[
\|\operatorname{Tr}v_n\|_{L^2(\partial\Hh)}\longrightarrow0.
\]
Hence \(\int_{\Hh}|\nabla v_n|^2\to k^2\).  Since \(\zeta_nv_n\) has zero trace, its zero extension \(W_n\) belongs to \(H^1(\R^N)\).  The \(L^2\)-continuity of the bathtub functional and \eqref{eq:postthreshold-cutoff-small} now give
\[
\mathcal M_1^{\R^N}(W_n)=1+o(1),
\qquad
\int_{\R^N}|\nabla W_n|^2=k^2+o(1).
\]  Multiplying \(W_n\) by \(1+o(1)\) normalizes its whole-space reduced mass to one.  Proposition~\ref{prop:whole-space-compactness} then yields \eqref{eq:postthreshold-whole-profile}.

Finally, \eqref{eq:postthreshold-cutoff-small} implies convergence of the translated squared densities in \(L^1\) to \(\Phi^2\).  Since each \(A_n\) is a unit-volume bathtub maximiser for \(v_n^2\), while the radial optimiser \(\Phi\) has a unique unit-volume bathtub ball and no plateau at its threshold, Lemma~\ref{lem:bathtub-stability} gives \eqref{eq:postthreshold-set-profile}.
\end{proof}

\subsection{Tangential symmetry of threshold optimisers}\label{subsec:threshold-symmetry}

The half-space problem is invariant under translations and rotations parallel to the boundary.  Tangential Schwarz rearrangement therefore suggests rotational symmetry of compact threshold optimisers.  To obtain the equality case needed here, we combine moment control with the classical rigidity theorem for Schwarz rearrangement and then show that the centres of the horizontal slices cannot depend on height.  We use \citet{BrothersZiemer1988}; see also \citet{FeroneVolpicelli2003}.  The partial-rearrangement framework of \citet{Capriani2014} is related, but only the classical Schwarz result is needed below.

\begin{lemma}[Moment finiteness for threshold optimisers]\label{lem:threshold-exponential-moments}
Let $(A,v)\in\mathfrak O_*$.  Then $A$ is bounded.  Moreover there is $\gamma>0$ such that
\begin{equation}\label{eq:threshold-exp-moment}
\int_{\Hh}e^{2\gamma|y|}\bigl(v^2+|\nabla v|^2\bigr)\,dy<\infty.
\end{equation}
In particular all quantities in \eqref{eq:critical-response-T}--\eqref{eq:critical-response-M}, and all first and second moments used below, are finite.  In addition, for every compact interval $J\Subset(0,\infty)$,
\begin{equation}\label{eq:threshold-uniform-slice-moment}
\sup_{s\in J}
\int_{\R^{N-1}}e^{2\gamma|x'|}v(x',s)^2\,dx'<\infty.
\end{equation}
\end{lemma}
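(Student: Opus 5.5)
The plan has three steps: boundedness of $A$, exponential decay of $v$ outside $A$, and the uniform slice bound.

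\emph{Boundedness of $A$.} By Proposition~\ref{prop:EL-halfspace} and Lemma~\ref{lem:no-plateau-halfspace}, $A=\{v>t\}$ with $t>0$. It therefore suffices to show $v(x)\to0$ as $|x|\to\infty$ in $\overline{\Hh}$. The equation $-\Delta v=k^2m_Av$ has a bounded coefficient. At a boundary point the Robin term has the favourable sign and may be discarded, exactly as in Lemma~\ref{lem:boundary-sup-escape}. Hence local Moser estimates on unit (half-)balls give
\[
\sup_{B_{1/2}(x)\cap\Hh}v\le C\|v\|_{L^2(B_1(x)\cap\Hh)},
\]
with $C$ independent of $x$ and of the Robin parameter. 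Since $v\in L^2(\Hh)$, the right-hand side tends to zero as $|x|\to\infty$. So $v<t$ outside some ball $B_{R_0}$, and $A\subset B_{R_0}$ up to a null set.

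\emph{Exponential decay.} Outside $B_{R_0}$ we have $m_A=-1$, so $v$ solves $-\Delta v+k^2v=0$ in $\Hh\setminus B_{R_0}$ with $\partial_{x_N}v=\tau_*v$ on the flat boundary. I would use the weighted Agmon test function $\varphi=\eta^2v$, where $\eta=e^{\gamma\psi}\chi$, $\psi$ is a bounded truncation of $|y|$ (e.g.\ $\min\{|y|,L\}$), and $\chi$ is a cutoff vanishing on $B_{R_0}$. This gives
\[
\int|\nabla(\eta v)|^2+k^2\int\eta^2v^2+\tau_*\int_{\partial\Hh}\eta^2v^2=\int|\nabla\eta|^2v^2+(\text{terms supported on }B_{R_0+1}).
\]
Because $|\nabla\eta|^2\le 2\gamma^2\eta^2+(\text{compact part})$, choosing $\gamma<k/2$ absorbs the main term. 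This bounds $\int e^{2\gamma\psi}v^2$ and $\int e^{2\gamma\psi}|\nabla v|^2$ uniformly in $L$, and letting $L\to\infty$ yields \eqref{eq:threshold-exp-moment}. Here the Robin term is nonnegative on the left and harmless. The gradient weight follows from $\nabla(\eta v)=\eta\nabla v+v\nabla\eta$. Finiteness of all first and second moments is then immediate.

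\emph{Uniform slice bound.} For \eqref{eq:threshold-uniform-slice-moment} I would combine the weighted $H^1$ bound with a one-dimensional trace inequality in $x_N$. Set $g(s):=\int e^{2\gamma|x'|}v(x',s)^2\,dx'$. For $s\in J\Subset(0,\infty)$ and any $s_0\in J$,
\[
g(s)\le g(s_0)+2\int_J\!\int e^{2\gamma|x'|}|v\,\partial_Nv|\,dx'\,dx_N,
\]
and averaging over $s_0\in J$ bounds $g$ on $J$ by $C_J\int e^{2\gamma|y|}(v^2+|\nabla v|^2)$, using $|x'|\le|y|$. Alternatively, the pointwise bound $v\le Ce^{-\gamma|y|}$ coming from the Moser estimate applied to the weighted $L^2$ bound gives the same conclusion directly.

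The main obstacle is step two: the absorption in the Agmon estimate must hold uniformly, with the truncation handled so that the compactly supported error terms are controlled by the unweighted $H^1$ norm. Step one, uniformity of the boundary Moser constant under the Robin condition, is the other point needing care, but it has already been used in the paper.
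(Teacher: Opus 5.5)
Your proposal is correct and follows the paper's proof essentially step by step. Boundedness of $A$ comes from uniform, Robin-independent local Moser bounds combined with $A=\{v>t\}$, $t>0$. The decay comes from an Agmon estimate in the unfavourable exterior, where the Robin term has a good sign; your truncated weight $e^{\gamma\min\{|y|,L\}}$ is just a cleaner way of handling what the paper calls the outer cutoff. The slice bound comes from the fundamental theorem of calculus in $x_N$, which is exactly the paper's embedding $H^1(J;L^2(\R^{N-1}))\hookrightarrow C(\overline J;L^2(\R^{N-1}))$ applied to $e^{\gamma|x'|}v$.

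The only slip is in your aside. A pointwise bound $v\le Ce^{-\gamma|y|}$ with the same $\gamma$ does not give \eqref{eq:threshold-uniform-slice-moment}, because $e^{2\gamma|x'|}e^{-2\gamma|(x',s)|}$ tends to $1$ as $|x'|\to\infty$ and so is not integrable in $x'$. That route only works after decreasing $\gamma$, which is harmless since the lemma asks only for some $\gamma>0$.
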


\begin{proof}
Let $t>0$ be the bathtub level of Lemma~\ref{lem:no-plateau-halfspace}, so that $A=\{v>t\}$ up to null sets.  Since $v\in H^1(\Hh)$ and solves an equation with uniformly bounded zeroth-order coefficient, the standard local interior and boundary Moser estimates give
\[
\sup_{B_{1/2}^+(y)}v
\le C\|v\|_{L^2(B_1^+(y))}
\]
with a constant independent of the centre.  Because the $L^2$ tail of the fixed function $v$ tends to zero, the right-hand side tends to zero uniformly as $|y|\to\infty$.  Thus $v<t$ outside a sufficiently large half-ball, and $A$ is bounded.  The same estimate will also be used below in the following elementary form:
\begin{equation}\label{eq:threshold-uniform-decay}
\sup_{\substack{(x',s)\in\Hh\\ |(x',s)|\ge R}}v(x',s)\longrightarrow0
\qquad(R\to\infty).
\end{equation}

Outside a fixed half-ball containing $A$, $v$ solves the constant-coefficient equation $(-\Delta+k^2)v=0$
\[
(-\Delta+k^2)v=0,
\]
with the nonnegative Robin term $\tau_*\int_{\partial\Hh}v\varphi$ in the weak formulation.  A standard Agmon test with
$\varphi=\chi_R^2e^{2\gamma|y|}v$, where $0<\gamma<k$ and $\chi_R$ vanishes on the bounded favourable region and is cut off again at large radius, gives a bound independent of the outer cutoff for the corresponding weighted $H^1$ norm.  The Robin boundary contribution has the favourable sign and may be discarded.  Letting the outer cutoff tend to infinity gives \eqref{eq:threshold-exp-moment}.

Finally fix $J\Subset(0,\infty)$.  Since $e^{\gamma|x'|}\le e^{\gamma|(x',s)|}$, \eqref{eq:threshold-exp-moment} implies
\[
e^{\gamma|x'|}v\in H^1\bigl(J;L^2(\R^{N-1})\bigr).
\]
The Hilbert-valued one-dimensional Sobolev embedding
$H^1(J;L^2)\hookrightarrow C(\overline J;L^2)$ therefore gives \eqref{eq:threshold-uniform-slice-moment}.
\end{proof}

For a normalised threshold optimiser $(A,v)\in\mathfrak O_*$ define
\begin{align}
T(v)&:=\int_{\partial\Hh}v^2\,dS,\label{eq:critical-response-T}\\
a(A)&:=\int_A y_N\,dy,\label{eq:critical-response-a}\\
\mathsf M_T(v)&:=\int_{\Hh}y_N\,\nabla_Tv\otimes\nabla_Tv\,dy,
\label{eq:critical-response-M}
\end{align}
where $\nabla_T=(\partial_1,\ldots,\partial_{N-1})$.  These quantities are finite by Lemma~\ref{lem:threshold-exponential-moments}.

For a nonnegative $v\in H^1(\Hh)$ and $s>0$, write
\[
v_s(x'):=v(x',s),\qquad x'\in\R^{N-1},
\]
and let $v^\sharp_s$ be the Schwarz symmetric decreasing rearrangement of $v_s$ in $\R^{N-1}$.  The function $v^\sharp(x',s):=v_s^\sharp(x')$ will be called the tangential Schwarz rearrangement of $v$.

\begin{lemma}[Tangential Schwarz rearrangement]\label{lem:tangential-schwarz}
For every nonnegative $v\in H^1(\Hh)$,
\begin{align}
\|v^\sharp\|_{L^2(\Hh)}&=\|v\|_{L^2(\Hh)},
&\mathcal B(v^\sharp)&=\mathcal B(v),
\label{eq:tangential-rearr-mass}\\[1mm]
\|\operatorname{Tr}v^\sharp\|_{L^2(\partial\Hh)}
&=\|\operatorname{Tr}v\|_{L^2(\partial\Hh)},
\label{eq:tangential-rearr-trace}\\[1mm]
\int_{\Hh}|\nabla_Tv^\sharp|^2
&\le\int_{\Hh}|\nabla_Tv|^2,
&
\int_{\Hh}|\partial_Nv^\sharp|^2
&\le\int_{\Hh}|\partial_Nv|^2.
\label{eq:tangential-rearr-energy}
\end{align}
Consequently
\[
\mathcal M(v^\sharp)=\mathcal M(v),
\qquad
E_\tau(v^\sharp)\le E_\tau(v)
\quad\text{for every }\tau\ge0.
\]
\end{lemma}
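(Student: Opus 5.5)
The plan is to treat the tangential Schwarz rearrangement slice by slice, as a Steiner-type symmetrisation in $N-1$ variables, and to reduce each claim either to an equimeasurability statement or to the Pólya--Szegő inequality on the slices $\{y_N=s\}$. The only delicate point is the normal derivative.

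\textbf{Mass and bathtub identities.} For almost every $s>0$, $v_s\in L^2(\R^{N-1})$ and $v_s^\sharp$ is equimeasurable with $v_s$. Fubini then gives $\|v^\sharp\|_2=\|v\|_2$. More generally, $v^\sharp$ and $v$ are equimeasurable on $\Hh$, since for every level $\ell$
\[
|\{v^\sharp>\ell\}|=\int_0^\infty|\{v_s^\sharp>\ell\}|\,ds=\int_0^\infty|\{v_s>\ell\}|\,ds=|\{v>\ell\}|.
\]
Consequently $(v^\sharp)^2$ and $v^2$ have the same decreasing rearrangement on $\Hh$. The identity $\sup_{|A|=1}\int_Af=\int_0^1f^*$ from the proof of Lemma~\ref{lem:exact-B-splitting} gives $\mathcal B(v^\sharp)=\mathcal B(v)$, and combined with the $L^2$ identity, $\mathcal M(v^\sharp)=\mathcal M(v)$.

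\textbf{Tangential gradient.} For almost every $s$, $v_s\in H^1(\R^{N-1})$, because $\int_0^\infty\|\nabla_Tv(\cdot,s)\|_2^2\,ds<\infty$. The classical Pólya--Szegő inequality in $\R^{N-1}$ gives $\|\nabla' v_s^\sharp\|_2\le\|\nabla' v_s\|_2$ on each such slice, and integrating in $s$ yields the tangential half of \eqref{eq:tangential-rearr-energy}.

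\textbf{Normal derivative, and the main obstacle.} Here one must first show that $v^\sharp\in H^1(\Hh)$. I would use the $L^2$-contractivity of Schwarz rearrangement, $\|f^\sharp-g^\sharp\|_{L^2}\le\|f-g\|_{L^2}$ (Crowe--Zweibel--Rosenbloom). Applied to $f=v_{s+h}$ and $g=v_s$, it gives
\[
\int_0^\infty\|v^\sharp_{s+h}-v^\sharp_s\|_2^2\,ds\le\int_0^\infty\|v_{s+h}-v_s\|_2^2\,ds\le h^2\|\partial_Nv\|_2^2 .
\]
The difference-quotient characterisation of Sobolev functions then shows that $\partial_Nv^\sharp\in L^2$ with $\|\partial_Nv^\sharp\|_2\le\|\partial_Nv\|_2$. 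This also establishes the $H^1$ membership needed for the trace. An alternative route is to cite Brock's result for Steiner symmetrisation in several variables. I expect the measure-theoretic bookkeeping at this step (the choice of representatives and a.e.\ slices) to be the main technical obstacle.

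\textbf{Trace and conclusion.} Using the same contraction together with $s\mapsto v_s\in C([0,\infty);L^2(\R^{N-1}))$ for $v\in H^1(\Hh)$, the map $s\mapsto v^\sharp_s$ is continuous into $L^2$ as well, with $v^\sharp_s\to(\operatorname{Tr}v)^\sharp$ as $s\downarrow0$. Hence $\operatorname{Tr}v^\sharp=(\operatorname{Tr}v)^\sharp$, and equimeasurability gives \eqref{eq:tangential-rearr-trace}. Adding the two gradient inequalities and the unchanged trace term gives $E_\tau(v^\sharp)\le E_\tau(v)$ for every $\tau\ge0$.
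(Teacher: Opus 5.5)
Your proposal is correct and follows the paper's proof essentially step by step. Both arguments use slicewise equimeasurability with Fubini for the mass and bathtub identities, and the P\'olya--Szeg\H{o} inequality on horizontal slices for the tangential energy. For the normal derivative, both combine the $L^2$-contraction of Schwarz rearrangement with the difference-quotient characterisation of $H^1((0,\infty);L^2(\R^{N-1}))$, and both use the resulting $L^2$-continuity of $s\mapsto v_s^\sharp$ to identify $\operatorname{Tr}v^\sharp=(\operatorname{Tr}v)^\sharp$.
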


\begin{proof}
For almost every $s$, Schwarz rearrangement preserves every $L^p$ norm of the slice and does not increase its tangential Dirichlet integral.  Integrating in $s$ gives the first identity in \eqref{eq:tangential-rearr-mass} and the first inequality in \eqref{eq:tangential-rearr-energy}.  Slice-wise equimeasurability also gives, for every $t>0$,
\[
|\{v^\sharp>t\}|
=\int_0^\infty |\{v_s^\sharp>t\}|\,ds
=\int_0^\infty |\{v_s>t\}|\,ds
=|\{v>t\}|.
\]
Thus $v$ and $v^\sharp$ are globally equimeasurable and the bathtub functional is unchanged.

For the normal derivative we use the $L^2$ contraction property of Schwarz rearrangement.  For $h>0$,
\[
\|v^\sharp_{s+h}-v^\sharp_s\|_{L^2(\R^{N-1})}
\le
\|v_{s+h}-v_s\|_{L^2(\R^{N-1})}.
\]
After division by $h$, integration in $s$, and passage to the limit through the difference-quotient characterisation of
$H^1((0,\infty);L^2(\R^{N-1}))$, this yields the second inequality in \eqref{eq:tangential-rearr-energy}.  The same contraction identifies the trace.  Indeed
$v_s\to\operatorname{Tr}v$ in $L^2(\R^{N-1})$ as $s\downarrow0$, so
\[
v_s^\sharp\longrightarrow (\operatorname{Tr}v)^\sharp
\quad\text{in }L^2(\R^{N-1}),
\]
and hence
$\operatorname{Tr}v^\sharp=(\operatorname{Tr}v)^\sharp$.
This proves \eqref{eq:tangential-rearr-trace}.
\end{proof}

We use the following equality case for Schwarz rearrangement.

\begin{lemma}[Schwarz equality rigidity for compactly supported functions]\label{lem:schwarz-equality-rigidity}
Let $m\ge1$, let $f\in H^1(\R^m)$ be nonnegative with compact support, and let $f^\sharp$ denote its Schwarz symmetric decreasing rearrangement.  Put
$M_f:=\operatorname*{ess\,sup}f^\sharp$.  Assume
\begin{equation}\label{eq:BZ-critical-set-condition}
\bigl|\{x\in\R^m:\nabla f^\sharp(x)=0,\ 0<f^\sharp(x)<M_f\}\bigr|=0.
\end{equation}
If
\begin{equation}\label{eq:BZ-energy-equality}
\int_{\R^m}|\nabla f|^2
=
\int_{\R^m}|\nabla f^\sharp|^2,
\end{equation}
then there exists $c\in\R^m$ such that
\[
f(x)=f^\sharp(x-c)
\qquad\text{for a.e. }x\in\R^m.
\]
\end{lemma}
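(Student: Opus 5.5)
The plan is to reduce this lemma to the classical Brothers--Ziemer theorem \citep{BrothersZiemer1988}, which is stated for the $p$-Dirichlet energy and Sobolev functions vanishing at infinity. No new analysis should be needed; the work is in checking that the hypotheses transfer.

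We first set up the notation. Since $f\in H^1(\R^m)$ is nonnegative with compact support, it belongs to $W^{1,2}_0$ of a large ball and vanishes at infinity in the sense required there: every superlevel set $\{f>t\}$, $t>0$, has finite measure. The rearrangement $f^\sharp$ is then a radially symmetric, nonincreasing function of $|x|$ with $f^\sharp\in H^1(\R^m)$. The Pólya--Szegő inequality gives $\int|\nabla f^\sharp|^2\le\int|\nabla f|^2$, so the equality \eqref{eq:BZ-energy-equality} is exactly the equality case treated by Brothers and Ziemer with $p=2$.

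Their theorem states the following: if the set where $\nabla f^\sharp=0$ and $0<f^\sharp<M_f$ has measure zero, then equality in Pólya--Szegő forces $f=f^\sharp(\cdot-c)$ almost everywhere for some translation $c$. Condition \eqref{eq:BZ-critical-set-condition} is exactly this hypothesis. We are careful to phrase it with $0<f^\sharp<M_f$. On the top plateau $\{f^\sharp=M_f\}$, which can have positive measure, and on the zero set, $\nabla f^\sharp=0$ automatically by Lemma~\ref{lem:sobolev-level-set}. Those sets are harmless because they are level sets of $f$ of the same measure, which we check below.

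The key steps, in order, are:
\begin{enumerate}[label=(\alph*)]
\item verify membership in the Brothers--Ziemer class ($W^{1,2}$, nonnegative, vanishing at infinity);
\item quote their conclusion in dimension $m\ge2$;
\item handle $m=1$ separately.
\end{enumerate}

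For $m=1$ a short direct argument is available. Equality in the one-dimensional Pólya--Szegő inequality, together with the coarea formula, shows that almost every level set $\{f>t\}$ with $0<t<M_f$ has exactly two boundary points, each carrying the gradient modulus of the rearrangement. The condition $|\{(f^\sharp)'=0,\ 0<f^\sharp<M_f\}|=0$ makes the distribution function $\mu(t)=|\{f>t\}|$ absolutely continuous on $(0,M_f)$. Together these show that each superlevel set is an interval. Moreover the intervals are nested, and the midpoint of $\{f>t\}$ must be constant in $t$, since otherwise the two boundary speeds would differ and give strict inequality in $(\int 1/|f'|)^{-1}\le\cdots$. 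This yields $f=f^\sharp(\cdot-c)$.

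In all dimensions we then confirm that the top plateau $\{f=M_f\}$ coincides, up to translation, with the ball $\{f^\sharp=M_f\}$. This follows because the sets $\{f>t\}$ are balls $B_{r(t)}(c)$ with a common centre $c$ for $t<M_f$, and $\{f\ge M_f\}=\bigcap_{t<M_f}\{f>t\}$.

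The main obstacle is only bookkeeping: making sure the formulation of \eqref{eq:BZ-critical-set-condition} matches Brothers--Ziemer's hypothesis precisely, including the roles of the zero set and the top plateau. If a cleaner citation is preferred, we can instead invoke \citet{FeroneVolpicelli2003}, who handle the same equality case, and record $m=1$ as the elementary special case described above.
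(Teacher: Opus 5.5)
Your proposal is correct and matches the paper. The paper's proof is simply the citation of the $p=2$ equality case in \citet{BrothersZiemer1988} (see also \citet{FeroneVolpicelli2003}), with \eqref{eq:BZ-critical-set-condition} identified as their no-flat-zone hypothesis and compact support ensuring that $f$ vanishes at infinity. The one difference is that the paper invokes that citation for every $m\ge1$, whereas you add a separate one-dimensional argument: level sets that are intervals, equal boundary speeds, and a fixed midpoint. This is a harmless safeguard that reproduces the Brothers--Ziemer mechanism in the case $m=1$, which the paper needs when $N=2$.
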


\begin{proof}
This is the $p=2$ equality case in the Schwarz P\'olya--Szeg\H{o} theorem of \citet{BrothersZiemer1988}; see also \citet{FeroneVolpicelli2003}.  Condition \eqref{eq:BZ-critical-set-condition} is precisely the no-flat-zone hypothesis used there.  We have stated the result only for compactly supported functions because this is the form to which we reduce each horizontal slice by positive truncation.
\end{proof}

The next lemma shows that if every horizontal slice is a translate of a centred radial slice, equality in the normal Dirichlet energy forces the translation to be independent of height.

\begin{lemma}[Rigidity of a moving tangential centre]\label{lem:moving-centre-rigidity}
Let $m\ge1$.  Let $v,w\in H^1(\R^m\times(0,\infty))$ be nonnegative and suppose that, for almost every $s>0$, the slice $w_s$ is radial about the origin and nonzero.  Assume that there is a measurable map $c:(0,\infty)\to\R^m$ such that
\begin{equation}\label{eq:moving-centre-representation}
v(x,s)=w(x-c(s),s)
\quad\text{for a.e. }(x,s).
\end{equation}
Assume moreover that the continuous representative of
\[
\rho(s):=\|v_s\|_{L^2(\R^m)}^2
\]
is strictly positive on $(0,\infty)$ and that, for every compact interval $J\Subset(0,\infty)$,
\begin{equation}\label{eq:moving-centre-second-moment}
\sup_{s\in J}\int_{\R^m}|x|^2v(x,s)^2\,dx<\infty.
\end{equation}
Then $c$ has a locally $H^1$ representative.  For every compact $J\Subset(0,\infty)$ one has
\begin{equation}\label{eq:moving-centre-energy-identity}
\int_J\!\|\partial_sv_s\|_2^2\,ds
=
\int_J\!\|\partial_sw_s\|_2^2\,ds
+
\int_J a(s)|c'(s)|^2\,ds,
\end{equation}
where
\begin{equation}\label{eq:moving-centre-a}
a(s):=\frac1m\int_{\R^m}|\nabla_xw(x,s)|^2\,dx>0
\quad\text{for a.e. }s.
\end{equation}
Consequently, if
\begin{equation}\label{eq:moving-centre-normal-equality}
\int_0^\infty\|\partial_sv_s\|_2^2\,ds
=
\int_0^\infty\|\partial_sw_s\|_2^2\,ds,
\end{equation}
then $c$ is almost everywhere equal to one constant vector.
\end{lemma}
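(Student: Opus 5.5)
The plan is to read off the centre $c(s)$ from a first moment, prove its regularity from that formula, and then compute $\partial_sv$ by the chain rule. Two facts about radial slices do most of the work. For a.e. $s$ the slice $w_s$ is radial about the origin, so $\int_{\R^m}y\,w_s(y)^2\,dy=0$. Substituting \eqref{moving-centre-representation}, this gives
\[
\int_{\R^m}x\,v(x,s)^2\,dx=\int_{\R^m}(y+c(s))\,w_s(y)^2\,dy=c(s)\,\rho(s),
\qquad
c(s)=\frac{1}{\rho(s)}\int_{\R^m}x\,v(x,s)^2\,dx
\]
for a.e. $s$.

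I would take the right-hand side as the representative of $c$ and show it is locally $H^1$.

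\begin{enumerate}[label=(\alph*)]
\item Fix $J\Subset(0,\infty)$. The map $s\mapsto v_s$ lies in $H^1(J;L^2(\R^m))$, so $\rho$ has weak derivative $\rho'=2\int v\,\partial_sv$. This derivative is in $L^2(J)$, and $\rho$ is continuous and bounded below on $\overline J$ by hypothesis.
\item For the numerator $n(s):=\int x\,v_s^2$, I would justify $n'=2\int x\,v\,\partial_sv$. One way is to pass to the limit in difference quotients; another is to use the cut-off weights $x\mathbf 1_{|x|<R}$ and let $R\to\infty$.
\item Cauchy--Schwarz and \eqref{eq:moving-centre-second-moment} give $|n'(s)|\le 2\bigl(\sup_J\int|x|^2v_s^2\bigr)^{1/2}\|\partial_sv_s\|_2\in L^2(J)$, and $n$ is bounded on $J$.
\item The quotient $c=n/\rho$ is then in $H^1(J)$, hence absolutely continuous.
\end{enumerate}

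Next I would derive the derivative formula. For a.e. $s$ and small $h$ I would split the increment as
\[
v_{s+h}-v_s=\bigl[w_{s+h}-w_s\bigr](\cdot-c(s+h))+\bigl[w_s(\cdot-c(s+h))-w_s(\cdot-c(s))\bigr].
\]
The first bracket, divided by $h$, converges in $L^2$ to $(\partial_sw_s)(\cdot-c(s))$. This uses continuity of translations in $L^2$ together with the convergence of difference quotients of $w$ in $L^2(J;L^2)$, after passing to a subsequence in $h$. The second bracket, divided by $h$, converges to $-c'(s)\cdot\nabla w_s(\cdot-c(s))$. This holds at points where $c$ is differentiable and $w_s\in H^1(\R^m)$, since translation is strongly differentiable in $L^2$ on $H^1$. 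The result is
\[
\partial_sv_s=(\partial_sw_s)(\cdot-c(s))-c'(s)\cdot\nabla w_s(\cdot-c(s)).
\]
I expect this step to be the main technical obstacle. The difficulty is making the difference-quotient limits hold in $L^2(J;L^2)$, not merely pointwise in $s$. I would first handle it by mollifying $w$ in $s$, and then pass to the limit using the $H^1$ bounds.

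Finally, I would expand the square. Translation invariance of $L^2$ gives
\[
\|\partial_sv_s\|_2^2=\|\partial_sw_s\|_2^2-2\,c'\cdot\!\int\partial_sw_s\,\nabla w_s+\int(c'\cdot\nabla w_s)^2 .
\]
The cross term vanishes: $\partial_sw_s$ is radial, being an $L^2$-limit of radial difference quotients, while each component of $\nabla w_s$ is odd. For the last term, radial symmetry gives $\int\partial_iw_s\,\partial_jw_s=\delta_{ij}\,m^{-1}\|\nabla w_s\|_2^2$, so it equals $a(s)|c'(s)|^2$. Integrating over $J$ yields \eqref{eq:moving-centre-energy-identity}. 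Positivity $a(s)>0$ follows because a nonzero $L^2(\R^m)$ function cannot have vanishing gradient.

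Under \eqref{eq:moving-centre-normal-equality}, I would exhaust $(0,\infty)$ by compact intervals $J$. The identity \eqref{eq:moving-centre-energy-identity} then forces $\int_Ja|c'|^2=0$ on each $J$, so $c'=0$ a.e. Since the representative of $c$ is continuous on the connected set $(0,\infty)$, it is constant, and $c$ is a.e. equal to that constant vector.
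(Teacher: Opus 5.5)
Your proposal is correct and follows the paper's three-step skeleton. First, the barycentre identity $\int x\,v_s^2=\rho(s)c(s)$ gives $c=n/\rho\in H^1(J)$, with $n'$ justified by the same cutoff and second-moment argument. Second, a chain rule for $\partial_sv$. Third, the radial identities $\int\partial_sw_s\,\nabla w_s=0$ and $\int\nabla w_s\otimes\nabla w_s=a(s)I_m$.

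The only difference is how the chain rule is justified. The paper approximates $c$ by smooth $c_j$ in $H^1(J)$ and $w$ by smooth compactly supported $w_j$, and applies the classical chain rule to $w_j(x-c_j(s),s)$. It then passes to the limit in distributions, which needs a variable-translation continuity lemma and an $L^1_{\rm loc}$ argument for the product $c_j'\cdot\nabla_xw_j$. The payoff is that the formula holds directly in $L^2(\R^m\times J)$.

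Your slice-by-slice difference-quotient argument is more elementary, and it closes without the mollification you propose as a fallback. The obstacle you anticipate is not real. Since $s\mapsto v_s$ and $s\mapsto w_s$ lie in $H^1(J;L^2(\R^m))$, they are strongly differentiable in $L^2$ at a.e. $s$, along all $h\to0$, by Lebesgue differentiation of $\int_s^{s+h}\partial_tw_t\,dt$. The strong derivative equals the weak one. So convergence at a.e. $s$ is exactly what identifies $\partial_sv_s$, and you need neither $L^2(J;L^2)$ convergence nor a subsequence in $h$.

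Your splitting then uses two facts: $p\mapsto w_s(\cdot-p)$ is differentiable into $L^2$ when $w_s\in H^1(\R^m)$, and the continuous representative $c$ is differentiable a.e. Together these give the formula at a.e. $s$. Integrability of $a|c'|^2$ follows from the pointwise energy identity.

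One detail is missing. \eqref{eq:moving-centre-representation} is given only for a.e. $s$, but your increment uses it at both $s$ and $s+h$. Either restrict $h$ to values with $s+h$ in the good set, or first extend the slice identity to every $s\in J$. The extension follows from continuity of $s\mapsto v_s$ and $s\mapsto w_s$ in $L^2$ and of your representative $c$.
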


\begin{proof}
We divide the proof into three steps.

\emph{Step 1: the centre is locally $H^1$.}
Set
\[
b(s):=\int_{\R^m}x\,v(x,s)^2\,dx.
\]
Since $w_s$ is centred and radial, \eqref{eq:moving-centre-representation} gives
\begin{equation}\label{eq:barycentre-centre}
b(s)=\rho(s)c(s)
\quad\text{for a.e. }s.
\end{equation}
Fix $J\Subset(0,\infty)$.  The Hilbert-valued Sobolev property
$v\in H^1(J;L^2(\R^m))$ implies that $s\mapsto v_s$ is continuous into $L^2$ and
\[
\rho'(s)=2\int_{\R^m}v\,\partial_sv
\quad\text{for a.e. }s.
\]
Thus $\rho\in H^1(J)$.  By the strict positivity hypothesis and compactness of $J$,
\begin{equation}\label{eq:rho-away-zero}
\inf_{s\in J}\rho(s)>0.
\end{equation}

For the barycentre, first insert a smooth radial cutoff in $x$, differentiate under the integral, and then let the cutoff radius tend to infinity.  Assumption \eqref{eq:moving-centre-second-moment} and Cauchy--Schwarz justify the limit and give, componentwise,
\begin{equation}\label{eq:barycentre-derivative}
b'(s)=2\int_{\R^m}x\,v(x,s)\,\partial_sv(x,s)\,dx,
\end{equation}
with
\[
|b'(s)|
\le
2\left(\int |x|^2v_s^2\right)^{1/2}
\|\partial_sv_s\|_2.
\]
Hence $b\in H^1(J;\R^m)$.  Equations \eqref{eq:barycentre-centre} and \eqref{eq:rho-away-zero} imply that
$c=b/\rho\in H^1(J;\R^m)$.

\emph{Step 2: Sobolev chain rule for the moving translation.}
We claim that on $\R^m\times J$,
\begin{equation}\label{eq:moving-translation-chain-rule}
\partial_sv(x,s)
=
\partial_sw(x-c(s),s)
-c'(s)\cdot\nabla_xw(x-c(s),s)
\end{equation}
in the sense of distributions, and hence almost everywhere after identifying the $L^2$ representatives.

To justify the formula, choose $c_j\in C^\infty(\overline J;\R^m)$ with
$c_j\to c$ in $H^1(J)$ and uniformly on $J$, and choose smooth compactly supported $w_j$ converging to $w$ in $H^1(\R^m\times J)$.  For
$v_j(x,s):=w_j(x-c_j(s),s)$ the classical chain rule gives
\[
\partial_sv_j
=(\partial_sw_j)(x-c_j(s),s)
-c_j'(s)\cdot(\nabla_xw_j)(x-c_j(s),s).
\]
We use the following elementary variable-translation continuity fact.  If
$F\in L^2(\R^m\times J)$ and $d_j\in L^\infty(J;\R^m)$ with
$\|d_j\|_{L^\infty(J)}\to0$, then
\begin{equation}\label{eq:variable-translation-continuity}
\|F(\,\cdot-d_j(s),s)-F(\cdot,s)\|_{L^2(\R^m\times J)}\longrightarrow0.
\end{equation}
For $F\in C_c^\infty$ this follows from the fundamental theorem of calculus in the $x$ variables and the bound by $\|d_j\|_\infty\|\nabla_xF\|_2$; the general case follows by density and the fact that every translation is an $L^2$ isometry.

Now split
\[
v_j-v
=
\bigl[w_j(\,\cdot-c_j(s),s)-w(\,\cdot-c_j(s),s)\bigr]
+
\bigl[w(\,\cdot-c_j(s),s)-w(\,\cdot-c(s),s)\bigr].
\]
The first bracket tends to zero in $L^2$ because translations are isometries, while the second tends to zero by \eqref{eq:variable-translation-continuity} with $d_j=c_j-c$ after translating by $c(s)$.  Thus $v_j\to v$ in $L^2$.  Applying the same argument to $\partial_sw_j\to\partial_sw$ and to $\nabla_xw_j\to\nabla_xw$ gives
\[
(\partial_sw_j)(\,\cdot-c_j(s),s)\to
(\partial_sw)(\,\cdot-c(s),s),
\qquad
(\nabla_xw_j)(\,\cdot-c_j(s),s)\to
(\nabla_xw)(\,\cdot-c(s),s)
\]
in $L^2(\R^m\times J)$.  To pass the product term to the limit, fix a bounded set $K\Subset\R^m$.  Since $c_j$ and $c$ are uniformly bounded on $J$, all translations that meet $K$ remain in one fixed bounded enlargement $K_1$.  Cauchy--Schwarz gives
\[
\begin{aligned}
&\|c_j'\cdot\nabla_xw_j(\,\cdot-c_j,\cdot)
-c'\cdot\nabla_xw(\,\cdot-c,\cdot)\|_{L^1(K\times J)}
\\
&\quad\le |K|^{1/2}
\|c_j'-c'\|_{L^2(J)}
\|\nabla_xw_j(\,\cdot-c_j,\cdot)\|_{L^2(K\times J)}
\\
&\qquad
+|K|^{1/2}\|c'\|_{L^2(J)}
\|\nabla_xw_j(\,\cdot-c_j,\cdot)
-\nabla_xw(\,\cdot-c,\cdot)\|_{L^2(K\times J)}
\longrightarrow0.
\end{aligned}
\]
Thus the product converges in $L^1_{\rm loc}$, which is enough to pass to the limit against test functions.  Passing to the limit in the distributional identity gives \eqref{eq:moving-translation-chain-rule}.  Since the first two terms in that identity belong to $L^2$, it follows in particular that
$c'(s)\cdot\nabla_xw_s\in L^2(\R^m\times J)$.

\emph{Step 3: orthogonal energy splitting.}
Because $w$ is invariant under every rotation of the $x$ variables, the same invariance holds for its weak derivative $\partial_sw$; hence $\partial_sw_s$ is radial for almost every $s$.  Each component of $\nabla_xw_s$ is odd under the corresponding reflection.  Hence
\begin{equation}\label{eq:radial-cross-zero}
\int_{\R^m}\partial_sw_s\,\nabla_xw_s\,dx=0.
\end{equation}
Rotational invariance similarly gives
\begin{equation}\label{eq:radial-gradient-matrix}
\int_{\R^m}\nabla_xw_s\otimes\nabla_xw_s\,dx
=a(s)I_m,
\qquad
a(s)=\frac1m\int_{\R^m}|\nabla_xw_s|^2\,dx.
\end{equation}
Using \eqref{eq:moving-translation-chain-rule}, translating the $x$ variable, and then using \eqref{eq:radial-cross-zero}--\eqref{eq:radial-gradient-matrix}, we obtain for almost every $s\in J$
\[
\|\partial_sv_s\|_2^2
=
\|\partial_sw_s\|_2^2+a(s)|c'(s)|^2.
\]
Integration gives \eqref{eq:moving-centre-energy-identity}.  If $a(s)=0$, then $w_s$ is almost everywhere constant; since $w_s\in L^2(\R^m)$, that constant is zero, contrary to the nonzero-slice assumption.  Thus \eqref{eq:moving-centre-a} holds.

Finally, apply \eqref{eq:moving-centre-energy-identity} to an increasing exhaustion of $(0,\infty)$ by compact intervals.  The two normal energies are integrable, so \eqref{eq:moving-centre-normal-equality} yields
\[
\int_0^\infty a(s)|c'(s)|^2\,ds=0.
\]
Since $a(s)>0$ almost everywhere, $c'(s)=0$ almost everywhere.  The locally absolutely continuous representative of $c$ is therefore constant.
\end{proof}

\begin{theorem}[Tangential rotational symmetry at the threshold]\label{thm:threshold-tangential-symmetry}
Let $(A,v)\in\mathfrak O_*$ be a normalised threshold optimiser.  Then there exists one vector $c\in\R^{N-1}$ and a function $V:[0,\infty)\times(0,\infty)\to(0,\infty)$ such that, after modification on a null set,
\begin{equation}\label{eq:threshold-common-axis}
v(x',s)=V(|x'-c|,s),
\qquad
V(\cdot,s)\ \text{is nonincreasing for a.e. }s>0.
\end{equation}
If $t>0$ is the bathtub level, then
\[
A=\{v>t\}
\]
up to null sets and almost every horizontal section of $A$ is a ball centred at the same point $c$.

In particular the tangential energy-moment tensor is isotropic:
\begin{equation}\label{eq:MT-isotropic}
\mathsf M_T(v)=\mu_T(v)I_{N-1},
\qquad
\mu_T(v):=\frac1{N-1}
\int_{\Hh}y_N|\nabla_Tv|^2\,dy.
\end{equation}
\end{theorem}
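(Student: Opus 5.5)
Our approach is to compare $v$ with its tangential Schwarz rearrangement $v^\sharp$ and exploit the equality cases. By Lemma~\ref{lem:tangential-schwarz}, $\mathcal M(v^\sharp)=\mathcal M(v)=1$ and $E_{\tau_*}(v^\sharp)\le E_{\tau_*}(v)=k^2$. Since $k^2=\Lambda_{\Hh}(\tau_*)$ is the infimum, $v^\sharp$ is again a threshold optimiser. Moreover, both inequalities in \eqref{eq:tangential-rearr-energy} are equalities, because the trace terms coincide by \eqref{eq:tangential-rearr-trace} and each energy inequality points the same way. So
\[
\int_{\Hh}|\nabla_Tv^\sharp|^2=\int_{\Hh}|\nabla_Tv|^2,
\qquad
\int_{\Hh}|\partial_Nv^\sharp|^2=\int_{\Hh}|\partial_Nv|^2 .
\]
Since slicewise Pólya--Szegő holds for almost every $s$, the first identity upgrades to slicewise equality $\|\nabla_{x'}v_s\|_2=\|\nabla_{x'}v_s^\sharp\|_2$ for almost every $s$. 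As $v^\sharp\in\mathfrak O_*$, it satisfies the Euler--Lagrange system of Proposition~\ref{prop:EL-halfspace} and is positive. By Lemma~\ref{lem:no-plateau-halfspace}, its favourable set is $\{v^\sharp>t\}$ for the same level $t$ (equimeasurability gives the same bathtub level).

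The main obstacle is to verify the no-flat-zone hypothesis \eqref{eq:BZ-critical-set-condition} slice by slice, and to reduce to compact support as Lemma~\ref{lem:schwarz-equality-rigidity} requires. For the flat zones we use Lemma~\ref{lem:sobolev-level-set} applied to $v^\sharp\in W^{2,2}_{\rm loc}(\Hh)$. If for a positive-measure set of $s$ the slice $v^\sharp_s$ had a flat zone of positive $(N-1)$-measure, these zones would form a set of positive $N$-measure on which $\nabla_{x'}v^\sharp=0$. Since $v^\sharp_s$ is radial and decreasing in $|x'|$, the set $\{\nabla_{x'}v^\sharp=0,\ 0<v^\sharp<M\}$ consists, within each slice, of annular plateaus. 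We intend to show that on such a plateau $v^\sharp$ is locally a function of $s$ alone. Then $\Delta v^\sharp=\partial_{ss}v^\sharp$, and the equation $-\Delta v^\sharp=k^2m v^\sharp$ becomes a one-dimensional ODE on a set of positive measure. This is incompatible with the exponential decay of Lemma~\ref{lem:threshold-exponential-moments} and with the analyticity of $v^\sharp$ off the free boundary $\partial A$ (where $(-\Delta\pm c)v^\sharp=0$). The expected route is unique continuation: a solution of $(-\Delta+k^2)u=0$ or $(-\Delta-\kappa k^2)u=0$ that is independent of $x'$ on an open set would be independent of $x'$ throughout its connected region, which contradicts $L^2$ integrability in $x'$. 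If this turns out to be delicate near the free boundary, we would instead argue only in the unfavourable region together with the open set $\{v^\sharp>t\}$, since $\{v^\sharp=t\}$ is null. For compact support we apply the lemma to the positive truncations $(v_s-\eta)_+$. Each of these has compact support because $v$ decays uniformly by \eqref{eq:threshold-uniform-decay}, it rearranges to $(v_s^\sharp-\eta)_+$, and equality of total Dirichlet energy passes to the truncations, since Pólya--Szegő holds separately on $\{v_s>\eta\}$ and on $\{v_s\le\eta\}$. Letting $\eta\downarrow0$ along a sequence gives centres $c_\eta(s)$ that stabilise, so $v(x',s)=v^\sharp(x'-c(s),s)$ for a.e.\ $s$.

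Next we apply Lemma~\ref{lem:moving-centre-rigidity} with $m=N-1$, $w=v^\sharp$. The slices are nonzero and $\rho>0$ by positivity of $v$. The second-moment hypothesis \eqref{eq:moving-centre-second-moment} follows from \eqref{eq:threshold-uniform-slice-moment}. The normal-energy equality is the second identity displayed above. Hence $c$ is constant, which gives \eqref{eq:threshold-common-axis} with $V(r,s)=v^\sharp_s$ evaluated at radius $r$. The statement $A=\{v>t\}$ is Lemma~\ref{lem:no-plateau-halfspace}, and its horizontal sections are superlevel sets of radial decreasing slices, hence balls centred at $c$. Finally, \eqref{eq:MT-isotropic} follows by translating $x'\mapsto x'+c$ and using invariance of $v$ under rotations in $x'$, as in \eqref{eq:radial-gradient-matrix}: for each $s$ the slice tensor $\int\nabla_{x'}v_s\otimes\nabla_{x'}v_s$ is a multiple of $I_{N-1}$, and integrating against $y_N$ preserves this.
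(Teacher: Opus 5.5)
Your overall route matches the paper's: equality in the tangential rearrangement, slicewise P\'olya--Szeg\H{o} equality, Brothers--Ziemer applied to the truncations $(v_s-\eta)_+$, then Lemma~\ref{lem:moving-centre-rigidity} and the isotropy computation. The gap is in the step you yourself call the main obstacle, the no-flat-zone condition \eqref{eq:BZ-critical-set-condition}. It sits in the favourable phase $U_+:=\{v^\sharp>t\}$.

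Your proposed contradiction is that $x'$-independence propagates through a connected region and clashes with $L^2$ integrability in $x'$ or with exponential decay. This is unavailable in $U_+$. Lemma~\ref{lem:threshold-exponential-moments}, applied to the optimiser $(A^\sharp,v^\sharp)$, shows that $U_+$ is bounded. So $\partial_1v^\sharp=\dots=\partial_{N-1}v^\sharp\equiv0$ on a component $C$ of $U_+$ is compatible with any integrability or decay.

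What is missing is a boundary-value argument. Take $(x_0',s_0)\in C$. The section $\{x':v^\sharp(x',s_0)>t\}$ is a bounded centred ball, by radial monotonicity, and it contains $x_0'$. It is connected and lies in $U_+$, hence in $C$. So $v^\sharp(\cdot,s_0)$ is a constant $>t$ on that ball, whereas continuity forces $v^\sharp=t$ on its boundary sphere.

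Two further repairs are needed in the same step.

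First, you only know $\nabla_{x'}v^\sharp=0$ on a set of positive measure, not on an open set. So neither ``locally a function of $s$ alone'' nor unique continuation from an open set is available. The pointwise relation $-\partial_{ss}v^\sharp=k^2m_{A^\sharp}v^\sharp$ on a positive-measure set contradicts nothing by itself. The tool that works is real analyticity of each $\partial_iv^\sharp$ on the open phases $U_\pm$. These phases are open because $v^\sharp$ is continuous, and their union has full measure because $v^\sharp>0$ and $|\{v^\sharp=t\}|=0$. Vanishing on a positive-measure subset of a component then forces vanishing on the whole component.

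Second, in the unfavourable phase $U_-:=\{0<v^\sharp<t\}$, your $L^2$ contradiction requires the relevant component to be unbounded within a slice. This is true but must be argued. Moving radially outward in a slice keeps $0<v^\sharp<t$, so each component contains a horizontal ray. Along that ray, $x'$-independence would make $v^\sharp_s$ a positive constant outside a ball, which is impossible for $v^\sharp_s\in L^2(\R^{N-1})$. The paper instead proves that $U_-$ is connected and uses a slice above the bounded set $\{v^\sharp\ge t\}$.

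With these repairs your proof coincides with the paper's.
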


\begin{proof}
Put $m:=N-1$ and set $w:=v^\sharp$.

\emph{Step 1: rearrangement is an equality at a threshold optimiser.}
Lemma~\ref{lem:tangential-schwarz} preserves the reduced mass, so
$\mathcal M(w)=\mathcal M(v)=1$, and
\[
E_{\tau_*}(w)\le E_{\tau_*}(v)=k^2.
\]
Since $k^2=\Lambda_{\Hh}(\tau_*)$, threshold optimality forces equality.  The $L^2$ norm and trace norm are already preserved by Lemma~\ref{lem:tangential-schwarz}; therefore the only possible deficits are the two nonnegative Dirichlet deficits in \eqref{eq:tangential-rearr-energy}.  Their sum is zero, so each vanishes:
\begin{equation}\label{eq:tangential-normal-energy-equality}
\int_{\Hh}|\nabla_Tw|^2=\int_{\Hh}|\nabla_Tv|^2,
\qquad
\int_{\Hh}|\partial_Nw|^2=\int_{\Hh}|\partial_Nv|^2.
\end{equation}
For almost every $s$, the slice-wise P\'olya--Szeg\H{o} deficit
\[
d(s):=
\int_{\R^m}|\nabla_Tv_s|^2
-
\int_{\R^m}|\nabla_Tw_s|^2
\]
is nonnegative.  The first equality in \eqref{eq:tangential-normal-energy-equality} says $\int_0^\infty d(s)\,ds=0$.  Hence
\begin{equation}\label{eq:slice-PS-equality}
\int_{\R^m}|\nabla_Tw_s|^2
=
\int_{\R^m}|\nabla_Tv_s|^2
\quad\text{for a.e. }s>0.
\end{equation}

The rearranged function is itself an attained threshold minimiser of the reduced problem.  Let $A^\sharp$ be a unit-volume bathtub set for $w$.  Since $\mathcal M(w)=1$ and $E_{\tau_*}(w)=k^2$, the pair $(A^\sharp,w)$ attains the original half-space problem.  Proposition~\ref{prop:EL-halfspace} therefore applies to $w$.  In particular $w$ is positive and solves the Euler--Lagrange equation with the bang--bang coefficient $m_{A^\sharp}$.  Lemma~\ref{lem:no-plateau-halfspace} gives a unique bathtub level $t>0$ up to null sets, with
\begin{equation}\label{eq:w-bathtub-set}
A^\sharp=\{w>t\}
\quad\text{a.e.},
\qquad
|\{w=t\}|=0.
\end{equation}
Lemma~\ref{lem:threshold-exponential-moments}, applied to $(A^\sharp,w)$, also gives boundedness of $A^\sharp$ and the decay \eqref{eq:threshold-uniform-decay} for $w$.

\emph{Step 2: the rearranged optimiser has no tangential flat zone below the slice maximum.}
For almost every $s$ define
\[
M(s):=\operatorname*{ess\,sup}_{x'\in\R^m}w(x',s).
\]
We claim
\begin{equation}\label{eq:no-flat-tangential-critical-set}
\left|
\{(x',s):\nabla_Tw(x',s)=0,\ 0<w(x',s)<M(s)\}
\right|=0.
\end{equation}

Indeed, away from the negligible interface $\{w=t\}$ there are two open phases
\[
U_+:=\{w>t\},
\qquad
U_-:=\{0<w<t\}.
\]
On each phase the coefficient in the Euler--Lagrange equation is constant.  Standard elliptic regularity therefore makes $w$ real analytic on every connected component of $U_+$ and on $U_-$.

We first consider $U_+$.  Suppose that on some connected component $C\subset U_+$ the common zero set of the tangential derivatives has positive $N$-dimensional measure.  Each analytic function $\partial_iw$ then vanishes on a set of positive measure in $C$ and consequently vanishes identically on $C$.  Thus $w$ is independent of $x'$ on $C$.  Choose $(x'_0,s_0)\in C$.  Because $w_{s_0}$ is radial and nonincreasing, the horizontal section
$\{x':w(x',s_0)>t\}$ is a centred ball $B_{R(s_0)}$.  It is bounded because $A^\sharp$ is bounded, and the whole ball belongs to the same connected component $C$.  Hence $w(\cdot,s_0)$ would be constant on $B_{R(s_0)}$.  Continuity at the lateral boundary, where $w=t$, would force this constant to equal $t$, contradicting $w>t$ inside.  Therefore the common tangential critical set has measure zero in $U_+$.

For $U_-$ we first note that it is connected.  Let
$K:=\{w\ge t\}$.  By continuity, tangential radial monotonicity, and the uniform decay of $w$, $K$ is bounded and each horizontal section of $K$ is a centred closed ball, possibly empty.  Given two points of $U_-$, choose $R$ larger than all sectional radii of $K$ and also larger than the tangential radii of the two chosen points, and choose a height $S$ above the vertical projection of $K$.  First move each point radially outwards within its own horizontal slice to some point of tangential radius $R$.  Radial monotonicity keeps these two segments below $t$.  From the first exterior point move vertically to height $S$; at height $S$ move along an arbitrary horizontal path to the tangential position of the second exterior point; then move vertically down to the target height and finally radially inwards to the second point.  The two vertical exterior segments and the entire horizontal segment at height $S$ lie outside $K$.  This construction works also for $m=1$, where the two points of tangential radius $R$ may have opposite signs.  Hence the whole path lies in $U_-$ and $U_-$ is connected.

If the common zero set of $\partial_1w,\ldots,\partial_mw$ had positive measure in $U_-$, analyticity and connectedness would imply
$\partial_iw\equiv0$ on $U_-$ for every $i$.  Choose $s_0$ above the vertical projection of the bounded set $K$.  Then
$U_-\cap\{s=s_0\}=\R^m\times\{s_0\}$, so $w(\cdot,s_0)$ would be constant on $\R^m$.  Since $w_{s_0}\in L^2(\R^m)$ the constant must be zero, contradicting positivity of the principal eigenfunction.  This proves \eqref{eq:no-flat-tangential-critical-set}.

By Fubini, there is a full-measure set $S\subset(0,\infty)$ such that for every $s\in S$ both \eqref{eq:slice-PS-equality} holds and
\begin{equation}\label{eq:slice-no-flat}
\bigl|\{x'\in\R^m:\nabla_Tw_s(x')=0,\ 0<w_s(x')<M(s)\}\bigr|=0.
\end{equation}

\emph{Step 3: each horizontal slice is a translate of its Schwarz rearrangement.}
Fix $s\in S$.  The classical equality theorem in Lemma~\ref{lem:schwarz-equality-rigidity} is stated for compactly supported functions, while the positive slice $v_s$ need not have compact support.  We therefore apply it to positive truncations.

For $0<\eta<M(s)$ set
\[
f_\eta:=(v_s-\eta)_+,
\qquad
f_\eta^\sharp=(w_s-\eta)_+,
\]
and also
\[
g_\eta:=\min\{v_s,\eta\},
\qquad
g_\eta^\sharp=\min\{w_s,\eta\}.
\]
The displayed rearrangement identities follow from equimeasurability.  The chain rule for Sobolev truncations and the fact that the weak gradient vanishes almost everywhere on a level set give the orthogonal decompositions
\begin{align*}
\int|\nabla_Tv_s|^2
&=
\int|\nabla f_\eta|^2+
\int|\nabla g_\eta|^2,\\
\int|\nabla_Tw_s|^2
&=
\int|\nabla f_\eta^\sharp|^2+
\int|\nabla g_\eta^\sharp|^2.
\end{align*}
P\'olya--Szeg\H{o} applied separately to $f_\eta$ and $g_\eta$ gives
\[
\int|\nabla f_\eta^\sharp|^2\le\int|\nabla f_\eta|^2,
\qquad
\int|\nabla g_\eta^\sharp|^2\le\int|\nabla g_\eta|^2.
\]
Their deficits are nonnegative and sum to the zero deficit in \eqref{eq:slice-PS-equality}; therefore
\begin{equation}\label{eq:truncated-PS-equality}
\int_{\R^m}|\nabla f_\eta^\sharp|^2
=
\int_{\R^m}|\nabla f_\eta|^2
\qquad(0<\eta<M(s)).
\end{equation}

The uniform decay \eqref{eq:threshold-uniform-decay}, now applied to the original threshold optimiser $v$, implies that $\{v_s>\eta\}$ is bounded.  Thus $f_\eta$ has compact support.  Moreover, by \eqref{eq:slice-no-flat},
\[
\bigl|
\{\nabla f_\eta^\sharp=0,\ 0<f_\eta^\sharp<M(s)-\eta\}
\bigr|=0.
\]
Lemma~\ref{lem:schwarz-equality-rigidity} and \eqref{eq:truncated-PS-equality} therefore give a vector $c_\eta(s)\in\R^m$ such that
\begin{equation}\label{eq:truncated-slice-translation}
(v_s-\eta)_+(x')
=(w_s-\eta)_+(x'-c_\eta(s))
\quad\text{for a.e. }x'.
\end{equation}

The centre does not depend on the truncation level.  Indeed, let
$0<\eta_1<\eta_2<M(s)$ and choose any
$q\in(\eta_2,M(s))$.  From \eqref{eq:truncated-slice-translation}, the nonempty set
$\{v_s>q\}$ is simultaneously a Euclidean ball centred at $c_{\eta_1}(s)$ and a Euclidean ball centred at $c_{\eta_2}(s)$.  A nonempty ball has a unique centre, hence
$c_{\eta_1}(s)=c_{\eta_2}(s)$.  Denote the common value by $c(s)$.

Letting $\eta\downarrow0$ through a countable sequence in \eqref{eq:truncated-slice-translation} and using positivity of $v_s$ gives
\begin{equation}\label{eq:slice-translated-radial}
v(x',s)=w(x'-c(s),s)
\quad\text{for a.e. }x'.
\end{equation}
Thus \eqref{eq:slice-translated-radial} holds for almost every $s>0$.

\emph{Step 4: the slice centre is independent of height.}
We verify the hypotheses of Lemma~\ref{lem:moving-centre-rigidity}.  For every compact $J\Subset(0,\infty)$, \eqref{eq:threshold-uniform-slice-moment} implies
\[
\sup_{s\in J}\int_{\R^m}|x'|^2v(x',s)^2\,dx'<\infty.
\]
The slices of $w$ are radial by construction.  Moreover the principal eigenfunction $v$ is continuous and strictly positive in $\Hh$.  Hence $\rho(s)=\|v_s\|_2^2>0$ for every $s>0$; since $v\in H^1(J;L^2)$, $\rho$ is continuous and is therefore bounded away from zero on every compact $J\Subset(0,\infty)$.  Thus all hypotheses of Lemma~\ref{lem:moving-centre-rigidity} apply to \eqref{eq:slice-translated-radial}.  Its normal-energy hypothesis is exactly the second equality in \eqref{eq:tangential-normal-energy-equality}.  Therefore $c(s)$ is almost everywhere equal to one fixed vector $c\in\R^m$.  This proves \eqref{eq:threshold-common-axis}.

The bathtub statement follows from $A=\{v>t\}$ up to null sets and \eqref{eq:threshold-common-axis}.  Finally, for almost every $s$, rotational symmetry around $c$ gives
\[
\int_{\R^m}\nabla_Tv_s\otimes\nabla_Tv_s\,dx'
=
\frac{I_m}{m}
\int_{\R^m}|\nabla_Tv_s|^2\,dx'.
\]
Multiplying by $s$ and integrating over $s>0$ gives \eqref{eq:MT-isotropic}.
\end{proof}

\begin{corollary}[Mean-curvature dependence at the threshold]\label{cor:scalar-critical-response}
For every threshold optimiser $(A,v)\in\mathfrak O_*$ define
\begin{equation}\label{eq:C-critical-def}
\mathcal C(A,v)
:=
2\mu_T(v)-\frac12T(v)
+\frac{a(A)}N\bigl(\tau_*T(v)-2k^2\bigr).
\end{equation}
Then the response in \eqref{eq:G-critical-def} satisfies
\begin{equation}\label{eq:G-critical-scalar}
\mathcal G_\sigma(P;A,v)
=\sigma T(v)+h_P\mathcal C(A,v).
\end{equation}
In particular the first-order domain dependence is purely through mean curvature; orientation relative to principal directions plays no role.
\end{corollary}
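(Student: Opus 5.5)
The plan is to substitute the tangential symmetry of Theorem~\ref{thm:threshold-tangential-symmetry} into the general first-order response \eqref{eq:G-critical-def}. Only the anisotropic term in that response needs work. I take \eqref{eq:G-critical-def} to have the following structure, coming from boundary flattening in Fermi coordinates $x=P+\varepsilon(y'+y_N(-\nu(P)))+O(\varepsilon^2)$:
\begin{itemize}
\item a term $\sigma T(v)$ from the Robin parameter shift $\tau_\delta-\tau_*=\sigma\varepsilon+o(\varepsilon)$;
\item a term linear in the shape operator $\mathsf S_P$ from the metric correction of the Dirichlet integral, which involves the contraction $\operatorname{tr}(\mathsf S_P\mathsf M_T(v))$ together with a trace part such as $h_P\int_{\Hh}y_N|\nabla_Tv|^2$ or $h_P\int y_N|\partial_Nv|^2$;
\item terms proportional to $h_P$ from the volume element $(1-\varepsilon h_Py_N+O(\varepsilon^2))$, acting on the bulk mass, on the favourable volume (this produces $a(A)$ and, after renormalising the volume to one via the scaling Lemma~\ref{lem:halfspace-volume-scaling}, the factor $\frac{a(A)}N(\tau_*T-2k^2)$), and on the trace.
\end{itemize}
The first step is to check, term by term in the definition, that every contribution other than $\sigma T(v)$ and $\operatorname{tr}(\mathsf S_P\mathsf M_T(v))$ already has the form $h_P\times(\text{scalar functional of }(A,v))$. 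This holds because each such contribution arises either from the Jacobian, whose first-order term is $-\varepsilon h_Py_N$, or from the volume rescaling, both of which only see $\operatorname{tr}\mathsf S_P$.

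The second step is to apply \eqref{eq:MT-isotropic}, $\mathsf M_T(v)=\mu_T(v)I_{N-1}$. This gives $\operatorname{tr}(\mathsf S_P\mathsf M_T(v))=\mu_T(v)\operatorname{tr}\mathsf S_P=h_P\mu_T(v)$. The moment integrals are finite by Lemma~\ref{lem:threshold-exponential-moments}, so all quantities are well defined. The isotropy also removes any dependence on the tangential translation $c$ and on a tangential rotation of the optimiser, so $\mathcal G_\sigma$ is intrinsic to the pair $(A,v)$.

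The third step is to collect coefficients: $2\mu_T$ from the metric term, $-\tfrac12T$ from the boundary-measure correction to the Robin integral (using $\tau_*$ and the surface Jacobian), and the volume-renormalisation term with $a(A)$. Matching these against \eqref{eq:C-critical-def} gives \eqref{eq:G-critical-scalar}. If the metric term in \eqref{eq:G-critical-def} instead carries the full gradient tensor, I would split it into tangential and normal blocks. The mixed block $\int y_N\,\partial_Nv\,\nabla_Tv$ vanishes by oddness under reflections about the axis through $c$, and the normal block pairs only with $\operatorname{tr}\mathsf S_P$.

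The main obstacle is bookkeeping rather than analysis: one must confirm the exact coefficients $2$, $-\tfrac12$ and $a(A)/N$ in \eqref{eq:G-critical-def}, and in particular the Euler--Lagrange identity used to trade $\int|\nabla v|^2$ for $k^2-\tau_*T$ in the volume term. I would first verify the case where $\partial\Omega$ is a sphere, where $\mathsf S_P=R^{-1}I$, and then obtain the general formula by linearity in $\mathsf S_P$.
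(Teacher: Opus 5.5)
Your second step is exactly the paper's proof: the isotropy $\mathsf M_T(v)=\mu_T(v)I_{N-1}$ from Theorem~\ref{thm:threshold-tangential-symmetry} gives $\mathsf S_P:\mathsf M_T(v)=\mu_T(v)h_P$, and since \eqref{eq:G-critical-def} is an explicit definition, substituting this and collecting the $h_P$ terms is the whole proof. The rest of your plan, which reconstructs where the coefficients come from, is unnecessary and in one place misattributed. The boundary Jacobian has no linear term. The $-\tfrac12T(v)$ instead arises in the transplantation from the normal-moment identity $E_1-\tfrac12T(v)=k^2M_1$, not from a surface-measure correction to the Robin integral.
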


\begin{proof}
By Theorem~\ref{thm:threshold-tangential-symmetry},
$\mathsf M_T(v)=\mu_T(v)I_{N-1}$.  Hence
$\mathsf S_P:\mathsf M_T(v)=\mu_T(v)\operatorname{tr}\mathsf S_P=\mu_T(v)h_P$.
Substitution into \eqref{eq:G-critical-def} gives \eqref{eq:G-critical-scalar}.
\end{proof}

\section{Bounded-domain critical-scale limit}\label{sec:bounded-targets}

\subsection{Exact leading-order asymptotics}

Let $\eps:=\delta^{1/N}$ and $\tau_\delta:=\alpha_\delta\eps$.  We transfer the half-space result to the original bounded domain.  At every finite limit of the scaled Robin parameter, the half-space problem gives the leading coefficient of the bounded-domain optimum.  The proof uses the standard localisation and boundary-flattening scheme from small-volume spectral optimisation; see, for example, \citet{FerreriVerzini2024,FerreriMazzoleniPellacciVerzini2026}.  The estimates below address the optimised denominator and the scaled Robin boundary term.  Choose a mesoscopic scale $\eps\ll r_\delta\ll1$ so that each localised piece sees either a nearly flat half-space or the whole space, while the artificial localisation error is negligible compared with the natural energy scale \(\eps^{-2}\).

The following small-set estimate makes the localisation uniform.  For a measurable favourable set \(D\subset\Omega\), write
\[
d_D(u):=\int_\Omega m_Du^2.
\]

\begin{lemma}[Small-set $L^2$ estimate under positive weighted mass]\label{lem:small-set-L2}
Assume that \(\Omega\) is bounded and Lipschitz and fix \(\kappa>0\).  There exist \(\delta_0>0\) and \(C>0\), depending only on \(\Omega,N,\kappa\), such that for every measurable \(D\subset\Omega\) with \(|D|=\delta<\delta_0\) and every \(u\in H^1(\Omega)\) satisfying \(d_D(u)>0\),
\begin{equation}\label{eq:small-set-L2}
\|u\|_{L^2(\Omega)}^2
\le C\rho_N(\delta)\|\nabla u\|_{L^2(\Omega)}^2,
\end{equation}
where
\begin{equation}\label{eq:rhoN}
\rho_N(\delta):=
\begin{cases}
\delta^{2/N},&N\ge3,\\[1mm]
\delta\bigl(1+|\log\delta|\bigr),&N=2.
\end{cases}
\end{equation}
In particular, since the Robin term is nonnegative,
\[
\|u\|_2^2\le C\rho_N(\delta)\,\calE_{\alpha_\delta}(u)
\qquad(\alpha_\delta\ge0).
\]
\end{lemma}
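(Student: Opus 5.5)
The plan is to combine the positivity constraint with a Sobolev (or Moser--Trudinger type) estimate on the small set $D$.

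\emph{Step 1: reduce to the small set.} The condition $d_D(u)>0$ reads
\[
\int_{\Omega\setminus D}u^2<\kappa\int_D u^2,
\qquad\text{hence}\qquad
\|u\|_{L^2(\Omega)}^2<(\kappa+1)\int_D u^2 .
\]
So it suffices to bound $\int_D u^2$ by $C\rho_N(\delta)\|\nabla u\|_2^2$.

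\emph{Step 2: H\"older on $D$.} For $N\ge3$, H\"older gives $\int_Du^2\le\delta^{2/N}\|u\|_{L^{2^*}(\Omega)}^2$. The Sobolev inequality on the bounded Lipschitz domain (via an extension operator) then yields $\|u\|_{2^*}^2\le C_S(\|\nabla u\|_2^2+\|u\|_2^2)$. Combining,
\[
\|u\|_2^2\le (\kappa+1)C_S\delta^{2/N}\bigl(\|\nabla u\|_2^2+\|u\|_2^2\bigr).
\]
For $\delta<\delta_0$ with $(\kappa+1)C_S\delta_0^{2/N}\le1/2$, the $L^2$ term is absorbed into the left-hand side, which gives \eqref{eq:small-set-L2}.

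For $N=2$, use the dimension-tracked embedding $\|u\|_{L^p(\Omega)}\le C\sqrt p\,\|u\|_{H^1(\Omega)}$ for $p\ge2$. This follows from Trudinger--Moser, or from Gagliardo--Nirenberg with explicit constants. Then
\[
\int_Du^2\le\delta^{1-2/p}\|u\|_p^2\le Cp\,\delta^{1-2/p}\|u\|_{H^1}^2 .
\]
Choosing $p=\max\{2,|\log\delta|\}$ makes $\delta^{-2/p}\le e^2$, so the bound becomes $C\delta(1+|\log\delta|)\|u\|_{H^1}^2$. The $L^2$ part is absorbed as before, since $\rho_2(\delta)\to0$.

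\emph{Step 3: the Robin version.} The final claim is immediate, because $\calE_{\alpha_\delta}(u)\ge\|\nabla u\|_2^2$ for $\alpha_\delta\ge0$.

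The main obstacle is the $N=2$ case. Getting exactly the logarithmic rate $\delta|\log\delta|$ requires the $\sqrt p$ growth of the embedding constant, uniform on Lipschitz $\Omega$. The first approach is to extend $u$ to $H^1(\R^2)$ with a fixed extension operator and apply the whole-plane Gagliardo--Nirenberg estimate
\[
\|u\|_p\le C\sqrt p\,\|u\|_2^{2/p}\|\nabla u\|_2^{1-2/p}.
\]
Alternatively, integrate the Trudinger exponential bound over $D$ and use Jensen's inequality.
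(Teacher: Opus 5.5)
Your proposal is correct and matches the paper's proof step for step: the constraint $d_D(u)>0$ gives $\|u\|_2^2<(\kappa+1)\int_D u^2$. You then use H\"older plus the fixed-domain Sobolev embedding for $N\ge3$, the bound $\|u\|_{L^p}^2\le Cp\,\|u\|_{H^1}^2$ with $p\approx|\log\delta|$ for $N=2$, and absorb the $L^2$ term for small $\delta$, exactly as the paper does. Your choice $p=\max\{2,|\log\delta|\}$ and the check $\delta^{-2/p}\le e^2$ just make explicit what the paper leaves implicit.
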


\begin{proof}
Since $d_D(u)>0$,
$\|u\|_2^2<(\kappa+1)\int_Du^2$.  For $N\ge3$, H\"older and the fixed-domain Sobolev embedding give
$\int_Du^2\le C\delta^{2/N}(\|\nabla u\|_2^2+\|u\|_2^2)$; for $N=2$, the standard estimate
$\|u\|_{L^p}^2\le Cp(\|\nabla u\|_2^2+\|u\|_2^2)$ with $p=|\log\delta|$ gives
$\int_Du^2\le C\delta(1+|\log\delta|)(\|\nabla u\|_2^2+\|u\|_2^2)$.  For $\delta$ small the $L^2$ term is absorbed, proving \eqref{eq:small-set-L2}.
\end{proof}

We also use monotonicity with respect to the local favourable volume.

\begin{lemma}[Monotonicity with respect to local favourable volume]\label{lem:local-volume-monotonicity}
For every \(\alpha\ge0\), the maps
\[
V\longmapsto\Lambda_{\Hh}(\alpha;V),
\qquad
V\longmapsto\Lambda_{\R^N}(V)
\]
are strictly decreasing on \((0,\infty)\).  Consequently, for every bounded interval \(0\le \alpha\eps\le T\) there is a modulus \(\omega_T(r)\downarrow0\) such that, whenever \(0<V\le(1+Cr)\delta\),
\begin{equation}\label{eq:local-halfspace-volume-lower}
\Lambda_{\Hh}(\alpha;V)
\ge
\eps^{-2}\Bigl(\Lambda_{\Hh}(\alpha\eps)-\omega_T(r)\Bigr).
\end{equation}
\end{lemma}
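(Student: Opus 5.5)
The plan is to derive both statements from the exact scaling laws, Lemma~\ref{lem:whole-scaling} and Lemma~\ref{lem:halfspace-volume-scaling}, combined with the monotonicity and concavity of $\tau\mapsto\Lambda_{\Hh}(\tau)$ from Lemma~\ref{lem:halfspace-concavity-prelim} and the strict positivity $\Lambda_{\Hh}(0)>0$ from Proposition~\ref{prop:halfspace-basic}. No new compactness argument should be needed. This is essentially the computation in Corollary~\ref{cor:strict-resource-subhomogeneity}, with the reference volume now arbitrary instead of $1$.

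\emph{Strict decrease.} For the whole space, Lemma~\ref{lem:whole-scaling} gives $\Lambda_{\R^N}(V)=V^{-2/N}\Lambda_{\R^N}$ with $\Lambda_{\R^N}>0$, which is strictly decreasing. For the half-space, fix $\alpha\ge0$ and $0<V_1<V_2$. Put $\tau:=\alpha V_2^{1/N}$ and $r:=(V_1/V_2)^{1/N}\in(0,1)$. Lemma~\ref{lem:halfspace-volume-scaling} gives $\Lambda_{\Hh}(\alpha;V_1)=V_1^{-2/N}\Lambda_{\Hh}(r\tau)$. Concavity on $[0,\tau]$ then yields
\[
\Lambda_{\Hh}(r\tau)\ge r\Lambda_{\Hh}(\tau)+(1-r)\Lambda_{\Hh}(0)>r^2\Lambda_{\Hh}(\tau).
\]
The strict inequality uses $r>r^2$, $\Lambda_{\Hh}(\tau)>0$ and $\Lambda_{\Hh}(0)>0$. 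Multiplying by $V_1^{-2/N}=r^{-2}V_2^{-2/N}$ gives
\[
\Lambda_{\Hh}(\alpha;V_1)>V_2^{-2/N}\Lambda_{\Hh}(\tau)=\Lambda_{\Hh}(\alpha;V_2),
\]
which is the claimed strict monotonicity.

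\emph{The lower bound.} Let $0<V\le(1+Cr)\delta$ and set $\lambda_r:=(1+Cr)^{1/N}\ge1$. By the (non-strict) monotonicity just proved, followed by the scaling lemma with volume $\lambda_r^N\eps^N$,
\[
\Lambda_{\Hh}(\alpha;V)\ge\Lambda_{\Hh}\bigl(\alpha;(1+Cr)\delta\bigr)=\eps^{-2}\lambda_r^{-2}\Lambda_{\Hh}(\alpha\eps\lambda_r)\ge\eps^{-2}\lambda_r^{-2}\Lambda_{\Hh}(\alpha\eps).
\]
The last step uses that $\Lambda_{\Hh}$ is nondecreasing and $\lambda_r\ge1$. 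Finally, the bound $\Lambda_{\Hh}\le k^2$ from Proposition~\ref{prop:halfspace-basic} gives
\[
\lambda_r^{-2}\Lambda_{\Hh}(\alpha\eps)\ge\Lambda_{\Hh}(\alpha\eps)-(1-\lambda_r^{-2})k^2.
\]
Hence \eqref{eq:local-halfspace-volume-lower} holds with
\[
\omega_T(r):=k^2\bigl(1-(1+Cr)^{-2/N}\bigr),
\]
which decreases to $0$ as $r\downarrow0$. This modulus is in fact independent of $T$, so the restriction $\alpha\eps\le T$ is not even needed.

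There is no serious obstacle. The only point requiring care is the strictness in the half-space case, which rests on $\Lambda_{\Hh}(0)>0$ and hence on the Neumann reflection identity, Proposition~\ref{prop:neumann-reflection}.
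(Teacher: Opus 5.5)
Your proposal is correct and follows the paper's proof. Strict decrease is exactly the rescale-to-unit-volume plus concavity computation of Corollary~\ref{cor:strict-resource-subhomogeneity}, and the lower bound comes from monotonicity in $V$ followed by Lemma~\ref{lem:halfspace-volume-scaling}. The only difference is the final step: the paper invokes uniform continuity of $\Lambda_{\Hh}$ on compact $\tau$-intervals, whereas you note that the dilation factor $(1+Cr)^{1/N}\ge1$ only increases $\tau$. Monotonicity of $\Lambda_{\Hh}$ together with the ceiling $\Lambda_{\Hh}\le k^2$ then gives the explicit, $T$-independent modulus $k^2\bigl(1-(1+Cr)^{-2/N}\bigr)$, which is a slightly cleaner and sharper conclusion.
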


\begin{proof}
For $0<V_1<V_2$, rescale the volume-$V_2$ problem to unit volume and apply Corollary~\ref{cor:strict-resource-subhomogeneity}; this gives strict decrease of both the half-space and whole-space values.  If $V\le(1+Cr)\delta$, monotonicity and Lemma~\ref{lem:halfspace-volume-scaling} give
\[
\Lambda_{\Hh}(\alpha;V)\ge
\eps^{-2}(1+Cr)^{-2/N}
\Lambda_{\Hh}\!\left(\alpha\eps(1+Cr)^{1/N}\right).
\]
Uniform continuity of $\Lambda_{\Hh}$ on compact $\tau$-intervals proves \eqref{eq:local-halfspace-volume-lower}.
\end{proof}

The following localisation estimate yields the matching lower bound uniformly in the favourable set and the test function.

\begin{proposition}[Mesoscopic lower bound by the flat limit problem]\label{prop:mesoscopic-lower}
Assume that \(\Omega\) is bounded with \(C^2\) boundary.  Fix \(T<\infty\).  There exists a quantity \(\eta_\delta\downarrow0\), depending only on \(\Omega,N,\kappa,T\), such that whenever
\[
|D|=\delta,
\qquad
\alpha_\delta\eps\le T,
\qquad
\nu\in H^1(\Omega),
\qquad
d_D(\nu)>0,
\]
one has
\begin{equation}\label{eq:mesoscopic-lower}
\calE_{\alpha_\delta}(\nu)
\ge
\eps^{-2}\Bigl(\Lambda_{\Hh}(\tau_\delta)-\eta_\delta\Bigr)d_D(\nu).
\end{equation}
The estimate is uniform over all measurable \(D\subset\Omega\) of volume \(\delta\).
\end{proposition}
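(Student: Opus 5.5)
We would prove the estimate by an IMS-type localisation at the mesoscopic scale $r_\delta$, with $\eps\ll r_\delta\ll1$ chosen below. Fix a partition of unity $\sum_j\chi_j^2=1$ on $\overline\Omega$ by smooth cutoffs supported in balls of radius $2r_\delta$, with bounded overlap (independent of $\delta$) and $|\nabla\chi_j|\le C/r_\delta$. The balls are of two kinds: interior balls at distance at least $r_\delta$ from $\partial\Omega$, and boundary balls centred on $\partial\Omega$. The IMS formula gives
\[
\calE_{\alpha_\delta}(\nu)=\sum_j\calE_{\alpha_\delta}(\chi_j\nu)-\sum_j\int_\Omega|\nabla\chi_j|^2\nu^2
\ge\sum_j\calE_{\alpha_\delta}(\chi_j\nu)-Cr_\delta^{-2}\|\nu\|_2^2,
\]
and $d_D(\nu)=\sum_j d_D(\chi_j\nu)$ exactly, because $m_D$ is a multiplier. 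By Lemma~\ref{lem:small-set-L2}, the error term is at most $C r_\delta^{-2}\rho_N(\delta)\,\calE_{\alpha_\delta}(\nu)$, and this can be absorbed into the left-hand side. In relative terms the error is $O(r_\delta^{-2}\rho_N(\delta))$. Compared with the target scale $\eps^{-2}$, it is negligible provided $r_\delta^{-2}\rho_N(\delta)\to0$: for $N\ge3$ this is $(\eps/r_\delta)^2\to0$, and for $N=2$ we choose $r_\delta\gg\eps|\log\eps|^{1/2}$.

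Next we bound each piece from below. Set $V_j:=|D\cap\operatorname{supp}\chi_j|$. Pieces with $d_D(\chi_j\nu)\le0$ contribute nonnegative energy and nonpositive mass, so we simply drop them. For an interior piece, extend $\chi_j\nu$ by zero to $\R^N$. The whole-space inequality with favourable volume $V_j\le\delta$, combined with Lemmas~\ref{lem:whole-scaling} and~\ref{lem:local-volume-monotonicity}, gives
\[
\int|\nabla(\chi_j\nu)|^2\ge\eps^{-2}k^2d_D(\chi_j\nu)\ge\eps^{-2}\Lambda_{\Hh}(\tau_\delta)d_D(\chi_j\nu),
\]
where the last step uses Proposition~\ref{prop:halfspace-basic}. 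For a boundary piece, flatten $\partial\Omega$ with $C^2$ normal coordinates. On a ball of radius $2r_\delta$ the Jacobian and the metric distortion are $1+O(r_\delta)$, so the Dirichlet energy, the boundary measure and the weighted mass all change by factors $1+O(r_\delta)$, and the favourable volume becomes at most $(1+Cr_\delta)\delta$. The Robin coefficient is also perturbed by a factor $1+O(r_\delta)$, which we absorb by replacing $\alpha_\delta$ by $(1-Cr_\delta)\alpha_\delta$ and using monotonicity in $\tau$. We then apply the volume-$V$ half-space inequality~\eqref{eq:Lambda-H-volume} and Lemma~\ref{lem:local-volume-monotonicity}, where the modulus $\omega_T$ is uniform because $\tau_\delta\le T$. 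This gives
\[
\calE_{\alpha_\delta}(\chi_j\nu)\ge\eps^{-2}\bigl(\Lambda_{\Hh}(\tau_\delta)-\omega_T(Cr_\delta)-Cr_\delta\bigr)d_D(\chi_j\nu).
\]
Note that the mass estimate on a boundary piece must be multiplicative, $(1\pm Cr_\delta)$ applied separately to the $\kappa$-part and to the $-1$-part. This costs an additive error of order $r_\delta\|\chi_j\nu\|_2^2$, and Lemma~\ref{lem:small-set-L2} converts it into $o(1)$ times the energy, so it is absorbed as well.

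Summing over $j$ and collecting the three errors (localisation, flattening, volume modulus) yields~\eqref{eq:mesoscopic-lower} with
\[
\eta_\delta:=C\bigl(r_\delta+\omega_T(Cr_\delta)+r_\delta^{-2}\rho_N(\delta)\eps^{2}\eps^{-2}\bigr)\to0.
\]
Here the absorbed terms appear as factors $(1-o(1))$ multiplying the energy, and these are turned into additive errors by multiplying through by the bounded quantity $\Lambda_{\Hh}\le k^2$. Every constant depends only on $\Omega$, $N$, $\kappa$ and $T$, so the bound is uniform in $D$ and $\nu$. The main obstacle is the absorption of the $L^2$-type errors, namely the IMS cutoff term and the flattening distortion of the unfavourable mass. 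These have no favourable sign, and only the small-set estimate of Lemma~\ref{lem:small-set-L2} makes them negligible relative to $\eps^{-2}$. This requires choosing $r_\delta$ carefully in $N=2$, where the logarithm appears.
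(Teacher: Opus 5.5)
Your proposal follows the paper's proof step by step: IMS localisation at a mesoscopic scale $r_\delta$, the whole-space bound on interior charts, flattening plus Lemma~\ref{lem:local-volume-monotonicity} on boundary charts, and absorption of the $L^2$ errors through Lemma~\ref{lem:small-set-L2}. There is, however, a concrete gap in your choice of scale for $N=2$.

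\textbf{Missing upper constraint on $r_\delta$ when $N=2$.} You correctly observe that the flattening distortion of the unfavourable mass costs an additive $Cr_\delta\|\chi_j\nu\|_2^2$ in each boundary chart. After multiplication by the eigenvalue scale, this becomes a total error
\[
Cr_\delta\eps^{-2}\|\nu\|_2^2\le Cr_\delta\eps^{-2}\rho_N(\delta)\,\calE_{\alpha_\delta}(\nu).
\]
For $N\ge3$ the prefactor is $Cr_\delta$. For $N=2$, however, $\eps^{-2}\rho_2(\delta)=1+|\log\delta|$, so absorbing this error requires $r_\delta|\log\delta|\to0$. This is an upper bound on $r_\delta$, in addition to your lower bound $r_\delta\gg\eps|\log\eps|^{1/2}$. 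Your stated conditions do not imply it. For example, $r_\delta=1/\log|\log\eps|$ satisfies them, yet $r_\delta|\log\eps|\to\infty$, and then the claim that Lemma~\ref{lem:small-set-L2} turns this term into $o(1)\calE_{\alpha_\delta}(\nu)$ is false.

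The window $\eps|\log\eps|^{1/2}\ll r_\delta\ll|\log\eps|^{-1}$ is nonempty, so the repair is easy. The paper takes $r_\delta=(1+|\log\delta|)^{-2}$; the constraint you omitted is exactly the third condition in \eqref{eq:mesoscopic-scales}. Correspondingly, your $\eta_\delta$ should contain $r_\delta\rho_N(\delta)\eps^{-2}$ rather than $r_\delta$; the two agree only for $N\ge3$.

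\textbf{Robin coefficient step.} Monotonicity in $\tau$ does let you lower the Robin coefficient to $(1-Cr_\delta)\alpha_\delta$. But you are then left with $\Lambda_{\Hh}((1-Cr_\delta)\tau_\delta)$, which is at most $\Lambda_{\Hh}(\tau_\delta)$, so monotonicity points the wrong way for getting back. You need either concavity, $\Lambda_{\Hh}((1-Cr_\delta)\tau_\delta)\ge(1-Cr_\delta)\Lambda_{\Hh}(\tau_\delta)$, or uniform continuity on $[0,T]$. Your $Cr_\delta$ term can absorb the resulting error once you invoke one of these. Simpler still, as the paper does: the boundary Jacobian is $1+O(r_\delta)$ just like the bulk one, so you can factor a common $1-Cr_\delta$ out of the entire flattened energy and never change the Robin coefficient.
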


\begin{proof}
This is the Robin version of the standard IMS/local-flattening argument used in the Dirichlet and Neumann small-volume analyses; see \citet{FerreriVerzini2024,FerreriMazzoleniPellacciVerzini2026}.  The proof below keeps track of the dependence on $D$ and of the Robin boundary term.

Choose $r=r_\delta$ by
$r_\delta=\eps^{1/2}$ for $N\ge3$ and
$r_\delta=(1+|\log\delta|)^{-2}$ for $N=2$.  Then
\begin{equation}\label{eq:mesoscopic-scales}
r_\delta\to0,
\qquad \rho_N(\delta)r_\delta^{-2}\to0,
\qquad r_\delta\rho_N(\delta)\eps^{-2}\to0.
\end{equation}
Take a uniformly finite interior/boundary cover at scale $r$ and an IMS partition $\sum_j\chi_j^2=1$ with $\sum_j|\nabla\chi_j|^2\le Cr^{-2}$.  Set $\nu_j=\chi_j\nu$ and $d_j=\int m_D\nu_j^2$.  Then $\sum_jd_j=d_D(\nu)$ and
\[
\sum_j\calE_{\alpha_\delta}(\nu_j)
=\calE_{\alpha_\delta}(\nu)+O(r^{-2})\|\nu\|_2^2.
\]
For an interior chart, zero extension and whole-space scaling give
$\calE_{\alpha_\delta}(\nu_j)\ge\eps^{-2}\Lambda_{\R^N}d_j$ whenever $d_j>0$, and the inequality is automatic when $d_j\le0$.

For a boundary chart, flattening yields a half-space function $w_j$ and a flattened denominator $\widetilde d_j$ with
\begin{equation}\label{eq:chart-energy-comparison}
\calE_{\alpha_\delta}(\nu_j)
\ge(1-Cr)\left(\int_{\Hh}|\nabla w_j|^2+\alpha_\delta\int_{\partial\Hh}w_j^2\right),
\end{equation}
\begin{equation}\label{eq:chart-denominator-comparison}
|d_j-\widetilde d_j|\le Cr\|\nu_j\|_2^2,
\qquad |\widetilde D_j|\le(1+Cr)\delta.
\end{equation}
The boundary Jacobian gives the same $1+O(r)$ error as the bulk metric, so no additional Robin-scale loss occurs.  Lemma~\ref{lem:local-volume-monotonicity} therefore gives, uniformly for $\tau_\delta\le T$,
\[
\calE_{\alpha_\delta}(\nu_j)
\ge \eps^{-2}\bigl(\Lambda_{\Hh}(\tau_\delta)-o_r(1)\bigr)d_j
-Cr\eps^{-2}\|\nu_j\|_2^2;
\]
if $\widetilde d_j\le0$, the same bound follows from nonnegativity of the flattened energy and \eqref{eq:chart-denominator-comparison}.

Summation, $\Lambda_{\Hh}\le\Lambda_{\R^N}$, and the IMS identity yield
\[
\calE_{\alpha_\delta}(\nu)
\ge \eps^{-2}\bigl(\Lambda_{\Hh}(\tau_\delta)-o_r(1)\bigr)d_D(\nu)
-C(r\eps^{-2}+r^{-2})\|\nu\|_2^2.
\]
Lemma~\ref{lem:small-set-L2} and \eqref{eq:mesoscopic-scales} make the last term $o(1)\calE_{\alpha_\delta}(\nu)$, uniformly in $D$ and $\nu$.  Absorbing it on the left proves \eqref{eq:mesoscopic-lower}.
\end{proof}

The matching upper bound is supplied by the following exact-volume boundary transplantation lemma.

\begin{lemma}[Boundary-chart transplantation with exact volume]\label{lem:boundary-transplant}
Assume \(\partial\Omega\) is \(C^2\), let \(P\in\partial\Omega\), and let \((A,v)\) be an admissible half-space pair with \(|A|=1\) and compact support in a fixed ball.  Put \(\eps=\delta^{1/N}\) and assume
\[
\tau_\delta:=\alpha_\delta\eps\longrightarrow\tau\in[0,+\infty).
\]
Then there exist favourable sets \(D_\delta\subset\Omega\) with \(|D_\delta|=\delta\) and test functions \(u_\delta\in H^1(\Omega)\) such that
\begin{equation}\label{eq:transplant-limit}
\lim_{\delta\downarrow0}
\delta^{2/N}
\frac{\displaystyle\int_\Omega|\nabla u_\delta|^2
+\alpha_\delta\int_{\partial\Omega}u_\delta^2}
{\displaystyle\int_\Omega m_{D_\delta}u_\delta^2}
=
\frac{\displaystyle\int_{\Hh}|\nabla v|^2
+\tau\int_{\partial\Hh}v^2}
{\displaystyle\int_{\Hh}m_Av^2}.
\end{equation}
Consequently,
\begin{equation}\label{eq:critical-limsup}
\limsup_{\delta\downarrow0}
\delta^{2/N}\Lambda_\delta(\alpha_\delta)
\le \Lambda_{\Hh}(\tau).
\end{equation}
\end{lemma}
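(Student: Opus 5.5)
We transplant the half-space pair into $\Omega$ through a boundary chart at $P$, scaled by $\eps$, and then correct the favourable volume so that it is exactly $\delta$.

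\emph{Chart and rescaling.} Fix a $C^2$ boundary-normal chart $\Psi:B_{r_0}^+\to\Omega$ near $P$, with $\Psi(0)=P$ and $D\Psi(0)=I$. Suppose $A$ and $\operatorname{supp}v$ lie in $B_\rho$. For $\delta$ small, define
\[
\widetilde u_\delta(x):=v\bigl(\eps^{-1}\Psi^{-1}(x)\bigr),
\qquad
\widetilde D_\delta:=\Psi(\eps A).
\]
Both are supported in $\Psi(B^+_{\eps\rho})$, and $\widetilde u_\delta$ is extended by zero to the rest of $\Omega$. Changing variables $x=\Psi(\eps y)$ gives a metric $g_\eps(y)=I+O(\eps|y|)$ and volume and boundary Jacobians $1+O(\eps|y|)$. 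These errors are uniform on the fixed ball $B_\rho$. It follows that
\[
\int_\Omega|\nabla\widetilde u_\delta|^2=\eps^{N-2}\Bigl(\int_{\Hh}|\nabla v|^2+O(\eps)\Bigr),
\qquad
\alpha_\delta\int_{\partial\Omega}\widetilde u_\delta^2=\eps^{N-2}\,\tau_\delta\Bigl(\int_{\partial\Hh}v^2+O(\eps)\Bigr).
\]
Because $\tau_\delta\to\tau$ is bounded, the numerator equals $\eps^{N-2}\bigl(\int|\nabla v|^2+\tau\int_{\partial\Hh}v^2+o(1)\bigr)$. The same substitution gives $\int_\Omega\widetilde u_\delta^2=\eps^N(\|v\|_2^2+O(\eps))$, $\int_{\widetilde D_\delta}\widetilde u_\delta^2=\eps^N(\int_Av^2+O(\eps))$, and $|\widetilde D_\delta|=\delta(1+O(\eps))$.

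\emph{Exact volume correction.} This is the only delicate point. Since $|\widetilde D_\delta|-\delta=O(\eps\delta)$, we adjust $\widetilde D_\delta$ by a set of measure $O(\eps\delta)$. If the volume is too large, remove a subset of $\widetilde D_\delta$ of the required measure. If it is too small, add a subset of $\Psi(B^+_{\eps\rho})\setminus\widetilde D_\delta$, which has ample room when $\eps$ is small; alternatively add a small disjoint piece elsewhere in $\Omega$, where $\widetilde u_\delta=0$. Call the result $D_\delta$, so $|D_\delta|=\delta$ and $u_\delta:=\widetilde u_\delta$ is unchanged.

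The denominator changes by at most
\[
(\kappa+1)\|\widetilde u_\delta\|_\infty^2\,|D_\delta\triangle\widetilde D_\delta|.
\]
If $v$ is unbounded, estimate $\int_{E}\widetilde u_\delta^2$ instead, with $|E|=O(\eps\delta)$, by H\"older and Sobolev: after rescaling this is $\eps^N\int_{E'}v^2$ with $|E'|=O(\eps)$, and that integral tends to zero by absolute continuity. Either way the change is $o(\eps^N)$. Hence
\[
\int_\Omega m_{D_\delta}u_\delta^2=\eps^N\Bigl(\int_{\Hh}m_Av^2+o(1)\Bigr).
\]
Dividing the numerator by the denominator and multiplying by $\delta^{2/N}=\eps^2$ gives \eqref{eq:transplant-limit}, using $\int_{\Hh}m_Av^2>0$.

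\emph{The $\limsup$ bound \eqref{eq:critical-limsup}.} Fix $\eta>0$. Take an admissible pair whose quotient at $\tau$ is at most $\Lambda_{\Hh}(\tau)+\eta$. Approximate $v$ in $H^1(\Hh)$, hence also in trace, by a compactly supported function $v_c$, and keep $A$ replaced by a bathtub set of $v_c$ restricted to a fixed ball. This is the same approximation used in Proposition~\ref{prop:halfspace-basic}, and it works at fixed $\tau$ because the trace term is continuous in $H^1$. The resulting compactly supported pair has quotient at most $\Lambda_{\Hh}(\tau)+2\eta$. Apply \eqref{eq:transplant-limit} to it, use $\Lambda_\delta(\alpha_\delta)\le\lambda_1(D_\delta;\alpha_\delta)\le$ the quotient of $u_\delta$, and let $\eta\downarrow0$.
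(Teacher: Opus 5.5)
Your proof is correct. It differs from the paper's only at the one delicate point, the exact volume constraint.

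\textbf{How the paper enforces exact volume.} The paper never perturbs the favourable set. It notes that $V(r):=|\Phi(rA)|=r^N(1+O(r))$ satisfies $V'(r)=r^{N-1}(N+O(r))>0$ for small $r$. It then solves $V(r_\delta)=\delta$, which gives $r_\delta=\eps(1+O(\eps))$, and transplants at the scale $r_\delta$ instead of $\eps$. As a result, $D_\delta=\Phi(r_\delta A)$ is an exact image of $A$ and the rescaled test function is exactly $v$. No denominator perturbation has to be estimated; the only change in the limit is $\alpha_\delta r_\delta\to\tau$.

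\textbf{How you enforce it.} You keep the scale $\eps$ and trim or pad $\widetilde D_\delta$ by a set of measure $O(\eps\delta)$. You control the resulting change of the denominator by absolute continuity of $v^2$. When you pad outside the support of $\widetilde u_\delta$, there is no change at all.

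\textbf{What each route buys.} Your route is purely measure-theoretic. It needs neither differentiability of $V$ nor any structure of $A$, and it is entirely adequate at leading order. The paper's route keeps the favourable set rigidly tied to $A$ with an explicit scale, and this is what makes the same construction reusable at the next order. In Proposition~\ref{prop:critical-first-order-transplant}, the expansion $r/\eps=1+\eps h_Pa(A)/N+O(\eps^2)$ is precisely what produces the $a(A)$-term in $\mathcal G_\sigma$. An ad hoc trimming or padding at scale $\eps$ would instead make the $O(\eps)$ correction depend on where the volume is added or removed.

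Your final $\limsup$ step, via compactly supported approximants with a bathtub set inside a fixed ball, is the same as the paper's.

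\textbf{Minor point.} For a $C^2$ boundary, normal coordinates are only $C^1$, whereas a graph-flattening chart is genuinely $C^2$. This does not affect your argument, since you only use $D\Psi(z)=I+O(|z|)$, which holds for either chart.
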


\begin{proof}
This is the usual boundary transplantation used in small-volume blow-up arguments; the only point worth recording is exact volume.  Let $\Phi$ be a $C^2$ flattening map at $P$, with $D\Phi(0)$ orthogonal.  On the fixed support of $(A,v)$,
$J(ry)=1+O(r)$, $G(ry)=I+O(r)$, and $J_\partial(ry)=1+O(r)$.  Hence
$V(r):=|\Phi(rA)|=r^N(1+O(r))$, and
$V'(r)=r^{N-1}(N+O(r))>0$ for small $r$.  There is therefore a unique $r_\delta$ with $V(r_\delta)=\delta$, and
\begin{equation}\label{eq:r-delta}
r_\delta=\eps(1+O(\eps)).
\end{equation}
Set $D_\delta=\Phi(r_\delta A)$ and
$u_\delta(\Phi(r_\delta y))=v(y)$, extended by zero outside the chart.  Change of variables gives
\[
\int|\nabla u_\delta|^2=r_\delta^{N-2}\left(\int_{\Hh}|\nabla v|^2+O(r_\delta)\right),\quad
\int_{\partial\Omega}u_\delta^2=r_\delta^{N-1}\left(\int_{\partial\Hh}v^2+O(r_\delta)\right),
\]
\[
\int m_{D_\delta}u_\delta^2=r_\delta^N\left(\int_{\Hh}m_Av^2+O(r_\delta)\right).
\]
Since $\alpha_\delta r_\delta\to\tau$, substitution proves \eqref{eq:transplant-limit}.  Approximation by compactly supported half-space competitors gives \eqref{eq:critical-limsup}.
\end{proof}

We can now identify the complete leading-order limit throughout the finite critical-scale regime.

\begin{theorem}[Critical-scale bounded-domain limit]\label{thm:critical-scale-bounded-limit}
Assume that \(\Omega\subset\R^N\), \(N\ge2\), is bounded with \(C^2\) boundary and that
\begin{equation}\label{eq:critical-scale-assumption}
\tau_\delta:=\alpha_\delta\delta^{1/N}\longrightarrow\tau\in[0,+\infty).
\end{equation}
Then
\begin{equation}\label{eq:critical-scale-limit}
\delta^{2/N}\Lambda_\delta(\alpha_\delta)
\longrightarrow
\Lambda_{\Hh}(\tau).
\end{equation}
Equivalently, since \(\Lambda_{\Hh}(\tau)\le\Lambda_{\R^N}\),
\[
\delta^{2/N}\Lambda_\delta(\alpha_\delta)
\longrightarrow
\min\{\Lambda_{\Hh}(\tau),\Lambda_{\R^N}\}.
\]
\end{theorem}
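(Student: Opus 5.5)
The plan is to combine the two one-sided bounds already prepared: Proposition~\ref{prop:mesoscopic-lower} for the liminf and Lemma~\ref{lem:boundary-transplant} for the limsup. Since $\tau_\delta\to\tau<\infty$, the values $\tau_\delta$ stay in some bounded interval $[0,T]$ for all small $\delta$. The proposition then applies uniformly in the favourable set.

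For the lower bound, fix any measurable $D\subset\Omega$ with $|D|=\delta$ and any $u\in H^1(\Omega)$ with $d_D(u)>0$. Inequality \eqref{eq:mesoscopic-lower} gives
\[
\frac{\calE_{\alpha_\delta}(u)}{d_D(u)}\ge\eps^{-2}\bigl(\Lambda_{\Hh}(\tau_\delta)-\eta_\delta\bigr).
\]
Taking the infimum over $u$ and then over $D$ yields
\[
\eps^2\Lambda_\delta(\alpha_\delta)\ge\Lambda_{\Hh}(\tau_\delta)-\eta_\delta.
\]
By continuity of $\Lambda_{\Hh}$ on $[0,\infty)$ (Proposition~\ref{prop:halfspace-basic}) we have $\Lambda_{\Hh}(\tau_\delta)\to\Lambda_{\Hh}(\tau)$. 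Together with $\eta_\delta\to0$, this gives $\liminf_{\delta\downarrow0}\eps^2\Lambda_\delta(\alpha_\delta)\ge\Lambda_{\Hh}(\tau)$.

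For the upper bound, I invoke \eqref{eq:critical-limsup}. The one point to check there is the approximation step. Fix $\eta>0$ and choose, via the reduced formulation \eqref{eq:reduced-halfspace}, a competitor with quotient at most $\Lambda_{\Hh}(\tau)+\eta$. Replace it by a $C_c^\infty(\overline{\Hh})$ function close in $H^1(\Hh)$; trace continuity keeps the boundary term close. The $L^2$ continuity \eqref{eq:B-L2-continuity} keeps $\mathcal M$ positive and close. We then take $A$ to be a bathtub set inside a fixed ball, adding a null-contributing portion where the function vanishes if necessary, exactly as in the proof of Proposition~\ref{prop:halfspace-basic}. This gives a compactly supported admissible pair with quotient at most $\Lambda_{\Hh}(\tau)+2\eta$. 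Lemma~\ref{lem:boundary-transplant} then gives $\limsup\eps^2\Lambda_\delta(\alpha_\delta)\le\Lambda_{\Hh}(\tau)+2\eta$, and we let $\eta\downarrow0$. The equivalent formulation with the minimum follows at once from \eqref{eq:halfspace-whole-comparison}.

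The main obstacle is in the lower bound: it relies on the uniformity of $\eta_\delta$ over all $D$ and all admissible $u$, together with the fact that the local flattening loses only a factor $1+O(r)$ even in the Robin term. Both are built into Proposition~\ref{prop:mesoscopic-lower}, so I would only check that its hypothesis $\alpha_\delta\eps\le T$ holds for small $\delta$ and that nothing depends on attainment. The upper bound is routine given the transplantation lemma. A subsequence argument is not needed, since both bounds hold along the full family $\delta\downarrow0$.
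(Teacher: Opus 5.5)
Your proposal is correct and follows the paper's proof. The liminf comes from Proposition~\ref{prop:mesoscopic-lower}, which is uniform in $D$ and $\nu$ once $\tau_\delta$ is bounded, combined with continuity of $\Lambda_{\Hh}$; the limsup is \eqref{eq:critical-limsup} from Lemma~\ref{lem:boundary-transplant}. Your extra detail on the upper bound (density in $H^1$, trace continuity, the $L^2$-continuity \eqref{eq:B-L2-continuity} of $\mathcal B$, and a bathtub set chosen in a bounded region) only spells out the approximation that the paper compresses into the last sentence of that lemma's proof.
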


\begin{proof}
The upper bound is \eqref{eq:critical-limsup}.  Since \(\tau_\delta\to\tau\), the sequence \(\tau_\delta\) is bounded.  Proposition~\ref{prop:mesoscopic-lower}, applied to every admissible pair \((D,\nu)\) with positive weighted denominator, gives
\[
\delta^{2/N}\lambda_1(D;\alpha_\delta)
\ge
\Lambda_{\Hh}(\tau_\delta)-\eta_\delta.
\]
Taking the infimum over \(|D|=\delta\), using \(\eta_\delta\to0\), and invoking continuity of \(\Lambda_{\Hh}\) yield
\[
\liminf_{\delta\downarrow0}
\delta^{2/N}\Lambda_\delta(\alpha_\delta)
\ge\Lambda_{\Hh}(\tau).
\]
Together with the upper bound this proves \eqref{eq:critical-scale-limit}.
\end{proof}

\begin{corollary}[Transfer of the universal transition threshold to bounded domains]\label{cor:bounded-leading-phase}
Under the assumptions of Theorem~\ref{thm:critical-scale-bounded-limit},
\begin{align*}
0\le\tau<\tau_*
&\quad\Longrightarrow\quad
\delta^{2/N}\Lambda_\delta(\alpha_\delta)
\longrightarrow\Lambda_{\Hh}(\tau)<\Lambda_{\R^N},\\
\tau\ge\tau_*
&\quad\Longrightarrow\quad
\delta^{2/N}\Lambda_\delta(\alpha_\delta)
\longrightarrow\Lambda_{\R^N}.
\end{align*}
Thus the same finite universal threshold \(\tau_*\) appears in the leading-order bounded-domain value.  The scalar limit does not, by itself, distinguish a compact boundary profile from escape from the boundary when \(\tau\ge\tau_*\); that is the next profile-classification problem.
\end{corollary}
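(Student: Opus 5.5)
The corollary follows from Theorem~\ref{thm:critical-scale-bounded-limit} together with the half-space phase structure. We take the limit value from the former and its comparison with $\Lambda_{\R^N}$ from the latter.

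\emph{Step 1: reduction to the half-space value.} Since $\tau_\delta\to\tau\in[0,\infty)$ and $\partial\Omega$ is $C^2$, Theorem~\ref{thm:critical-scale-bounded-limit} applies and gives
\[
\delta^{2/N}\Lambda_\delta(\alpha_\delta)\to\Lambda_{\Hh}(\tau)
\]
in both cases. Nothing about the bounded domain is needed beyond this. Note that the lower bound of Proposition~\ref{prop:mesoscopic-lower} and the upper bound of Lemma~\ref{lem:boundary-transplant} already hold uniformly for bounded $\tau_\delta$.

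\emph{Step 2: the subcritical case $0\le\tau<\tau_*$.} Here Corollary~\ref{cor:threshold-structure}, specifically \eqref{eq:threshold-interval}, gives the strict inequality $\Lambda_{\Hh}(\tau)<\Lambda_{\R^N}$. This is where the definition \eqref{eq:tau-star} of $\tau_*$ as a supremum enters. Monotonicity from Lemma~\ref{lem:halfspace-concavity-prelim} ensures that every $\tau$ below the supremum lies in the strict-sublevel set, which is an interval starting at $0$.

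\emph{Step 3: the case $\tau\ge\tau_*$.} Theorem~\ref{thm:finite-detachment} gives $\tau_*<\infty$, so the second alternative is nonvacuous. Its consequence \eqref{eq:ceiling-after-tau-star} then gives $\Lambda_{\Hh}(\tau)=\Lambda_{\R^N}$ for every $\tau\ge\tau_*$, including the endpoint. The endpoint relies on continuity of $\Lambda_{\Hh}$ from Proposition~\ref{prop:halfspace-basic}, together with the upper bound \eqref{eq:halfspace-whole-comparison}. Combining this with Step 1 yields the limit $\Lambda_{\R^N}$.

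\emph{On the main difficulty.} There is no real obstacle left at this stage. The substantive inputs are finiteness of $\tau_*$, already settled in Theorem~\ref{thm:finite-detachment}, and the uniform mesoscopic lower bound behind Theorem~\ref{thm:critical-scale-bounded-limit}. The only point needing care is the endpoint $\tau=\tau_*$. There one must invoke equality at the supremum, which comes from continuity, rather than the strict-gap statement.

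The final remark in the corollary, that the scalar limit alone does not distinguish profiles, needs no proof. It only observes that at value $\Lambda_{\R^N}$ both the compact alternative and the escaping alternative of Theorem~\ref{thm:critical-halfspace-profile} yield the same limit.
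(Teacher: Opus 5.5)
Your proposal is correct and matches the paper's approach: the paper gives no separate proof and treats the corollary as the direct combination of Theorem~\ref{thm:critical-scale-bounded-limit} with Corollary~\ref{cor:threshold-structure} and Theorem~\ref{thm:finite-detachment}, including, as you note, continuity at the endpoint $\tau=\tau_*$. One aside is slightly loose: Lemma~\ref{lem:boundary-transplant} is stated under $\tau_\delta\to\tau$ rather than uniformly for bounded $\tau_\delta$, but this plays no role once you invoke the theorem.
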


The theorem contains as a special case the full physical-Robin subcritical regime previously obtained only by comparison with the Neumann problem.

\begin{proposition}[Concentration in a single mesoscopic chart]\label{prop:one-chart-concentration}\label{prop:one-chart-supercritical}
Assume the hypotheses of Theorem~\ref{thm:critical-scale-bounded-limit}, let $(D_\delta,u_\delta)$ be normalised almost minimisers,
\begin{equation}\label{eq:bounded-almostmin-normalisation}
\int_\Omega m_{D_\delta}u_\delta^2=1,
\qquad
\eps^2\calE_{\alpha_\delta}(u_\delta)\longrightarrow\Lambda_{\Hh}(\tau),
\end{equation}
and use the mesoscopic IMS cover from Proposition~\ref{prop:mesoscopic-lower}.  Then some chart $U_{j_\delta,\delta}$ satisfies
\begin{equation}\label{eq:one-chart-supercritical-volume}
\frac{|D_\delta\cap U_{j_\delta,\delta}|}{\delta}\longrightarrow1.
\end{equation}
If $0\le\tau<\tau_*$, this chart is necessarily a boundary chart; hence there are $P_\delta\in\partial\Omega$ and a fixed $C$ such that
\begin{equation}\label{eq:one-chart-resource-concentration}
\frac{|D_\delta\setminus B_{Cr_\delta}(P_\delta)|}{\delta}\longrightarrow0.
\end{equation}
\end{proposition}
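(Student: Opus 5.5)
The plan is to run the IMS localisation of Proposition~\ref{prop:mesoscopic-lower} on the almost minimisers themselves and to exploit strict subhomogeneity in the favourable volume. The aim is to show that splitting the favourable volume among several charts costs a definite amount at the leading order, which contradicts \eqref{eq:bounded-almostmin-normalisation}.

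First set $\nu_j=\chi_ju_\delta$, $d_j=\int m_{D_\delta}\nu_j^2$ and $V_j=|D_\delta\cap U_{j,\delta}|$. We have $\sum_jd_j=1$. By bounded overlap of the cover, $\sum_jV_j\le C\delta$; to obtain exact volume bookkeeping I would instead use the fractions $\theta_j:=\int_{D_\delta}\chi_j^2/\delta$, which satisfy $\sum_j\theta_j=1$, and bound the local favourable volume seen by $\nu_j$ by $V_j$. The IMS identity together with Lemma~\ref{lem:small-set-L2} gives $\sum_j\eps^2\calE_{\alpha_\delta}(\nu_j)\le\Lambda_{\Hh}(\tau)+o(1)$. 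Chart by chart, the flattening estimates in the proof of Proposition~\ref{prop:mesoscopic-lower} combined with the exact volume scaling (Lemmas~\ref{lem:whole-scaling} and \ref{lem:halfspace-volume-scaling}) and monotonicity in $\tau$ give
\[
\eps^2\calE_{\alpha_\delta}(\nu_j)\ge \Lambda_{\Hh}(\tau_\delta;V_j/\delta)\,(d_j)_+-o(1)\|\nu_j\|_2^2\eps^{-2}\rho_N(\delta),
\]
so that $\Lambda_{\Hh}(\tau_\delta;\theta)$ is the rescaled local cost of favourable fraction $\theta$. Corollary~\ref{cor:strict-resource-subhomogeneity}, in its quantitative form \eqref{eq:strict-resource-quantitative}, yields
\[
\Lambda_{\Hh}(\tau;\theta)\ge\theta^{-1/N}\Lambda_{\Hh}(\tau)+(1-\theta^{1/N})\theta^{-2/N}\Lambda_{\Hh}(0),
\]
which is a strictly convex penalty for $\theta<1$.

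Next, suppose $\max_j\theta_j\le1-\eta$ along a subsequence. The charts with $d_j>0$ carry all the positive mass, so we have $\sum_j(d_j)_+\ge1$, and each $\theta_j\le1-\eta$ gives the cost bound $\Lambda_{\Hh}(\tau)(1+c_\eta)(d_j)_+$ with $c_\eta>0$ uniform on bounded $\tau$. Summing yields $\liminf\eps^2\calE\ge\Lambda_{\Hh}(\tau)(1+c_\eta)$, a contradiction. Charts whose $\theta_j$ is tiny but whose $d_j$ is positive are handled by Lemma~\ref{lem:asymptotic-B-splitting}'s small-volume estimate, which makes their cost per unit mass blow up. Hence some $\theta_{j_\delta}\to1$. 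Then $|D_\delta\cap U_{j_\delta}|/\delta\ge\theta_{j_\delta}\to1$, and \eqref{eq:one-chart-supercritical-volume} follows.

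Finally, suppose $\tau<\tau_*$ but the dominant chart were interior. Its cost would then be bounded below by $\Lambda_{\R^N}\cdot(d_{j_\delta})_+$. Because all other charts have $\theta_j\to0$ in total, their positive masses vanish by the small-volume estimate, so $d_{j_\delta}\to1$. This gives $\liminf\ge\Lambda_{\R^N}>\Lambda_{\Hh}(\tau)$ by Theorem~\ref{thm:finite-detachment} and \eqref{eq:phase-law}, a contradiction. Therefore the dominant chart is a boundary chart, and choosing $P_\delta\in\partial\Omega$ near its centre gives \eqref{eq:one-chart-resource-concentration}.

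I expect the main obstacle to be the precise volume bookkeeping with overlapping charts. Near-maximal fractions $\theta_j$ must translate into the inequality of fractions, and the error terms from the weights $\chi_j^2$ must not spoil strict subhomogeneity. I would first handle this with the quantitative convexity bound above, summed via Jensen's inequality.
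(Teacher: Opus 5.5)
\textbf{Gap: the contradiction is set up on the wrong quantity.} Your chart lower bound is correctly written in terms of $V_j=|D_\delta\cap\operatorname{supp}\chi_j|$. The bathtub set of $\nu_j=\chi_ju_\delta$ may use all of $D_\delta\cap\operatorname{supp}\chi_j$, and the weight $\chi_j^2$ cannot be moved into the favourable set. You then assume $\max_j\theta_j\le1-\eta$ with $\theta_j=\delta^{-1}\int_{D_\delta}\chi_j^2$. But $\theta_j\le V_j/\delta$, which is the wrong direction: a small $\theta_j$ gives no upper bound on $V_j$, and hence no strict-subhomogeneity penalty.

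In fact your intermediate conclusion ``some $\theta_{j_\delta}\to1$'' is false in general. Place the transplanted half-space optimiser (or, for $\tau\ge\tau_*$, the whole-space ball) at a point where two cutoffs satisfy $\chi_1^2=\chi_2^2=\tfrac12$. Since the patch has diameter $O(\eps)\ll r_\delta$, the cutoffs are almost constant on it. This gives an almost minimiser with $\max_j\theta_j\to\tfrac12$, yet the IMS pieces pay no penalty at all: each $\nu_j\approx u_\delta/\sqrt2$ sees the full volume $\delta$.

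The same defect affects your last step, where $\theta_j\to0$ for $j\neq j_\delta$ is used to make the other positive masses vanish. A neighbouring chart with $\chi_j$ small but positive on $D_\delta$ has small $\theta_j$ but $V_j=\delta$.

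\textbf{Repair, as in the paper: argue with $V_j$ directly and drop $\theta_j$.} Suppose $V_j\le(1-\eta)\delta$ for every $j$. After flattening, with Jacobian $1+O(r_\delta)$, each boundary chart sees a fraction at most $1-\eta/2$. By \eqref{eq:strict-resource-quantitative}, every unit of positive local reduced mass in a boundary chart then costs at least $(1-\eta/2)^{-1/N}\Lambda_{\Hh}(\tau_\delta)$. In an interior chart the cost is at least $\theta^{-2/N}k^2\ge k^2\ge\Lambda_{\Hh}(\tau)$, and strictly more than $\Lambda_{\Hh}(\tau)$ by a uniform amount when $\theta\le1-\eta$.

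Summing over charts and using only $\sum_j(d_j)_+\ge1$ contradicts almost minimality. Hence $\max_jV_j/\delta\to1$. The volumes are never summed, so neither exact volume bookkeeping nor a Jensen step is needed. Nor is a separate small-volume estimate: the bound $\theta^{-1/N}\Lambda_{\Hh}(\tau)$ is already uniform for all $\theta\le1-\eta$.

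For $\tau<\tau_*$, run the same contradiction against the hypothesis that no \emph{boundary} chart has $V_j\ge(1-\eta)\delta$. Interior charts then cost at least $k^2>\Lambda_{\Hh}(\tau)$ per unit mass, and boundary charts at least $(1-\eta/2)^{-1/N}\Lambda_{\Hh}(\tau)$. This avoids needing $d_{j_\delta}\to1$. Since charts overlap, a dominant chart need not be unique, and the correct statement is that a dominant boundary chart exists.
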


\begin{proof}
Fix $\eta>0$ and suppose every chart contains at most $(1-\eta)\delta$ favourable volume.  After boundary flattening the corresponding fraction is at most $1-\eta/2$.  Exact volume scaling and Corollary~\ref{cor:strict-resource-subhomogeneity} then give a uniform $c_\eta>0$ such that every local positive reduced mass costs at least
\[
\eps^{-2}\bigl(\Lambda_{\Hh}(\tau)+c_\eta\bigr)
\]
in a boundary chart, while an interior chart costs at least
$\eps^{-2}\Lambda_{\R^N}$.  When $\tau\ge\tau_*$ both values equal $k^2$ at unit favourable volume, but strict subhomogeneity still gives the same positive gap for favourable-volume fractions bounded by $1-\eta$; the quantitative small-volume estimate in Lemma~\ref{lem:asymptotic-B-splitting} makes this uniform as the fraction tends to zero.  When $\tau<\tau_*$ the interior gap is already
$\Lambda_{\R^N}-\Lambda_{\Hh}(\tau)>0$.

Repeating the IMS summation in Proposition~\ref{prop:mesoscopic-lower}, now with positive local reduced masses, yields
\[
\eps^2\calE_{\alpha_\delta}(u_\delta)
\ge \Lambda_{\Hh}(\tau)+c_\eta-o(1),
\]
contrary to almost minimality and Theorem~\ref{thm:critical-scale-bounded-limit}.  Thus one chart contains $(1-o(1))\delta$ favourable volume.  When $\Lambda_{\Hh}(\tau)<k^2$, an interior dominant chart would have leading cost at least $k^2>\Lambda_{\Hh}(\tau)$, so it must be a boundary chart.  The bounded overlap and diameter $O(r_\delta)$ give \eqref{eq:one-chart-resource-concentration}.
\end{proof}

\begin{lemma}[Reduction of the dominant chart to the unit-volume limit problem]\label{lem:dominant-core-reduction}
Let \((D_\delta,u_\delta)\) be normalised almost minimisers under \(\tau_\delta\to\tau<\infty\), and suppose a mesoscopic chart \(U_\delta\) satisfies
\[
|D_\delta\cap U_\delta|=(1-o(1))\delta.
\]
Choose a slightly larger chart \(U_\delta^+\) and a two-function IMS partition
\[
\chi_\delta^2+\eta_\delta^2=1,
\qquad
\chi_\delta=1\text{ on }U_\delta,
\qquad
\operatorname{supp}\chi_\delta\subset U_\delta^+,
\qquad
|\nabla\chi_\delta|+|\nabla\eta_\delta|\le Cr_\delta^{-1}.
\]
Put \(u_\delta^{\rm c}=\chi_\delta u_\delta\) and \(u_\delta^{\rm r}=\eta_\delta u_\delta\).  Then
\begin{equation}\label{eq:core-remainder-mass-energy}
 d_D(u_\delta^{\rm c})\to1,
\qquad
 d_D(u_\delta^{\rm r})\to0,
\qquad
 \eps^2\calE_{\alpha_\delta}(u_\delta^{\rm r})\to0,
\end{equation}
and
\begin{equation}\label{eq:core-energy-limit}
\eps^2\calE_{\alpha_\delta}(u_\delta^{\rm c})
\longrightarrow\Lambda_{\Hh}(\tau).
\end{equation}
If \(U_\delta\) is an interior chart and \(\tau<\tau_*\), such a dominant chart is impossible.

If \(U_\delta\) is a boundary chart, flatten it and rescale by \(\eps\), with the amplitude factor \(\eps^{N/2}\).  The resulting functions \(q_\delta\in H^1(\Hh)\), extended by zero across the artificial sides, satisfy after multiplication by \(1+o(1)\)
\begin{equation}\label{eq:boundary-core-minseq}
\mathcal M_1^{\Hh}(q_\delta)=1,
\qquad
E_\tau(q_\delta)\longrightarrow\Lambda_{\Hh}(\tau).
\end{equation}
If \(U_\delta\) is an interior chart, the corresponding Euclidean rescaling produces \(q_\delta\in H^1(\R^N)\) with
\begin{equation}\label{eq:interior-core-minseq}
\mathcal M_1^{\R^N}(q_\delta)=1,
\qquad
\int_{\R^N}|\nabla q_\delta|^2\longrightarrow k^2.
\end{equation}
\end{lemma}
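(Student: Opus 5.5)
The plan is to start from the two-function IMS identity
\[
\calE_{\alpha_\delta}(u_\delta)=\calE_{\alpha_\delta}(u_\delta^{\rm c})+\calE_{\alpha_\delta}(u_\delta^{\rm r})-\int_\Omega\bigl(|\nabla\chi_\delta|^2+|\nabla\eta_\delta|^2\bigr)u_\delta^2 .
\]
The error term is at most $Cr_\delta^{-2}\|u_\delta\|_2^2$. By Lemma~\ref{lem:small-set-L2} this is $\le Cr_\delta^{-2}\rho_N(\delta)\calE_{\alpha_\delta}(u_\delta)=o(\eps^{-2})$, using \eqref{eq:mesoscopic-scales} and the bound $\eps^2\calE_{\alpha_\delta}(u_\delta)=O(1)$. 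The denominators split exactly, $d_D(u_\delta^{\rm c})+d_D(u_\delta^{\rm r})=1$.

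The key step is to bound the remainder's positive part. The remainder $u_\delta^{\rm r}$ sees favourable volume at most $|D_\delta\setminus U_\delta|=o(\delta)$. So, as in the proof of Proposition~\ref{prop:one-chart-concentration}, run the mesoscopic lower bound chart by chart with local favourable volumes $V_j$ summing to $o(\delta)$. Exact volume scaling (Lemmas~\ref{lem:whole-scaling}, \ref{lem:halfspace-volume-scaling}) together with Lemma~\ref{lem:local-volume-monotonicity} then gives a cost per unit of positive reduced mass of order $\eps^{-2}(\delta/V)^{2/N}\Lambda_{\Hh}(0)$, which tends to $\infty$ on the $\eps^{-2}$ scale. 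A cleaner way to see this is through the small-volume bathtub bound: $\int_{D\setminus U_\delta}(u^{\rm r})^2\le C\,o(1)\,\eps^{2}\calE(u^{\rm r})$, by Hölder on a set of volume $o(\delta)$ together with the rescaled Sobolev inequality. This is exactly the rescaled form of \eqref{eq:B-small-volume-uniform}. Hence $d_D(u^{\rm r})^+\le o(1)\,\eps^2\calE(u^{\rm r})$.

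With that bound the argument closes as follows.
\begin{enumerate}
\item Since $d_D(u^{\rm c})$ is positive, or else $u^{\rm c}$ contributes nonnegatively, Proposition~\ref{prop:mesoscopic-lower} gives $\eps^2\calE(u^{\rm c})\ge(\Lambda_{\Hh}(\tau)-o(1))\,d_D(u^{\rm c})$.
\item Writing $X=\eps^2\calE(u^{\rm r})\ge0$, this yields
\[
\Lambda_{\Hh}(\tau)+o(1)\ge(\Lambda_{\Hh}(\tau)-o(1))\bigl(1-d_D(u^{\rm r})\bigr)+X.
\]
\item Using $d_D(u^{\rm r})\le o(1)X$, we get $X(1-o(1))\le o(1)$, so $X\to0$ and $d_D(u^{\rm r})^+\to0$.
\item The negative part satisfies $-d_D(u^{\rm r})\le\|u^{\rm r}\|_2^2\le C\rho_N\eps^{-2}X\to0$.
\end{enumerate}
Thus $d_D(u^{\rm r})\to0$ and $d_D(u^{\rm c})\to1$, and the energy identity then gives \eqref{eq:core-energy-limit}. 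For an interior chart with $\tau<\tau_*$, zero extension of $u^{\rm c}$ gives $\eps^2\calE(u^{\rm c})\ge k^2 d_D(u^{\rm c})\to k^2>\Lambda_{\Hh}(\tau)$, which contradicts \eqref{eq:core-energy-limit}.

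For the rescaled statements, flatten $U_\delta^+$ with a $C^2$ chart whose differential at $P_\delta$ is orthogonal, and set $q_\delta(y)=\eps^{N/2}u^{\rm c}_\delta(\Phi(\eps y))$. The metric, Jacobian and boundary Jacobian are $1+O(r_\delta)$ on the support, as in Lemma~\ref{lem:boundary-transplant} and \eqref{eq:chart-energy-comparison}. The rescaled favourable set $\widetilde D_\delta$ has volume $(1+O(r_\delta))(1-o(1))$ and the Robin coefficient becomes $\tau_\delta\to\tau$. Hence $E_\tau(q_\delta)=\eps^2\calE(u^{\rm c})(1+o(1))+|\tau_\delta-\tau|\,O(\|{\rm Tr}\,q_\delta\|^2)$, and the trace is bounded by the $H^1$ bound. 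The $H^1$ bound follows from Lemma~\ref{lem:normalised-bounded}, or from Lemma~\ref{lem:small-set-L2} rescaled.

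For the denominator, $\int m_{\widetilde D}q^2\le\mathcal M_{|\widetilde D|}(q)$. Also $\mathcal M_{V}$ differs from $\mathcal M_1$ by $O(|V-1|^{\theta})$, by \eqref{eq:B-small-volume-uniform} applied to the volume difference. So $\mathcal M_1^{\Hh}(q_\delta)\ge1-o(1)$. The reverse inequality $\mathcal M_1(q_\delta)\le 1+o(1)$ comes from the lower bound $E_\tau(q_\delta)\ge\Lambda_{\Hh}(\tau)\mathcal M_1(q_\delta)$ combined with \eqref{eq:core-energy-limit}. A factor $1+o(1)$ then normalises to \eqref{eq:boundary-core-minseq}. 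The interior case is identical without a boundary and gives \eqref{eq:interior-core-minseq}.

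The main obstacle is the remainder estimate, namely showing uniformly that a piece carrying only $o(\delta)$ favourable volume cannot hold positive reduced mass at bounded $\eps^{-2}$ energy. I would handle it through the rescaled small-volume bathtub bound, keeping track of the $N=2$ logarithm via $\rho_N$.
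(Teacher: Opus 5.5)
Your route is essentially the paper's. Both arguments use the two-function IMS identity, with the localisation error controlled by Lemma~\ref{lem:small-set-L2} and \eqref{eq:mesoscopic-scales}. Both bound the core below by $\eps^2\calE_{\alpha_\delta}(u_\delta^{\rm c})\ge(\Lambda_{\Hh}(\tau)-o(1))\,d_D(u_\delta^{\rm c})$ from Proposition~\ref{prop:mesoscopic-lower}. Both then show that the cost of the remainder blows up, because it sees only favourable volume $s_\delta\delta$ with $s_\delta\to0$, and finish by flattening, rescaling and renormalising.

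The one real difference is how the remainder's cost is obtained.
\begin{itemize}
\item The paper applies Theorem~\ref{thm:critical-scale-bounded-limit} at volume $s_\delta\delta$, which gives $\eps^2\Lambda_{s_\delta\delta}(\alpha_\delta)=s_\delta^{-2/N}(\Lambda_{\Hh}(0)+o(1))$.
\item You use a direct H\"older--Sobolev small-volume bound. This is more elementary and suffices.
\end{itemize}
Your normalisation of $q_\delta$ spells out what the paper compresses into ``after a scalar normalisation'': the upper bound on $\mathcal M_1^{\Hh}$ comes from $E_\tau\ge\Lambda_{\Hh}(\tau)\mathcal M_1^{\Hh}$, and the lower bound from comparing $\mathcal M_V$ with $\mathcal M_1$. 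For \eqref{eq:interior-core-minseq} you implicitly use that an interior dominant chart forces $\tau\ge\tau_*$, hence $\Lambda_{\Hh}(\tau)=k^2$ by Theorem~\ref{thm:finite-detachment}.

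Step~4, however, does not work as written.
\begin{itemize}
\item Lemma~\ref{lem:small-set-L2} requires $d_D(u_\delta^{\rm r})>0$, which is exactly the case you are not in when bounding the negative part.
\item Without positive weighted mass, $X=\eps^2\calE_{\alpha_\delta}(u^{\rm r}_\delta)\to0$ does not control $\|u^{\rm r}_\delta\|_2$. For example, $\alpha_\delta=0$ is allowed when $\tau=0$, and then nearly constant functions have small energy.
\item For $N=2$ the factor $\rho_2(\delta)\eps^{-2}=1+|\log\delta|$ would anyway require a rate for $X\to0$.
\end{itemize}
The fix is already in your Step~2. If $d_D(u^{\rm r}_\delta)\le0$, then $d_D(u^{\rm c}_\delta)=1+|d_D(u^{\rm r}_\delta)|$, and Step~2 becomes
\[
\bigl(\Lambda_{\Hh}(\tau)-o(1)\bigr)\,|d_D(u^{\rm r}_\delta)|+X\le o(1).
\]
Since $\Lambda_{\Hh}(\tau)>0$, this gives both $|d_D(u^{\rm r}_\delta)|\to0$ and $X\to0$. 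This is what the paper means by ``if $d^{\rm r}\le0$, its energy is nonnegative.'' Drop Step~4 and use this instead.

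You should also state the small-volume bound $d_D(u^{\rm r}_\delta)\le o(1)X$ only under $d_D(u^{\rm r}_\delta)>0$, since otherwise the positive part is zero and nothing is needed.
\begin{itemize}
\item On the bounded domain, Sobolev and Gagliardo--Nirenberg carry an $L^2$ term. You absorb it using $\|u^{\rm r}_\delta\|_2^2<(\kappa+1)\int_{D_\delta\setminus U_\delta}(u^{\rm r}_\delta)^2$, so state this absorption explicitly.
\item For $N=2$, use Gagliardo--Nirenberg as in Lemma~\ref{lem:small-volume-coercivity}, which yields $d_D(u^{\rm r}_\delta)\le Cs_\delta X$.
\item Do not use the logarithmic $\rho_2$ here. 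Since $s_\delta\delta|\log(s_\delta\delta)|/\eps^2=s_\delta(|\log s_\delta|+|\log\delta|)$, that route needs $s_\delta|\log\delta|\to0$, which is not given.
\end{itemize}
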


\begin{proof}
Let $u_\delta^{\rm c}=\chi_\delta u_\delta$ and $u_\delta^{\rm r}=\eta_\delta u_\delta$, and write the corresponding reduced masses as $d_\delta^{\rm c}+d_\delta^{\rm r}=1$.  The two-function IMS identity, Lemma~\ref{lem:small-set-L2}, and the choice of $r_\delta$ give
\begin{equation}\label{eq:two-core-IMS-scaled}
\eps^2\bigl(\calE_{\alpha_\delta}(u_\delta^{\rm c})+\calE_{\alpha_\delta}(u_\delta^{\rm r})\bigr)
=\Lambda_{\Hh}(\tau)+o(1).
\end{equation}
The core sees unit favourable volume up to $o(1)$, while the remainder sees $s_\delta\delta$ with $s_\delta\to0$.  The local flat/whole-space lower bound gives
$\eps^2\calE(u_\delta^{\rm c})\ge(\Lambda_{\Hh}(\tau)-o(1))(d_\delta^{\rm c})_+$ for a boundary core (and $k^2(d_\delta^{\rm c})_+$ for an interior one).  If $d_\delta^{\rm r}>0$, Theorem~\ref{thm:critical-scale-bounded-limit} applied at volume $s_\delta\delta$ gives
\[
\eps^2\Lambda_{s_\delta\delta}(\alpha_\delta)
=s_\delta^{-2/N}\bigl(\Lambda_{\Hh}(0)+o(1)\bigr)\to\infty;
\]
if $d_\delta^{\rm r}\le0$, its energy is nonnegative.  Equation \eqref{eq:two-core-IMS-scaled} therefore forces
$d_\delta^{\rm r}\to0$, $d_\delta^{\rm c}\to1$, zero scaled remainder energy, and \eqref{eq:core-energy-limit}.  For $\tau<\tau_*$ an interior core is impossible because $k^2>\Lambda_{\Hh}(\tau)$.

For a boundary core, flatten and rescale by $\eps$.  The chart errors are $o(1)$ and the favourable volume is $1+o(1)$, so after a scalar normalisation
$\mathcal M_1^{\Hh}(q_\delta)=1$ and
$E_\tau(q_\delta)\to\Lambda_{\Hh}(\tau)$; boundedness follows as in Lemma~\ref{lem:normalised-bounded}.  The interior rescaling is identical without the boundary term and gives \eqref{eq:interior-core-minseq}.  This proves \eqref{eq:core-remainder-mass-energy}--\eqref{eq:interior-core-minseq}.
\end{proof}

\begin{theorem}[Bounded-domain profile classification]\label{thm:bounded-profile-away-critical}\label{thm:bounded-profile-critical}
Assume $\Omega\subset\R^N$, $N\ge2$, is bounded with $C^2$ boundary, let $D_\delta$ be optimisers, and normalise the positive eigenfunctions by $\int_\Omega m_{D_\delta}u_\delta^2=1$.  Suppose $\tau_\delta\to\tau<\infty$ and put $\eps_n=\delta_n^{1/N}$ along an arbitrary sequence $\delta_n\downarrow0$.

\emph{(i) Subcritical regime $0\le\tau<\tau_*$.}  After a subsequence there are boundary points $P_n$, flattening maps, tangential shifts $a_n'$, and a normalised half-space optimiser $(A,v)$ at $\tau$ such that the dominant rescaled cores satisfy
\begin{equation}\label{eq:subcritical-bounded-profile}
q_n(a_n'+\cdot)\longrightarrow v\qquad\text{strongly in }H^1(\Hh).
\end{equation}
If $F_n\subset\Hh$ denotes the flattened and $\eps_n$-rescaled favourable part of the dominant core before tangential recentering, then
\begin{equation}\label{eq:subcritical-set-profile}
 |(F_n-a_n')\triangle A|\longrightarrow0.
\end{equation}
In particular the full physical patch is tight on the $\eps_n$ scale around the corresponding boundary centres: for every $\eta>0$ there is $R<\infty$ such that
\begin{equation}\label{eq:subcritical-patch-tightness}
\frac{|D_{\delta_n}\setminus\Psi_n(\eps_n(B_R^++a_n'))|}{\delta_n}<\eta
\end{equation}
for all large $n$.

\emph{(ii) Supercritical regime $\tau>\tau_*$.}  There are points $x_n\in\Omega$ with
\begin{equation}\label{eq:supercritical-normal-distance}
\frac{\operatorname{dist}(x_n,\partial\Omega)}{\eps_n}\longrightarrow\infty
\end{equation}
and localised rescalings $Q_n$ such that
\begin{equation}\label{eq:supercritical-bounded-profile}
Q_n\longrightarrow\Phi\qquad\text{strongly in }H^1(\R^N),
\end{equation}
while
\begin{equation}\label{eq:supercritical-set-profile}
\left|\left(\frac{D_{\delta_n}-x_n}{\eps_n}\right)\triangle B_{R_1}\right|\longrightarrow0.
\end{equation}

\emph{(iii) Critical regime $\tau=\tau_*$.}  Every sequence has a subsequence with exactly one of two alternatives: either a compact boundary profile as in (i), with a threshold optimiser $(A_*,v_*)$, or an escaping whole-space profile as in (ii).  The statement is a subsequential classification and does not assert that both alternatives occur in a fixed bounded domain.
\end{theorem}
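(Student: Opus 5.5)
The plan is to start in every regime from Proposition~\ref{prop:one-chart-concentration} and Lemma~\ref{lem:dominant-core-reduction}. Optimisers $(D_{\delta_n},u_{\delta_n})$ with the normalisation $\int m_{D}u^2=1$ are in particular normalised almost minimisers, because Theorem~\ref{thm:critical-scale-bounded-limit} gives $\eps_n^2\calE_{\alpha_{\delta_n}}(u_{\delta_n})\to\Lambda_{\Hh}(\tau)$. Hence a dominant mesoscopic chart exists. The core-remainder splitting then produces rescaled cores $q_n$ that form a normalised minimising sequence, either for the half-space problem at $\tau$, as in \eqref{eq:boundary-core-minseq}, or for the whole-space problem, as in \eqref{eq:interior-core-minseq}. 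Once the chart type is fixed, the three regimes reduce to the half-space compactness and escape theory of Section~\ref{sec:limit-problems}. One technical point must be checked at the outset. Replacing the fixed $\tau$ by $\tau_n$ in $E$ changes the energy by $|\tau_n-\tau|\,\|\operatorname{Tr}q_n\|_2^2=o(1)$, and this holds because of the uniform $H^1$ bound from Lemma~\ref{lem:normalised-bounded}.

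\emph{Subcritical case.} The dominant chart is a boundary chart. Theorem~\ref{thm:strict-gap-attainment} gives precompactness of $(q_n)$ modulo tangential translations $a_n'$, so $q_n(a_n'+\cdot)\to v$ strongly in $H^1(\Hh)$ for a normalised optimiser, which proves \eqref{eq:subcritical-bounded-profile}. For the sets, the flattened and rescaled favourable parts $F_n$ have volume $1+o(1)$. Because $D_{\delta_n}$ is an exact bathtub set for $u_{\delta_n}$ (Proposition~\ref{prop:bounded-existence-bathtub}), the flattened $F_n$ are almost-bathtub sets for $q_n^2$, up to the Jacobian distortion $1+O(r_n)$ and the cutoff region where $\chi_n<1$. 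The cutoff region carries negligible mass by \eqref{eq:core-remainder-mass-energy}. After rescaling $F_n$ to exact unit volume, the deficit in \eqref{eq:bathtub-stability-deficit} therefore tends to zero. Lemma~\ref{lem:no-plateau-halfspace} gives $|\{v=t\}|=0$, and Lemma~\ref{lem:bathtub-stability} then gives \eqref{eq:subcritical-set-profile}. The tightness statement \eqref{eq:subcritical-patch-tightness} follows by combining three facts: $A$ is bounded (the argument of Lemma~\ref{lem:threshold-exponential-moments} applies at any $\tau$), the sets converge in measure, and the volume outside the dominant chart is $o(\delta_n)$.

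\emph{Supercritical case.} If the dominant chart is interior, Proposition~\ref{prop:whole-space-compactness} applies directly. Its distance from $\partial\Omega$ is at least comparable to $r_n\gg\eps_n$, which gives \eqref{eq:supercritical-normal-distance}. If it is a boundary chart, Theorem~\ref{thm:postthreshold-profile} applies to $(q_n)$. It yields heights $h_n\to\infty$, cutoffs, and whole-space convergence to $\Phi$, together with convergence in measure of the bathtub sets to $B_{R_1}$. Mapping the centre back through the flattening gives points $x_n$ at distance approximately $\eps_nh_n$ from the boundary. The $C^2$ chart distortion on an $O(\eps_n)$ neighbourhood of $x_n$ is $O(\eps_n h_n+\eps_n)$ relative to the mesoscopic scale and vanishes after rescaling, so the Euclidean rescalings $Q_n$ converge to $\Phi$ as well. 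The set convergence follows again from Lemma~\ref{lem:bathtub-stability}, using the radial profile's absence of a plateau (Proposition~\ref{prop:whole-space-classification}).

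\emph{Critical case.} Apply the same reduction. Theorem~\ref{thm:critical-halfspace-profile} splits a boundary-chart sequence into exactly the compact alternative or the escape alternative, and these are handled verbatim as in (i) and (ii). An interior dominant chart belongs to the escape alternative automatically.

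\emph{Main obstacle.} I expect the hardest step to be the transfer of the bathtub property through the localisation and flattening. The physical sets $D_{\delta_n}$ are exact superlevel sets of $u_{\delta_n}$, but the flattened core sets are only approximately optimal for $q_n^2$. One must show that the bathtub deficit in \eqref{eq:bathtub-stability-deficit} is $o(1)$ despite three sources of error: the Jacobian weighting, the volume leakage to other charts, and the truncation by $\chi_n$. I would try to handle this by writing the weighted $L^1$ distance between $q_n^2$ and the pulled-back density $\eps_n^N u_{\delta_n}^2\circ\Psi_n$ on the chart, and bounding it by $O(r_n)$ plus the remainder mass. That remainder mass vanishes by \eqref{eq:core-remainder-mass-energy} together with Lemma~\ref{lem:small-set-L2}.
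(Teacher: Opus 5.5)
Your proposal is correct and follows the paper's proof essentially step for step: one-chart concentration and the dominant-core reduction, then Theorem~\ref{thm:strict-gap-attainment}, Proposition~\ref{prop:whole-space-compactness}/Theorem~\ref{thm:postthreshold-profile}, or Theorem~\ref{thm:critical-halfspace-profile} according to the regime, and Lemma~\ref{lem:bathtub-stability} for the favourable sets. The only variation is where the $o(1)$ bathtub deficit comes from. The paper reads it off from the core normalisation: the actual flattened set already gives reduced mass $1+o(1)$, while $\Lambda_{\Hh}(\tau)\,\mathcal M(q_n)\le E_\tau(q_n)\to\Lambda_{\Hh}(\tau)$ forces $\mathcal M(q_n)\le 1+o(1)$. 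This spares you transporting the exact superlevel structure of $D_{\delta_n}$ through the chart. If you keep your route, control the $L^2$ mass on the $o(\delta_n)$ stray favourable volume by Lemma~\ref{lem:small-volume-coercivity} rather than by Lemma~\ref{lem:small-set-L2}, because the latter needs positive weighted mass and loses a logarithm when $N=2$.
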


\begin{proof}
Proposition~\ref{prop:one-chart-concentration} gives a dominant mesoscopic core and Lemma~\ref{lem:dominant-core-reduction} reduces it to a unit-volume model.  If $\tau<\tau_*$ the core is on the boundary and Theorem~\ref{thm:strict-gap-attainment} gives \eqref{eq:subcritical-bounded-profile} after tangential recentering.  If $\tau>\tau_*$, an interior core is a whole-space minimising sequence and a boundary core is a half-space minimising sequence at the whole-space value; Proposition~\ref{prop:whole-space-compactness} and Theorem~\ref{thm:postthreshold-profile}, respectively, give the same whole-space limit $\Phi$.  Interior IMS charts lie at distance $\gtrsim r_{\delta_n}$ from the boundary, while boundary-chart escape occurs at a height tending to infinity in patch variables; since $r_{\delta_n}/\eps_n\to\infty$, both yield \eqref{eq:supercritical-normal-distance}.  At $\tau=\tau_*$ the same reduction, followed by Theorem~\ref{thm:critical-halfspace-profile}, gives the compact/escape dichotomy.

We finally identify the favourable sets.  In every compact core the actual rescaled favourable portion has volume $1+o(1)$ and, by the core normalisation, has bathtub deficit $o(1)$.  After modifying it on a set of measure $o(1)$ to obtain exact unit volume, strong $L^2$ convergence of the profiles and Lemma~\ref{lem:bathtub-stability} give convergence in measure to the unique bathtub set of the limiting profile: $A$ (or $A_*$) in the half-space case and $B_{R_1}$ in the whole-space case.  Proposition~\ref{prop:one-chart-concentration} leaves only $o(\delta_n)$ favourable volume outside the dominant core, so the same convergence holds for the full physical sets.  This proves all three cases.
\end{proof}

\begin{corollary}[Fixed or subcritical Robin coefficients]\label{thm:tau-zero-leading}
If
\[
\alpha_\delta\delta^{1/N}\longrightarrow0,
\]
then
\begin{equation}\label{eq:tau-zero-limit}
\delta^{2/N}\Lambda_\delta(\alpha_\delta)
\longrightarrow
\Lambda_{\Hh}(0)
=2^{-2/N}\Lambda_{\R^N}.
\end{equation}
In particular this holds for every fixed finite Robin coefficient \(\alpha_\delta\equiv\alpha\ge0\), and more generally whenever \(\alpha_\delta=o(\delta^{-1/N})\).
\end{corollary}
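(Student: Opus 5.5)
This corollary is a direct specialisation of Theorem~\ref{thm:critical-scale-bounded-limit} to the limit value $\tau=0$, combined with the Neumann reflection identity. I would first observe that the hypothesis $\alpha_\delta\delta^{1/N}\to0$ is exactly \eqref{eq:critical-scale-assumption} with $\tau=0\in[0,+\infty)$. Hence Theorem~\ref{thm:critical-scale-bounded-limit} applies without change and gives $\delta^{2/N}\Lambda_\delta(\alpha_\delta)\to\Lambda_{\Hh}(0)$. Proposition~\ref{prop:neumann-reflection} then identifies $\Lambda_{\Hh}(0)=2^{-2/N}\Lambda_{\R^N}$, which proves \eqref{eq:tau-zero-limit}.

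For the fixed-coefficient case $\alpha_\delta\equiv\alpha\ge0$, I would note that $\tau_\delta=\alpha\delta^{1/N}\to0$ as $\delta\downarrow0$. The general statement $\alpha_\delta=o(\delta^{-1/N})$ is a restatement of the hypothesis. Since $\tau_*>0$ by Corollary~\ref{cor:threshold-structure}, the limit lies in the subcritical regime. Theorem~\ref{thm:bounded-profile-critical}(i) therefore also gives boundary concentration, although the corollary only asserts the eigenvalue limit.

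No real obstacle arises, because all analytic work is contained in Proposition~\ref{prop:mesoscopic-lower} and Lemma~\ref{lem:boundary-transplant}. The one point to check is that these results treat the endpoint $\tau=0$ uniformly. The lower bound only needs $\tau_\delta\le T$, and it uses continuity of $\Lambda_{\Hh}$ at $0$, which is the right-continuity established in Proposition~\ref{prop:halfspace-basic}. The upper bound covers $\tau=0$ explicitly.
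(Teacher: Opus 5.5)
Your proposal is correct and matches the paper's intended argument, which gives no separate proof: the corollary is Theorem~\ref{thm:critical-scale-bounded-limit} specialised to $\tau=0$, with the value identified by Proposition~\ref{prop:neumann-reflection}. Your check that the endpoint $\tau=0$ is covered is the only point needing attention, and it holds: right-continuity (or simply monotonicity) of $\Lambda_{\Hh}$ gives the lower bound, and Lemma~\ref{lem:boundary-transplant} gives the upper bound.
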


\begin{remark}[Complete leading-order profile classification]\label{rem:profile-after-value}
Theorem~\ref{thm:bounded-profile-critical} upgrades the scalar limit to a subsequential profile classification for every finite $\tau$.  In the subcritical regime the natural statement is compactness modulo tangential translations and subsequences, because uniqueness of the half-space optimiser has not been proved.  The supercritical limit is unique up to whole-space translation.  At the threshold, compact boundary-critical and escaping whole-space profiles are the only two leading alternatives.
\end{remark}

\subsection{First-order selection in the critical window}\label{subsec:critical-first-development}

The leading statement $\tau_\delta\to\tau_*$ is not fine enough to decide between the two critical configurations.  By Corollary~\ref{cor:critical-window-halfspace}, changing $\tau$ by $O(\eps)$ already changes the flat half-space value at the same order at which curvature enters boundary flattening.  The natural next scale is therefore
\begin{equation}\label{eq:critical-window-sigma}
\tau_\delta
=\tau_*+\sigma\eps+o(\eps),
\qquad\text{equivalently}\qquad
\alpha_\delta=\frac{\tau_*}{\eps}+\sigma+o(1),
\end{equation}
with $\sigma\in\R$ fixed.

We next derive the first-order boundary transplantation coefficient.  The Fermi-coordinate expansion contains the full second fundamental form.  Theorem~\ref{thm:threshold-tangential-symmetry}, however, shows that every threshold optimiser has an isotropic tangential energy tensor.  Consequently the final response depends on the domain boundary only through its mean curvature.

Let $P\in\partial\Omega$.  With the convention fixed in Section~\ref{sec:framework},
$\mathsf S_P\xi=D_\xi\nu(P)$ on $T_P\partial\Omega$; in particular a sphere has positive outward principal curvatures.  Write $\kappa_1(P),\ldots,\kappa_{N-1}(P)$ for the eigenvalues of $\mathsf S_P$ and set
\begin{equation}\label{eq:mean-curvature-trace}
h_P:=\operatorname{tr}\mathsf S_P
=\sum_{i=1}^{N-1}\kappa_i(P)
=(N-1)H_{\partial\Omega}(P).
\end{equation}
We use the threshold-profile quantities $T(v)$, $a(A)$, and $\mathsf M_T(v)$ defined in \eqref{eq:critical-response-T}--\eqref{eq:critical-response-M}.

For the next formula define the critical curvature--detuning response
\begin{equation}\label{eq:G-critical-def}
\begin{aligned}
\mathcal G_\sigma(P;A,v)
:=\;&\sigma T(v)
+2\,\mathsf S_P:\mathsf M_T(v)\\
&+h_P\left[
-\frac12T(v)
+\frac{a(A)}{N}\bigl(\tau_*T(v)-2k^2\bigr)
\right].
\end{aligned}
\end{equation}
Here $:$ denotes the Frobenius contraction after identifying $T_P\partial\Omega$ with $\R^{N-1}$ by an orthonormal frame.  By Corollary~\ref{cor:scalar-critical-response}, this tensorial form simplifies for every threshold optimiser to the scalar expression \eqref{eq:G-critical-scalar}.

\begin{proposition}[First-order transplantation of a threshold optimiser]\label{prop:critical-first-order-transplant}
Assume $\partial\Omega$ is of class $C^{2,1}$ and \eqref{eq:critical-window-sigma} holds.  Fix $P\in\partial\Omega$ and a normalised threshold optimiser $(A,v)\in\mathfrak O_*$.  Then there are sets $D_\delta\subset\Omega$ with $|D_\delta|=\delta$ and test functions $u_\delta\in H^1(\Omega)$ such that
\begin{equation}\label{eq:critical-first-order-transplant}
\eps^2
\frac{\displaystyle\int_\Omega|\nabla u_\delta|^2
+\alpha_\delta\int_{\partial\Omega}u_\delta^2}
{\displaystyle\int_\Omega m_{D_\delta}u_\delta^2}
=
 k^2+\eps\,\mathcal G_\sigma(P;A,v)+o(\eps).
\end{equation}
\end{proposition}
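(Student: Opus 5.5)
We transplant $(A,v)$ into $\Omega$ through Fermi coordinates at $P$ and expand numerator and denominator to first order in $\eps$. The profile does not have compact support, so we first need truncation. Lemma~\ref{lem:threshold-exponential-moments} shows that $A$ is bounded and that $v$ has exponential moments. Fix a cutoff radius $R_\delta=C|\log\eps|$ and set $v_R=\chi_{R_\delta}v$. The truncation changes $E_{\tau_*}$, $\mathcal M$, $T$, $a$ and $\mathsf M_T$ by $O(e^{-c R_\delta})=o(\eps)$. Since $A$ is bounded, $A\subset B_{R_\delta}^+$ and the bathtub set is unchanged.

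\textbf{Coordinates.} Use Fermi coordinates $x=\Psi(y',y_N)=\gamma(y')-y_N\nu(\gamma(y'))$. Here $\gamma$ is a $C^{2,1}$ graph parametrisation of $\partial\Omega$ near $P$ with $\gamma(0)=P$ and $D\gamma(0)$ an isometry. In these coordinates the metric is
\[
g=\begin{pmatrix}(I-y_N\mathsf S_P)^2+O(|y'|)&0\\0&1\end{pmatrix}+O(|y|^2).
\]
The volume density is $J=1-h_Py_N+O(|y|^2)$ and the boundary density is $J_\partial=1+O(|y'|^2)$. The $C^{2,1}$ hypothesis is exactly what makes the remainders quadratic, hence $O(\eps^2R_\delta^2)=o(\eps)$ after rescaling.

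\textbf{Exact volume.} Choose $r_\delta$ with $|\Psi(r_\delta A)|=\delta$. Expanding $|\Psi(rA)|=r^N(1-r h_P a(A)+O(r^2))$ gives
\[
r_\delta=\eps\Bigl(1+\tfrac{\eps}{N}h_Pa(A)+o(\eps)\Bigr).
\]
Set $D_\delta=\Psi(r_\delta A)$ and $u_\delta(\Psi(r_\delta y))=v_R(y)$.

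\textbf{Expansion of the three terms.} Each is computed in turn.
\begin{itemize}
\item The Dirichlet term is $r^{N-2}\int(g^{ij}\partial_iv\partial_jv)J$ with $g^{-1}_{T}=I+2r y_N\mathsf S_P+\dots$, giving
\[
r^{N-2}\Bigl(\|\nabla v\|^2+2r\,\mathsf S_P:\mathsf M_T-r h_P\!\int y_N|\nabla v|^2\Bigr).
\]
The linear terms in $y'$ integrate against $v$. By Theorem~\ref{thm:threshold-tangential-symmetry} and a tangential recentering at the axis $c$, these odd contributions vanish.
\item The Robin term is $\alpha_\delta r^{N-1}(T(v)+O(r^2))$.
\item The denominator is $r^N\bigl(1-r h_P\int y_N m_Av^2\bigr)$.
\end{itemize}

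\textbf{Assembly.} Combine these with $\alpha_\delta r_\delta=\tau_*+\eps(\sigma+\tau_*h_Pa/N)+o(\eps)$ and with the prefactor $\eps^2r_\delta^{-2}=1-\tfrac{2\eps}{N}h_Pa$. This yields
\[
k^2+\eps\Bigl[\sigma T+\tfrac{\tau_*h_Pa}{N}T-\tfrac{2k^2h_Pa}{N}+2\,\mathsf S_P:\mathsf M_T-h_P X\Bigr],
\]
where
\[
X=\int y_N|\nabla v|^2-k^2\int y_Nm_Av^2.
\]

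\textbf{Main obstacle.} The remaining step is the Pohozaev-type identity $X=\tfrac12 T(v)$ for the threshold optimiser. We would obtain it by testing the Euler--Lagrange equation \eqref{eq:EL-weak-halfspace} with $y_N\,v$, that is, by integrating $\int\nabla v\cdot\nabla(y_Nv)$. This gives
\[
\int y_N|\nabla v|^2+\tfrac12\int\partial_N(v^2)=k^2\int y_N m_Av^2 .
\]
The boundary term $\tfrac12\int\partial_N(v^2)=-\tfrac12 T$ then produces $X=\tfrac12T$. The testing is justified by the exponential moments in Lemma~\ref{lem:threshold-exponential-moments}, and the trace term with $\tau_*$ drops out because $y_N=0$ on $\partial\Hh$. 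The sign bookkeeping here is the delicate part, and we would double-check it against the ball case $\tau=0$.

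Substituting $X=\tfrac12T$ gives exactly $\mathcal G_\sigma$. Finally, the $O(|y'|)$ metric corrections, which are of order $r|y'|$, and the cross terms must vanish to order $\eps$; tangential symmetry and the moment bounds handle these.
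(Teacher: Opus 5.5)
Your proposal is correct and follows the paper's proof essentially step for step: an exact-volume Fermi transplantation with $r_\delta=\eps(1+\eps h_Pa(A)/N+o(\eps))$, first-order expansions of the bulk, Robin and denominator terms, the expansion of $\alpha_\delta r_\delta$, and the normal-moment identity $E_1-\frac12T(v)=k^2M_1$ obtained by testing the Euler--Lagrange equation with $y_Nv$, assembled exactly as in the paper. The differences are cosmetic. The paper cuts off at a fixed physical distance rather than at $C|\log\eps|$. It also never invokes tangential symmetry, and you do not need it either: in Fermi coordinates built on a graph with vanishing gradient at $P$, the boundary metric is $I+O(|y'|^2)$, the variation of the shape operator only enters through $O(y_N|y'|)$, and $g_{iN}=0$. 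So the first-order $y'$-linear terms you planned to cancel by recentering never arise, and the expansion holds directly with the tensorial $\mathcal G_\sigma$.
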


\begin{proof}
Choose geodesic principal coordinates on $\partial\Omega$ centred at $P$ and inward Fermi distance $z_N>0$.  In these coordinates the volume Jacobian and the energy metric satisfy, as $z\to0$,
\begin{align}
J(z)
&=1-h_Pz_N+O(|z|^2),\label{eq:Fermi-J-first}\\
J(z)\,G^{-1}(z)
&=I+z_N\bigl(2\widetilde{\mathsf S}_P-h_PI\bigr)+O(|z|^2),
\label{eq:Fermi-metric-first}
\end{align}
where $\widetilde{\mathsf S}_P$ acts as $\mathsf S_P$ on tangential components and vanishes on the normal component.  The boundary Jacobian in geodesic coordinates is
\begin{equation}\label{eq:Fermi-boundary-first}
J_\partial(z',0)=1+O(|z'|^2),
\end{equation}
so there is no linear surface-measure term.

Let $r=r_\delta$ be chosen so that the Fermi image of $rA$ has volume exactly $\delta=\eps^N$.  Lemma~\ref{lem:threshold-exponential-moments} and the boundedness of $A$ give
\[
|\Phi_P(rA)|
=r^N\left(1-rh_Pa(A)+O(r^2)\right),
\]
and hence
\begin{equation}\label{eq:r-critical-first}
\frac r\eps
=1+\eps\frac{h_Pa(A)}N+O(\eps^2).
\end{equation}
Set $D_\delta=\Phi_P(rA)$.  Transplant $v$ through the same Fermi chart and multiply it by a cutoff which equals one on a fixed physical neighbourhood of $P$ and vanishes before the edge of the chart.  By \eqref{eq:threshold-exp-moment}, the cutoff error is exponentially small in $1/r$, hence $o(r)$ after the common leading factors are removed.

Write
\[
M_1:=\int_{\Hh}y_Nm_Av^2,
\qquad
E_1:=\int_{\Hh}y_N|\nabla v|^2.
\]
Using \eqref{eq:Fermi-J-first}--\eqref{eq:Fermi-boundary-first} and the normalisation $\int m_Av^2=1$ gives
\begin{align}
\int_\Omega|\nabla u_\delta|^2
&=r^{N-2}\left[
\int_{\Hh}|\nabla v|^2
+r\left(2\mathsf S_P:\mathsf M_T(v)-h_PE_1\right)
+O(r^2)
\right],\label{eq:first-order-bulk-exp}\\
\alpha_\delta\int_{\partial\Omega}u_\delta^2
&=r^{N-2}\left[
\alpha_\delta r\,T(v)+O(r^2)\right],\label{eq:first-order-robin-exp}\\
\int_\Omega m_{D_\delta}u_\delta^2
&=r^N\left[1-rh_PM_1+O(r^2)\right].\label{eq:first-order-den-exp}
\end{align}
The critical-window assumption and \eqref{eq:r-critical-first} yield
\begin{equation}\label{eq:alpha-r-critical}
\alpha_\delta r
=\tau_*+\eps\left(
\sigma+\tau_*\frac{h_Pa(A)}N
\right)+o(\eps).
\end{equation}
Finally, testing the threshold Euler--Lagrange equation with $y_Nv$ (after a cutoff and then passing to the limit using Lemma~\ref{lem:threshold-exponential-moments}) gives the moment identity
\begin{equation}\label{eq:normal-moment-identity}
E_1-\frac12T(v)=k^2M_1.
\end{equation}
Indeed, $\int v\partial_Nv=-T(v)/2$, while the Robin boundary term vanishes because $y_N=0$ on $\partial\Hh$.

Insert \eqref{eq:r-critical-first}--\eqref{eq:normal-moment-identity} into the quotient defined by \eqref{eq:first-order-bulk-exp}--\eqref{eq:first-order-den-exp}.  The zeroth-order term is
$E_{\tau_*}(v)=k^2$, and the coefficient of $\eps$ is exactly \eqref{eq:G-critical-def}.  This proves \eqref{eq:critical-first-order-transplant}.
\end{proof}

\begin{proposition}[Vanishing first-order correction for interior concentration]\label{prop:critical-interior-zero-correction}
Assume only that $\tau_\delta$ stays bounded.  Fix $x_0\in\Omega$ with $\operatorname{dist}(x_0,\partial\Omega)>0$.  There are unit-volume-ball favourable sets centred at $x_0$ and compactly supported test functions for which
\begin{equation}\label{eq:critical-interior-zero}
\eps^2 Q_\delta
=k^2+O(e^{-c/\eps})
\end{equation}
for some $c>0$.  In particular the escaping/interior benchmark has coefficient zero at order $\eps$.
\end{proposition}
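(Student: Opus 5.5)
We transplant the whole-space optimiser directly into the interior. Fix $x_0\in\Omega$ and put $d_0:=\operatorname{dist}(x_0,\partial\Omega)>0$. Let $\Phi$ be the positive radial whole-space optimiser of Proposition~\ref{prop:whole-space-classification}, normalised by $\int m_{B_{R_1}}\Phi^2=1$ and with $\int|\nabla\Phi|^2=k^2$.

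\emph{Construction.} Set $D_\delta:=x_0+\eps B_{R_1}$. It has volume exactly $\delta$ and lies inside $\Omega$ once $\eps R_1<d_0/2$. Choose a fixed cutoff $\chi\in C_c^\infty(B_{d_0/2}(x_0))$ with $\chi=1$ on $B_{d_0/4}(x_0)$, and define
\[
u_\delta(x):=\chi(x)\,\Phi\bigl((x-x_0)/\eps\bigr).
\]
Because $u_\delta$ has compact support in $\Omega$, its trace vanishes. The Robin term therefore disappears identically, and this is why the bound on $\tau_\delta$ plays essentially no role.

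\emph{Scaling and error terms.} Change variables $y=(x-x_0)/\eps$. Then $\eps^{2-N}\int|\nabla u_\delta|^2$ equals $\int|\nabla(\chi_\eps\Phi)|^2$, where $\chi_\eps(y)=\chi(x_0+\eps y)$. Likewise $\eps^{-N}\int m_{D_\delta}u_\delta^2$ equals $\int m_{B_{R_1}}(\chi_\eps\Phi)^2$; here the weight is $-1$ outside $B_{R_1}$, and $\chi_\eps\equiv1$ on $B_{R_1}$ for small $\eps$. Both quantities differ from $k^2$ and $1$ respectively only by integrals over the region $\{|y|\ge d_0/(4\eps)\}$. The error terms are
\[
\int_{|y|>d_0/(4\eps)}\Bigl(|\nabla\Phi|^2+\Phi^2+\eps^2\|\nabla\chi\|_\infty^2\Phi^2\Bigr)dy .
\]

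\emph{Exponential decay.} The only substantive input is that $\Phi$ and $\Phi'$ decay exponentially. Outside $B_{R_1}$, $\Phi$ is a multiple of $r^{-(N-2)/2}K_{(N-2)/2}(kr)$ (proof of Proposition~\ref{prop:whole-space-classification}). Hence $\Phi$ and $\Phi'$ are $O(r^{(1-N)/2}e^{-kr})$, and the error terms are $O(e^{-c/\eps})$ for any $c<kd_0/2$.

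\emph{Conclusion.} The quotient is therefore $(k^2+O(e^{-c/\eps}))/(1+O(e^{-c/\eps}))=k^2+O(e^{-c/\eps})$, which gives \eqref{eq:critical-interior-zero}. There is no curvature term because the chart is Euclidean and exact, and there is no Robin term because the trace is zero.

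We do not expect a genuine obstacle. The one point to check is that the cutoff-gradient cross term is controlled by the same exponential tail; this follows from Cauchy--Schwarz on the annulus where $\nabla\chi_\eps\ne0$.
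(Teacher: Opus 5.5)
Your proposal is correct and follows the paper's proof essentially verbatim. Both transplant the rescaled whole-space optimiser $\bigl(x_0+\eps B_{R_1},\,\Phi((\cdot-x_0)/\eps)\bigr)$ with a fixed interior cutoff, observe that the trace vanishes so that the Robin term (and hence $\tau_\delta$) drops out, and use the exponential Yukawa decay of $\Phi$ to make the cutoff errors in numerator and denominator $O(e^{-c/\eps})$. Your explicit Bessel asymptotics simply make the paper's decay step quantitative, with $c<kd_0/2$.
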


\begin{proof}
Take the whole-space unit-volume optimiser $(B_{R_1},\Phi)$, place the ball $x_0+\eps B_{R_1}$ inside $\Omega$, and scale $\Phi((x-x_0)/\eps)$.  Multiply by a cutoff which is identically one in a fixed neighbourhood of $x_0$ and vanishes before reaching $\partial\Omega$.  The favourable volume is exactly $\eps^N=\delta$, the boundary trace of the test function is zero, and the exponential Yukawa decay of $\Phi$ makes both numerator and denominator cutoff errors $O(e^{-c/\eps})$ after rescaling.  The whole-space quotient is exactly $k^2$.
\end{proof}

Define
\begin{equation}\label{eq:Gamma-critical-boundary}
\Gamma_{\rm bd}(\sigma)
:=
\inf_{P\in\partial\Omega}
\inf_{(A,v)\in\mathfrak O_*}
\mathcal G_\sigma(P;A,v).
\end{equation}
The preceding two propositions already give a rigorous one-sided development of the optimal value.

\begin{corollary}[First-order upper bound in the critical window]\label{cor:critical-first-order-upper}
Under the assumptions of Proposition~\ref{prop:critical-first-order-transplant},
\begin{equation}\label{eq:critical-first-order-upper}
\limsup_{\delta\downarrow0}
\frac{\eps^2\Lambda_\delta(\alpha_\delta)-k^2}{\eps}
\le
\min\{0,\Gamma_{\rm bd}(\sigma)\}.
\end{equation}
\end{corollary}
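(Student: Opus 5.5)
The upper bound in Corollary~\ref{cor:critical-first-order-upper} will follow by comparing $\Lambda_\delta(\alpha_\delta)$ with the Rayleigh quotients of the two explicit families of competitors already built, one interior and one boundary-attached. Since $\Lambda_\delta(\alpha_\delta)$ is an infimum over all admissible pairs $(D,u)$ with $|D|=\delta$, each admissible pair gives an upper bound. The two bounds are then combined through the elementary fact that the limsup of a minimum is at most the minimum of the individual limsups.

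\emph{Interior bound.} Apply Proposition~\ref{prop:critical-interior-zero-correction}. Its only hypothesis is that $\tau_\delta$ be bounded, which holds under \eqref{eq:critical-window-sigma}. We obtain $\eps^2\Lambda_\delta(\alpha_\delta)\le k^2+O(e^{-c/\eps})$. Dividing by $\eps$ and using $e^{-c/\eps}/\eps\to0$ gives
\[
\limsup_{\delta\downarrow0}\frac{\eps^2\Lambda_\delta(\alpha_\delta)-k^2}{\eps}\le 0 .
\]

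\emph{Boundary bound.} Fix $P\in\partial\Omega$ and $(A,v)\in\mathfrak O_*$. Proposition~\ref{prop:critical-first-order-transplant} supplies pairs $(D_\delta,u_\delta)$ with exact volume $\delta$ whose scaled quotient equals $k^2+\eps\,\mathcal G_\sigma(P;A,v)+o(\eps)$. The denominator of these pairs is positive for small $\delta$, since it equals $r^N(1+O(r))$. Hence $\eps^2\Lambda_\delta(\alpha_\delta)$ is at most this quotient, and
\[
\limsup_{\delta\downarrow0}\frac{\eps^2\Lambda_\delta(\alpha_\delta)-k^2}{\eps}\le\mathcal G_\sigma(P;A,v).
\]
The left-hand side does not depend on the choice of $(P,A,v)$. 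Taking the infimum over $P\in\partial\Omega$ and over $(A,v)\in\mathfrak O_*$ therefore yields the bound $\Gamma_{\rm bd}(\sigma)$. No uniformity of the $o(\eps)$ remainder in $(P,A,v)$ is needed, because each triple is fixed before letting $\delta\downarrow0$. We also never need the infimum defining $\Gamma_{\rm bd}(\sigma)$ to be attained.

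Combining the two one-sided bounds gives \eqref{eq:critical-first-order-upper}. The only points that need checking are that $\Gamma_{\rm bd}(\sigma)>-\infty$, so that the statement is meaningful, and that $\mathfrak O_*\neq\emptyset$.
\begin{itemize}[label={}]
\item Nonemptiness of $\mathfrak O_*$ is Theorem~\ref{thm:threshold-attainment}.
\item For finiteness, use Corollary~\ref{cor:scalar-critical-response}: $\mathcal G_\sigma=\sigma T(v)+h_P\,\mathcal C(A,v)$.
\item The mean curvature $h_P$ is bounded on the compact $C^2$ boundary $\partial\Omega$.
\item $T(v)\le T_*<\infty$ by Proposition~\ref{prop:tau-star-left-slope}, and $\mu_T(v)$, $a(A)$ are bounded over $\mathfrak O_*$.
\end{itemize}
The last uniform bound is the one step I expect to require care, since Lemma~\ref{lem:threshold-exponential-moments} gives only finiteness for each fixed optimiser. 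I would first try to deduce the uniform bound from compactness of $\mathfrak O_*$ modulo tangential translations, via Theorem~\ref{thm:critical-halfspace-profile} (the escape alternative is excluded because $T(v)>0$). The hard part is showing that the weighted moments $a(A)$ and $\mu_T(v)$ remain bounded along such compact sequences. For the proof of \eqref{eq:critical-first-order-upper} itself, however, this finiteness is inessential: if $\Gamma_{\rm bd}(\sigma)=-\infty$ the argument above still proves the inequality, with right-hand side $-\infty$.
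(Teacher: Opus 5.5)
Your argument is correct and is essentially the paper's own proof. Each fixed boundary triple $(P;A,v)$ gives the bound $\mathcal G_\sigma(P;A,v)$ through Proposition~\ref{prop:critical-first-order-transplant}, the interior ball of Proposition~\ref{prop:critical-interior-zero-correction} gives $0$, and taking the infimum yields $\min\{0,\Gamma_{\rm bd}(\sigma)\}$. As you note, the finiteness digression is not needed here; the paper proves $\Gamma_{\rm bd}(\sigma)>-\infty$ separately in Proposition~\ref{prop:Gamma-finite}, by contradiction against the matching first-order lower bounds rather than through uniform moment bounds on $\mathfrak O_*$.
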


\begin{proof}
Proposition~\ref{prop:critical-first-order-transplant} gives the upper bound by every compact threshold pair placed at every boundary point.  Proposition~\ref{prop:critical-interior-zero-correction} gives the competing coefficient $0$.  Take the infimum.
\end{proof}

\begin{remark}[Mean curvature is the complete first-order geometric datum]\label{rem:critical-full-curvature}
The Fermi-coordinate expansion produces the tensor contraction
$2\mathsf S_P:\mathsf M_T(v)$, but Theorem~\ref{thm:threshold-tangential-symmetry} forces
$\mathsf M_T(v)=\mu_T(v)I_{N-1}$ for every threshold optimiser.  Thus the complete first-order geometric dependence is the scalar mean-curvature factor $h_P$ in \eqref{eq:G-critical-scalar}; individual principal curvatures and tangential orientation do not survive optimisation.  This mirrors the sharp Neumann expansion of \citet{FerreriMazzoleniPellacciVerzini2026}, although the Robin threshold coefficient $\mathcal C(A,v)$ is determined by the critical half-space optimiser rather than by the reflected Neumann ball.
\end{remark}

\begin{lemma}[Coercivity for small favourable volume at the patch scale]\label{lem:small-volume-coercivity}
Let $\eps_n\downarrow0$ and let $\lambda_n>0$ satisfy
\[
0<c_0\le \eps_n^2\lambda_n\le C_0<\infty.
\]
If $E_n\subset\Omega$ is measurable with
\[
|E_n|=s_n\eps_n^N,
\qquad s_n\longrightarrow0,
\]
then there is a modulus $\omega(s)\downarrow0$ such that, for every $w\in H^1(\Omega)$,
\begin{equation}\label{eq:small-volume-coercivity}
\lambda_n\int_{E_n}w^2
\le
\omega(s_n)
\left(
\int_\Omega|\nabla w|^2
+\lambda_n\int_\Omega w^2
\right).
\end{equation}
The modulus depends only on $\Omega,N,c_0,C_0$.
\end{lemma}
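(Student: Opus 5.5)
Rescaling reduces the estimate to a fixed-scale Sobolev bound in which the small-volume gain is applied to the cut-off pieces. Set $\mu_n:=\lambda_n\eps_n^2\in[c_0,C_0]$. The natural length scale of the right-hand side of \eqref{eq:small-volume-coercivity} is $\lambda_n^{-1/2}\asymp\eps_n$, so I will work on the patch scale throughout.

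The argument starts from Hölder on $E_n$. For $N\ge3$,
\[
\int_{E_n}w^2\le|E_n|^{2/N}\|w\|_{L^{2^*}(\Omega)}^2=s_n^{2/N}\eps_n^2\|w\|_{2^*}^2 .
\]
The difficulty is that the fixed-domain Sobolev inequality $\|w\|_{2^*}^2\le C(\|\nabla w\|_2^2+\|w\|_2^2)$ produces an $L^2$ term without the large factor $\lambda_n$. I will avoid this by localising at scale $\eps_n$. I cover $\Omega$ by balls $B_j$ of radius $\eps_n$ with bounded overlap, and take cutoffs $\chi_j$ with $\sum\chi_j^2=1$ and $|\nabla\chi_j|\le C\eps_n^{-1}$. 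For each $j$ I rescale $B_j\cap\Omega$ by $\eps_n$. Since $\partial\Omega$ is $C^2$, for $n$ large these rescaled sets are uniformly Lipschitz, being close to unit balls or half-balls, and therefore carry a uniform Sobolev inequality with constant depending only on $\Omega,N$. Scaling back gives
\[
\eps_n^2\|\chi_jw\|_{L^{2^*}}^2\le C\bigl(\eps_n^{N\cdot 2/N}\cdots\bigr),
\]
that is,
\[
\int_{E_n\cap B_j}(\chi_jw)^2\le C s_n^{2/N}\bigl(\eps_n^2\|\nabla(\chi_jw)\|_2^2+\|\chi_jw\|_2^2\bigr).
\]
Summing over $j$ with the IMS identity and bounded overlap gives
\[
\int_{E_n}w^2\le Cs_n^{2/N}\bigl(\eps_n^2\|\nabla w\|_2^2+\|w\|_2^2\bigr),
\]
because the cutoff gradients cost only $C\eps_n^{-2}\|w\|_2^2$, which has the same order as the existing $L^2$ term. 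Multiplying by $\lambda_n=\mu_n\eps_n^{-2}$ and using $\mu_n\ge c_0$ to absorb $\eps_n^{-2}\le c_0^{-1}\lambda_n$ then gives \eqref{eq:small-volume-coercivity} with $\omega(s)=C(C_0,c_0)s^{2/N}$.

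For $N=2$ the $L^{2^*}$ step fails. On each rescaled unit piece I will instead use $\|f\|_{L^p}^2\le Cp\|f\|_{H^1}^2$ with $p$ chosen large in terms of $s_n$, for instance $p=\max\{2,|\log s_n|\}$, exactly as in Lemma~\ref{lem:small-set-L2}. Hölder then gives the factor $|E_n\cap B_j|^{1-2/p}\eps_n^{-2(1-2/p)}\le s_n^{1-2/p}$ after rescaling, so $\omega(s)=Cs(1+|\log s|)$. Alternatively, a fixed $p=4$ gives $\omega(s)=Cs^{1/2}$, which is also enough.

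The main obstacle is the uniformity of the local Sobolev constants over boundary pieces at scale $\eps_n$. There are two points to check. First, the rescaled domains $\eps_n^{-1}(B_j\cap\Omega)$ must be uniformly Lipschitz, or else an extension operator with a uniform bound must exist. I will handle this by working in flattening charts of the $C^2$ boundary with uniformly bounded distortion, and by choosing balls centred on $\partial\Omega$ whenever a ball meets the boundary, so that each piece is a bi-Lipschitz image of a half-ball with uniform constants. Second, the finite overlap of the cover must be uniform, which follows from a standard Besicovitch/Vitali argument. Everything else is scaling bookkeeping.
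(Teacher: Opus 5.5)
Your proof is correct, but it takes a genuinely different route from the paper's, which never localises.

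For $N\ge3$ the paper applies H\"older and the Sobolev embedding on the fixed domain $\Omega$. This gives $\lambda_n\int_{E_n}w^2\le C\,\eps_n^2\lambda_n s_n^{2/N}\bigl(\|\nabla w\|_2^2+\|w\|_2^2\bigr)$. The unweighted $L^2$ term is then dominated by $\lambda_n\|w\|_2^2$, because $\lambda_n\ge c_0\eps_n^{-2}\ge1$ for large $n$.

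So the difficulty you identify in dimension $N\ge3$ is not actually there. The fixed-domain bound carries $\eps_n^2\|w\|_2^2$, which is stronger than the $\|w\|_2^2$ in your localised bound. The covering, the IMS partition and the uniform local Sobolev constants are therefore unnecessary in that case.

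For $N=2$, by contrast, some scale-sensitive ingredient is genuinely needed. The plain embedding $H^1(\Omega)\hookrightarrow L^4(\Omega)$ would leave a factor $s_n^{1/2}\eps_n^{-1}$ in front of $\|\nabla w\|_2^2$. The paper supplies this ingredient through the multiplicative Gagliardo--Nirenberg inequality $\|w\|_4^2\le C\|w\|_2\bigl(\|\nabla w\|_2+\|w\|_2\bigr)$, followed by Young's inequality with $\sqrt{\lambda_n}\,\eps_n\asymp1$. Your patch-scale localisation does the same job by making every inequality scale-invariant.

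The paper's route is two lines and needs only a fixed Lipschitz domain, with no control of Sobolev constants at scale $\eps_n$ near $\partial\Omega$. Your route has three advantages:
\begin{itemize}
\item it treats all dimensions uniformly;
\item in the last step it uses only the upper bound $\eps_n^2\lambda_n\le C_0$ (your absorption via $c_0$ is not needed, since the cutoff error already appears as $\|w\|_2^2$ and becomes $\lambda_n\|w\|_2^2$ after multiplication);
\item with $p\sim|\log s_n|$ it gives the slightly better two-dimensional modulus $Cs(1+|\log s|)$ instead of $Cs^{1/2}$.
\end{itemize}
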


\begin{proof}
For $N\ge3$, H\"older and the Sobolev embedding on the fixed bounded domain give
\[
\int_{E_n}w^2
\le C|E_n|^{2/N}
\left(\|\nabla w\|_2^2+\|w\|_2^2\right).
\]
Since $|E_n|^{2/N}=s_n^{2/N}\eps_n^2$ and
$\lambda_n\eps_n^2$ stays in a compact subset of $(0,+\infty)$,
\eqref{eq:small-volume-coercivity} follows with
$\omega(s)=Cs^{2/N}$.

For $N=2$, use the Gagliardo--Nirenberg inequality
\[
\|w\|_4^2
\le C\|w\|_2\bigl(\|\nabla w\|_2+\|w\|_2\bigr).
\]
Then
\[
\int_{E_n}w^2
\le |E_n|^{1/2}\|w\|_4^2
\le C\eps_n s_n^{1/2}
\|w\|_2\bigl(\|\nabla w\|_2+\|w\|_2\bigr).
\]
Using $\sqrt{\lambda_n}\eps_n\asymp1$ and Young's inequality yields
\eqref{eq:small-volume-coercivity} with
$\omega(s)=Cs^{1/2}$.  This proves the claim in every dimension $N\ge2$.
\end{proof}

\begin{lemma}[Liouville alternatives in the unfavourable region]\label{lem:hostile-liouville}
Let $k>0$.
\begin{enumerate}[label=(\roman*)]
\item If $V\in L^\infty(\R^N)$ satisfies $(-\Delta+k^2)V=0$ in distributions on $\R^N$, then $V\equiv0$.
\item If $V\in L^\infty(\Hh)\cap H^1_{\rm loc}(\Hh)$ satisfies
\[
(-\Delta+k^2)V=0\quad\text{in }\Hh,
\qquad
\partial_\nu V+\tau V=0\quad\text{on }\partial\Hh
\]
weakly for some $\tau\ge0$, then $V\equiv0$.
\end{enumerate}
\end{lemma}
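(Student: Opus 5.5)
For (i), I would argue by comparison with the explicit exponential barrier and the maximum principle. Since $V$ is bounded, set $M:=\|V\|_\infty$. For $R>0$ and a point $x_0$, consider the radial supersolution
\[
\psi_R(x):=M\,\frac{\cosh\bigl(k|x-x_0|/\sqrt N\bigr)}{\cosh(kR/\sqrt N)}
\]
or, more simply, $\psi_R(x)=M\sum_{i=1}^N\cosh(k(x_i-x_{0,i})/\sqrt N)/\cosh(kR/\sqrt N)$. This function satisfies $(-\Delta+k^2)\psi_R=0$, and it is at least $M$ on the boundary of the cube $Q_R(x_0)$ of half-side $R$. Interior elliptic regularity makes $V$ smooth, so the weak maximum principle for $-\Delta+k^2$ (a positive zeroth-order term) on $Q_R$ gives $|V|\le\psi_R$ there. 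At $x_0$ this reads $|V(x_0)|\le NM/\cosh(kR/\sqrt N)$. Letting $R\to\infty$ gives $V(x_0)=0$. An equivalent route is the tempered-distribution argument: $(|\xi|^2+k^2)\widehat V=0$ with a nonvanishing smooth symbol forces $\widehat V=0$. I would present the barrier version so that it adapts to (ii).

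For (ii), I would use the same barrier, now on the half-cube $Q_R^+(x_0',0)=\{|x_i-x_{0,i}|<R\ (i<N),\ 0<x_N<R\}$. Here I would take the barrier
\[
\psi_R=M\Bigl(\sum_{i<N}\cosh(k(x_i-x_{0,i})/\sqrt N)+\cosh(k(x_N-R)/\sqrt N)\Bigr)/\cosh(kR/\sqrt N)\cdot c,
\]
with $c$ chosen so that $\psi_R\ge M$ on the lateral sides and on the top face $x_N=R$. On the flat face I must verify the Robin inequality for a supersolution. With inward normal $x_N$ and $\partial_\nu=-\partial_{x_N}$, this is $-\partial_{x_N}\psi_R+\tau\psi_R\ge0$ at $x_N=0$. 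The $x_N$-term $\cosh(k(x_N-R)/\sqrt N)$ is decreasing at $x_N=0$, so $-\partial_{x_N}\psi_R\ge0$, and $\tau\psi_R\ge0$. Hence $\psi_R$ is a supersolution of the mixed problem. The comparison for $\psi_R\mp V$ then follows by testing the weak formulation with $(V-\psi_R)_+$, which vanishes on the Dirichlet part of $\partial Q_R^+$. This test gives $\int|\nabla(V-\psi_R)_+|^2+k^2\int(V-\psi_R)_+^2+\tau\int_{\partial\Hh}(V-\psi_R)_+^2\le0$, so $V\le\psi_R$, and symmetrically $-V\le\psi_R$. Admissibility of this test function uses $V\in H^1_{\rm loc}$ up to the flat boundary, which I take from the weak formulation's meaning.

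Evaluating the bound at a fixed point $(x_0',s)$ with $s$ fixed and $R\to\infty$ gives
\[
|V(x_0',s)|\le CM\bigl((N-1)+\cosh(k(s-R)/\sqrt N)\bigr)/\cosh(kR/\sqrt N).
\]
The right-hand side tends to $CMe^{-ks/\sqrt N}$, not to $0$. That limit is not enough by itself, and I expect this to be the main obstacle: the top-face term does not decay at the evaluation point.

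My first remedy is to center the cube at height $R$ instead, on the region $\{|x_i-x_{0,i}|<R,\ 0<x_N<2R\}$. I would use only the tangential $\cosh$ terms, plus $\cosh(k(x_N-R)/\sqrt N)$, which is large both at the top and on the bottom face. On the bottom face, however, the Robin sign fails, because $-\partial_{x_N}\cosh$ is negative there. I would therefore bootstrap instead. The estimate above shows $|V(x',s)|\le C'Me^{-ks/\sqrt N}$ uniformly in $x'$. Applying the interior statement (i) to reflections is not available, so I would iterate the half-cube bound with $M$ replaced by $\sup_{x_N\ge s}|V|$ on translated half-spaces. Alternatively, I would take a tangential Fourier and Liouville route: each tangential frequency solves an ODE $-\hat V''+(k^2+|\xi|^2)\hat V=0$ with $\hat V'(0)=\tau\hat V(0)$. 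Boundedness kills the growing mode, and the decaying mode $e^{-sx_N}$ satisfies the Robin condition only if $-s=\tau\ge0$, which is impossible. This gives $V=0$ distributionally. I would use this Fourier argument, justified in tempered distributions, as the actual proof of (ii).
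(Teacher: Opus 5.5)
Your proof is correct. For (ii), the argument you finally commit to is the paper's: a tangential Fourier transform, boundedness discarding the $e^{a(\xi)x_N}$ mode with $a(\xi)=\sqrt{k^2+|\xi|^2}$, and the Robin condition reducing to $(a(\xi)+\tau)\widehat f(\xi)=0$ with $a+\tau\ge k>0$. For (i), your main route is a maximum-principle barrier rather than the paper's whole-space Fourier argument, which you also mention. The barrier works. With the $1/\sqrt N$ scaling one has $(-\Delta+k^2)\psi_R=k^2(1-1/N)\psi_R\ge0$, not $=0$, but this is harmless because a supersolution is all you need.

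The ``main obstacle'' you found for the half-space barrier comes only from the normal term you chose, and the barrier route in fact closes (ii) as well. On the half-cube $\{|x_i-x_{0,i}|<R\ (i<N),\ 0<x_N<R\}$ use
\[
\psi_R(x)=\frac{M}{\cosh(kR)}\Bigl(\sum_{i<N}\cosh\bigl(k(x_i-x_{0,i})\bigr)+\cosh(kx_N)\Bigr).
\]
\begin{itemize}
\item Each term solves $(-\Delta+k^2)\psi=0$.
\item $\psi_R\ge M$ on the lateral faces and on the top face.
\item $\partial_{x_N}\psi_R=0$ at $x_N=0$, so $\partial_\nu\psi_R+\tau\psi_R=\tau\psi_R\ge0$ for every $\tau\ge0$.
\end{itemize}
Your comparison test with $(\pm V-\psi_R)_+$ then gives $|V(x_0',s)|\le M\bigl(N-1+\cosh(ks)\bigr)/\cosh(kR)\to0$.

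Your claim that the Robin sign fails on the bottom face for $\cosh(k(x_N-R)/\sqrt N)$ is also mistaken. That function is decreasing at $x_N=0$, so $-\partial_{x_N}\psi>0$ there. Its real defect is that it is largest near $x_N=0$. Hence the normalised barrier is not small at a fixed height, and on your first half-cube it does not dominate $M$ on the top face.

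Comparing the two routes: the barrier argument is elementary and treats (i) and (ii) uniformly. It also yields quantitative exponential decay. It is rigorous directly in the weak $H^1_{\rm loc}(\overline{\Hh})$ framework, the only point being admissibility of $(V-\psi_R)_+$, which you correctly flag. The Fourier argument is shorter. However, the steps ``each frequency solves an ODE'' and ``boundedness kills the growing mode'' need some care, because $\widehat V(\cdot,x_N)$ is only a tempered distribution in $\xi$. Both your sketch and the paper treat this point briefly.
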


\begin{proof}
For (i), boundedness makes $V$ a tempered distribution. Fourier transformation gives
\[
(|\xi|^2+k^2)\widehat V(\xi)=0.
\]
The multiplier is everywhere strictly positive and has a smooth reciprocal with polynomially bounded derivatives, so multiplication by its reciprocal is legitimate on tempered distributions. Hence $\widehat V=0$.

For (ii), take the tangential Fourier transform in $x'$ in the sense of tempered distributions and put $a(\xi)=\sqrt{k^2+|\xi|^2}$. The normal equation is
\[
-\partial_N^2\widehat V+a(\xi)^2\widehat V=0.
\]
Boundedness as $x_N\to\infty$ eliminates the growing mode, so
\[
\widehat V(\xi,x_N)=\widehat f(\xi)e^{-a(\xi)x_N},
\]
where $f$ is the boundary trace. Since the outward normal to $\Hh$ is $-e_N$, the Robin condition becomes
\[
(a(\xi)+\tau)\widehat f(\xi)=0.
\]
Because $a+\tau\ge k>0$ has a smooth reciprocal of admissible growth, $\widehat f=0$, and the displayed representation gives $V\equiv0$.
\end{proof}

\begin{lemma}[Uniform localisation at the patch scale]\label{lem:uniform-exp-localisation}
Assume $\partial\Omega$ is $C^{2,1}$ and
\[
\tau_{\delta_n}=\tau_*+O(\eps_n),
\qquad \eps_n=\delta_n^{1/N}.
\]
Let $(D_n,u_n)$ be bounded-domain optimisers, normalised by
\[
\int_\Omega m_{D_n}u_n^2=1,
\]
and suppose that, along the subsequence under consideration, Theorem~\ref{thm:bounded-profile-critical} gives either a compact threshold boundary profile or an escaping whole-space profile.  Let $z_n$ denote the corresponding physical core centre: $z_n\in\partial\Omega$ in the compact case, after absorbing the tangential recentering into the boundary chart, and $z_n=x_n$ in the escaping case.

Then there are constants $R_0,C,c>0$, independent of $n$, such that for all sufficiently large $n$,
\begin{equation}\label{eq:full-set-patch-containment}
D_n\subset B_{R_0\eps_n}(z_n)\cap\Omega
\qquad\text{up to null sets},
\end{equation}
and
\begin{equation}\label{eq:uniform-exp-localisation}
\eps_n^{N/2}u_n(x)
+\eps_n^{N/2+1}|\nabla u_n(x)|
\le
C\exp\!\left(-c\frac{|x-z_n|}{\eps_n}\right)
\end{equation}
whenever $x$ is outside $B_{2R_0\eps_n}(z_n)$; the gradient estimate is understood almost everywhere.  Consequently, after the corresponding boundary or interior blow-up, all polynomially weighted $L^2$ and $H^1$ tails are uniformly integrable.
\end{lemma}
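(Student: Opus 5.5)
The argument runs in the rescaled variables around $z_n$ in three steps: $L^\infty$ bounds, confinement of the favourable set, and exponential decay in the unfavourable region. Set $\lambda_n:=\Lambda_{\delta_n}(\alpha_{\delta_n})$, so $\eps_n^2\lambda_n\to k^2$. Put $U_n(y):=\eps_n^{N/2}u_n(z_n+\eps_n y)$, written in the boundary Fermi chart in the compact case and in Euclidean variables in the escaping case. Then $U_n$ solves
\[
-\Delta U_n=\eps_n^2\lambda_n m_n U_n
\]
with a metric that is $C^{1,1}$-close to flat on balls of radius $o(r_{\delta_n}/\eps_n)$, and satisfies the Robin condition with parameter $\tau_{\delta_n}=\tau_*+O(\eps_n)$. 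The first step is a uniform $L^\infty$ bound on $U_n$. This follows by the Moser iteration of Lemma~\ref{lem:uniform-Linfty-branch}, applied chartwise on unit balls and half-balls at the patch scale. The Robin term has the favourable sign, so the constants do not depend on $n$. The starting $L^2$ bound comes from Lemma~\ref{lem:small-set-L2} together with $\eps_n^2\calE_{\alpha_\delta}(u_n)\to k^2$.

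\textbf{Step 2: confinement of the favourable set, \eqref{eq:full-set-patch-containment}.} By Proposition~\ref{prop:bounded-existence-bathtub}, $D_n\subset\{u_n\ge t_n\}$. In rescaled form the levels are $\theta_n:=\eps_n^{N/2}t_n$, and as in Lemma~\ref{lem:bathtub-level-convergence} they converge to the threshold of the limit profile, which is positive ($\theta_n\to t_*>0$ or $\theta_n\to\Phi(R_1)$). It therefore suffices to show $\sup_{|y|\ge R_0}U_n\to0$ uniformly in $n$ for some fixed $R_0$. The local Moser estimate gives
\[
\sup_{B_{1/2}(y)}U_n\le C\|U_n\|_{L^2(B_1(y))},
\]
so it is enough to control the $L^2$ tail of $U_n$ outside $B_{R_0}$. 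Strong $H^1$ convergence of the core handles the region near the patch. Far away (beyond the mesoscopic scale), the remainder estimate \eqref{eq:core-remainder-mass-energy} says the remainder has scaled energy $o(1)$.

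To turn vanishing energy into vanishing $L^2$ mass I argue by contradiction. Suppose points $y_n$ with $|y_n|\ge R_0$ and $U_n(y_n)\ge\theta_n/2$ exist. Translating to $y_n$ and extracting a subsequence produces a bounded limit on $\R^N$ or on $\Hh$. If it carried a favourable region of positive measure, that region would be a second piece of favourable volume, which Proposition~\ref{prop:one-chart-concentration} forbids. If its favourable volume is zero, Lemma~\ref{lem:small-volume-coercivity} absorbs the weight, and the limit solves $(-\Delta+k^2)V=0$ with the Robin condition where the boundary is visible. Lemma~\ref{lem:hostile-liouville} then forces $V\equiv0$, while local $C^{0}$ convergence keeps $V(0)\ge\lim\theta_n/2>0$, a contradiction.

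\textbf{Step 3: exponential decay, \eqref{eq:uniform-exp-localisation}.} Outside $B_{R_0}$ the coefficient is $-\eps_n^2\lambda_n\le -k^2/2$, so $U_n$ satisfies $-\Delta U_n+cU_n=0$ there with Robin flux of the good sign. I compare $U_n$ with the barrier $Ce^{-c'|y|}$, adjusted by a small boundary-adapted correction in the curved chart so that the barrier is a Robin supersolution, and apply the maximum principle on $\{|y|>2R_0\}\cap\Omega_n$. The outer boundary $\partial\Omega$, far from $z_n$, is included; there the Robin condition with $\tau\ge0$ makes the barrier admissible. The uniform $L^\infty$ bound from Step 1 controls the inner sphere $|y|=2R_0$. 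An Agmon-type weighted energy estimate is an alternative that avoids building the barrier explicitly. The gradient bound then follows from local $W^{2,p}$ (Robin) estimates applied on unit cells, and the tail integrability is immediate from the exponential bound.

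I expect the hardest part to be Step 2: the remainder must be shown to carry no favourable mass at distance between $R_0$ and infinity, uniformly. This relies on Proposition~\ref{prop:one-chart-concentration}, which only gives volume $o(\delta)$ outside the dominant core. Pointwise exclusion therefore needs the blow-up/Liouville contradiction together with Lemma~\ref{lem:small-volume-coercivity}, and the delicate case is when the points $y_n$ sit at intermediate scales near a curved boundary.
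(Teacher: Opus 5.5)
Reordering the argument (blow-up/Liouville first to get the sup tail, then an explicit barrier for the rate) is a reasonable alternative to the paper's route. As written, however, there is a gap in dimension two, and Steps 2 and 3 both rest on it.

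\textbf{The gap in Step 1.} The uniform $L^\infty$ bound needs a uniform global $L^2$ bound on $U_n$, and Lemma~\ref{lem:small-set-L2} does not give one when $N=2$. It yields
\[
\|U_n\|_{L^2}^2=\|u_n\|_{L^2(\Omega)}^2\le C\rho_N(\delta_n)\lambda_n
=C\,\frac{\rho_N(\delta_n)}{\eps_n^2}\,\eps_n^2\lambda_n,
\qquad
\frac{\rho_2(\delta_n)}{\eps_n^2}=1+|\log\delta_n|\to\infty .
\]
So you only get $\|U_n\|_\infty=O(|\log\delta_n|^{1/2})$. That gives neither local compactness of the translated functions $U_n(y_n+\cdot)$ nor the boundedness of the limit $V$ that Lemma~\ref{lem:hostile-liouville} requires.

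\textbf{How to repair it.} You need exactly the paper's first step: upgrade the $o(1)$ scaled remainder energy to vanishing $L^2$ mass. The paper tests the eigenvalue equation with $\eta_n^2u_n$ and absorbs the favourable term with Lemma~\ref{lem:small-volume-coercivity}. Equivalently, combine $d_D(u_n^{\rm r})\to0$ from \eqref{eq:core-remainder-mass-energy} with Lemma~\ref{lem:small-volume-coercivity} on $D_n\cap\operatorname{supp}\eta_n$, which has volume $o(\delta_n)$; this gives $\|\eta_nu_n\|_2\to0$. Together with the strong $H^1$ convergence of the core, you get a uniform $L^2$ bound and in fact global patch-scale $L^2$ tightness. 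After that, local Moser alone gives the uniform sup tail, and your Step 2 blow-up becomes unnecessary. This is how the paper obtains \eqref{eq:full-set-patch-containment}.

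\textbf{Corrections in Step 2.}
\begin{itemize}
\item Points $y_n$ with $|y_n|\to\infty$ can still lie inside the dominant mesoscopic chart, since $r_{\delta_n}/\eps_n\to\infty$. So Proposition~\ref{prop:one-chart-concentration} does not exclude favourable mass near them. What excludes it is the global convergence in measure of the full rescaled favourable sets in Theorem~\ref{thm:bounded-profile-critical}, together with boundedness of the limit set.
\item Only $\liminf\theta_n>0$ is available at this stage, and it follows from local uniform convergence plus the volume argument, as in the paper. Full convergence is not available: the half of Lemma~\ref{lem:bathtub-level-convergence} that excludes $\theta_n\ge\theta_*+\varepsilon$ uses global convergence in measure.
\item The intermediate-scale case you flagged as delicate is harmless. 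After rescaling, the boundary curvature is uniformly $O(\eps_n)$, so every blow-up limit is $\R^N$ or a flat Robin half-space with parameter $\tau_*$.
\end{itemize}

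\textbf{Step 3 compared with the paper.} Your barrier is a genuine alternative to the paper's halving iteration $M_n(R+L)\le\frac12M_n(R)$, which the paper proves by a second blow-up/Liouville contradiction. In Euclidean rescaled coordinates on $(\Omega-z_n)/\eps_n$, the function $\Psi=Ce^{-c'|y|}$ satisfies:
\begin{itemize}
\item $-\Delta\Psi+\eps_n^2\lambda_n\Psi\ge0$ whenever $c'^2\le\eps_n^2\lambda_n$;
\item $\partial_\nu\Psi+\tau_{\delta_n}\Psi\ge(\tau_{\delta_n}-c')\Psi\ge0$ whenever $c'\le\tau_*/2$.
\end{itemize}
So no chart correction is needed, and you obtain an explicit decay rate. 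Admissibility does use $\tau_*>0$, not merely $\tau\ge0$: with Neumann data the radial barrier has the wrong flux where $y\cdot\nu>0$. The paper's halving iteration gives no explicit rate, but it needs no sign bookkeeping on the boundary.
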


\begin{proof}
We first upgrade the dominant-core reduction from energy tightness to global $L^2$ tightness.  Let $\chi_n^2+\eta_n^2=1$ be the two-function partition from Lemma~\ref{lem:dominant-core-reduction}, with $\chi_n=1$ on the dominant mesoscopic chart, and set
\[
w_n:=\eta_nu_n.
\]
The favourable volume meeting $\operatorname{supp}\eta_n$ is $o(\delta_n)$ by the one-core property.  Writing
\[
\lambda_n:=\Lambda_{\delta_n}(\alpha_{\delta_n}),
\]
Theorem~\ref{thm:critical-scale-bounded-limit} gives
$\eps_n^2\lambda_n\to k^2>0$.  Testing the exact eigenvalue equation with $\eta_n^2u_n$ and using
\[
|\nabla(\eta_nu_n)|^2
=\eta_n^2|\nabla u_n|^2
+2\eta_nu_n\nabla\eta_n\cdot\nabla u_n
+u_n^2|\nabla\eta_n|^2
\]
gives the exact identity
\begin{align}
&\int_\Omega|\nabla w_n|^2
+\alpha_{\delta_n}\int_{\partial\Omega}w_n^2
+\lambda_n\int_\Omega w_n^2\notag\\
&\qquad
=(\kappa+1)\lambda_n\int_{D_n}w_n^2
+\int_\Omega u_n^2|\nabla\eta_n|^2.
\label{eq:remainder-coercive-identity}
\end{align}
Lemma~\ref{lem:small-volume-coercivity} absorbs the first term on the right into the left-hand side.  The second term is negligible at the patch scale quantitatively.  Indeed,
$|\nabla\eta_n|\le Cr_{\delta_n}^{-1}$ and Lemma~\ref{lem:small-set-L2}, applied to the normalised eigenfunction itself, gives
\[
\|u_n\|_2^2
\le C\rho_N(\delta_n)\|\nabla u_n\|_2^2
\le C\rho_N(\delta_n)\lambda_n.
\]
Since $\eps_n^2\lambda_n$ stays bounded,
\[
\eps_n^2\int_\Omega u_n^2|\nabla\eta_n|^2
\le
C\frac{\rho_N(\delta_n)}{r_{\delta_n}^2}
\,\eps_n^2\lambda_n
=o(1)
\]
by \eqref{eq:mesoscopic-scales}.  Hence
\begin{equation}\label{eq:remainder-L2-vanishing}
\|w_n\|_2^2
+\eps_n^2\|\nabla w_n\|_2^2
\longrightarrow0.
\end{equation}
Together with the strong $H^1$ convergence of the rescaled dominant core, this implies global patch-scale $L^2$ tightness about $z_n$:
\begin{equation}\label{eq:global-patch-L2-tightness}
\lim_{R\to\infty}\limsup_{n\to\infty}
\int_{\Omega\setminus B_{R\eps_n}(z_n)}u_n^2=0.
\end{equation}

Let $t_n>0$ be a bathtub level from Proposition~\ref{prop:bounded-existence-bathtub} and set
\[
\theta_n:=\eps_n^{N/2}t_n.
\]
The classified limit profile is strictly positive and has a positive bathtub level.  The required local compactness follows as follows.  After recentering at $z_n$ and scaling by $\eps_n$, the eigenfunctions solve
\[
-\Delta q_n=\mu_n b_n q_n,
\qquad |b_n|\le\max\{1,\kappa\},
\qquad \mu_n=\eps_n^2\lambda_n\to k^2,
\]
with a uniformly bounded Robin coefficient in boundary charts.  Local Moser bounds followed by $W^{2,p}$ estimates (interior or flat-boundary Robin estimates) therefore give uniform $C^{0,\beta}$ bounds on every fixed rescaled ball.  The already known strong $H^1$ convergence consequently improves, after subsequence extraction, to local uniform convergence to the classified profile.

If $\theta_n\to0$, choose a compact set $K$ in the limiting half-space or whole-space model with $|K|>1$ and with the positive limit profile bounded below on $K$.  Local uniform convergence then gives $q_n>\theta_n$ on $K$ for all large $n$.  Since the bathtub characterisation contains the strict superlevel set in the favourable region, the rescaled favourable volume would exceed one, contradicting $|D_n|=\eps_n^N$.  Thus
\begin{equation}\label{eq:theta-uniform-positive}
\liminf_{n\to\infty}\theta_n>0.
\end{equation}

The same local Moser estimate, now centred at arbitrary points of $\Omega$, and \eqref{eq:global-patch-L2-tightness} yield
\begin{equation}\label{eq:uniform-sup-tail-preexp}
\lim_{R\to\infty}\limsup_{n\to\infty}
\eps_n^{N/2}
\sup_{\Omega\setminus B_{R\eps_n}(z_n)}u_n=0.
\end{equation}
Indeed, if $x_n$ lies outside $B_{R\eps_n}(z_n)$, the rescaled Moser estimate on a unit ball or boundary half-ball centred at $x_n$ gives
\[
\eps_n^{N/2}u_n(x_n)
\le C
\left(
\int_{B_{c\eps_n}(x_n)\cap\Omega}u_n^2
\right)^{1/2}.
\]
For $R$ larger than a fixed constant the ball on the right lies outside
$B_{(R-c)\eps_n}(z_n)$, and \eqref{eq:global-patch-L2-tightness} makes this uniformly small.  This proves \eqref{eq:uniform-sup-tail-preexp}.  Combining it with \eqref{eq:theta-uniform-positive} and the bathtub inclusion
$D_n\subset\{u_n\ge t_n\}$ proves \eqref{eq:full-set-patch-containment} for a fixed $R_0$.

Outside $B_{R_0\eps_n}(z_n)$ the equation is therefore the constant-coefficient equation in the unfavourable region
\[
-\Delta u_n=-\lambda_nu_n.
\]
We claim that there are $L>0$ and $q\in(0,1)$ such that, for every sufficiently large $R$ and all large $n$,
\begin{equation}\label{eq:halving-iteration}
M_n(R+L)\le qM_n(R),
\qquad
M_n(R):=
\eps_n^{N/2}
\sup_{|x-z_n|\ge R\eps_n}u_n(x).
\end{equation}
Suppose that no such $L$ exists, with for instance $q=1/2$.  Then one can choose integers $L_j\to\infty$, indices $n_j\to\infty$, radii $R_j\to\infty$, and points $x_j$ satisfying
\[
|x_j-z_{n_j}|\ge (R_j+L_j)\eps_{n_j},
\qquad
\eps_{n_j}^{N/2}u_{n_j}(x_j)
\ge\frac14 M_{n_j}(R_j),
\]
while $M_{n_j}(R_j+L_j)>\frac12M_{n_j}(R_j)$.  Divide by $M_{n_j}(R_j)$ and rescale by $\eps_{n_j}$ around $x_j$.  On every ball of radius at most $L_j/2$ the normalised functions are bounded by one, because that ball remains outside $B_{R_j\eps_{n_j}}(z_{n_j})$, and their value at the origin is at least $1/4$.

After passing to a subsequence, the rescaled domains converge locally either to $\R^N$ or, after a rigid motion, to $\Hh$; in the boundary case the $C^{2,1}$ boundary becomes flat and the scaled Robin coefficient tends to $\tau_*$.  Local elliptic compactness therefore gives a bounded nonzero nonnegative solution of either
\[
(-\Delta+k^2)V=0\quad\text{in }\R^N
\]
or
\[
(-\Delta+k^2)V=0\quad\text{in }\Hh,
\qquad
\partial_\nu V+\tau_*V=0\quad\text{on }\partial\Hh.
\]
Both limiting alternatives are impossible by Lemma~\ref{lem:hostile-liouville}.  This contradicts the normalisation at the chosen point and proves \eqref{eq:halving-iteration}.

Iteration of \eqref{eq:halving-iteration} gives the pointwise part of \eqref{eq:uniform-exp-localisation}.  Standard interior and boundary elliptic estimates on $\eps_n$-balls then give the gradient estimate.  Integrating these estimates proves uniform exponential, hence polynomially weighted, $L^2$ and $H^1$ tails.
\end{proof}

\begin{proposition}[First-order lower bound for compact critical profiles]\label{prop:critical-compact-liminf}
Assume $\partial\Omega$ is $C^{2,1}$ and \eqref{eq:critical-window-sigma}.  Let $\delta_n\downarrow0$ be a sequence of bounded-domain optimisers whose critical profile is compact.  After absorbing the tangential recentering into the boundary chart, suppose
\[
P_n\longrightarrow P\in\partial\Omega,
\qquad
q_n\longrightarrow v
\quad\text{strongly in }H^1(\Hh),
\]
and the rescaled favourable sets converge in measure to the threshold optimiser $A$.  Then
\begin{equation}\label{eq:critical-compact-liminf}
\liminf_{n\to\infty}
\frac{\eps_n^2\Lambda_{\delta_n}(\alpha_{\delta_n})-k^2}{\eps_n}
\ge
\mathcal G_\sigma(P;A,v).
\end{equation}
\end{proposition}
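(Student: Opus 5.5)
The plan is to reverse the transplantation of Proposition~\ref{prop:critical-first-order-transplant}. Instead of pushing a half-space pair into $\Omega$, I would pull the exact optimisers $(D_n,u_n)$ back to the half-space through the Fermi chart at $P_n$. The pulled-back quotient can then be compared with the flat threshold problem at $\tau_*$, and the sharp global inequality \eqref{eq:global-quadratic-trace-final} absorbs the leading order.

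\emph{Step 1: pull back through the Fermi chart.} In inward Fermi coordinates at $P_n$, set $q_n(y)=\eps_n^{N/2}u_n(\Phi_{P_n}(\eps_n y))$ and $F_n=\eps_n^{-1}\Phi_{P_n}^{-1}(D_n)$. By Lemma~\ref{lem:uniform-exp-localisation} I may cut off at $|y|\le \eps_n^{-1/2}$ with only exponentially small error, and all moments $\int y_N(\cdot)$ are uniformly integrable. Using \eqref{eq:Fermi-J-first}--\eqref{eq:Fermi-boundary-first}, now with the $C^{2,1}$ remainder $O(\eps_n^2|y|^2)$ controlled by these tails, I expect
\[
\eps_n^2\lambda_n\Bigl(1-\eps_nh_{P_n}\textstyle\int y_Nm_{F_n}q_n^2\Bigr)
=\int|\nabla q_n|^2+\eps_n\bigl(2\mathsf S_{P_n}:\mathsf M_T(q_n)-h_{P_n}\textstyle\int y_N|\nabla q_n|^2\bigr)+\eps_n\alpha_{\delta_n}T(q_n)+o(\eps_n),
\]
with flat denominator $\int m_{F_n}q_n^2=1+O(\eps_n)$. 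Since $|D_n|=\delta_n$, the flat volume satisfies $|F_n|=1+\eps_nh_{P_n}a(F_n)+o(\eps_n)$.

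\emph{Step 2: restore unit flat volume and apply the sharp inequality.} Dilate by $\rho_n=|F_n|^{-1/N}=1-\eps_nh_{P_n}a(F_n)/N+o(\eps_n)$. By Lemma~\ref{lem:halfspace-volume-scaling} the volume mismatch is converted into a shift of the effective Robin parameter. The result is a unit-volume flat pair $(\tilde F_n,\tilde q_n)$ with effective parameter
\[
\tilde\tau_n=\tau_*+\eps_n\bigl(\sigma-\tau_*h_Pa(A)/N\bigr)+o(\eps_n),
\]
together with an $\eps_n$-order energy rescaling producing the $-2k^2a(A)/N$ term.

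I would then write $E_{\tilde\tau_n}(\tilde q_n)=E_{\tau_*}(\tilde q_n)+(\tilde\tau_n-\tau_*)T(\tilde q_n)$ and use $E_{\tau_*}(\tilde q_n)\ge k^2\mathcal M(\tilde q_n)\ge k^2\int m_{\tilde F_n}\tilde q_n^2$. This inequality holds for \emph{every} function, which is what makes a lower bound possible. In that step I must keep the bulk-denominator weight, the normal-moment curvature term and the tangential tensor term as they stand.

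\emph{Step 3: pass to the limit in the $\eps_n$ coefficients.} Strong $H^1$ convergence $q_n\to v$, convergence in measure $F_n\to A$, and uniform weighted tails give convergence of each first-order coefficient: $T(q_n)\to T(v)$, $a(F_n)\to a(A)$, the $y_N$-moments converge to $E_1$ and $M_1$, and $\mathsf M_T(q_n)\to\mathsf M_T(v)$. Together with $P_n\to P$, these coefficients converge to the corresponding ones in \eqref{eq:G-critical-def}. The moment identity \eqref{eq:normal-moment-identity} for the limit $v$, which is a threshold optimiser by Theorem~\ref{thm:bounded-profile-critical}, then recombines $-h_PE_1+k^2h_PM_1$ into $-\tfrac12h_PT(v)$, exactly as in the upper-bound computation. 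Dividing by $\eps_n$ should give \eqref{eq:critical-compact-liminf}.

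\emph{Main obstacle.} The delicate point is that the zeroth-order term of $q_n$ is not exactly $k^2$, and its defect is only known to be $o(1)$, not $o(\eps_n)$. The fix is Step 2: bound the whole zeroth-order flat expression of the \emph{exact} function $\tilde q_n$ from below by $k^2$ times its own denominator, via \eqref{eq:global-quadratic-trace-final}. Then only the $\eps_n$-coefficients need to converge, and they do so by strong convergence.

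A second obstacle is justifying the $O(\eps_n^2)$ metric remainders uniformly. This needs the $C^{2,1}$ hypothesis together with the exponential localisation \eqref{eq:uniform-exp-localisation}, which also keeps the bathtub-versus-actual-set discrepancy in the denominator of lower order.
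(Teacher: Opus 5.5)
Your proposal follows the paper's proof essentially step for step. You pull the exact optimiser back through the moving Fermi chart and expand volume, denominator and numerator to first order using the exponential localisation. You then dilate to unit flat volume and apply the threshold inequality $E_{\tau_*}(\cdot)\ge k^2\int m_{F}(\cdot)^2$ to the exact rescaled function; this is precisely how the paper avoids needing a rate for the zeroth-order defect. Finally you recombine with the normal-moment identity $E_1-\tfrac12T(v)=k^2M_1$.

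There is one slip to correct: the volume correction in your effective Robin parameter has the wrong sign. The scaling gives $E_\tau(q_n)=s_n^{N-2}E_{\tau s_n}(\widehat q_n)$ with $s_n=|F_n|^{1/N}=\rho_n^{-1}=1+\varepsilon_nh_Pa(A)/N+o(\varepsilon_n)$. Hence
\[
\tilde\tau_n=\tau_{\delta_n}/\rho_n=\tau_*+\varepsilon_n\bigl(\sigma+\tau_*h_Pa(A)/N\bigr)+o(\varepsilon_n),
\]
which matches $\alpha_\delta r$ in the upper-bound transplantation. With your minus sign, the coefficient of $h_Pa(A)/N$ would come out as $-\tau_*T(v)-2k^2$ instead of the $\tau_*T(v)-2k^2$ appearing in $\mathcal G_\sigma$.

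A minor point: no bathtub-versus-actual-set control is needed. The inequality $\mathcal M(\tilde q_n)\ge\int m_{\tilde F_n}\tilde q_n^2$ holds trivially for the actual unit-volume set.
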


\begin{proof}
By Lemma~\ref{lem:uniform-exp-localisation}, the entire favourable set lies in one fixed patch-scale ball and the rescaled eigenfunctions have uniform exponential tails.  We may therefore use a moving Fermi chart centred at $P_n$ on the whole part of the eigenfunction relevant at order $\eps_n$, with errors $o(\eps_n)$.  Write $F_n\subset\Hh$ for the full flattened, $\eps_n$-rescaled favourable set and keep the notation $q_n$ for the corresponding rescaled eigenfunction.  Then
\[
F_n\to A\quad\text{in measure},
\qquad
q_n\to v\quad\text{strongly in }H^1(\Hh),
\]
and Lemma~\ref{lem:uniform-exp-localisation} upgrades this to convergence of all first moments appearing below.

Set
\[
a_n:=|F_n|,
\qquad
M_n:=\int_{\Hh}m_{F_n}q_n^2,
\qquad
M_{1,n}:=\int_{\Hh}y_Nm_{F_n}q_n^2.
\]
For later reference we make the Fermi remainders quantitative.  Uniformly for $P_n$ on the compact boundary and for $\eps_n|y|$ inside a fixed tubular neighbourhood,
\begin{align*}
J_{P_n}(\eps_n y)
&=1-\eps_n h_{P_n}y_N+\eps_n^2R^J_n(y),\\
J_{P_n}(\eps_n y)G^{-1}_{P_n}(\eps_n y)
&=I+\eps_n y_N(2\widetilde{\mathsf S}_{P_n}-h_{P_n}I)
+\eps_n^2R^G_n(y),
\end{align*}
with
\[
|R^J_n(y)|+\|R^G_n(y)\|\le C|y|^2,
\qquad
J_{\partial,P_n}(\eps_n y',0)=1+\eps_n^2R^\partial_n(y'),
\quad |R^\partial_n(y')|\le C|y'|^2.
\]
After the fixed physical cutoff, Lemma~\ref{lem:uniform-exp-localisation} therefore bounds every discarded numerator or denominator remainder by
\[
C\eps_n^2
\int_{\Hh}(1+|y|^2)(q_n^2+|\nabla q_n|^2)\,dy
+O(e^{-c/\eps_n})
=O(\eps_n^2)=o(\eps_n).
\]
The favourable-set volume remainder is $O(\eps_n^2)$ as well, because \eqref{eq:full-set-patch-containment} puts $F_n$ in one fixed ball.  Thus all first-order expansions below are uniform and require no unrecorded convergence rate.

The exact physical volume constraint and the Fermi Jacobian expansion give
\begin{equation}\label{eq:compact-volume-first}
1
=\int_{F_n}J_{P_n}(\eps_n y)\,dy
=a_n-\eps_n h_{P_n}\int_{F_n}y_N\,dy+o(\eps_n),
\end{equation}
so that
\begin{equation}\label{eq:compact-an-first}
a_n
=1+\eps_n h_Pa(A)+o(\eps_n).
\end{equation}
Likewise, the physical normalisation gives
\begin{equation}\label{eq:compact-Mn-first}
M_n
=1+\eps_n h_PM_1+o(\eps_n),
\qquad
M_1:=\int_{\Hh}y_Nm_Av^2.
\end{equation}
Finally the numerator expansion is
\begin{align}
\eps_n^2\Lambda_{\delta_n}(\alpha_{\delta_n})
={}&E_{\tau_{\delta_n}}(q_n)\notag\\
&+\eps_n\left(
2\mathsf S_P:\mathsf M_T(v)-h_PE_1
\right)
+o(\eps_n),
\label{eq:compact-energy-first}
\end{align}
where
\[
E_1:=\int_{\Hh}y_N|\nabla v|^2.
\]
The replacement of the moments of $q_n$ by those of $v$ is justified by the uniform exponential tails.

It remains to obtain the correct first-order lower bound for the flat term without assuming any convergence rate.  Let
\[
s_n:=a_n^{1/N}
=1+\eps_n\frac{h_Pa(A)}N+o(\eps_n),
\]
and define the unit-volume dilation
\[
\widehat F_n:=s_n^{-1}F_n,
\qquad
\widehat q_n(z):=q_n(s_nz).
\]
Then $|\widehat F_n|=1$ and
\[
\widehat M_n
:=\int_{\Hh}m_{\widehat F_n}\widehat q_n^2
=s_n^{-N}M_n>0
\]
for large $n$.  Threshold optimality at unit favourable volume gives the exact nonnegative flat excess
\begin{equation}\label{eq:threshold-flat-excess-unit}
E_{\tau_*}(\widehat q_n)-k^2\widehat M_n\ge0.
\end{equation}
Since
\[
E_{\tau_{\delta_n}}(q_n)
=s_n^{N-2}E_{\tau_{\delta_n}s_n}(\widehat q_n),
\]
\eqref{eq:threshold-flat-excess-unit} yields
\begin{align}
E_{\tau_{\delta_n}}(q_n)
\ge{}&
M_ns_n^{-2}
\left[
 k^2
 +(\tau_{\delta_n}s_n-\tau_*)R_n
\right],
\label{eq:flat-profile-lower}
\end{align}
where
\[
R_n
:=
\frac{\displaystyle\int_{\partial\Hh}\widehat q_n^2}
{\widehat M_n}
\longrightarrow T(v).
\]
Using \eqref{eq:critical-window-sigma}, \eqref{eq:compact-an-first}, and \eqref{eq:compact-Mn-first} in \eqref{eq:flat-profile-lower}, we obtain
\begin{align}
E_{\tau_{\delta_n}}(q_n)
\ge k^2
+\eps_n\Biggl[
&\sigma T(v)
+h_Pk^2M_1\notag\\
&+\frac{h_Pa(A)}N
\bigl(\tau_*T(v)-2k^2\bigr)
\Biggr]
+o(\eps_n).
\label{eq:flat-profile-expanded}
\end{align}
Combining \eqref{eq:compact-energy-first} and \eqref{eq:flat-profile-expanded}, and using the normal-moment identity
\[
E_1-\frac12T(v)=k^2M_1,
\]
gives exactly \eqref{eq:critical-compact-liminf}.
\end{proof}

\begin{lemma}[Boundary flattening for an escaping core]\label{lem:escape-boundary-layer-flattening}
Assume the hypotheses of Lemma~\ref{lem:uniform-exp-localisation} and suppose the classified profile is escaping.  Let $x_n$ be the core centres,
\[
d_n:=\operatorname{dist}(x_n,\partial\Omega),
\qquad
h_n:=\frac{d_n}{\eps_n}\longrightarrow+\infty.
\]
If $d_n\to0$, let $P_n$ be the nearest boundary point.  Then, after cutting off only at a fixed physical distance from $P_n$ and flattening the boundary graph, one obtains a pair $(\widetilde F_n,\widetilde q_n)$ on $\Hh$ such that
\begin{equation}\label{eq:escape-flat-volume}
|\widetilde F_n|=1,
\end{equation}
\begin{equation}\label{eq:escape-flat-mass-energy}
\int_{\Hh}m_{\widetilde F_n}\widetilde q_n^2=1+o(\eps_n),
\qquad
E_{\tau_{\delta_n}}(\widetilde q_n)
=\eps_n^2\Lambda_{\delta_n}(\alpha_{\delta_n})+o(\eps_n),
\end{equation}
and
\begin{equation}\label{eq:escape-flat-trace-small}
\|\operatorname{Tr}\widetilde q_n\|_2^2\longrightarrow0.
\end{equation}
If $\liminf d_n>0$, the same conclusions hold after an interior cutoff and zero extension to $\R^N$, with no boundary term and errors $O(e^{-c/\eps_n})$.
\end{lemma}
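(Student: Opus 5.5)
The construction follows the transplantation of Proposition~\ref{prop:critical-first-order-transplant}, but this time the physical optimiser itself is pulled back to the half-space. Unlike the compact case, nothing needs to converge to a fixed half-space profile. By Lemma~\ref{lem:uniform-exp-localisation}, the favourable set $D_n$ lies in $B_{R_0\eps_n}(x_n)$ and $u_n$ decays like $\exp(-c|x-x_n|/\eps_n)$ at the patch scale.

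\emph{Case $d_n\to0$.} Take a Fermi (or graph-flattening) chart $\Phi_{P_n}$ at the nearest boundary point $P_n$, defined on a fixed physical neighbourhood, with a cutoff $\zeta$ equal to one on a smaller fixed neighbourhood. Because $|x_n-P_n|=d_n\to0$ and all the mass sits in an $\eps_n$-ball around $x_n$, the cutoff changes the numerator and denominator only by $O(e^{-c/\eps_n})$. Pull back and rescale: set $q_n(y)=\eps_n^{N/2}(\zeta u_n)(\Phi_{P_n}(\eps_n y))$, and let $F_n$ be the rescaled image of $D_n$, which sits in a ball of radius $R_0$ around height $h_n$. The Jacobian satisfies $J=1-\eps_nh_{P_n}y_N+O(\eps_n^2|y|^2)$. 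A first-order error then looks like $\eps_nh_n$, which need not be small, so the Fermi frame cannot be used naively around $P_n$.

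The cure is to flatten only a boundary layer. Compose the chart with a map that equals the flattening near $\partial\Omega$ and equals a rigid motion on $\{z_N\ge h_n/2\}$ in patch variables, interpolating in the normal variable over the region $h_n/4\le y_N\le h_n/2$. The core near height $h_n$ is then transported isometrically, with Jacobian and metric exactly $1$ and $I$. Metric errors appear only where $q_n$ is exponentially small, of size $e^{-ch_n}$ times polynomial factors. Since the interpolation has $C^{1}$ size $O(\eps_n\cdot\text{dist})$, its contribution to the energy and to the mass is $O(\eps_n)\cdot e^{-ch_n}\,\mathrm{poly}(h_n)=o(\eps_n)$.

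With the core transported isometrically, the volume identity $|\widetilde F_n|=1$ holds exactly, because $D_n$ lies in the isometric region and $|D_n|=\eps_n^N$. The denominator is unchanged up to $o(\eps_n)$ errors from the layer, which gives the first part of \eqref{eq:escape-flat-mass-energy}. For the numerator, the bulk energy matches up to $o(\eps_n)$. The boundary term uses $J_\partial=1+O(\eps_n^2|y'|^2)$ together with $\alpha_{\delta_n}\eps_n=\tau_{\delta_n}$, so its error is $O(\eps_n^2)\int(1+|y'|^2)q_n^2=o(\eps_n)$ by the uniform tails. The trace bound \eqref{eq:escape-flat-trace-small} follows from \eqref{eq:uniform-exp-localisation}: on $\partial\Hh$ the point lies at rescaled distance at least $h_n-R_0$ from the core, so $\|\operatorname{Tr}\widetilde q_n\|_2^2\le C\int e^{-2c\sqrt{h_n^2+|y'|^2}}dy'\to0$.

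\emph{Case $\liminf d_n>0$.} Cut off at a fixed interior distance and extend by zero. The boundary is invisible, and the cutoff errors are $O(e^{-c/\eps_n})$, again by \eqref{eq:uniform-exp-localisation}.

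The main obstacle is the interpolating chart in the case $d_n\to0$. It has to have uniformly controlled derivatives, $D\Psi-I=O(\eps_n\min(y_N,h_n))$. It also has to be a diffeomorphism onto $\Hh$ that preserves the boundary trace measure to second order. Most importantly, every term linear in $\eps_n$ must carry an exponentially small weight. I would first try a straight-line interpolation in the normal coordinate between the Fermi map and its tangent rigid map. Every error there is bounded by $C\eps_n\int_{y_N<h_n/2}(1+|y|)(q_n^2+|\nabla q_n|^2)=O(\eps_n e^{-ch_n/4})$, using the gradient bound in \eqref{eq:uniform-exp-localisation}.
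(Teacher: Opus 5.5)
Your route is viable and differs from the paper's, but the interpolating chart you propose does not exist as stated. You correctly diagnose that a Fermi chart fails, because its Jacobian and metric errors at the core are of size $\eps_nh_n=d_n$, which is not $o(\eps_n)$. Transporting the core isometrically is a sound remedy.

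\textbf{The gap.} You interpolate only in the normal variable, over a fixed physical neighbourhood. In patch variables the boundary is a graph $y_N=\gamma_n(y')$ with $|\gamma_n(y')|$ of order $\eps_n|y'|^2$ wherever $\partial\Omega$ is curved at $P_n$. For $|y'|\gtrsim\sqrt{h_n/\eps_n}$, i.e.\ physically $|x'|\gtrsim\sqrt{d_n}$, which is well inside the chart, the flattening map and the tangent rigid map differ by at least $h_n$. This exceeds the width of your layer $\{h_n/4\le y_N\le h_n/2\}$, with two consequences:
\begin{enumerate}[label=(\alph*)]
\item The normal derivative of the straight-line interpolation picks up a term of size $\eps_n|y'|^2/h_n$, so the map is no longer a small $C^1$ perturbation of a diffeomorphism.
\item If $\partial\Omega$ is convex at $P_n$, the rigid piece sends part of $\{y_N\ge h_n/2\}$ outside $\Omega$.
\end{enumerate}
For the same reason, your bound $D\Psi-I=O(\eps_n\min(y_N,h_n))$ is false. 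The true deviation also contains terms $O(\eps_n|y'|)$ and $O(\eps_n|y'|^2/h_n)$.

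\textbf{The repair.} Localise the isometric region tangentially as well. Let $\chi=1$ on $\{|y'|\le1\}$ and $\chi=0$ on $\{|y'|\ge2\}$. Let $\beta=0$ on $(-\infty,1/4]$ and $\beta=1$ on $[1/2,\infty)$. Put $\phi_n(y)=\chi(y'/h_n)\beta(y_N/h_n)$ and $\Psi_n(y)=(y',\,y_N-(1-\phi_n(y))\gamma_n(y'))$.
\begin{itemize}
\item Since $\eps_nh_n=d_n\to0$, on $\{|y'|\le2h_n\}$ the boundary stays below height $Cd_nh_n\ll h_n/4$. Hence $\phi_n=0$ on $\partial\Omega$, and the boundary is flattened.
\item One has $|\gamma_n\,\partial_{y_N}\phi_n|\le Cd_n<1$. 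So $\Psi_n$ is monotone on vertical lines, hence a diffeomorphism onto the half-space chart, and it is the identity near the core.
\item On the support of $\nabla\phi_n$ one has $D\Psi_n-I=O(d_n)$, and this support lies at patch distance at least $h_n/2$ from the core.
\item Outside the cylinder the pure shear has error $O(\eps_n|y'|)$, which is integrated against the weight $e^{-c|y-(0,h_n)|}$.
\end{itemize}
All errors are then $O(\eps_n\,\mathrm{poly}(h_n)e^{-ch_n})=o(\eps_n)$. Your volume, mass, Robin and trace arguments then go through unchanged.

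\textbf{Comparison with the paper.} With this repair, your argument is in effect the paper's shear switched off on a cylinder around the core, but the mechanism is genuinely different. The paper uses only the exact vertical shear $\Theta_n(y',y_N)=(y',y_N-\gamma_n(y'))$ on the whole graph chart.
\begin{itemize}
\item Its Jacobian is exactly one, so $|\widetilde F_n|=1$ and the mass are exact up to the cutoff.
\item Its metric error $\nabla\gamma_n=O(\eps_n|y'|)$ does not depend on height, so the $\eps_nh_n$ obstruction never arises.
\item The price is a genuine first-order cross term $-2\int\partial_N\widetilde q_n\,\nabla\gamma_n\cdot\nabla_T\widetilde q_n$ at the core. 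The paper removes it by expanding $\nabla\gamma_n=\eps_n\mathsf S_{P_n}y'+O(\eps_n^2|y'|^2)$ and passing to the escaping limit $\Phi$. For $\Phi$ one has $\int\partial_N\Phi\,(\mathsf S_Pz')\cdot\nabla_T\Phi=0$ by oddness in the recentred normal variable.
\end{itemize}
Your version uses only the exponential localisation of Lemma~\ref{lem:uniform-exp-localisation}. It does not need the identification of the limit profile or its radial symmetry, and it gives an explicit error rate. The paper's version avoids any cut-and-paste chart, but it relies on strong convergence, including weighted moments, to the radial optimiser.
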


\begin{proof}
Only the case $d_n\to0$ needs discussion.  Use rigid coordinates centred at $P_n$ with inward normal $e_N$.  Since $d_n$ lies in the tubular neighbourhood for large $n$, the nearest-point relation gives
$x_n=P_n+d_ne_N$.  In the rescaled coordinates $y=(x-P_n)/\eps_n$, the boundary is a graph
\[
y_N=\gamma_n(y'),
\]
and, on every fixed physical chart,
\begin{equation}\label{eq:escape-graph-estimates}
\gamma_n(0)=0,
\qquad
\nabla\gamma_n(0)=0,
\qquad
|\gamma_n(y')|\le C\eps_n|y'|^2,
\qquad
|\nabla\gamma_n(y')|\le C\eps_n|y'|.
\end{equation}
The $C^{2,1}$ regularity gives the sharper uniform Taylor expansion
\begin{equation}\label{eq:escape-graph-linearisation}
\nabla\gamma_n(y')
=\eps_n\mathsf S_{P_n}y'
+O(\eps_n^2|y'|^2)
\end{equation}
whenever $\eps_n|y'|$ remains in the fixed graph chart.  The core is centred at $(0,h_n)$ and, by Lemma~\ref{lem:uniform-exp-localisation}, the whole favourable set lies in a fixed ball about that point while all energy outside polynomially weighted neighbourhoods is exponentially small.

Choose a cutoff in the original domain which is identically one on a fixed physical neighbourhood containing the core and vanishes before the artificial sides of the graph chart.  Since those sides remain a fixed positive physical distance from $x_n$, Lemma~\ref{lem:uniform-exp-localisation} makes the resulting cutoff errors $O(e^{-c/\eps_n})$.  We omit this cutoff from the notation.  Flatten the graph by the exact vertical map
\[
\Theta_n(y',y_N):=(y',y_N-\gamma_n(y')).
\]
Its Jacobian determinant is one and it sends the graph to $\{z_N=0\}$.  Put
\[
\widetilde q_n(z):=q_n\!\left(z',z_N+\gamma_n(z')\right),
\qquad
\widetilde F_n:=\Theta_n(F_n).
\]
Because the cutoff is one on the entire favourable core and $\det D\Theta_n=1$, the physical volume constraint gives \eqref{eq:escape-flat-volume} exactly, while the actual indefinite denominator associated with $\widetilde F_n$ is unchanged up to the exponentially small chart cutoff.  This gives the first relation in \eqref{eq:escape-flat-mass-energy}.

For the bulk energy, the chain rule gives the exact identity
\begin{align}
\int |\nabla q_n|^2
={}&\int_{\Hh}|\nabla\widetilde q_n|^2
-2\int_{\Hh}
\partial_N\widetilde q_n\,
\nabla\gamma_n\cdot\nabla_T\widetilde q_n\notag\\
&+\int_{\Hh}|\nabla\gamma_n|^2
|\partial_N\widetilde q_n|^2
+O(e^{-c/\eps_n}).
\label{eq:escape-flatten-energy-identity}
\end{align}
The quadratic term is bounded by
\[
C\eps_n^2
\int_{\Hh}|z'|^2|\partial_N\widetilde q_n|^2
+O(\eps_n^3)
=O(\eps_n^2)
\]
by \eqref{eq:escape-graph-estimates} and the uniform exponentially weighted $H^1$ bounds from Lemma~\ref{lem:uniform-exp-localisation}.  For the linear term, \eqref{eq:escape-graph-linearisation} gives
\[
\frac1{\eps_n}
\int
\partial_N\widetilde q_n\,\nabla\gamma_n\cdot\nabla_T\widetilde q_n
=
\int
\partial_N\widetilde q_n\,
(\mathsf S_{P_n}z')\cdot\nabla_T\widetilde q_n
+O(\eps_n),
\]
where the $O(\eps_n)$ is uniform by the second weighted moment bound.  Recenter vertically by $h_n$.  After passing to a further subsequence, $P_n\to P$.  The escaping-profile theorem, together with the fact that the graph flattening is $C^1$-close to the identity on every fixed ball about the recentered core, gives strong $H^1(\R^N)$ convergence to the radial whole-space optimiser $\Phi$; the exponential bounds give convergence of the first weighted moments.  Hence
\begin{align*}
\frac1{\eps_n}
\int_{\Hh}
\partial_N\widetilde q_n\,
\nabla\gamma_n\cdot\nabla_T\widetilde q_n
\longrightarrow
\int_{\R^N}
\partial_N\Phi(z)
\,(\mathsf S_P z')\cdot\nabla_T\Phi(z)\,dz
=0.
\end{align*}
The last integral vanishes because $\Phi$ is radial: its integrand is odd in the recentered normal variable.  Thus the bulk-energy change in \eqref{eq:escape-flatten-energy-identity} is $o(\eps_n)$.

On the boundary, the surface element becomes
\[
\sqrt{1+|\nabla\gamma_n|^2}\,dz'
=(1+O(\eps_n^2|z'|^2))\,dz'.
\]
The surface-measure error in the Robin term is bounded by
$C\eps_n^2\int |z'|^2\widetilde q_n(z',0)^2\,dz'$, hence is $o(\eps_n)$ by the exponential localisation and $h_n\to\infty$.  The same estimate gives
\[
\|\operatorname{Tr}\widetilde q_n\|_2^2
\le C(1+h_n^{N-1})e^{-ch_n}\longrightarrow0.
\]
This proves \eqref{eq:escape-flat-mass-energy}--\eqref{eq:escape-flat-trace-small}.  No rate relating $h_n\to\infty$ to $\eps_n\to0$ is used: the only first-order geometric term is multiplied by $\eps_n$, and its radial limit cancels exactly.

If $d_n$ stays bounded below, take a cutoff which is one on a fixed neighbourhood of $x_n$ and vanishes before reaching $\partial\Omega$.  Lemma~\ref{lem:uniform-exp-localisation} makes all cutoff errors exponentially small in $1/\eps_n$; zero extension then gives a whole-space competitor of unit favourable volume.  This proves the final assertion.
\end{proof}

\begin{proposition}[First-order lower bound for escaping critical profiles]\label{prop:critical-escape-liminf}
Under \eqref{eq:critical-window-sigma}, let $\delta_n\downarrow0$ be a sequence of bounded-domain optimisers whose critical profile escapes normally.  Then
\begin{equation}\label{eq:critical-escape-liminf}
\liminf_{n\to\infty}
\frac{\eps_n^2\Lambda_{\delta_n}(\alpha_{\delta_n})-k^2}{\eps_n}
\ge0.
\end{equation}
\end{proposition}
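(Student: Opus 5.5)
The plan is to feed the flattened pair of Lemma~\ref{lem:escape-boundary-layer-flattening} straight into the sharp boundary-trace inequality of Theorem~\ref{thm:finite-detachment}. The detuning $\tau_{\delta_n}-\tau_*=\sigma\eps_n+o(\eps_n)$ is then paid for by the vanishing trace norm. Two cases arise, according to whether $d_n=\operatorname{dist}(x_n,\partial\Omega)$ tends to zero or stays bounded below; passing to subsequences, I treat them separately.

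If $\liminf d_n>0$, the final assertion of Lemma~\ref{lem:escape-boundary-layer-flattening} gives a zero-extended whole-space competitor $W_n$. It has unit favourable volume, reduced mass $1+O(e^{-c/\eps_n})$, and Dirichlet energy $\eps_n^2\Lambda_{\delta_n}(\alpha_{\delta_n})+O(e^{-c/\eps_n})$. The whole-space variational inequality \eqref{eq:whole-reduced-V} at $V=1$ applies to the actual set, because the denominator $\int m_{\widetilde F_n}W_n^2$ is at most $\mathcal M_1^{\R^N}(W_n)$. This gives $\eps_n^2\Lambda_{\delta_n}\ge k^2(1+O(e^{-c/\eps_n}))-O(e^{-c/\eps_n})$, which is far better than $o(\eps_n)$.

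If $d_n\to0$, I take the pair $(\widetilde F_n,\widetilde q_n)$ with $|\widetilde F_n|=1$ and denominator $1+o(\eps_n)$, and set $t_n:=\|\operatorname{Tr}\widetilde q_n\|_2^2\to0$. Since $\mathcal M(\widetilde q_n)\ge\int m_{\widetilde F_n}\widetilde q_n^2=1+o(\eps_n)>0$, the sharp inequality \eqref{eq:global-quadratic-trace-final} gives
\[
E_{\tau_*}(\widetilde q_n)\ge k^2\mathcal M(\widetilde q_n)\ge k^2+o(\eps_n).
\]
From \eqref{eq:escape-flat-mass-energy} I then obtain
\[
\eps_n^2\Lambda_{\delta_n}(\alpha_{\delta_n})
=E_{\tau_{\delta_n}}(\widetilde q_n)+o(\eps_n)
=E_{\tau_*}(\widetilde q_n)+(\tau_{\delta_n}-\tau_*)t_n+o(\eps_n)
\ge k^2+\eps_n\sigma t_n+o(\eps_n).
\]
Because $t_n\to0$ by \eqref{eq:escape-flat-trace-small}, the term $\eps_n\sigma t_n$ is $o(\eps_n)$ regardless of the sign of $\sigma$. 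Dividing by $\eps_n$ and taking the $\liminf$ yields \eqref{eq:critical-escape-liminf}.

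The main obstacle lies entirely in Lemma~\ref{lem:escape-boundary-layer-flattening}, which I assume here. Within this argument, the only delicate point is to keep the errors of order $o(\eps_n)$ rather than merely $o(1)$, namely the denominator error and the energy identity in \eqref{eq:escape-flat-mass-energy}. It is also essential to avoid dividing by $t_n$, since no rate for $h_n\to\infty$ is available; the product $\sigma t_n\eps_n$ is handled simply as an $o(\eps_n)$ quantity. A minor check is that the reduced mass $\mathcal M$, defined as a supremum over unit-volume sets, dominates the actual denominator, so the inequality runs in the correct direction.
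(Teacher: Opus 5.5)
Your proposal is correct and follows essentially the paper's own argument. The paper makes the same split into the case where the core stays a fixed distance from $\partial\Omega$, handled by whole-space optimality for the zero-extended unit-volume competitor, and the case $d_n\to0$, where the threshold inequality is applied to the flattened pair $(\widetilde F_n,\widetilde q_n)$ of Lemma~\ref{lem:escape-boundary-layer-flattening}; your route through $\mathcal M(\widetilde q_n)\ge\int_{\Hh}m_{\widetilde F_n}\widetilde q_n^2$ and \eqref{eq:global-quadratic-trace-final} is equivalent to the paper's direct application to the particular set $\widetilde F_n$, and the detuning term $(\tau_{\delta_n}-\tau_*)\|\operatorname{Tr}\widetilde q_n\|_2^2=O(\eps_n)\cdot o(1)$ is absorbed exactly as there.
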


\begin{proof}
If the physical distance of the core from the boundary stays bounded below, Lemma~\ref{lem:escape-boundary-layer-flattening} gives a whole-space admissible pair with unit favourable volume and with energy differing from
$\eps_n^2\Lambda_{\delta_n}$ by $o(\eps_n)$.  Whole-space optimality immediately yields \eqref{eq:critical-escape-liminf}.

It remains to consider $d_n\to0$.  Let $(\widetilde F_n,\widetilde q_n)$ be supplied by Lemma~\ref{lem:escape-boundary-layer-flattening}.  Since $|\widetilde F_n|=1$, the threshold variational inequality may be applied to this particular favourable set:
\[
E_{\tau_*}(\widetilde q_n)
\ge
k^2\int_{\Hh}m_{\widetilde F_n}\widetilde q_n^2.
\]
Therefore
\begin{align*}
E_{\tau_{\delta_n}}(\widetilde q_n)
&\ge
k^2\int_{\Hh}m_{\widetilde F_n}\widetilde q_n^2
+(\tau_{\delta_n}-\tau_*)
\|\operatorname{Tr}\widetilde q_n\|_2^2\\
&=k^2+o(\eps_n),
\end{align*}
where we used \eqref{eq:critical-window-sigma}, \eqref{eq:escape-flat-mass-energy}, and \eqref{eq:escape-flat-trace-small}.  Returning to the physical pair by \eqref{eq:escape-flat-mass-energy} proves \eqref{eq:critical-escape-liminf}.
\end{proof}

\begin{proposition}[Finiteness of $\Gamma_{\rm bd}$]\label{prop:Gamma-finite}
For every fixed $\sigma\in\R$,
\[
-\infty<\Gamma_{\rm bd}(\sigma)<+\infty.
\]
\end{proposition}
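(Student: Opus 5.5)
The infimum in \eqref{eq:Gamma-critical-boundary} is over a nonempty set, since $\mathfrak O_*\neq\emptyset$ by Theorem~\ref{thm:threshold-attainment} and $\partial\Omega\neq\emptyset$. Hence $\Gamma_{\rm bd}(\sigma)<+\infty$ is immediate: fix any $P$ and any $(A,v)\in\mathfrak O_*$, and note that $\mathcal G_\sigma(P;A,v)$ is a finite real number, because $T(v)$, $a(A)$ and $\mu_T(v)$ are finite by Lemma~\ref{lem:threshold-exponential-moments}. The substance is the lower bound. By Corollary~\ref{cor:scalar-critical-response},
\[
\mathcal G_\sigma(P;A,v)=\sigma T(v)+h_P\,\mathcal C(A,v),
\]
and $|h_P|\le\max_{\partial\Omega}|h|<\infty$ because $\partial\Omega$ is compact and $C^2$. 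It therefore suffices to bound $T(v)$, $a(A)$ and $\mu_T(v)$ uniformly over $\mathfrak O_*$.

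For $T(v)$ I use Proposition~\ref{prop:tau-star-left-slope}, which gives $0<T(v)\le T_*<\infty$. For $\mu_T(v)$ a crude bound of $\int_{\Hh}y_N|\nabla v|^2$ is not immediately available, so I instead exploit the normal-moment identity \eqref{eq:normal-moment-identity}:
\[
E_1=k^2M_1+\tfrac12T(v).
\]
Since $0\le(N-1)\mu_T\le E_1$, this yields
\[
(N-1)\mu_T\le k^2\int y_N m_Av^2+\tfrac12T_*\le k^2\kappa\int_A y_Nv^2+\tfrac12T_*.
\]
Thus both $\mu_T(v)$ and $a(A)=\int_A y_N$ reduce to a uniform bound on the heights of points in $A$ together with a uniform $L^\infty$ bound on $v$. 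The $L^\infty$ bound follows from Lemma~\ref{lem:uniform-Linfty-branch}; its Moser argument carries over verbatim at energy level $k^2$, with normalised $H^1$ bound \eqref{eq:positive-defect-H1-bound}.

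The main obstacle is a uniform bound $\sup\{y_N:y\in A\}\le H$ over all of $\mathfrak O_*$, i.e.\ a compactness statement for the class of threshold optimisers. I would argue by contradiction using Theorem~\ref{thm:critical-halfspace-profile}. Take $(A_j,v_j)\in\mathfrak O_*$ with $a(A_j)\to\infty$ or $\mu_T(v_j)\to\infty$. This is a normalised minimising sequence at $\tau_*$. By Proposition~\ref{prop:tau-star-left-slope} the escape alternative is impossible, because each $v_j$ is an exact optimiser with trace norm bounded below. Indeed, the proof of Proposition~\ref{prop:threshold-contact} combined with compactness excludes $T(v_j)\to0$: if the traces vanished, the escaping case would have to occur, and then Lemma~\ref{lem:boundary-sup-escape} together with the bathtub contact argument of Lemma~\ref{lem:contact-forces-boundary-amplitude} and Proposition~\ref{prop:threshold-contact} contradicts the essential boundary contact of each $A_j$ with threshold level bounded below. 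Hence, after tangential translation (which does not affect $a$, $T$ or $\mu_T$), $v_j\to v_*$ strongly in $H^1$ and $A_j\to A_*$ in measure.

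To upgrade this convergence to convergence of the first moments, I would rerun the Agmon estimate of Lemma~\ref{lem:threshold-exponential-moments} uniformly in $j$. The Moser tail bound is uniform because the $L^2$ tails are uniformly small by strong convergence, so there is a uniform radius outside which $v_j<t_j$, with $t_j\to t_*>0$ by Lemma~\ref{lem:bathtub-stability}. This gives uniform boundedness of the $A_j$ and uniform exponential weights. Consequently $a(A_j)$ and $\mu_T(v_j)$ converge to finite limits, contradicting their divergence. This establishes uniform bounds on $T$, $a$ and $\mu_T$ over $\mathfrak O_*$, and hence $\Gamma_{\rm bd}(\sigma)\ge-|\sigma|T_*-\max_{\partial\Omega}|h|\,\sup_{\mathfrak O_*}|\mathcal C|>-\infty$.
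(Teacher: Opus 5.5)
Your proposal is correct, but it reaches the lower bound by a different route from the paper.

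\textbf{The paper's route.} The paper never bounds $T$, $a(A)$ or $\mu_T$ uniformly over $\mathfrak O_*$. It argues by contradiction through the bounded-domain problem:
\begin{enumerate}
\item Profiles $(P_j;A_j,v_j)$ with $\mathcal G_\sigma\le -j$ are transplanted via Proposition~\ref{prop:critical-first-order-transplant}, at a $j$-dependent scale $\eps_j$.
\item Actual optimisers at $\delta_j=\eps_j^N$ then have first-order quotient at most $-j+1$.
\item This contradicts the finite lower bounds of Propositions~\ref{prop:critical-compact-liminf} and~\ref{prop:critical-escape-liminf}, applied after the classification in Theorem~\ref{thm:bounded-profile-critical}.
\end{enumerate}
There the compactness needed is supplied only indirectly, by Lemma~\ref{lem:uniform-exp-localisation} for bounded-domain optimisers.

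\textbf{Your route.} You prove compactness of $\mathfrak O_*$ itself, modulo tangential translations.
\begin{enumerate}
\item Escape is excluded by combining Lemma~\ref{lem:boundary-sup-escape}, Lemma~\ref{lem:contact-forces-boundary-amplitude} and Proposition~\ref{prop:threshold-contact}. This is the mechanism of Theorem~\ref{thm:threshold-attainment}, now applied to exact threshold optimisers.
\item Uniform Moser tails, together with bathtub levels bounded below, confine every $A_j$ to one fixed half-ball.
\item The normal-moment identity \eqref{eq:normal-moment-identity} converts that height bound into a bound on $\mu_T$.
\end{enumerate}

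\textbf{What each approach buys.} Your argument is intrinsic to the half-space problem. It gives the explicit bound $\Gamma_{\rm bd}(\sigma)\ge -|\sigma|T_*-\max_{\partial\Omega}|h_P|\,\sup_{\mathfrak O_*}|\mathcal C|$, with the supremum depending only on $N$ and $\kappa$. With the uniform Agmon estimate added, it would also give continuity of $\mathcal C$ along convergent sequences, and hence attainment of the infimum defining $\Gamma_{\rm bd}$. The paper's argument costs nothing beyond the first-order bounded-domain machinery already built, but it yields only finiteness, and it does so through $\Omega$.

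\textbf{Places to tighten.}
\begin{itemize}
\item The detour through Proposition~\ref{prop:tau-star-left-slope} and a trace lower bound is unnecessary. The contact argument rules out escape directly for any sequence in $\mathfrak O_*$.
\item Lemma~\ref{lem:bathtub-stability} gives convergence of sets, not $t_j\to t_*$. You only need $\liminf_j t_j>0$, which follows from $\{v_j>t_j\}\subset A_j$, $|A_j|=1$, and $v_j\to v_*>0$ in $L^2$. Alternatively, argue as in Lemma~\ref{lem:bathtub-level-convergence}.
\item Neither the $L^\infty$ bound nor the uniform Agmon estimate is needed for boundedness. Indeed $\int_A y_Nv^2\le H\,\mathcal B(v)=H(1+\|v\|_2^2)/(\kappa+1)$, which is already uniformly bounded.
\item Tangential symmetry is not needed either, since $|\mathsf S_P:\mathsf M_T(v)|\le C\int_{\Hh}y_N|\nabla_Tv|^2$.
\end{itemize}
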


\begin{proof}
Upper finiteness is immediate: Theorem~\ref{thm:threshold-attainment} supplies at least one threshold optimiser $(A,v)\in\mathfrak O_*$, the boundary $\partial\Omega$ is compact, and Lemma~\ref{lem:threshold-exponential-moments} makes every quantity in $\mathcal G_\sigma(P;A,v)$ finite.

Suppose, for contradiction, that $\Gamma_{\rm bd}(\sigma)=-\infty$. Then there are $P_j\in\partial\Omega$ and $(A_j,v_j)\in\mathfrak O_*$ such that
\[
\mathcal G_\sigma(P_j;A_j,v_j)\le-j.
\]
For each fixed $j$, Proposition~\ref{prop:critical-first-order-transplant} applies to this profile. Choose $\eps_j>0$, with $\eps_j<j^{-2}$ and $\eps_j\downarrow0$, so small that, with $\delta_j=\eps_j^N$ and $\tau_{\delta_j}=\tau_*+\sigma\eps_j$, the corresponding transplanted competitor gives
\[
\frac{\eps_j^2\Lambda_{\delta_j}(\alpha_{\delta_j})-k^2}{\eps_j}\le-j+1.
\]
Choose an actual bounded-domain optimiser at each $\delta_j$. The critical profile theorem, Theorem~\ref{thm:bounded-profile-critical}, gives after passage to a subsequence either a compact critical profile or an escaping profile. In the compact case Proposition~\ref{prop:critical-compact-liminf} gives a finite lower bound
\[
\liminf_{j\to\infty}
\frac{\eps_j^2\Lambda_{\delta_j}-k^2}{\eps_j}
\ge \mathcal G_\sigma(P;A,v)>-\infty,
\]
whereas in the escaping case Proposition~\ref{prop:critical-escape-liminf} gives the lower bound zero. Both alternatives contradict the preceding upper bound, whose right-hand side tends to $-\infty$. Therefore $\Gamma_{\rm bd}(\sigma)>-\infty$.
\end{proof}

\begin{theorem}[First-order critical selection law]\label{thm:critical-first-order-selection}
Assume $\partial\Omega$ is $C^{2,1}$ and
\[
\tau_\delta
=\tau_*+\sigma\eps+o(\eps),
\qquad
\eps=\delta^{1/N},
\]
for some fixed $\sigma\in\R$.  Then
\begin{equation}\label{eq:critical-selection-law}
\frac{\eps^2\Lambda_\delta(\alpha_\delta)-k^2}{\eps}
\longrightarrow
\min\{0,\Gamma_{\rm bd}(\sigma)\}.
\end{equation}
Moreover the leading profile is selected as follows.
\begin{enumerate}[label=(\roman*)]
\item If $\Gamma_{\rm bd}(\sigma)<0$, every sequence $\delta_n\downarrow0$ has only compact critical profile subsequences.  Every such limit $(P;A,v)$ satisfies
\[
\mathcal G_\sigma(P;A,v)=\Gamma_{\rm bd}(\sigma).
\]
\item If $\Gamma_{\rm bd}(\sigma)>0$, every sequence $\delta_n\downarrow0$ has only escaping profile subsequences, and the first-order coefficient is $0$.
\item If $\Gamma_{\rm bd}(\sigma)=0$, both leading alternatives remain compatible with the first-order value; any compact limiting profile must satisfy $\mathcal G_\sigma(P;A,v)=0$.
\end{enumerate}
\end{theorem}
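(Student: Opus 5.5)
The plan is to match the upper bound of Corollary~\ref{cor:critical-first-order-upper} with a liminf bound obtained from the subsequential profile classification. Write $L_\delta:=(\eps^2\Lambda_\delta(\alpha_\delta)-k^2)/\eps$. The upper bound already gives $\limsup_{\delta\downarrow0}L_\delta\le\min\{0,\Gamma_{\rm bd}(\sigma)\}$, and Proposition~\ref{prop:Gamma-finite} guarantees that this number is finite.

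For the lower bound, argue by subsequences. Fix any $\delta_n\downarrow0$ along which $L_{\delta_n}$ converges to $\liminf_{\delta\downarrow0} L_\delta$, and choose actual optimisers $D_{\delta_n}$. Since $\tau_{\delta_n}\to\tau_*$, part (iii) of Theorem~\ref{thm:bounded-profile-critical} gives a further subsequence with exactly one of two behaviours.

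In the compact case, pass to a subsequence along which the boundary centres converge, $P_n\to P$. The limit $(A,v)$ is a normalised threshold optimiser, so $(A,v)\in\mathfrak O_*$. Proposition~\ref{prop:critical-compact-liminf} then yields $\lim L_{\delta_n}\ge\mathcal G_\sigma(P;A,v)\ge\Gamma_{\rm bd}(\sigma)$, using the definition \eqref{eq:Gamma-critical-boundary} as an infimum over $P$ and over $\mathfrak O_*$.

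In the escaping case, Proposition~\ref{prop:critical-escape-liminf} gives $\lim L_{\delta_n}\ge0$.

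In either case the limit is at least $\min\{0,\Gamma_{\rm bd}(\sigma)\}$. Since the subsequence was chosen to realise the liminf, this gives the liminf inequality, and together with the upper bound it proves \eqref{eq:critical-selection-law}.

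The selection statements then follow by comparing the two lower bounds with the now-known limit $m:=\min\{0,\Gamma_{\rm bd}(\sigma)\}$.
\begin{itemize}
\item[(i)] If $\Gamma_{\rm bd}(\sigma)<0$, then $m=\Gamma_{\rm bd}(\sigma)<0$. An escaping subsequence would force $\lim L_{\delta_n}\ge0>m$, which is impossible, so every subsequence has a compact profile. For such a compact limit we get $\Gamma_{\rm bd}(\sigma)=m=\lim L_{\delta_n}\ge\mathcal G_\sigma(P;A,v)\ge\Gamma_{\rm bd}(\sigma)$, hence $\mathcal G_\sigma(P;A,v)=\Gamma_{\rm bd}(\sigma)$.
\item[(ii)] If $\Gamma_{\rm bd}(\sigma)>0$, then $m=0$. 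A compact subsequence would give $\lim L_{\delta_n}\ge\mathcal G_\sigma\ge\Gamma_{\rm bd}(\sigma)>0=m$, which is impossible, so only escaping profiles occur and the first-order coefficient is $0$.
\item[(iii)] If $\Gamma_{\rm bd}(\sigma)=0$, then $m=0$. Any compact limit satisfies $0=\lim L_{\delta_n}\ge\mathcal G_\sigma(P;A,v)\ge0$, so $\mathcal G_\sigma(P;A,v)=0$. Nothing excludes either alternative here.
\end{itemize}

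The one point needing care, and the main potential obstacle, is that Proposition~\ref{prop:critical-compact-liminf} is applied along an arbitrary sequence of optimisers. One must check that the compact alternative of Theorem~\ref{thm:bounded-profile-critical} provides exactly its hypotheses. These are: convergence of the boundary points (obtained by compactness of $\partial\Omega$), strong $H^1$ convergence of the recentred cores, and convergence in measure of the favourable sets to the bathtub set of a member of $\mathfrak O_*$. Similarly, the escaping alternative must give the hypotheses of Lemma~\ref{lem:escape-boundary-layer-flattening}. Both are available since $\tau_\delta=\tau_*+O(\eps)$, which is the standing assumption of Lemma~\ref{lem:uniform-exp-localisation}. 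Once these are in place the argument is purely a comparison of limits.
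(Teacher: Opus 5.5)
Your proposal is correct and follows the paper's proof essentially step for step. It obtains the upper bound from Corollary~\ref{cor:critical-first-order-upper} and finiteness from Proposition~\ref{prop:Gamma-finite}, then derives the lower bound from the compact/escape classification of Theorem~\ref{thm:bounded-profile-critical} via Propositions~\ref{prop:critical-compact-liminf} and~\ref{prop:critical-escape-liminf}, and reads off (i)--(iii) by comparing with $\min\{0,\Gamma_{\rm bd}(\sigma)\}$. The only difference is cosmetic: you first extract a subsequence realising the $\liminf$, whereas the paper shows that every subsequence has a further subsequence obeying the lower bound.
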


\begin{proof}
By Proposition~\ref{prop:Gamma-finite}, $\Gamma_{\rm bd}(\sigma)$ is a finite real number. Corollary~\ref{cor:critical-first-order-upper} gives
\[
\limsup_{\delta\downarrow0}
\frac{\eps^2\Lambda_\delta(\alpha_\delta)-k^2}{\eps}
\le
\min\{0,\Gamma_{\rm bd}(\sigma)\}.
\]
Take an arbitrary sequence $\delta_n\downarrow0$ and apply Theorem~\ref{thm:bounded-profile-critical}.  Along a further subsequence, either the compact or escaping alternative occurs.  Proposition~\ref{prop:critical-compact-liminf} gives, in the first case,
\[
\liminf
\frac{\eps^2\Lambda_\delta-k^2}{\eps}
\ge
\mathcal G_\sigma(P;A,v)
\ge
\Gamma_{\rm bd}(\sigma),
\]
whereas Proposition~\ref{prop:critical-escape-liminf} gives a lower bound by zero in the second case.  Hence every subsequence has a further subsequence satisfying the lower bound
\[
\liminf
\frac{\eps^2\Lambda_\delta-k^2}{\eps}
\ge
\min\{0,\Gamma_{\rm bd}(\sigma)\}.
\]
This proves \eqref{eq:critical-selection-law}.

If $\Gamma_{\rm bd}(\sigma)<0$, an escaping subsequence would have first-order liminf at least zero, contradicting \eqref{eq:critical-selection-law}; hence only compact limits occur, and the two inequalities force their response to equal $\Gamma_{\rm bd}(\sigma)$.  If $\Gamma_{\rm bd}(\sigma)>0$, a compact subsequence would have liminf at least $\Gamma_{\rm bd}(\sigma)>0$, while the value limit is zero, so only escape is possible.  The case $\Gamma_{\rm bd}(\sigma)=0$ follows in the same way.
\end{proof}

\begin{remark}[Interpretation of the first-order selection law]\label{rem:first-order-selection-scope}
Theorems~\ref{thm:critical-first-order-selection} and~\ref{thm:threshold-tangential-symmetry} determine the first-order value and the alternative between compact boundary concentration and escape.  When $\Gamma_{\rm bd}(\sigma)<0$, a compact limiting profile minimises
\[
\sigma T(v)+h_P\mathcal C(A,v)
\]
over boundary points and threshold optimisers.  For a fixed threshold profile, the preferred boundary points therefore lie at extrema of mean curvature according to the sign of $\mathcal C(A,v)$.  What remains open is uniqueness of the threshold optimiser and the sign, or possible profile dependence, of $\mathcal C(A,v)$.  If $\Gamma_{\rm bd}(\sigma)=0$, a further expansion is needed to distinguish the alternatives.
\end{remark}

\clearpage
\section{Numerical illustrations}\label{sec:numerics}

The computations in this section are intended to illustrate the analytical alternatives proved above.  The principal two-dimensional experiments use $N=2$, $\kappa=1$ and the ellipse
\[
 \Omega=\left\{(x_1,x_2):\frac{x_1^2}{4}+x_2^2<1\right\}.
\]
For the shape plots we display the normalised principal eigenfunction as a continuous colour field and draw the boundary of the favourable set $D$ in black.  This makes the competing boundary and interior configurations directly comparable.  The numerical eigenvalues are shown as the rescaled quantities $\delta\lambda$.  The scripts, raw tables and discrete fields used below are included with the source.

Near the transition the discrete rearrangement problem can have several stationary states.  We therefore use continuation in the Robin parameter and several initial configurations, and we compare the lowest boundary-attached and interior branches found in this way.  The numerical crossings are not certified global optima of the discrete free-boundary problem, and no numerical quantity is used in the proofs.

\subsection{Threshold benchmark}

For $N=2$ and $\kappa=1$, the independently computed whole-space value is
\[
 k^2=\Lambda_{\R^2}=8.1902771324,
 \qquad
 \Lambda_{\Hh}(0)=\frac{k^2}{2}=4.0951385662.
\]
At mesh size $h=1/30$ the discrete Neumann value is $4.0992361700$, a relative error of about $10^{-3}$.  The attached half-space branch meets the whole-space value at the discrete crossings
\begin{center}
\begin{tabular}{c|cccc}
$h$ & $0.05$ & $0.04$ & $1/30$ & $0.025$\\ \hline
$\tau_{*,h}$ & $3.1585$ & $3.1865$ & $3.2007$ & $3.2137$
\end{tabular}
\end{center}
A linear extrapolation against $h^2$ gives $3.233$.  Since the cellwise free-boundary representation does not by itself justify a particular convergence order, and several stationary branches coexist near the crossing, we do not attach an error bar to this extrapolation.  The computations suggest that the continuum threshold is near
\[
 \tau_*\approx3.2.
\]

\begin{figure}[!htbp]
\centering
\begin{minipage}[t]{0.49\textwidth}\centering
\includegraphics[width=\linewidth]{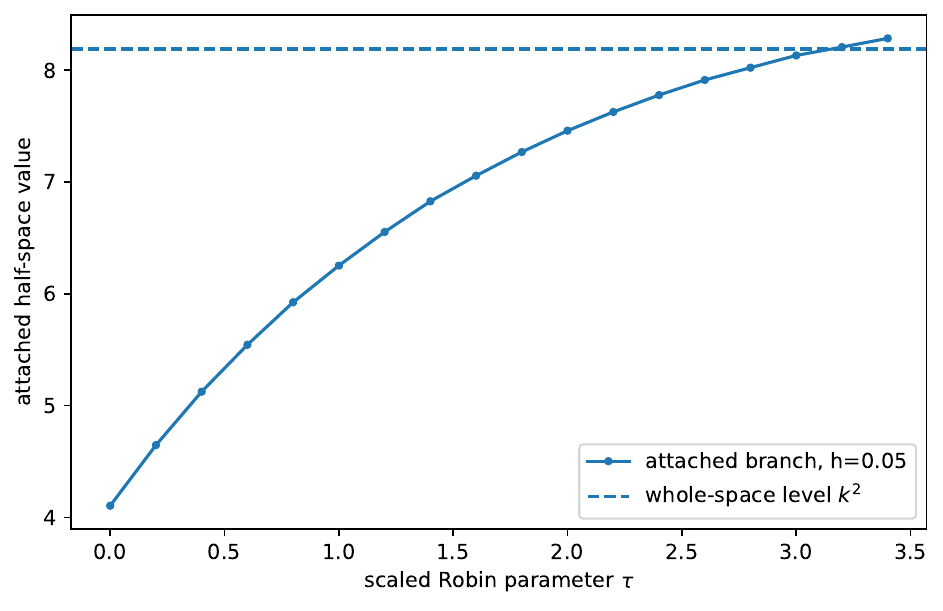}\\[-0.3em]
\small (a) Attached branch and whole-space value.
\end{minipage}\hfill
\begin{minipage}[t]{0.49\textwidth}\centering
\includegraphics[width=\linewidth]{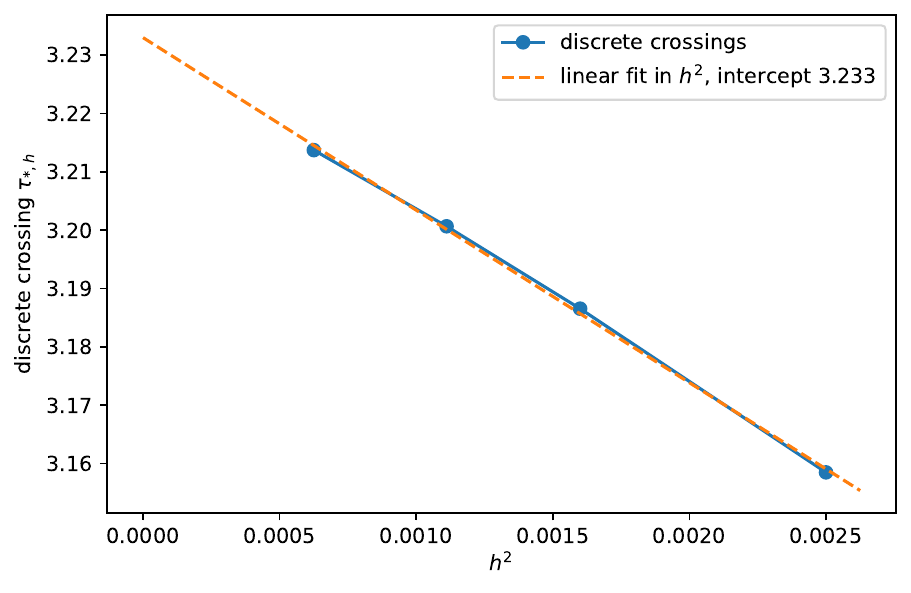}\\[-0.3em]
\small (b) Mesh convergence of the crossing.
\end{minipage}
\caption{Numerical localisation of the half-space transition threshold for $N=2$ and $\kappa=1$.}
\label{fig:numerical-halfspace-phase}
\end{figure}

At the discrete threshold one observes the two alternatives described in Theorem~\ref{thm:critical-halfspace-profile}.  The left panel of Figure~\ref{fig:numerical-threshold-alternatives} shows the lowest compact boundary-touching state found at the discrete crossing.  The right panel translates the exact whole-space ball profile to height $s$; both its quotient excess over $k^2$ and its boundary trace tend rapidly to zero.  Thus the compact and escaping alternatives at the threshold are both visible numerically.

\begin{figure}[!htbp]
\centering
\includegraphics[width=0.96\textwidth]{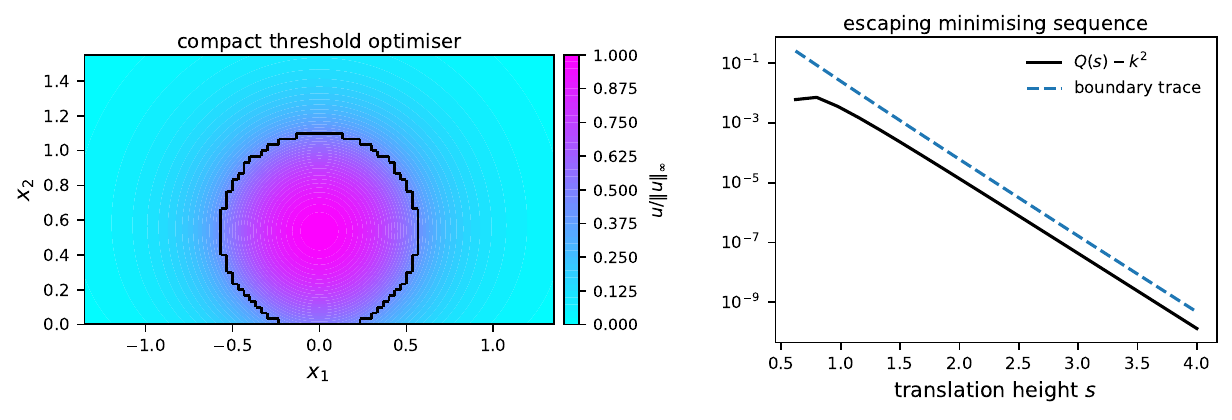}
\caption{The two threshold alternatives.  Left: compact boundary state at the discrete half-space crossing.  Right: translated whole-space profiles form an escaping minimising sequence with quotient converging to $k^2$ and vanishing boundary trace.}
\label{fig:numerical-threshold-alternatives}
\end{figure}

The derivative and variational characterisations of the threshold provide two further checks.  On the $h=1/30$ branch,
\[
 \frac{\Lambda_h(3.20)-\Lambda_h(3.18)}{0.02}=0.4151568,
 \qquad
 \frac{T_h(3.20)+T_h(3.18)}2=0.4151594,
\]
in agreement with $\Lambda_{\Hh}'(\tau_*^-)=T_*$.  Moreover, for
\[
 R_h(\tau):=\frac{k^2-\int_{\Hh}|\nabla v_h|^2}{\int_{\partial\Hh}v_h^2}
 =\tau+\frac{k^2-\Lambda_h(\tau)}{T_h(\tau)},
\]
the values at $\tau=3.00,3.10,3.15,3.18,3.20$ are $3.1767,3.1860,3.2002,3.2006,3.20065$, converging to the independently located crossing $\tau_{*,h}=3.200652$.

\subsection{Boundary or interior?}

The basic decision predicted by Theorem~\ref{thm:intro-main} is shown directly in Figure~\ref{fig:numerical-leading-regimes}.  We fix $\delta=0.03$ and compare a boundary-attached branch with an interior branch on the same mesh.  For the clearly subcritical choice $\tau=2$, the boundary branch has the smaller rescaled eigenvalue, $7.3501$ versus $8.3003$.  For the clearly supercritical choice $\tau=3.6$, the inequality reverses: the interior branch gives $8.3003$ versus $8.4030$ for the boundary branch.  The interior candidate need not sit at the centre of the ellipse; at leading order only its separation from the boundary on the patch scale matters.

\begin{figure}[!htbp]
\centering
\includegraphics[width=0.97\textwidth]{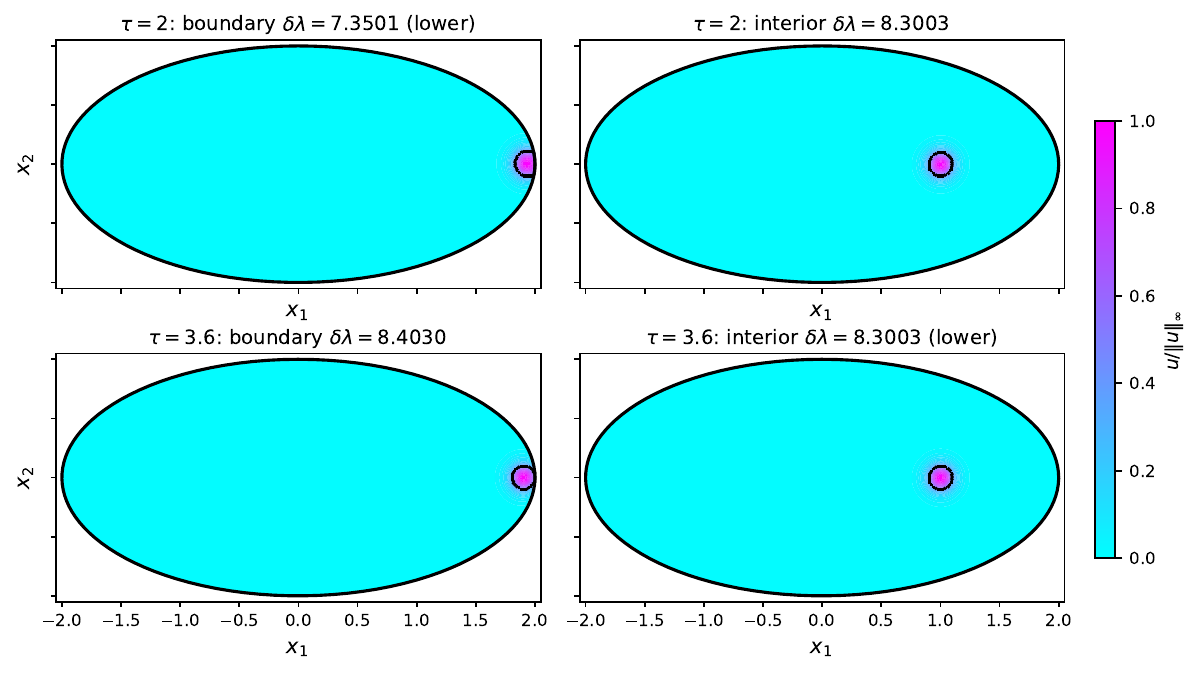}
\caption{Clear boundary and interior regimes on the ellipse for $\delta=0.03$.  The colour field is the normalised principal eigenfunction and the black curve outlines $D$.  Top: $\tau=2<\tau_*$, where the boundary branch has the smaller value.  Bottom: $\tau=3.6>\tau_*$, where the interior branch has the smaller value.}
\label{fig:numerical-leading-regimes}
\end{figure}

The fixed-coefficient regime gives a further check of the subcritical prediction.  With $\alpha=1$, so that $\tau_\delta=\sqrt\delta\to0$, the lowest computed branch remains boundary attached for $\delta=0.08,0.05,0.03$, with rescaled values $4.1321,4.1217,4.1219$, close to and consistent with convergence to the Neumann half-space value.  Thus the numerics reproduce the consequence that every fixed finite $\alpha$ is asymptotically subcritical.

\subsection{Critical window: first-order selection and the degenerate case}

For the critical-window experiments we use $\widehat\tau_*=3.225$, obtained from the $h^2$ extrapolation above, as a working numerical parameter and set
\[
 \tau_\delta=\widehat\tau_*+\sigma\sqrt\delta.
\]
This working value is used only to organise the illustrations and is not an error-certified estimate of the continuum threshold.  We again use $\delta=0.03$.  Figure~\ref{fig:numerical-critical-window} shows three comparisons.  For $\sigma=0$, the boundary correction is negative and the boundary branch has the smaller value: $8.23950<8.30033$.  For $\sigma=0.9$, the correction is positive and the interior branch has the smaller value: $8.30033<8.31841$.

The middle column addresses the first-order equality case.  The finest half-space moment computation gives the numerical zero $\sigma_0^{(h)}\simeq0.5913$ of the boundary correction.  At this value the first-order theory does not distinguish the two configurations, yet the finite-$\delta$ computation does: on the present mesh the boundary branch gives $8.29157$ and the interior branch $8.30033$.  Interpolating the same comparison for $\delta=0.08,0.05,0.03$ gives boundary-minus-interior differences approximately $-5.1\times10^{-4}$, $-2.12\times10^{-2}$ and $-8.77\times10^{-3}$.  Thus the tested finite problems favour the boundary branch, but the differences are small and not monotone in $\delta$; we therefore regard this only as a numerical indication of a higher-order preference, not as a second-order asymptotic conclusion.

\begin{figure}[!htbp]
\centering
\includegraphics[width=0.97\textwidth]{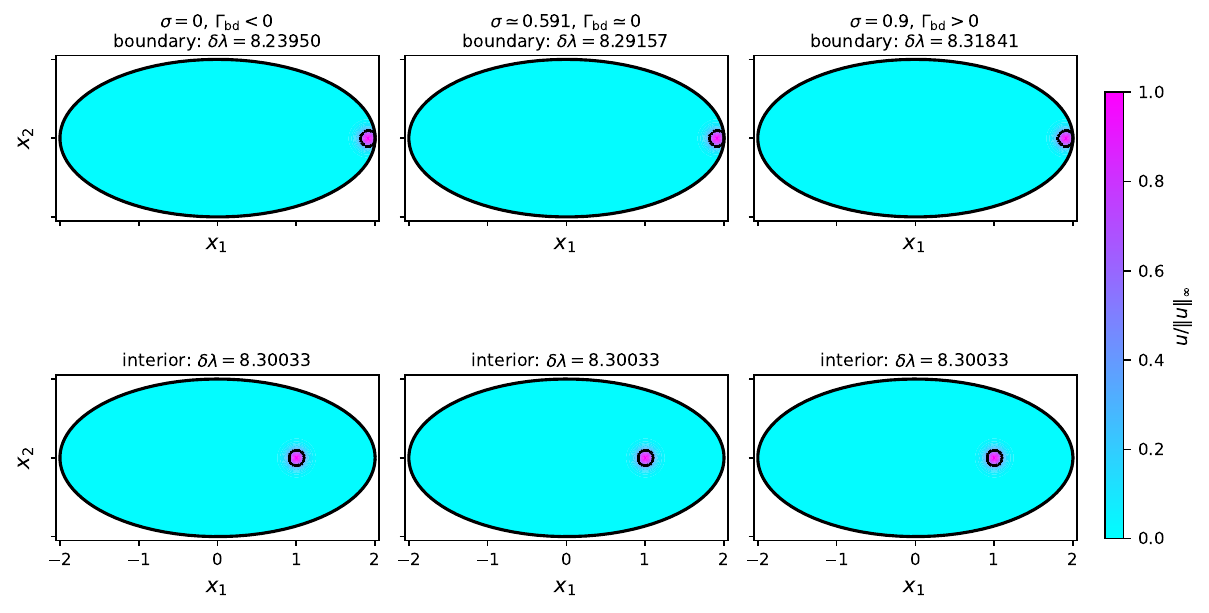}
\caption{Boundary--interior competition in the critical window on the ellipse, $\delta=0.03$.  In each column the boundary candidate is shown above the interior candidate.  Left: $\Gamma_{\rm bd}<0$ and the boundary branch has the smaller value.  Middle: the first-order boundary correction is numerically zero, while the finite-$\delta$ computation still slightly favours the boundary branch.  Right: $\Gamma_{\rm bd}>0$ and the interior branch has the smaller value.}
\label{fig:numerical-critical-window}
\end{figure}

\subsection{Where on the boundary?}

When the boundary branch is selected in the critical window, the first-order theory also determines its preferred location through mean curvature.  On the ellipse, the curvature is $2$ at $(\pm2,0)$ and $1/4$ at $(0,\pm1)$.  For the threshold profile computed here the curvature coefficient is negative.  Hence maximal curvature should be preferred.  At $\delta=0.03$ and $\sigma=0$, the high-curvature and low-curvature branches have rescaled values $8.23950$ and $8.26852$, respectively, as shown in Figure~\ref{fig:numerical-curvature-location}.

\begin{figure}[!htbp]
\centering
\includegraphics[width=0.97\textwidth]{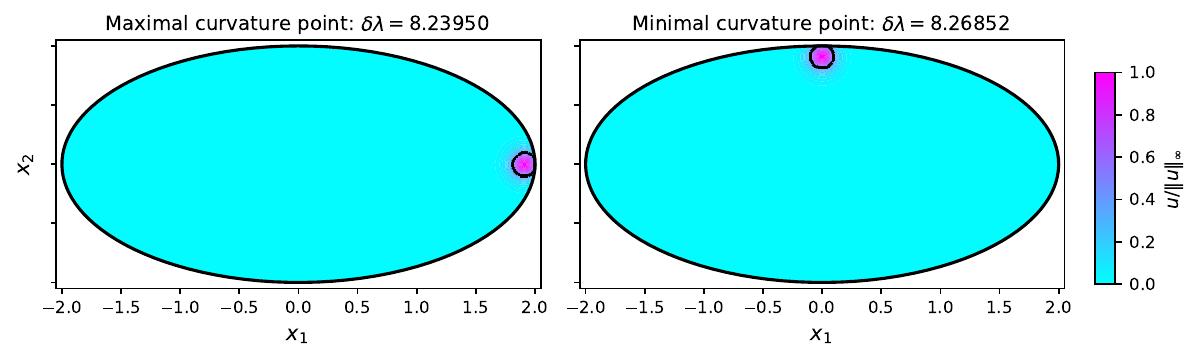}
\caption{Critical boundary placement on the ellipse.  For $N=2$, $\kappa=1$ the computed curvature coefficient is negative, and the branch attached at a point of maximal curvature has the smaller eigenvalue.}
\label{fig:numerical-curvature-location}
\end{figure}

\subsection{Contrast and dimension checks}

The threshold depends substantially on the contrast $\kappa$.  On the common $N=2$, $h=0.05$ discretisation we obtain
\begin{center}
\begin{tabular}{c|ccc}
$\kappa$ & $k^2$ & $\tau_{*,h}$ & $\mathcal C_h$ \\ \hline
$0.5$ & $20.0714$ & $4.5422$ & $-0.4571$\\
$1$   & $8.1903$  & $3.1585$ & $-0.1340$\\
$2$   & $3.2544$  & $2.1313$ & $-0.0690$
\end{tabular}
\end{center}
so increasing the favourable contrast lowers the scaled Robin loss at which the boundary-to-interior transition occurs.  The computed curvature coefficient remains negative throughout these cases; additional coarse tests at $\kappa=0.2,5,10$ found no sign reversal near the discrete crossing.  This is only numerical evidence, and the analytic sign of $\mathcal C$ remains open.

Finally, Figure~\ref{fig:numerical-3d-comparison} shows that the same geometry is present in three dimensions.  The left column comes from the axisymmetric $N=3$, $\kappa=1$ computation at $\tau=2$ and is boundary attached.  The right column shows the exact translated whole-space ball and radial profile representing the escaping core in the supercritical regime.  The corresponding discrete attached-branch crossings are $3.6540,4.0239,4.1705,4.3097$ for $h=0.10,0.0625,0.05,0.04$, with a further $h=1/30$ value near $4.365$.  Since these values are still moving appreciably, we use the three-dimensional calculation only as a qualitative check and do not quote a continuum estimate of $\tau_*(3,1)$.

\begin{figure}[!htbp]
\centering
\includegraphics[width=0.94\textwidth]{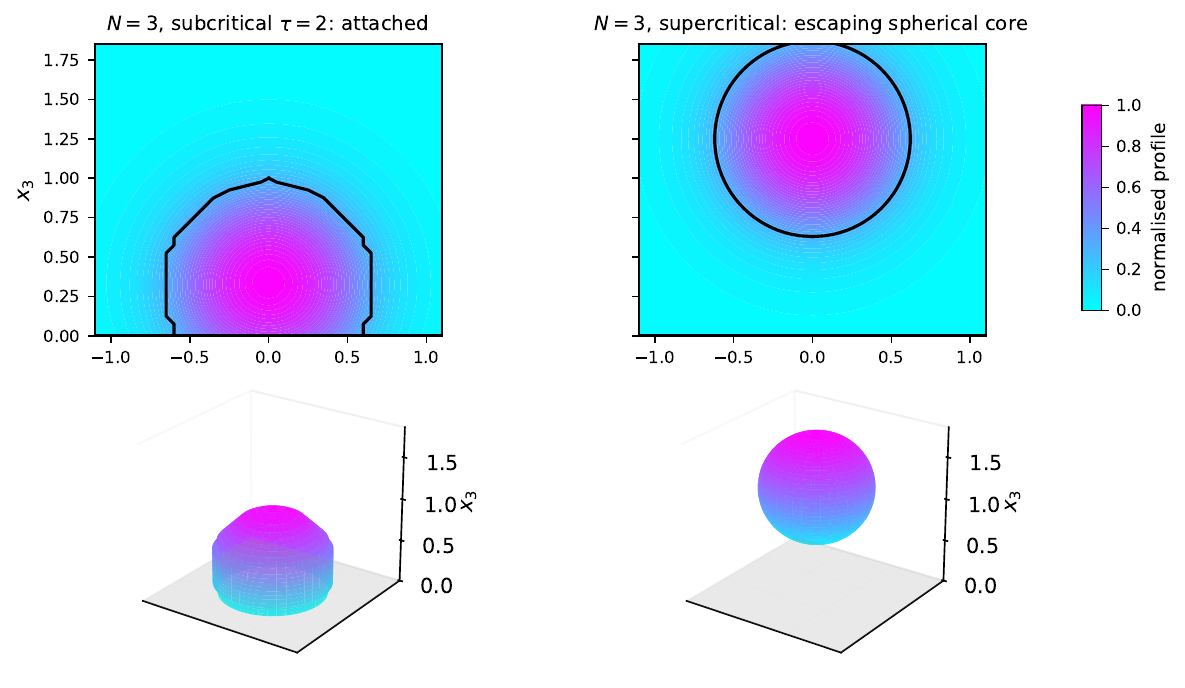}
\caption{Three-dimensional boundary and interior regimes.  Upper row: meridian sections with the normalised profile and the boundary of $D$.  Lower row: the corresponding three-dimensional favourable sets.  Left: computed subcritical attached state.  Right: translated whole-space ball representing the escaping supercritical core.}
\label{fig:numerical-3d-comparison}
\end{figure}

\section{Conclusion}\label{sec:conclusion}

The original optimisation problem asks where a favourable set of very small volume should be placed when the outer boundary is subject to Robin loss.  The answer is governed by the single scaled parameter $\tau_\delta=\alpha_\delta\delta^{1/N}$.  There is a finite threshold $\tau_*(N,\kappa)$.  If $\tau_\delta\to\tau<\tau_*$, optimal regions concentrate at the boundary.  If $\tau_\delta\to\tau>\tau_*$, their concentration centres move to distances large compared with the patch size and, after recentring and rescaling, the favourable sets converge in measure to the whole-space optimal ball.  Hence every fixed finite Robin coefficient lies asymptotically in the boundary regime; the transition away from boundary concentration occurs on the scale $\alpha_\delta\asymp\delta^{-1/N}$.

At $\tau=\tau_*$ the boundary and interior configurations have the same leading-order value.  In the finer scaling $\tau_\delta=\tau_*+\sigma\delta^{1/N}+o(\delta^{1/N})$, the first correction is the minimum of the zero interior contribution and an explicit boundary contribution.  Compact threshold optimisers are rotationally symmetric in the tangential variables, so the geometric part of that boundary contribution depends only on mean curvature.  The analysis therefore answers two successive location questions: whether the small favourable region should be near the boundary or in the interior, and, when the boundary configuration is selected at critical scale, which boundary points are preferred at first order.

The numerical experiments follow this same hierarchy.  For $N=2$ and $\kappa=1$, the half-space computations suggest a crossing near $\tau_*\approx3.2$.  On a smooth ellipse, direct comparisons show a clear boundary advantage below the threshold and a clear interior advantage above it.  In the critical window the computations display both signs of the first-order selection law.  At the numerical first-order equality parameter, the finite-$\delta$ computations show a small boundary preference; because this difference is beyond the order resolved analytically, we regard it only as evidence of a higher-order effect.  When the boundary configuration is selected, comparison of high- and low-curvature locations agrees with the predicted curvature dependence.  Further contrast values and an axisymmetric three-dimensional calculation support the same qualitative picture beyond the principal two-dimensional parameter choice.  None of these numerical observations is used in the proofs.

Open questions include uniqueness of compact threshold optimisers, the sign and possible optimiser-dependence of $\mathcal C(A,v)$, finer free-boundary regularity and quantitative shape estimates, and the next-order analysis when $\Gamma_{\rm bd}(\sigma)=0$.

\section*{Data and code availability}
The scripts, raw numerical tables, and discrete fields used to produce the numerical illustrations in Section~\ref{sec:numerics} are supplied with the manuscript as supplementary material.  No external datasets were used.

\section*{Funding}
This work was supported by the SGS project SGS05/P\v{R}F/2026 at the University of Ostrava.  Yifan Zhang was also co-funded by the European Union under REFRESH--Research Excellence For REgion Sustainability and High-tech Industries, project No.~CZ.10.03.01/00/22\_003/0000048, through the Operational Programme Just Transition.

\bibliographystyle{plainnat}
\bibliography{references}

\end{document}